\title{Stability of  equilibria uniformly in the inviscid limit for the
 Navier-Stokes-Poisson system}
\author{Fr\'ed\'eric Rousset, Changzhen Sun}
\address{Universit\'e Paris-Saclay,  CNRS, Laboratoire de Math\'ematiques d'Orsay (UMR 8628),  91405 Orsay Cedex, France}
\email{frederic.rousset@universite-paris-saclay.fr, changzhen.sun@universite-paris-saclay.fr }
\newcommand{\beq}{\begin{equation}}
\newcommand{\eeq}{\end{equation}}
\newcommand{\beqs}{\begin{equation*}}
\newcommand{\eeqs}{\end{equation*}}
\newcommand{\ben}{\begin{eqnarray}}
\newcommand{\een}{\end{eqnarray}}
\newcommand{\beno}{\begin{eqnarray*}}
\newcommand{\eeno}{\end{eqnarray*}}
\renewcommand{\Re}{{\rm Re}\,}
\renewcommand{\div}{{\rm div}\,}
\newcommand{\Supp}{{\rm Supp}\,}
\newcommand{\Rmnum}[1]{\uppercase\expandafter{\romannumeral #1} }
 \numberwithin{equation}{section}
\newtheorem{thm}{Theorem}[section]
\newtheorem{lem}[thm]{Lemma}
\newtheorem{prop}[thm]{Proposition}
\newtheorem{rmk}[thm]{Remark}
\newtheorem{cor}[thm]{Corollary}
\def\curl{\mathop{\rm curl}\nolimits}
\let\r=\rho
\def \d {\mathrm {d}}
\def\cF{{\mathcal F}}
\def\cG{{\mathcal G}}
\def\cM{{\mathcal M}}
\def\cP{{\mathcal P}}
\let\f=\frac
\def \p {\partial}
\def\mR {\mathbb{R}}
\def\pa {\partial^{\alpha}}
\def\ep{\varepsilon}
\def \ltr{\langle t\rangle}
\def \vep {\varepsilon}
\def \bt {\tilde{b}}
\def \pab {\partial _{\xi}^{\alpha}\partial _{\eta}^{\beta}}
\def\lnr {\langle \nabla \rangle}
\def \lxr{\langle \xi \rangle}
\def \ler{\langle \eta \rangle}
\def \lxmer{\langle \xi-\eta \rangle}
\def \lxper{\langle \xi+\eta \rangle}
\def \pt {\partial_{t}}
\def \vr {\varrho}
\def\R {\mathcal{R}}
\def \si {\sigma}
\def\pta{\partial^{\tilde{\alpha}}}
\def \na{\nabla}
\def \om{\omega}
\def \lp {\lambda_{+}}
\def \lm {\lambda_{-}}
\def \define {\triangleq}
\def \kpz {\kappa_{0}}
\def \chidix {\chi_{\varepsilon,\kpz}(\xi)}
\def \cchidix{(1-\chi_{\varepsilon,\kpz})}
\def \chidid {\chi_{\vep,\kpz}(D)}
\def \cchidid {(1-\chi_{\varepsilon,\kpz})(D)}
\def \tchidix{\tilde{\chi}_{\varepsilon,\kpz}(\xi)}
\def \tchidie {\tilde{\chi}_{\varepsilon,\kpz}(\eta)}
\def \tchidiepx  {\tilde{\chi}_{\varepsilon,\kpz}(\xi+\eta)}
\def \tchidiemx  {\tilde{\chi}_{\varepsilon,\kpz}(\xi-\eta)}
\def \tchidid {\tilde{\chi}_{\varepsilon,\kpz}(D)}
\def \chidir {\chi_{\varepsilon,\kpz}(\lambda r)}
\def \indxf {\frac{3}{2}(1-\f{2}{p})}
\def \indxs {\sigma+3(1-\frac{2}{p})}
\def \ek {8_{\kappa}}
\begin{document}

\maketitle
\begin{abstract}
   We prove a stability result of constant equilibra for the  three dimensional Navier-Stokes-Poisson system uniform in the inviscid limit. We allow the initial density  to be  close to a constant and
    the potential  part of the  initial velocity to be  small independently of the   rescaled viscosity  parameter $\ep$ while
    the incompressible part of the initial velocity is assumed to be small compared to  $\ep$. We  then get  a unique global
    smooth solution.
    We also prove a uniform in $\ep$ time decay rate for these solutions.
    Our approach allows to combine the parabolic  energy estimates that are efficient for the viscous equation at $\ep$ fixed
     and the  dispersive techniques (dispersive estimates and normal forms) that are useful
      for the inviscid irrotational system.

\end{abstract}
\section{Introduction}

The Navier-Stokes-Poisson  system   is a hydrodynamical model of  %viscous
plasma
%({\color{red} what is a viscous plasma ?}
which describes the dynamics of  electrons  and  ions that  interact with its self-consistent electric field. If we neglect the motion of ions, then the dynamics of electrons can be described by the following electron Navier-Stokes-Poisson system (ENSP)
 \beq \label{NSPEO}
 \left\{
\begin{array}{l}
\displaystyle \pt \rho^{\varepsilon} +\div( \rho^{\varepsilon} u^\varepsilon)=0,\\
\displaystyle \pt ( \rho^{\varepsilon} u^{\varepsilon})+\div(\rho^{\varepsilon}u^{\varepsilon}\otimes u^{\varepsilon} )-\varepsilon \mathcal{L} u^{\varepsilon}+
\nabla p(\rho^{\varepsilon})-\rho^{\varepsilon}\nabla \phi ^{\varepsilon}=0 ,  \\
\displaystyle \Delta \phi^{\varepsilon} =\rho ^{\varepsilon}-1,\\
\displaystyle u|_{t=0} =u_0^{\varepsilon} ,\rho|_{t=0}=\rho_0^{\varepsilon}.
\end{array}
\right.
\eeq
We shall always consider in this paper that the spatial domain is the whole space,   $x \in \mathbb{R}^3$.
 Here  the unkowns $\rho^{\varepsilon} (t,x)\in \mathbb{R}_{+}$ , $u^{\varepsilon}\in \mathbb{R}^{3}$, $\na\phi^{\varepsilon}\in\mathbb{R}^{3}$ are the  electron density, the  electron velocity and the  self-consistent electric field respectively. The thermal pressure of electrons $p(\rho^{\varepsilon})$ is usually assumed to follow a  polytropic $\gamma$-law:  $p(\rho^{\varepsilon})= C(\rho^{\varepsilon})^{\gamma},\, \gamma>1$
 while the viscous term is under the form
 $$ \mathcal{L}u^{\varepsilon}=\mu \Delta u^{\varepsilon}+(\mu+\lambda)\na \div u^{\varepsilon}$$ where the Lam\'e coefficients  $\mu,\lambda$ are  supposed to be constants which satisfy the condition:
 $$\mu>0    \qquad  2\mu+\lambda> 0$$
 Note that we consider a scaled version of the system with the coefficient $\varepsilon$ in front of the diffusion terms which is the inverse of the Reynolds number and which will be assumed small in this paper.
 For the simplicity of the presentation, we shall assume  in this paper that  $\mu=1,\,\lambda=0$
 and that
 $p(\rho^\ep)=(\rho^\ep)^2/ 2.$ Nevertheless,
 there is no special cancellation arising from this choice (the easiest  case for the analysis %{\color{red}{mathematically}} 
 in this paper
  would be the choice  $\mu(\rho)= \rho,$ $\lambda = -\mu$, since in this case there are curl free solutions of \eqref{NSPEO}).
   The results of  this paper can thus be easily extended  to   general pressure and to general density dependent  $\mu,\lambda$
 as long as $\mu(1)>0$,  %and $2\mu(1)+\lambda(1)>0$,
 $2\mu(1)+\lambda(1)> 0$.
 %can be treated without much more efforts.. \\
We shall also handle in this paper  a simplified system  for the dynamics of  ions, the electrons being considered
 in thermodynamical equilibrium which reads
  \beq\label{NSPION}
 \left\{
\begin{array}{l}
\displaystyle \pt \rho_{+}^{\varepsilon} +\div( \rho_{+}^{\varepsilon} u_{+}^\varepsilon)=0,\\
\displaystyle \pt ( \rho_{+}^{\varepsilon} u_{+}^{\varepsilon})+\div(\rho_{+}^{\varepsilon}u_{+}^{\varepsilon}\otimes u_{+}^\varepsilon )-\varepsilon \mathcal{L} u_{+}^{\varepsilon}+
\nabla p(\rho_{+}^{\varepsilon})-\rho_{+} ^{\varepsilon}\nabla \phi_{+} ^{\varepsilon}=0 ,  \\
\displaystyle \Delta \phi_{+}^{\varepsilon}-\phi_{+}^{\varepsilon} =\rho_{+} ^{\varepsilon}-1,\\
\displaystyle u_{+}|_{t=0} =u_{+0}^{\varepsilon} ,\rho_{+}|_{t=0}=\rho_{+0}^{\varepsilon}.
\end{array}
\right.
\eeq
There is  a large  body of literature dealing with the stability   %Navier-Stokes-Poisson
under small and smooth enough  perturbations of  the  constant equilibrium (say $(\rho^{\ep},u^{\ep})=(1,0)$)
 of (ENSP) when $\ep= 1$. Here stability means global existence (in a suitable Sobolev or  Besov space) and decay for  small perturbations.
  We refer for example to  \cite{MR2609958}  where  global existence in $H^{l}$ for $l\geq 4$ is proven under the assumption that the initial perturbation is small in $H^{l}$ and $L^1$. %
An explicit  time decay rate for the  perturbation  is  obtained by a careful  analysis of the Green function of the linearized system (we also refer to \cite{MR1355414}).
More recently, in \cite{MR2917409} global existence in $H^{N}$($N\geq 3$) of (ENSP) is obtained  by using only energy estimates  under the assumption that the initial perturbation belongs to $H^{N}$ and is small in $H^3$.
Moreover, as in works on the compressible Navier-Stokes system  \cite{MR3005540},
by assuming that the initial data is in  a negative Sobolev space $\dot{H}^{-s}$ ($0<s<\f{3}{2})$,
 explicit decay rates can be obtained  by using interpolation inequalities and energy estimates.
 These results use heavily the fact that the equation for the velocity is a parabolic equation
 and that the coupling between the two evolution equations of (ENSP) yields decay of the density.
 In \cite{MR2609958},  global existence in dimension $d$ is obtained  in hybrid Besov spaces %$(\rho^{\varepsilon}-1,u^{\varepsilon})\in\tilde{B}_{2,1}^{\f{N}{2}-\f{5}{2},\f{N}{2}}\times \tilde{B}_{2,1}^{\f{N}{2}-\f{3}{2},\f{N}{2}-1}$
    when the initial perturbation is close to equilibrium  in a
    $L^2$ critical norm  by using energy estimates and  by considering low  and high frequencies
    differently. This result was then generalized to a  $L^{p}$ critical frameworks \cite{MR2914601},\cite{MR3695805}.

  All  these works deal  with  an unscaled system, that is to say (ENSP) with $\varepsilon=1$.
   We can easily check that for the $\varepsilon$ dependent system, these works give global smooth solutions
   if the initial perturbation is  small enough compared to $\varepsilon$ and that the  obtained decay rates hold in terms of the slow
   time variable $\varepsilon t$ (for example \cite{MR2917409} would give that in $L^\infty,$  $(\rho^\varepsilon - 1)$
    is bounded by
   $\varepsilon(1+\varepsilon t)^{-\f{3}{2}}$).
   Indeed,  global existence is obtained by bootstrap  arguments and  a priori  estimates. There are roughly  two ways to get the a priori estimates. One way is, as in \cite{MR2609958}, \cite{MR2917409},
    to use energy estimates and to  get dissipation for $u^{\ep}$ by using  the diffusion term $\ep\Delta u^{\ep}$
   and dissipation for $\rho^{\ep}-1$
   by using  a "cross energy estimate".  The nonlinear terms   can be absorbed
    if some quantity is small compared to $\ep$.
   The other way is, as in  \cite{MR2523304}, \cite{MR2914601}, \cite{MR3695805} when considering global existence in critical Besov spaces
    is to use  the maximal  smoothing effect of the  heat kernel $e^{\ep t\Delta }$, which gives for example for the scaled heat equation
  $$\| e^{\ep t\Delta }f\|_{%\tilde{L}_{T}
   L^{1}(\mathbb{R}_{+}, \dot{B}_{p,1}^{s+2})}\lesssim \ep^{-1}
   \|f\|_{%\tilde{L}_{T}^{1}(
   \dot{B}_{p,1}^s}.$$
%{\color{red} where the 'titlde' time integrability means that we first take the norm with respect to time before summing the dyadic blocks  } 
Therefore,  to  control the nonlinear terms,  this also  leads to the assumption that the size of the  initial perturbation
    has to be  small compared  to $\ep$.

  Nevertheless, when $\ep=0$, the system $\eqref{NSPEO}$ reduces to the so-called electron Euler-Poisson (EEP) system. For the (EEP) system, the global existence of smooth solutions
  close to the  constant equilibrium $(1,0)$ was first obtained  by Guo \cite{MR1637856} under neutral, irrotational, small perturbation to the reference equilibrium $(\rho^{0},u^{0})=(1,0)$. The neutral assumption ($\int(\rho_{0}^{0}-1) \d x=0$) was then removed in  \cite{MR3032977}.
   The important property which was used in these works is that the (EEP) system has better dispersive properties
  than the Euler equations for compressible fluids due to the presence of the electric field.
  For example, when restricted to irrotational solutions, the linearized (EEP) system can be rewritten as a  Klein-Gordon equation which verifies in space dimension $d$ the decay estimate
    \beqs
    \|e^{it\lnr}f\|_{L^{\infty}}\lesssim (1+t)^{-\f{d}{2}}\|f\|_{W^{d,1}}
    \eeqs
    which is better than the one of the wave equation.
    Nevertheless, in dimension $3$, the only use of energy estimates and of the above dispersive decay
    (or its $L^p\rightarrow L^{p'} $ counterpart) is not enough to get global smooth solutions in the presence of quadratic
    nonlinearities. Some additional ingredient is thus needed namely either energy estimates using
     the  vector fields methods or the normal form method. For the Euler-Poisson system
   the normal form method of Shatah \cite{MR803256} or more generally, the ‘space-time resonances’ philosophy can  be
    used to control the nonlinear terms.
    We refer  to \cite{MR803256} and \cite{germain2010space}, \cite{MR3032977}
    for more information about normal form method and the  'space-time resonance' approach.
     This  type of  approach was recently successfully used to handle the (EEP) system in dimension two \cite{MR3274788}\cite{MR3024265}
     and one
     \cite{MR3595365}.

    Since in concrete  physical flows the Reynolds number is usually very high (thus $\ep$ very small),
    it is natural to ask for stability results that hold uniformly with respect to $\ep$ for (ENSP).
    Though the methods used in the  two lines of results that we just presented are completely different,
     it is rather natural to expect to get global smooth solutions for  perturbations
     of the constant equilibrium $(1,0)$  with a smallness assumption on the perturbation that is independent
      of $\ep$ except for the curl part of the velocity (remember that for $\ep= 0$  we have global smooth solutions
      only for irrotational data). This is the  result that we shall obtain in this paper.
       A first attempt to get such a result would be to write the solution of (ENSP) as the global solution of (EEP)
        plus a remainder and to try to control the remainder. Since the source term in the equation for the perturbation
        is of order $\ep$, one could hope to use the parabolic methods described above to control the remainder.
         Nevertheless, such a naive approach  cannot work. Indeed, even in dimension $3$,  the source term
         in the equation for the remainder  has  a non integrable decay in the energy norm so that there is no hope
         to be able to control the remainder globally in time. We thus really  need
          to develop  a method that allows to use the type of ideas introduced in the study of dispersive PDE
          when there is a small dissipative term in addition. This is the main aim of this paper.
       As far as we know, there are few works addressing this type of question, in \cite{MR3998637}
       it is the extension of the vector field method that is developed.
         The situation that we are dealing with here for (ENSP)
           occurs for many other systems of mathematical physics. Indeed, there are many other systems for
            which  we have
          for  the viscous version of the  physical model,  global existence for small,  viscosity dependent data  and for
          the inviscid version (which is often a dispersive perturbation of a compressible type Euler equation) global existence for small irrotational data. We can think about MHD, water-waves...We thus hope that the approach developed in this paper
          can be useful to handle other systems. As an illustration, we shall also handle the Navier-Stokes-Poisson
           system for ions, the results are described in the end of the introduction.

% Taking all the above results into account, we wonder that for the electron Navier-Stokes-Poisson system, if the smallness assumption on initial data used to provide the global existence can be taken independent of rescaled parameter $\ep$. Because when the Reynolds number is very large, that is $\varepsilon$ is very small, we could consider the NSP system as a small viscous approximation for Euler-Poisson system. So intuitively, one may expect that the solution of ENSP system should be close to that of EEP system in some sense  when $\ep$ is close to 0.
%   In this paper, we will give the positive answer to this question, the price to pay is the higher regularity assumption on the initial data. Namely, we can prove global existence  for \eqref{NSPEO} in some Sobolev norm $H^{M},M\geq 3$ if the density perturbation ($\vr_{0}^{\varepsilon}-1$) and  the curl-free part of initial velocity ($\mathcal{P}^{\perp}u_0$)is small but independent of $\varepsilon$ (say less than $\delta_0$) in $H^{N}\cap W^{8,1}, N\geq 20$, and the divergence-free part of initial velocity $\mathcal{P}u_{0}$ is small   proportional to $\varepsilon$ (say less than  $\delta_0\varepsilon$) in $H^3$.

We shall denote  by   $\mathcal{P}$  the Leray projector on divergence free vector fields so that   $\mathcal{P}^{\perp}=Id-\mathcal{P}=\na \Delta^{-1} \div $.
 The following is our main result for the (ENSP) system:

%$\mathcal{L}v= \Delta v+\na \div v.$
%{\color{red} Define $\mathcal{P}$ before the statement}

\begin{thm}\label{thmelectric}
Let us set  $\na\phi_0^{\ep}=-\na (-\Delta)^{-1}(\rho_0^{\ep}-1).$
%\na\phi^{\ep}=-\na (-\Delta)^{-1}(\rho^{\ep}-1).
There exists $\delta_{0}>0$ such that for every family of  initial data that satisfy for every $\ep \in (0, 1]$ the estimates
: %uniformly for $\ep \in (0, 1]$
\beno
%\sup_{\ep \in (0, 1]}
\|(\rho_0^{\varepsilon}-1,\na \phi_0^{\varepsilon},\mathcal{P}^{\perp}u_0^{\varepsilon})\|_{W^{\si+3,1}}+\|(\rho_0^{\varepsilon}-1,\na \phi_0^{\varepsilon},\mathcal{P}^{\perp}u_0^{\varepsilon})\|_{H^{N}}\leq \delta_{0}\\
\|\mathcal{P}u_0^{\varepsilon}\|_{H^{3}}\leq \delta_{0} \varepsilon
\eeno
with   $\sigma\geq 5$ and $N\geq \sigma+7$,
then, for every  $\varepsilon \in(0,1]$, there exists a unique global solution of the (ENSP) system \eqref{NSPEO} in $C([0,+\infty),H^{3})$.
 If in addition, we assume that $ \sup_{\ep \in (0, 1]}
\|\mathcal{P}u_0^{
\varepsilon}\|_{ \dot{H}^{-s}} <+ \infty$
for  some $0<s<\f{1}{2}$,  then we have the following time decay estimates that  are uniform  in $\ep$.  There exists  $C>0$
such that for every $\ep \in (0, 1]$, we have
 \beqs
 \|(\rho^{\varepsilon}-1,\na \phi^{\varepsilon},u^{\ep})\|_{W^{1,\infty}}\leq C\big( \min\{\ep,(1+ t)^{-\f{s}{2+s}}\}%+\varepsilon(1+t)^{-\f{1}{3}}
 +(1+t)^{-(\f{11}{8}+)}\big), \quad \forall t \geq 0.
 \eeqs
 %In the above, we have used the convention
 where
  $a^{+}$ stands for 
any number  strictly larger  arbitrarily  close to $a.$
\end{thm}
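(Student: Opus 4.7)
\medskip
\noindent\textbf{Proof plan.} The strategy is to run a bootstrap argument propagating two qualitatively different norms in parallel: a high-regularity energy
\[
\cE_N(t)=\|(\r^\ep-1,\na\phi^\ep,\cP^\perp u^\ep)(t)\|_{H^N}^2+\ep^{-2}\|\cP u^\ep(t)\|_{H^3}^2
\]
controlled uniformly by $\delta_0^2$, together with a pointwise/dispersive norm keeping $\|(\r^\ep-1,\na\phi^\ep,\cP^\perp u^\ep)\|_{W^{1,\infty}}$ bounded by $(1+t)^{-(\f{11}{8}+)}$ uniformly in $\ep$. The first step is to split $u^\ep=\cP u^\ep+\cP^\perp u^\ep$. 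Since the Poisson coupling only acts on the irrotational part, the block $(\r^\ep-1,\cP^\perp u^\ep)$ satisfies a damped Klein-Gordon system (the combination $\na(\r^\ep-1)-\na\phi^\ep$ has symbol $i\xi(1+|\xi|^{-2})$, yielding dispersion $\sqrt{1+|\xi|^2}$), while $\cP u^\ep$ obeys a forced heat equation $\pt\cP u^\ep-\ep\Delta\cP u^\ep=\cP(\text{quadratic})$.

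The second step is to close the high-regularity estimate. For the irrotational unknowns we build a modified energy via a normal-form / symmetrization transformation that removes the quadratic interactions whose time decay is not integrable; the resulting cubic and higher error terms are then absorbed using the dispersive bootstrap. The viscous term $\ep\cL$ contributes only nonnegative dissipation to this estimate. For $\cP u^\ep$, parabolic maximal regularity supplies $\ep\int_0^t\|\na\cP u^\ep\|_{H^3}^2\,\d s$, which lets us absorb source terms such as $\cP(u^\ep\cdot\na u^\ep)$ and $\cP((\r^\ep-1)\na\phi^\ep)$, by splitting them into either products of two dispersive factors (integrable in time via the $L^\infty$ decay) or products involving $\cP u^\ep$ itself, whose smallness of order $\ep$ compensates for the $\ep^{-1}$ lost when trading parabolic dissipation against $L^1_t$-type control.

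Third, to close the dispersive bound we apply Duhamel for the damped Klein-Gordon propagator $e^{-\ep t|\na|^2}e^{\pm it\lnr}$ and run a space-time resonance analysis in the spirit of \cite{MR3032977}. The factor $e^{-\ep t|\xi|^2}$ is harmless, or beneficial at high frequencies, so the classical bilinear stationary-phase estimates for the phase $\pm\lnr\pm\lnr\pm\lnr$ carry over, with the quadratic resonant contributions eliminated by the same normal form used in the energy step. The genuinely new source terms are those involving $\cP u^\ep$; here we exploit that $\cP u^\ep$ has size $\ep$ but enjoys parabolic smoothing, so that after rescaling time by $\ep t$ the corresponding Duhamel integral is bounded uniformly in $\ep$. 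The decay rate claimed in the theorem then follows by combining the Guo-Wang negative-Sobolev interpolation of \cite{MR3005540} applied to $\cP u^\ep$, which yields an algebraic rate $(1+t)^{-s/(2+s)}$ that must saturate at $\ep$ because of the size of the data, with the Klein-Gordon $L^\infty$-decay $(1+t)^{-(\f{11}{8}+)}$ for the irrotational variables.

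The hard part will be the consistent coupling of these two machineries inside a single functional framework. The normal-form changes of unknowns used for the irrotational block must remain compatible both with the Leray decomposition and with the parabolic gain that is available only for $\cP u^\ep$, so that cross interactions between the solenoidal part and the dispersive variables do not break the uniform-in-$\ep$ bootstrap. In particular, quadratic terms of the form $\cP u^\ep\cdot\na(\r^\ep-1)$ and $(\cP^\perp u^\ep\cdot\na)\cP u^\ep$ have to be carefully distributed between the high-regularity energy estimate and the dispersive estimate, in such a way that neither a power of $\ep$ nor an unbounded factor in $t$ is produced; this interface between parabolic and dispersive techniques is where the approach of this paper departs from either of the two families of previous works discussed in the introduction.
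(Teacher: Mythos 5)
Your overall philosophy (dispersive/normal-form control of the irrotational dynamics combined with parabolic energy estimates and negative-Sobolev interpolation for the $O(\ep)$ vortical part) is the right one, but the specific decomposition you propose contains a gap that the paper's argument is designed precisely to avoid. You apply the Leray projection to the \emph{actual} solution and bootstrap the ansatz that $\|(\rho^\ep-1,\na\phi^\ep,\cP^\perp u^\ep)\|_{W^{1,\infty}}\lesssim (1+t)^{-(\frac{11}{8}+)}$ uniformly in $\ep$. This block does not satisfy a closed damped Klein--Gordon system: $\cP^\perp$ commutes neither with the convection $u^\ep\cdot\na$ nor with the variable-coefficient viscosity $\f{\ep}{\rho^\ep}\cL$, so the irrotational unknowns are forced by quadratic terms involving $\cP u^\ep$ (e.g.\ $\cP^\perp(\cP u^\ep\cdot\na\cP^\perp u^\ep)$, $\div(\rho^\ep\,\cP u^\ep)$ in the density equation). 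For these sources you only control $\cP u^\ep$ in $H^3$ with size $\ep$ and decay no better than $\min\{\ep,(1+t)^{-s/(2+s)}\}$ with $s<\f12$, i.e.\ slower than $(1+t)^{-1/5}$. Placing such a term in the Klein--Gordon Duhamel formula, the convolution $\int_0^t(1+t-s)^{-a}\,\ep\,(1+s)^{-b}\,\d s$ with $a>1$ and $b$ small only returns $\ep(1+t)^{-b}$, not $(1+t)^{-(\frac{11}{8}+)}$; no normal form is available for these terms since the solenoidal factor carries no oscillation. So the bootstrap as stated cannot close; indeed the theorem itself only asserts the weaker combined bound $\min\{\ep,(1+t)^{-s/(2+s)}\}+(1+t)^{-(\frac{11}{8}+)}$ for the full unknowns, and $\rho^\ep-1$ genuinely inherits the slow part.

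The paper sidesteps this by decomposing the \emph{system} rather than the solution: it introduces an auxiliary viscous system \eqref{NSPlow1} with data $(\rho_0^\ep-1,\cP^\perp u_0^\ep)$ which propagates exact irrotationality (so there is no forcing from any vortical component at all), proves the full uniform dispersive decay for it (Theorem \ref{thminviscid}), and then puts every interaction with the vortical part into the difference system \eqref{NSPP}, which is estimated only in energy norms where slow decay is acceptable because its data and sources are $O(\ep)$ with time-integrable $L^2$ decay. If you want to rescue your route you would have to weaken your dispersive ansatz on the true irrotational block to the mixed rate and then re-examine whether the normal-form step still closes, which essentially reproduces the paper's splitting. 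Two further points you underestimate: the dispersion relation is $b(\xi)=\sqrt{1+|\xi|^2-\ep^2|\xi|^4}$ restricted to $\ep|\xi|^2\lesssim\kappa_0$ (not $\lnr$), so the stationary-phase and bilinear multiplier bounds must be reproved uniformly in $\ep$; and the viscous contribution inside the normal form is not harmless --- differentiating the profile in time produces new quadratic terms of the form $\ep\Delta T_{1/\varphi}(\cdot,\cdot)$ whose decay $(1+t)^{-1}$ is too slow for large $p$ and forces a second integration by parts in time.
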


\begin{rmk}
  If in addition, $\mathcal{P}u_0^{\varepsilon}$ is in $H^{M}$ (say $\sup_{\ep\in(0,1]}
  \|\mathcal{P}u_0^{
\varepsilon}\|_{ \dot{H}^{M}}<+ \infty$)
 and  $\sigma\geq M+2>5$, then  the solution constructed in Theorem \ref{thmelectric} also belongs to $C([0,\infty),H^M).$
\end{rmk}

Note that the assumption that we make on the size of the  "curl" part of the initial data,  that is to say the assumption on
 $\mathcal P u_{0}^\ep$, seems to be  the natural one. Indeed, even if we assume that
  $\mathcal P u_{0}^\ep= 0$, this property is not propagated by the system (ENSP), the convection diffusion equation
  for the rotational part
  of the velocity is forced by a source term of size $\ep$ so that a curl part of size $\ep$ is instantaneously
  created.

  The main difficulty in order to get Theorem \ref{thmelectric} lies in the interaction between the dynamics of the potential
  part and the incompressible part of the solution. For the potential part we could expect a $L^\infty$ decay
  given by the linear inviscid dispersive estimates  of the order $(1+ t)^{-\frac{3}{2}}$. For the incompressible part,
  we expect that this component will remain of order $\ep$ in $H^s$  but its  decay is driven by the heat equation with diffusivity $
  \ep$, in terms of uniform in $\ep$ estimate this can only yield at best  a rather slow  decay rate of order $(1+t)^{-1}$ which could be difficult to handle especially in the control of the interaction with the potential part.
 Our strategy to prove Theorem \ref{thmelectric}  is to split the system into two viscous  systems, with initial data
$(\rho_0^{\varepsilon}-1,\na \phi_0^{\varepsilon},\mathcal{P}^{\perp}u_0^{\varepsilon})$ and $(0,0,\mathcal{P}u_0^{\varepsilon})$ respectively.
The first one  will have  global solutions under $\ep$-independent assumptions on the inital data $(\rho_0^{\varepsilon}-1,\na \phi_0^{\varepsilon},\mathcal{P}^{\perp}u_0^{\varepsilon})$ and the solutions
will enjoy the same decay estimates as the  (EEP) system.
 The other is just the perturbation of the original system \eqref{NSPEO} by the solution to the former one,
  the important points are that for this system the initial data and the source term are small compared to
  $\ep$ and that the source term has integrable decay in $L^2$. We can thus  use energy estimates  and  the good decay properties  of the solutions to the  former system to prove  global existence and decay.

 More precisely, we write the solution $(\rho^{\varepsilon},\na\phi^{\varepsilon},u^{\varepsilon})$
  of (ENSP) as
  $$(\rho^{\varepsilon},\na\phi^{\varepsilon},u^{\varepsilon})  =(\rho,\na\phi,u)+(n,\na\psi,v),$$
 where
$(\rho,\na\phi,u)$ and $(n,\na\psi,v)$ are the solutions of the following systems:
 \beq \label{NSPlow}
 \left\{
\begin{array}{l}
\displaystyle \pt \rho +\div( \rho u)=0,\\
\displaystyle \pt u+u \cdot {\na u}-\varepsilon \mathcal{L} u+
\nabla \rho-\nabla \phi=0 ,  \\
\displaystyle \Delta \phi =\rho -1,\\
\displaystyle u|_{t=0} =\mathcal{P}^{\perp}u_0^{\varepsilon} ,\rho|_{t=0}=\rho_0^{\varepsilon}.
\end{array}
\right.
\eeq
 \beq \label{NSPP}
 \left\{
\begin{array}{l}
\displaystyle \pt n +\div( \rho v+nu+nv)=0,\\
\displaystyle \pt v+u\cdot {\na v}+v\cdot (\na u+\na v)-\varepsilon\mathcal{L}v +\na n -\na \psi
=\varepsilon(\f{1}{\rho+n}-1) (\mathcal{L}v+\mathcal{L}u),   \\
\displaystyle \Delta \psi=n,\\
\displaystyle v|_{t=0} =\mathcal{P}u_0^{\varepsilon}, n|_{t=0}=0.
\end{array}
\right.
\eeq
Note that for  these two systems we skip the $\varepsilon$ dependence of the solutions in our notation.

We can set $\vr=\rho-1,$  to change  system \eqref{NSPlow}  into:
 \begin{equation}\label{NSPlow1}
 \left\{
\begin{array}{l}
\displaystyle \pt \vr +\div u=-\div(\vr u),\\
\displaystyle  \pt u+u \cdot {\na u}-\varepsilon \mathcal{L} u+
\nabla \vr-\nabla \phi=0, \\
\displaystyle \Delta \phi =\vr,\\
\displaystyle u|_{t=0} =\mathcal{P}^{\perp}u_0^{\varepsilon} ,\vr|_{t=0}=\vr_0=\rho_{0}^{\varepsilon}-1.
\end{array}
\right.
\end{equation}
Note that the  initial datum for the last system is such that  $\curl(\mathcal{P}^{\perp}u_0^{\varepsilon})=0$,
and this irrotational property will be propagated which means that a smooth solution of this system will remain irrotational.
This system is thus a really good viscous approximation
of the  Euler-Poisson system. As we shall see below, the linear part of this system  has the same decay properties  for low  frequencies as  the (EEP) system, that is  for localized initial data, the $L^{p}$ norm of $(\vr,\na \phi,u)$ decay like $(1+t)^{-\f{3}{2}(1-\f{2}{p})}$ uniformly for $\ep\in(0,1]$.

%when  analysis Green matrix of the linearized system of \eqref{NSPlow1},
%The key observation here is that we use the imaginary part

%And it behaves very like the compressible Euler-Poisson system from where we can use more dispersive estimate and normal form method.

The following is the main result for the system \eqref{NSPlow1}.

\begin{thm}\label{thminviscid}
For any $6<p<+\infty$,  there exists $\delta _{0}>0$ %for any $\varepsilon\in (0,1]$,
such that for any family of initial data satisfying
\beno
&&\sup_{\varepsilon\in(0,1]} \left( \|(\vr_0^{\varepsilon},\na \phi_0^{\varepsilon},\mathcal{P}^{\perp}u_0^{\varepsilon})\|_{W^{\si+3,1}}+\|(\vr_0^{\varepsilon},\na \phi_0^{\varepsilon},\mathcal{P}^{\perp}u_0^{\varepsilon})\|_{H^{N}} \right) \leq \delta_0
\eeno
with  $\sigma \geq 3, N\geq \sigma+7,$
then %for any $6< p\leq p_0$,
for every $\ep \in (0, 1]$,
there exist a unique solution for system \eqref{NSPlow1} in
$C([0,\infty),H^N)$.
Moreover, we have  the following time decay estimates that are uniform  for   $\ep\in (0,1]$.  There exists a constant $C$
such that for every $\ep \in (0, 1]$, we have
\beq\label{decayeqlow}
\|(\vr,\na\phi,u)(t)\|_{W^{\si,p}}\leq C\delta_0  (1+t)^{-\f{3}{2}(1-\f{2}{p})}, \quad \forall t \geq 0.
\eeq

%$X(p)$, where $X(p)$ is the Banach space equipped with the norm $\|\cdot\|_{X(p)}$ (see the definition of $\chidid$ in section 2) defined by
%\beq
%\|U\|_{X(p)}\define \sup_{t\in [0,+\infty)}\big( \ltr ^{\frac{3}{2}(1-\f{2}{p})}\|U\|_{W^{\si,p}}+\ltr ^{\frac{3}{2}(1-\f{2}{p})}\|(1-\chi_{\vep,\kpz})(D)U\|_{H^{N-1}}+\|U\|_{H^N}\big).
%\eeq
%Moreover, there exists $C>0$ such that
%$$ \sup_{\ep \in (0, 1]}  \|U\|_{X(p)} \leq C \delta_{0}.$$
\end{thm}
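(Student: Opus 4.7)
The plan is to combine a high-order parabolic energy estimate in $H^N$ with a refined, uniform-in-$\varepsilon$ dispersive analysis of Klein-Gordon type, closed by a continuity bootstrap. The key structural reduction is to exploit that $\mathcal{P}^\perp u_0^\varepsilon$ is irrotational and that, since $\operatorname{curl} u$ solves a forced heat equation with zero initial data and zero source in \eqref{NSPlow1}, the solution remains irrotational. Writing $u = \nabla\psi$ and using the fact that $\mathcal{L}u = 2\nabla\Delta\psi$ in this case, the linear part of \eqref{NSPlow1} reduces to the damped Klein-Gordon equation
\begin{equation*}
\partial_t^2 \varrho - \Delta\varrho + \varrho - 2\varepsilon \Delta \partial_t \varrho = 0,
\end{equation*}
with dispersion relation $\lambda_\pm(\xi) = -\varepsilon|\xi|^2 \pm i\sqrt{1+|\xi|^2 - \varepsilon^2|\xi|^4}$. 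After introducing a complex-valued unknown $V$ combining $\varrho$ and $\psi$, the full system becomes an equation of the form $\partial_t V + i \Lambda_\varepsilon(D) V + \varepsilon A(D) V = Q(V,V)$, with $\Lambda_\varepsilon$ a smooth symbol close to the Klein-Gordon symbol $\sqrt{1+|\xi|^2}$ on the oscillatory region $\varepsilon|\xi| \lesssim 1$, and $A(D)$ a nonnegative second-order multiplier.

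I would next establish uniform-in-$\varepsilon$ linear dispersive estimates for the associated semigroup, splitting in Littlewood-Paley frequencies. For $|\xi| \lesssim 1/\sqrt{\varepsilon}$, stationary phase applied to the perturbed Klein-Gordon phase, using that the additional $-\varepsilon|\xi|^2$ in $\lambda_\pm$ yields only a harmless damping, gives the same $(1+t)^{-\frac{3}{2}(1-\frac{2}{p})}$ $L^{p'}\to L^p$-type decay as for the inviscid Klein-Gordon equation, with constants independent of $\varepsilon\in (0,1]$. For $|\xi| \gtrsim 1/\sqrt{\varepsilon}$, the heat factor $e^{-c \varepsilon t |\xi|^2}$ provides very strong decay directly. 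Together these give a Green's function bound compatible with the target rate.

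The bootstrap is then run on two quantities: the high Sobolev norm $X_1(t) = \sup_{s\leq t}\|(\varrho,u)(s)\|_{H^N}$, controlled by the standard viscous energy method plus the cross estimate that gives dissipation for $\varrho$ from the $-\nabla\phi$ coupling; and the weighted dispersive norm
\begin{equation*}
X_2(t) = \sup_{s\leq t} (1+s)^{\frac{3}{2}(1-\frac{2}{p})} \|(\varrho, \nabla\phi, u)(s)\|_{W^{\sigma, p}}.
\end{equation*}
The energy estimate closes once $\|\nabla u\|_{L^1_t L^\infty_x}$ is finite, which follows from $X_2$ after Gagliardo-Nirenberg interpolation with $X_1$ (this is where $N\geq\sigma+7$ is used). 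The dispersive bound is the delicate point: the Duhamel formula combined with the linear estimate forces one to control $\int_0^t (1+t-s)^{-\frac{3}{2}(1-\frac{2}{p})} \|Q(V,V)(s)\|_{W^{\sigma+3, p'}}\,ds$, which is not recoverable from $X_2^2$ by a naive Young-type bound in dimension three.

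The main obstacle is thus exactly the one typical of quasilinear Klein-Gordon problems in $\mathbb{R}^3$: quadratic nonlinearities are critical at the level of pointwise decay. I would overcome it by a normal-form reduction in the Shatah / Germain-Masmoudi-Shatah style, integrating by parts in time against the non-vanishing Klein-Gordon phase $\Lambda_\varepsilon(\xi) \pm \Lambda_\varepsilon(\xi-\eta) \pm \Lambda_\varepsilon(\eta)$, which is bounded below uniformly in $\varepsilon\in(0,1]$ thanks to the mass gap inherited from $\sqrt{1+|\xi|^2}$ on the oscillatory region, while the $\varepsilon$-damping only improves the estimates. This converts the quadratic Duhamel integral into boundary contributions controlled by $X_1, X_2$ plus cubic terms with integrable decay; the $W^{\sigma+3,1}$ assumption on the data feeds into the linear $L^1\to L^p$ dispersive bound applied to the initial datum. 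Closing $X_1(t)+X_2(t)\leq C\delta_0$ by a standard continuity argument then yields the global solution in $C([0,\infty);H^N)$ together with the decay \eqref{decayeqlow}.
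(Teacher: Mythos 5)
Your overall architecture matches the paper's: preserve irrotationality, reduce to a system whose linearization has eigenvalues $\lambda_\pm = -\varepsilon|\xi|^2 \pm i\sqrt{1+|\xi|^2-\varepsilon^2|\xi|^4}$, split frequencies at the scale $\varepsilon|\xi|^2 \sim \kappa_0$, prove uniform Klein--Gordon type dispersive bounds on the low-frequency piece and exponential $L^2$ decay on the high-frequency piece, and close a bootstrap combining a high Sobolev energy norm with a weighted $W^{\sigma,p}$ decay norm via a normal-form integration by parts in time. So far this is the paper's proof.

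However, there is a genuine gap in your treatment of the normal form step, and it is precisely the technical heart of the paper. You claim that integrating by parts in time against the phase $\phi_{jk}$ ``converts the quadratic Duhamel integral into boundary contributions $\ldots$ plus cubic terms with integrable decay.'' This is the familiar inviscid Klein--Gordon statement, but it is false here. After filtering, the low-frequency profile satisfies $\partial_t\alpha - \varepsilon\Delta\alpha = e^{-itb(D)}(\text{quadratic})$, not $\partial_t\alpha = e^{-itb(D)}(\text{quadratic})$, and the Duhamel integrand also carries a factor $e^{-\varepsilon|\xi|^2(t-s)}$. Consequently, when you integrate $\partial_s e^{-is\phi_{jk}}$ against the rest, the $s$-derivative falling on the heat factor produces a term $\varepsilon\Delta\, T_{m/\phi_{jk}}(\tilde r_j,\tilde r_k)$ and the $s$-derivative falling on a profile produces $T_{m/\phi_{jk}}(\varepsilon\Delta\tilde r_j,\tilde r_k)$ (together with the cubic $T_{m/\phi_{jk}}(\tilde B_j,\tilde r_k)$). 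These extra terms are \emph{still quadratic}, merely $\varepsilon$-weighted; the best one can prove directly is $\|\varepsilon\Delta R\|_{H^k}\lesssim (1+t)^{-1}$, which is not an integrable rate against the target decay $(1+t)^{-\frac{3}{2}(1-\frac{2}{p})}$ once $p>12$. The paper closes the loop by performing a \emph{second} normal-form integration by parts on precisely these quadratic $\varepsilon\Delta$-terms, turning them into boundary terms, genuinely faster-decaying $(\varepsilon\Delta)^2$-quadratic terms, and cubic terms (see the treatment of $I_3$, $I_4$ and Lemma~\ref{lemed}). Without this second step your estimate does not close for general $p$ (and in particular for $p\geq 24$, which is needed later in Theorem~\ref{thmp}), so you should explicitly account for the viscosity-generated quadratic remainders and iterate the normal form on them.

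A minor additional point: you also need to verify that the bilinear multipliers $m/\phi_{jk}$ and $m/\phi_{jk}^2$, with the phase built from $b(\xi)=\sqrt{1+|\xi|^2-\varepsilon^2|\xi|^4}$ rather than $\langle\xi\rangle$, retain Coifman--Meyer type boundedness uniformly for $\varepsilon\in(0,1]$ on the truncated region; the ``mass gap'' is not automatic but does hold once $\kappa_0$ is small (this is the content of Proposition~\ref{elementary for phase} and Lemma~\ref{lembilinear}).
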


%Remark:
%By Sobolev embedding, we have that for any small number $\kappa,$ we have:
%\beq
%\|(\rho,u,\na\phi)\|_{L^{\infty}}\leq (1+t)^{-(\f{3}{2}-\kappa)}
%\eeq

%Note that the above result means in particular that we have a global solution and that  the $W^{\si,p }$ norm  of $(\rho,u,\na\phi)$ decays like $(1+t)^{-\frac{3}{2}(1-\f{2}{p})}$
% which is the decay rate of the linear part of the system.
Let us now  explain  the main ideas for the proof. Using the ‘curl-free’ condition, we consider the new unkown  $V=(\f{\lnr}{|\na|} \vr,\f{\div}{|\na|} u)$. The linearized  system for $V$ is
 % \beq \label{eqsym}
% \left\{
%\begin{array}{l}
%\displaystyle \pt h + \lnr c=0 \\
%\displaystyle \pt c-\lnr h-2\vep \Delta c=0\\
%\displaystyle h|_{t=0}=\f{\lnr} {|\na|}\vr_0,c|_{t=0}= \f {\div}{|\na|}u_0 \\
%\end{array}
%\right.
 %\eeq
 $$\pt V+AV=0, \quad
A=\left(
  \begin{array}{cc}
    0&\lnr\\
    -\lnr&-2\vep \Delta\\
  \end{array}
\right).$$
%We then diagonalize the Green matrix:
%\beqs
%e^{-tA}=Q\left(
%  \begin{array}{cc}
%    e^{\lm(D)t }&0\\
%    0& e^{\lp(D)t}\\
%  \end{array}
%\right)Q^{-1}
%\eeqs
where we  use  $\lnr=\sqrt{1-\Delta}$ the Fourier multiplier with symbol $\sqrt{1+|\xi|^2}$.
The eigenvalues for this system are
\beqs %\label{eqA}
   \lambda _{\pm}=-\vep |\xi|^2\pm i \sqrt{1+|\xi|^2-\vep^2 |\xi|^4}\define -\vep |\xi|^2\pm ib(\xi)
   \eeqs
   A toy model to present the ideas is thus
   \beqs
   \left\{
   \begin{array}{c}
   \displaystyle\pt\beta-\lambda_{-}(D)\beta=\beta^2\\
   \displaystyle\beta|_{t=0}=\beta_{0}
   \end{array}
   \right.
   \eeqs
   The key observations are, on the one hand, when we focus on low frequencies, (say $\ep|\xi|^{2}\leq 2\kpz$ with $\kpz$ to be chosen  small but independent of $\ep$) then $b(\xi)$ is very close to $\lxr$, this %inspired us that we can use the
   indicates that the  imaginary part $e^{itb(D)}$ should give us an  $L^{p}$ decay estimate ($p>2$) which is uniform for $\ep\in (0,1]$. On the other hand, when we deal with high frequencies (in the sense that $\ep|\xi|^{2}\geq\kpz$), direct computations show that there exists a positive constant $c=c(\kpz)$ such that $\Re(\lambda_{\pm})\leq -c(\kpz)$ for any $\ep\in (0,1]$, so we can expect that the high frequency part of the solution has good decay  even in $L^2$
   norm.

   Define $\beta=P^{L}\beta+P^{H}\beta=\beta^{L}+\beta^{H}$ where $P^{L},P^{H}$ are the Fourier multipliers that
    project    on low and high frequencies  in the above sense respectively. We then define the  norm
   \beq\label{norm for toy model}
   \|\beta\|_{X_{T}}=\sup_{t\in[0,T)}\left(\|\beta (t)\|_{H^{10}}+\ltr^{\f{3}{2}}\|\beta^{H}(t)\|_{H^{10}}+\ltr^{\f{3}{2}(1 - \frac{2}{p})}\|\beta^{L}(t)\|_{W^{3,p}}\right).
   \eeq
   where we use the notation  $\ltr=\sqrt{1+t^2}$.
   The first Sobolev norm can be estimated by standard energy  estimates. The other two terms involve time decay
   estimates.
   The high frequencies piece is easier because we have
   uniform (with respect to $\ep$) upper bounds for $\Re(\lambda_{\pm})$ and thus an $L^2 \rightarrow L^2$ type estimate
     with exponential decay uniformly in $\ep$ for the semi-group. The low frequency piece  is more  difficult to get.
  % we only want to use $e^{itb(D)}$ to get time decay uniformly for $\ep\in(0,1]$ whereas $b(\xi)$ is dependent on $\ep$.
  We first check  that  $e^{itb(D)}$ enjoys the same dispersive estimates as $e^{it\lnr}$ uniformly for $\ep\in(0,1]$.
  As for the (EEP) systems the linear dispersive estimates are not enough to control the quadratic nonlinearity,
  we thus have to use  normal form transformation to close the low frequencies decay estimate.
   In this step, we have to carefully track the contribution of the viscous term that creates new error terms.    More precisely,
   let us  write $\alpha=e^{-itb(D)}P^{L}\beta$ then, $\alpha$
   satisfies the equation
   $$\pt\alpha-\ep\Delta\alpha=e^{-itb(D)}(\beta^2)^{L}.$$
 %Because of the dispersive estimate of $e^{itb(D)}$(see \eqref{lemlow}), it remains for us to the estimate of $L^1$ norm of $\alpha$.
   By Duhamel's formula, we have:
   \beqs
   \beta=e^{itb(D)}\alpha=e^{itb(D)}(e^{\ep t\Delta}\beta^{L}_0+\int_{0}^{t}e^{\ep(t-s)\Delta}e^{-isb(D)}\chi^{L}(D)((\beta^{L})^{2}%\cdot\partial_{k}\beta^{L}
   +\beta \beta^{H}+\beta^{H}\beta^{L})(s)\d s).
   \eeqs
   We focus only on the first term in the above integral, the decay for the  other terms is easy to obtain
   because of the $L^2$ decay of the high frequency part.
   We can see  the first term as
   \beq\label{toylow}
   e^{itb(D)}\cF^{-1}\int_{0}^{t}\int_{\mathbb{R}^3}e^{-\ep(t-s)|\xi|^{2}}e^{is\varphi}\hat{\alpha}(s,\xi-\eta)\widehat{\alpha}(s,\eta)\d \eta\d s
   \eeq
   where $\varphi=-b(\xi)+b(\eta)+b(\xi-\eta)>0$ for $\kpz$
   small enough.
%   We just recall that in  general, the set $\mathcal{T}=\{(\xi,\eta)|\phi(\xi,\eta)=0\}$ is called time resonnances set, and $\mathcal{S}=\{(\xi,\eta)|\partial_{\eta}\phi(\xi,\eta)=0\}$ is the space resonnances set and $\mathcal{R}=\mathcal{T}\cap \mathcal{S}$ is the 'space-time' resonances\cite{germain2010space}.
 Following the ‘space-time resonance’ method,   by using the  identity $e^{is\varphi}=\f{1}{i\varphi}\partial_{s}e^{is\varphi}$, we  can integrate by parts in time so that  \eqref{toylow} becomes:
   \beno\label{toydifficult}
   i\int_{0}^{t}e^{i(t-s)b(D)}e^{\ep(t-s)\Delta}\big(\ep\Delta T_{\f{1}{\varphi}}(\beta^{L},\beta^{L})+T_{\f{1}{\varphi}}(\ep\Delta{\beta^{L}}+(\beta^{2})^{L},\beta^{L})\big)\d s
   \eeno
     plus boundary terms and symmetric term which are similar  to handle (we refer to Section  \ref{notations} for the definition of the bilinear operator $T_{\f{1}{\varphi}}.$)
    The last term is cubic and thus can be estimated as in the study of the (EEP) system (we shall check that
     for $\kpz$ sufficiently small the operator $T_{\f{1}{\varphi}}$ has the same continuity properties as in the inviscid case).
     The first two terms are still quadratic but are $\ep$ small, we can thus get
    additional decay by  using the decay provided by the heat equation: for example, we expect that
     the $L^2$ norm of $\ep\Delta\beta^{L}$ has decay like $(1+t)^{-1}$.
      This is enough to get Theorem \ref{thminviscid} for $6<p \leq 12$. To propagate  the estimate for larger p
       which involves a faster rate of decay, the previous $(1+t)^{-1}$ gain is not enough and we shall
       therefore perform another step of integration by parts in time in order to close the estimate.
%   Nevertheless, it  is still  not enough to close the decay estimate.
%   Indeed, by using only
%   the  dispersive estimate for  $e^{itb(D)}$(see \eqref{lemlow}), and the  bilinear estimate for   $T_{\f{1}{\varphi}}$(Lemma \ref{lembilinear}), we would obtain a logarithmic divergence.
%   \beno
%   && \|\int_{0}^{t}e^{i(t-s)b(D)}e^{\ep(t-s)\Delta}T_{\f{1}{\varphi}}(\ep\Delta{\beta^{L}},\beta^{L}\big)\d s\|_{W^{1,\infty}}\\
%    &\lesssim&\int_{0}^{t}(1+t-s)^{-\f{3}{2}}\|T_{\f{1}{\varphi}}(\ep\Delta{\beta^{L}},\beta^{L}\big)\|_{W^{4,1}}\d s\\
%    &\lesssim&\int_{0}^{t}(1+t-s)^{-\f{3}{2}}\|\ep\Delta {\beta^{L}}\|_{H^{13}}\|\beta^{L}\|_{H^{13}}\d s\\
%    &\lesssim&\int_{0}^{t}(1+t-s)^{-\f{3}{2}}(1+s)^{-1}\d s\lesssim (1+t)^{-1}
%   \eeno
%   which is not enough to conclude.
%   To gain more decay, we will use another  normal form transform
%    and then use the fact that  we can expect for  $(\ep\Delta)^{2}\beta^{L}$  an $L^2$ decay estimate like $(1+t)^{-\f{3}{2}}$.

Let us now consider the system \eqref{NSPP}.
We shall see  the system \eqref{NSPP} as a perturbation of \eqref{NSPEO} by $(\rho,\na\phi,u)$.
 Thanks to the  good  decay estimates for $(\rho,\na\phi,u)$(in the sense that the time decay of the  $L^{\infty}$ norm is integrable in time), we can still get global existence by energy estimates for this system. We will prove the following result.

\begin{thm}\label{thmp}
We fix the number $p\geq24$ in Theorem \ref{thminviscid}. Consider $(\varrho,u,\na\phi)$ and $\delta_0$ given by Theorem \ref{thminviscid}.
If $\delta_0$ is small enough and  $\|\mathcal{P}^{\perp}u_0^{\varepsilon}\|_{H^3}\leq \delta_0 \varepsilon$,
then the system \eqref{NSPP}  has  a solution in $C([0,+\infty),H^3)$
and $$\sup_{0\leq t<+\infty}\|(n,\na\psi,v)(t)\|_{H^3}\leq 8\delta_0 \varepsilon.$$
Moreover, if we assume in addition that  for some $s$,   $0<s<\f{1}{2}$,
$ \sup_{\ep \in (0, 1)} \|\mathcal{P}u_0^{
\varepsilon}\|_{ \dot{H}^{-s}}<+\infty$, then
 we
 have the following uniform in $\ep$ time decay estimates for  $(n,\na\psi,v)$.  There exists  $C>0$ which does not depend on $\ep$, such that
 \beqs
     \|\na^{l}(n,\na\psi,v) (t)\|_{H^{3-l}}\leq C \min \{\ep,(1+t)^{
     -\min\{\f{l+s}{2+l+s},\f{1}{3}-\}}\}%+\ep (1+t)^{-\f{1}{3}} \big)
     \eeqs
     where $l=0,1,2$ and $a^{-}$ stands for a real number smaller but arbitrarily close to $a.$

\end{thm}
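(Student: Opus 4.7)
I would view system \eqref{NSPP} as a viscous Navier--Stokes--Poisson system for $(n,v)$, perturbed by a forcing field built from $(\rho-1,\nabla\phi,u)$ which, by Theorem \ref{thminviscid}, is globally small and enjoys time-integrable $L^\infty$ decay of rate $(1+t)^{-\frac{3}{2}(1-2/p)}$ with $p\ge 24$. Since $n(0)\equiv 0$, $\|v(0)\|_{H^3}\le \delta_0\varepsilon$, and the forcing appears either with an explicit $\varepsilon$ prefactor (in $\varepsilon(\tfrac{1}{\rho+n}-1)\mathcal{L}u$) or with time-integrable $L^\infty$ decay (in the semilinear terms $u\cdot \nabla v$, $\nabla(\rho v)$, etc.), purely parabolic energy estimates---carefully scaled in $\varepsilon$---should suffice to propagate $\|(n,\nabla\psi,v)(t)\|_{H^3}\le 8\delta_0\varepsilon$ globally. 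The uniform-in-$\varepsilon$ decay rates will then follow in the style of Guo--Wang by interpolating a propagated negative Sobolev norm $\dot H^{-s}$ against the viscous dissipation.

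\textbf{Global $H^3$ bound.} Local existence in $C([0,T);H^3)$ is standard since \eqref{NSPP} is a symmetric hyperbolic-parabolic system with smooth forcing. I would bootstrap the bound $\|(n,\nabla\psi,v)(t)\|_{H^3}\le 8\delta_0\varepsilon$. Using the symmetrizer adapted to the linear pressure and Poisson coupling, the differentiated energy satisfies $\frac{d}{dt}\mathcal E + 2\varepsilon \|\nabla v\|_{H^3}^2 \le \mathcal R(t)$, where $\mathcal E(t)$ is equivalent to $\|(n,\nabla\psi,v)(t)\|_{H^3}^2$. The remainder splits into (i) self-interactions of $(n,v)$, absorbable by smallness via the $H^3$ algebra property; (ii) mixed interactions with $(\rho-1,u,\nabla\phi)$, controlled by $\|(\rho-1,u,\nabla\phi)\|_{W^{1,\infty}}\mathcal E(t)\lesssim (1+t)^{-\frac{11}{8}}\mathcal E(t)$, a time-integrable quantity; (iii) the explicit forcing $\varepsilon(\tfrac{1}{\rho+n}-1)\mathcal L u$, bounded via Young's inequality together with the parabolic smoothing estimate on $\mathcal L u$ coming from Theorem \ref{thminviscid}, yielding an $\varepsilon^2$ contribution. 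To recover dissipation of $n$ (no viscosity acts on it directly), I would add a Matsumura--Nishida cross term $\eta\sum_{|\alpha|\le 2}\int\nabla\partial^\alpha n\cdot \partial^\alpha v\,dx$; differentiating in time and using the momentum equation produces the missing $\|\nabla n\|_{H^2}^2$ dissipation (together with a $\|n\|_{L^2}^2$ term from integrating by parts against $-\nabla\psi$ and $\Delta\psi=n$), while the new nonlinear errors share the structure of those above. Gronwall then closes the bootstrap.

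\textbf{Decay estimates.} Assuming $\|\mathcal P u_0^\varepsilon\|_{\dot H^{-s}}\le C$ uniformly in $\varepsilon$, I would next propagate $\|(n,v)(t)\|_{\dot H^{-s}}\le C$ by pairing the equations with $|\nabla|^{-2s}(n,v)$; the nonlinear contributions are handled via the Hardy--Littlewood--Sobolev inequality together with the already established $H^3$ bound (the restriction $s<1/2$ is needed to absorb the quadratic contributions), while the $(\rho,u,\nabla\phi)$-forcing contributes an $L^1_t$ quantity. The interpolation $\|\nabla^l f\|_{L^2}\le \|f\|_{\dot H^{-s}}^{1/(l+1+s)}\|\nabla^{l+1}f\|_{L^2}^{(l+s)/(l+1+s)}$ then converts the dissipation inequality into an ODE of the form $\frac{d}{dt}\mathcal E_l + c\varepsilon \,\mathcal E_l^{1+\frac{1}{l+s}}\le R_l(t)$, where $R_l(t)$ inherits the integrable decay of the forcing. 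Integration and balancing with the a priori bound $\mathcal E_l\lesssim \varepsilon^2$ give $\mathcal E_l(t)\lesssim \min\{\varepsilon^2,(1+t)^{-2(l+s)/(2+l+s)}\}$. The secondary cap $(1+t)^{-\frac{2}{3}+}$ (i.e.\ $\frac{1}{3}-$ in the statement) appears for $l\ge 1$ because the slowest surviving contribution in $R_l$---from the quadratic nonlinearity---becomes the dominating obstruction before the pure heat-type rate kicks in.

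\textbf{Main obstacle.} The central difficulty is the sharp $\varepsilon$-bookkeeping in the Matsumura--Nishida cross term: extracting $\|\nabla n\|$ dissipation produces errors like $\eta\int\nabla\partial^\alpha n\cdot \partial^\alpha(u\cdot\nabla v+\cdots)\,dx$ that threaten the $O(\varepsilon)$ scale of the total energy. Controlling them simultaneously requires the integrable $L^\infty$ decay of $(\rho,u,\nabla\phi)$ supplied by Theorem \ref{thminviscid} (whence the restriction $p\ge 24$), the explicit $\varepsilon$ prefactor on $\mathcal{L}u$ in the forcing, and the smallness of $(n,v)$ in $H^3$ from the bootstrap hypothesis, so that every term in $\mathcal R(t)$ is of order at most $\varepsilon^2$ times a function integrable in $t$.
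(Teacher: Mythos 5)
Your proposal is correct and follows essentially the same route as the paper: $\varepsilon$-scaled energy estimates plus a Matsumura--Nishida cross term (inserted with a small $\delta\varepsilon$ weight) to recover damping of $n$, with all coupling terms absorbed thanks to the time-integrable high-order $L^\infty$ decay of $(\varrho,u,\nabla\phi)$ from Theorem \ref{thminviscid} (whence $p\ge 24$) and the explicit $\varepsilon$ prefactor on $\mathcal{L}u$, followed by propagation of the $\dot H^{-s}$ norm via Hardy--Littlewood--Sobolev and interpolation yielding $\partial_t\mathcal{E}+c\varepsilon\,\mathcal{E}^{1+\frac{1}{s}}\lesssim \varepsilon^{2}(1+t)^{-b}+(1+t)^{-a}\mathcal{E}$, which, integrated with a time weight and balanced against the $O(\varepsilon^{2})$ a priori bound, gives the stated uniform decay. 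The only differences are matters of bookkeeping: the paper controls the coupling through $\|(\varrho,u)\|_{W^{M+2,\infty}}\lesssim(1+t)^{-4/3}$ (since top-order derivatives can fall on the outer solution, $W^{1,\infty}$ alone does not suffice) raised to powers $3/4$ and $5/4$ to make the source decay like $(1+t)^{-b}$ with $b>5/3$, which is exactly the origin of the $\frac13-$ cap you identify.
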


\begin{rmk}
If in addition, $\mathcal{P}u_0^{\varepsilon}$ is in $H^M$,
 where $3\leq M\leq \sigma-2$, %N\geq \si+6_{+}$,
 then the solution to \eqref{NSPP} constructed above belongs to  $C([0,+\infty),H^M)$.
Besides, as we do not assume that  $\|\mathcal{P}u_0^{\varepsilon}\|_{H^M}$ is small, we have some time decay estimate in terms of the slow variable $'\ep t'$:
  \beqs
 \|\na^{k}(n,\na\psi,v)(t)\|_{H^{M-k}}\leq  C(1+\ep t)^{-\min\{\f{k+s}{2},\f{1}{3}-\}}%+\varepsilon(1+t)^{-\f{1}{3}}\big)
  \eeqs
  where $k=0,1,2\cdots M-1$.
\end{rmk}
%{\color{purple}
Inspired by \cite{MR3005540} \cite{MR2917409} , we use merely energy estimates to prove global existence.
By using a modified energy functional  $\tilde{\mathcal{E}}_M$ that roughly controls the same Sobolev norms as
the usual energy functional
$$
    \mathcal{E}_M(n,v,\na\psi)%=\sum_{|\alpha|\leq M} \mathscr{E}_{\alpha}
    =\sum_{|\alpha|\leq M}\f{1}{2}\int\rho|\pa v|^2+|\pa n|^2+|\pa \na \psi|^2\d x,
$$
for $M\geq 3$, we shall get that  if $\mathcal{E}_3 \leq \delta \ep^2$, and $\delta$ small enough, we have a positive constant $c$ such that the inequality
\beq\label{ineqenergy}
\pt \tilde{\mathcal{E}}_M +c\ep (\|n\|_{H^M}+\|\na u\|_{H^M}^2)\lesssim \delta^3 \ep^2(1+t)^{-\f{5}{3}}
\eeq
holds. Note that the interest of this modified functional is that it detects also damping of the $n$ component.
The global existence then follows from continuation arguments.

For the decay estimate, we first prove that the solution remains bounded in  $\dot{H}^{-s}$ if the initial data is in $\dot{H}^{-s}$. Then by using an interpolation inequality and \eqref{ineqenergy}, we can  obtain the energy inequality:
$\pt \tilde{\mathcal{E}}_M+c\ep(\tilde{\mathcal{E}}_M)^{1+\f{1}{s}}\lesssim \ep^{2 } (1+t)^{-\f{5}{3}}$
from which we get the desired decay estimate.

Once we have proven  Theorem \ref{thminviscid} and Theorem \ref{thmp}, Theorem \ref{thmelectric} is an easy consequence of them.

In the last part of the paper, we shall explain how  we can  also handle
the Navier-Stokes-Poisson system for the ions dynamics (INSP) introduced in
(\ref{NSPION}) by using the same approach. Note that we have used the so-called linearized approximation %ion dynamics Navier-Stokes-Poisson
since in the (INSP) system,  we have replaced the Poisson equation  $ \Delta \phi_{+}^{\ep}=\rho_{+}^{\ep}-e^{-\phi_{+}^{\ep}}$,
 by a linearized version.  This is not a stringent assumption since we are again dealing with small perturbations of the constant equilibrium
  $(1, 0)$.
%but it is consistence with \eqref{NSPION} up to nonlinear corrections since we are considering small perturbations.
%%Here, for simplicity, we still assume $p(\rho_{+}^{\varepsilon})=\f{1}{2}\rho_{+}^{\varepsilon}^{2}$.
 For the  Euler-Poisson system describing ions dynamics (IEP) (that is $\varepsilon=0$ in \eqref{NSPION}), global smooth irrotational solutions with small amplitude  have been constructed
 by Guo and Pausader \cite{MR2775116}. The idea is again  to find dispersive estimates for the linearized system (which turn out to be  weaker than the one of the  linear Klein-Gordon equations) and  to use the  normal form method.
 Nevertheless, the analysis for this model is much more involved. Indeed,
 the dispersion relation  is closer to the one of the wave equation  which leads to the appearance of "time resonances". For example, the 'time resonances' of the phase function
 $\Phi_{++}=-p(\xi)+p(\xi-\eta)+p(\eta)$,$(p(\xi)=|\xi|\sqrt{\f{2+|\xi|^2}{1+|\xi|^2}})$ is $\{\eta=0\}\cup\{\xi-\eta=0\}$.
 After integration in time, the multilinear operators now have a singular kernel and
  to control them the use of  $\dot{H}^{-1}$ norms is needed.

 We now state the counterpart  of Theorem \ref{thmelectric}.

 \begin{thm}\label{thmion}
  Let us fix some absolute number $\kappa >0$ small enough.
 There exists  $\delta_2>0$  such that for any family of initial conditions that satisfy for every $\ep \in (0,1]$
 the estimates
\beno
\||(\rho_{+0}^{\varepsilon}-1,\mathcal{P}^{\perp}u_{+0}^{\varepsilon})\|_{W^{\si+3,8_{\kappa}'}}+\||\na|^{-1}(\rho_{+0}^{\varepsilon}-1,\mathcal{P}^{\perp}u_{+0}^{\varepsilon})\|_{H^{N}}\leq \delta_{2},\\
\|\mathcal{P}u_{+0}^{\varepsilon}\|_{H^{3}}\leq \delta_{2} \varepsilon
\eeno
with $8_{\kappa}=\f{8}{1-3\kappa},$ $8_{\kappa}'=\f{8_{\kappa}}{8_{\kappa}-1}$, $\sigma\geq 6$, $N\geq 2\sigma+1$, then
we have that  for every  $\varepsilon \in(0,1]$  there exists a unique global solution for system \eqref{NSPION} in $C([0,+\infty),H^{3})$.
Besides, if $\sup_{\ep\in (0, 1]}
\|\mathcal{P}u_{+0}^{\ep}\|_{ \dot{H}^{s}}<+\infty$ with $s<\f{3}{8}$, then we have the following time decay estimates.
 There exists $C>0$ such that for every $\ep \in (0, 1]$, we have the estimate
\beqs
 \|(\rho_{+}^{\varepsilon}-1, u_{+}^{\ep})\|_{W^{1,\infty}}\leq C\big( \min\{\ep,(1+ t)^{-\min\{\f{s}{2+s},\f{\kappa}{2}\}}\}%+\varepsilon(1+t)^{-\f{\kappa}{2}}
 +(1+t)^{-(1+\kappa)}\big), \quad \forall t \geq 0.
 \eeqs
 \end{thm}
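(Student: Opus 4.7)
Following the same strategy as for the electron case, I would split
\[
(\rho_+^\ep, \na\phi_+^\ep, u_+^\ep) = (\rho_+, \na\phi_+, u_+) + (n_+, \na\psi_+, v_+),
\]
where the first component solves the counterpart of \eqref{NSPlow} (viscous (INSP) with initial velocity $\mathcal{P}^\perp u_{+0}^\ep$, hence irrotational for all times) and the second component solves the counterpart of \eqref{NSPP} (viscous perturbation with initial velocity $\mathcal{P}u_{+0}^\ep$ of size $\delta_2 \ep$, forced by terms depending on the first component). The statement of Theorem \ref{thmion} would then follow by proving for each component an analogue of Theorem \ref{thminviscid} and Theorem \ref{thmp} respectively, and summing the resulting estimates.

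For the curl-free viscous system, linearization around $(1,0)$ diagonalizes with eigenvalues
\[
\lambda_\pm^+ = -\ep|\xi|^2 \pm i\sqrt{p(\xi)^2 - \ep^2|\xi|^4}, \qquad p(\xi) = |\xi|\sqrt{\f{2+|\xi|^2}{1+|\xi|^2}}.
\]
For $\ep|\xi|^2 \leq 2\kpz$ with $\kpz$ small the imaginary part is a small perturbation of the ion dispersion $p(\xi)$, so the low-frequency part of the semigroup inherits the $L^{8_\kappa'}\to L^{8_\kappa}$ dispersive estimate of \cite{MR2775116} yielding the decay rate $(1+t)^{-(1+\kappa)}$ uniformly in $\ep$; at high frequencies we retain $\Re \lambda_\pm^+ \leq -c(\kpz)$, giving uniform exponential $L^2$ decay. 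I would thus set up a working norm controlling an $H^N$-energy piece, an exponentially decaying high-frequency $L^2$ piece and a low-frequency $W^{\si, 8_\kappa}$-piece decaying at rate $(1+t)^{-(1+\kappa)}$. As in the electron case, closure of the low-frequency decay requires a normal form transformation applied, with the phases $\Phi_{\pm\pm}^+(\xi,\eta) = \mp p(\xi) \pm p(\xi-\eta) \pm p(\eta)$, after Duhamel along the heat semigroup $e^{\ep t \Delta}$: integration by parts in time against $1/\Phi^+$ plus the $\ep\Delta$-induced $(1+t)^{-1}$ gain absorbs the newly-created quadratic terms of size $\ep$, iterated if a single step is not sufficient.

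The hard step, which is also what forces the low-regularity assumption to be stated in terms of $|\na|^{-1}(\cdot)\in H^N$ rather than plain $H^N$, is the presence of genuine \emph{time resonances} in the ion case: the phase $\Phi_{++}^+$ vanishes on $\{\eta=0\}\cup\{\xi-\eta=0\}$, so the multiplier $1/\Phi^+$ is singular at these sets. Following \cite{MR2775116}, the resulting bilinear operator $T_{1/\Phi^+_{\pm\pm}}$ must be controlled via $\dot{H}^{-1}$-type norms of its inputs. I would verify that for $\kpz$ small the viscous correction to the phase does not alter the continuity properties of this operator on the frequencies of interest (the symbol $1/\Phi^+$ is close to its inviscid counterpart on the support of the low-frequency cut-off), so that the analysis of \cite{MR2775116} carries over, modulo the extra $\ep$-small quadratic remainders that the heat gain handles.

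For the perturbation system I would proceed exactly as in Theorem \ref{thmp}. Build a modified energy functional $\tilde{\mathcal{E}}_M$ that, besides the natural $\sum_{|\alpha|\leq M}\tfrac{1}{2}\int \rho_+|\pa v_+|^2+|\pa n_+|^2+|\pa\na\psi_+|^2\,\d x$, detects damping on $n_+$ via a suitable cross-term, and derive
\[
\pt \tilde{\mathcal{E}}_M + c\ep\bigl(\|n_+\|_{H^M}^2+\|\na v_+\|_{H^M}^2\bigr) \lesssim \delta_2^3\ep^2(1+t)^{-2(1+\kappa)},
\]
whose right-hand side is integrable because $\kappa>0$. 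A bootstrap closes $\tilde{\mathcal{E}}_3 \lesssim \delta_2^2 \ep^2$, giving global existence. For the decay, the $\dot{H}^{-s}$ bound is propagated independently and combined with the energy inequality via interpolation to produce $\pt \tilde{\mathcal{E}}_M + c\ep \tilde{\mathcal{E}}_M^{1+1/s} \lesssim \ep^2(1+t)^{-2(1+\kappa)}$, from which the stated $\min\{\ep,(1+t)^{-\min(s/(2+s),\kappa/2)}\}$ rate follows, and recombining both components yields Theorem \ref{thmion}.
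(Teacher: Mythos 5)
Your decomposition into a curl-free viscous ion system plus an $O(\ep)$ perturbation, the low/high-frequency split of the semigroup, the diagonalization with the $\ep$-perturbed ion phase, the role of time resonances, and the energy-functional strategy for the perturbation system all match the paper's scheme. However, your suggestion to close the low-frequency decay by ``iterating the normal form transformation if a single step is not sufficient'' is precisely the step that fails here, and the paper makes the opposite point explicit. In the electron case the phase has no time resonances, so after one integration by parts one may integrate by parts again (getting $1/\phi_{jk}^2$) to squeeze out the extra decay needed for large $p$. In the ion case the phases $\Phi_{++}^+$ vanish on $\{\eta=0\}\cup\{\xi-\eta=0\}$, the multiplier $1/\phi_{jk}$ is already singular, and a second integration by parts would produce $1/\phi_{jk}^2$, too singular to be a bounded bilinear kernel. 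The paper therefore performs only one integration by parts, and circumvents the slow $\ep\Delta$-error decay by a different device: choose the Lebesgue exponent $p=8_\kappa$ just slightly above $8$, so the target decay $(1+t)^{-(1+\kappa)}$ is as slow as possible while remaining integrable, and then control the $\ep\Delta$-remainders via a ``slow'' $W^{\lambda,r}$ estimate for $|\na|^{-1}R$ with $r$ close to $2$ (obtained by interpolating between $L^{8_\kappa}$ and $L^2$, Claims~1 and~2 in the proof of Proposition~\ref{propiNSPlow}) fed into the Hardy-type bilinear estimate inherited from \cite{MR2775116}. For the term $I_3$ where $\ep\Delta$ sits outside the bilinear operator, the paper distributes the Laplacian via the identity $\ep\Delta T_{m/\phi}(\tilde r,\tilde r)=T_{m/\phi}(\ep\Delta\tilde r,\tilde r)+2\sum_l T_{m/\phi}(\ep^{1/2}\p_l\tilde r,\ep^{1/2}\p_l\tilde r)$ instead of iterating; your plan does not mention this and, as written, would not close.

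Two smaller points: for the perturbation energy, the Poisson equation $\Delta\psi-\psi=n$ means the natural energy piece is $\tfrac12\int|\pa\lnr\psi|^2$, not $\tfrac12\int|\pa\na\psi|^2$; and the paper's a priori inequality for $\mathcal{E}_3$ has a source term decaying like $(1+s)^{-(1+\kappa)}\ep^2\delta^3$ rather than the $(1+t)^{-2(1+\kappa)}$ you state --- still integrable, so the bootstrap works, but the exponent you quote does not come out of the computation.
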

 \textbf{Organization of the paper.} In the second section, we introduce some notations. In Section 3, we establish some useful preliminary estimates (in particular linear decay estimates) in order  to prove Theorem \ref{thminviscid}. Then, we prove Theorem  \ref{thminviscid} and Theorem \ref{thmp}   in Section 4, and Section 5 respectively.
In Section 6, we will explain briefly the modifications needed  to extend the results to general pressure laws and density-dependent viscosities.
 In Section 7, we shall explain how to  deal with the ions system.
Finally, we recall some classical inequalities in the  appendix.

\section{Some Notations}\label{notations}
\begin{itemize}
\item  We define $\varphi_{0} (\xi),\chi(\xi)$ as two radial symmetric $C_{c}^{\infty}$ functions, which are both supported on $\{\xi\big||\xi|\leq 2\}$ and equal to 1 when
$\{\xi\big||\xi|\leq 1\}$, and $\tilde{\chi}\in C_{c}^{\infty}$ equal to 1 on
$\{\xi\big||\xi|\leq 3\}$ and vanish on
$\{\xi\big||\xi|\geq 4\}$.
\item  We shall also use the truncation function  $\chi_{\vep,\kpz}(\xi)=\chi\big(\sqrt{\frac{\varepsilon}{\kappa_0}}\xi\big)$
  in the proof of Theorem \ref{thminviscid}.
  
 \item We denote by  $m(D)$ the Fourier multiplier defined by $m(D)f=\cF^{-1}(m(\cdot)\cF{f}(\cdot))$.

\item   We also introduce the classical Littlewood-Paley decomposition:
 define $\varphi(\xi)=\varphi_{0}(\xi)-\varphi_{0}(2\xi)$
and $\varphi_{j}=\varphi(\f{\xi}{2^{j}})$, $j\in \mathbb{N}^{*}$, $\Delta_j f=\cF^{-1}(\varphi_j(\xi)\cF f(\xi)$), $j\in \mathbb{N}$. The norm in  the inhomogeneous Besov space $B_{p,r}^s$($p,r\geq 1, s\in\mathbb{R}$)
% and Triebel-Lizokin sapce $F_{p,r}^s$
 is defined by 
$\|f\|_{B_{p,r}^s}=(\sum_{j=0}^{\infty}\|\Delta_j f\|_{L^p}^{r}2^{jsr})^{\f{1}{r}}.$
%\qquad \|f\|_{F_{p,r}^s}=\|(\sum_{j=0}^{\infty}2^{jsr}|\Delta_j f|^{r})^{\f{1}{r}}\|_{L^p}$$
\item  For a given function $m(\zeta, \eta)$, we
  define the  bilinear operator $T_m(f,g)$ as:
 \ben\label{eqbilinear}
 T_m(f,g)&\define&\cF^{-1}(\int m(\xi-\eta,\eta)\hat{f}(\xi-\eta)\hat{g}(\eta)\d \eta)\nonumber\\
 &=&\f{1}{(2\pi)^3}\int m(\zeta,\eta)\hat{f}(\zeta)\hat{g}(\eta)e^{ix(\zeta+\eta)}\d \zeta\d \eta
 \een
 
 \item   We use  $\langle \cdot\rangle$  for $\sqrt{1+|\cdot|^2}.$
 \item We denote $a^{+}$ a constant which is larger and  arbitrarily
 close to $a$.
 \item  We shall always use the notation $\lesssim$ for $\leq C$ for  $C>0$ a harmless number that can
  be chosen independent of $\ep \in (0, 1]$ and $t >0$.

  \end{itemize}

\section{Preliminary estimates}\label{section useful lemmas}

In this section, we analyze the system \eqref{NSPlow1}.
At first, we observe that as long as a smooth solution exists on an  interval $[0,T]$, then $\omega(t) \triangleq \curl  u(t)=0$ on this interval.
Indeed, by taking  the $\curl$ in the second equation of system \eqref{NSPlow1}, we get the equation for $\omega$
 \beqs
 \left\{
\begin{array}{l}
\displaystyle \pt \om-\varepsilon \Delta \om+ \om \div u +(u\cdot \na)\om-(\om \cdot\na)u=0 \\

\displaystyle \om|_{t=0} =0
\end{array}
\right.
 \eeqs
%  By taking formal $L^2$ energy estimate, we get the inequality:
%  \beq
%  \f{\d}{\d t}\|\om\|_{L^2}^2+2 \varepsilon\|\na\om\|_{L^2}^2\leq C\|\om \|_{L^2}^2\|\na u\|_{L^{\infty}}
%  \eeq
  By the standard energy estimate and Gr\"{o}nwall's inequality, we have
  \beqs
  \|\om(t)\|_{L^2}^2\leq e^{c\int_{0}^{T}\|\na u(s)\|_{L^{\infty}}\d s}\|\om _0\|_{L^2}^2 =0
 \eeqs
 A direct consequence is that $u=\mathcal{P}^{\bot}u=\na{\Delta}^{-1}\div u$. Thus by using the  identity $\curl\curl u=-\Delta u+\na\div u$, the second equation of system \eqref{NSPlow1} turns out to be:
 $$\pt u+u \cdot {\na u}-2\varepsilon \Delta u+
\nabla \varrho-\nabla \phi=0.$$
%where $\epsilon=(\mu+\f{\lambda}{2})\varepsilon$.
Based on the above facts, let us set
 \beqs
 h=\f{\lnr}{|\na|} \vr,\quad c=\f{\div}{|\na|} u ,\qquad V=(h,c)^{\top},
 \eeqs
we then obtain  that $(h,c)$ satisfies the system:
  \beq \label{eqsym}
 \left\{
\begin{array}{l}
\displaystyle \pt h + \lnr c=-\lnr\f{\div} {|\na|}\big(\f{|\na|}{\lnr}h\cdot \R c\big)=\lnr \R^{*}\big(\f{|\na|}{\lnr}h\cdot \R c\big), \\
\displaystyle \pt c-\lnr h-2\vep \Delta c=-\f{1}{2}\f {\div}{|\na|} \na |\R c|^2=\f{1}{2}|\na| |\R c|^2,\\
\displaystyle h|_{t=0}=\f{\lnr} {|\na|}\vr_0,c|_{t=0}= \f {\div}{|\na|}u_0. \\
\end{array}
\right.
 \eeq
which we shall rewrite as:
\begin{equation}\label{eqV}
\pt V+  A ( D)
V=\left(
  \begin{array}{c}
    \lnr \R^{*}\big(\f{|\na|}{\lnr}h\cdot \R c\big) \\
  \R^{*} \na |\R c|^2\\
  \end{array}
\right)
\triangleq B(V,V), \quad  A ( D)= \left(
  \begin{array}{cc}
    0&\lnr\\
    -\lnr&-2\vep \Delta\\
  \end{array}
\right).
\end{equation}
In the above systems,
 $\R$ is the  vectorial Riesz transform:
 $\R =\f{\na}{|\na|}$ and $\R^{*}=-\f{\div}{|\na|}$ is its adjoint for the $L^2$ scalar product.\\
By elementary computations, we get that the eigenvalues of $-A(\xi)$ are:
   \beq \label{eqA}
   \lambda _{\pm}=-\vep |\xi|^2\pm i \sqrt{1+|\xi|^2-\vep^2 |\xi|^4}\define -\vep |\xi|^2\pm ib(\xi)
   \eeq
   where we cut the lower half imaginary axis %{\color{red} what does it means ?}
   to define the  square root of a complex number. Note that $b$ is in fact dependent on $\ep$, but we do not write it explicitly for simplicity. One can  easily check that the Green matrix is
   \beno
   e^{-t A(\xi)}=\frac{1}{\lambda_{+}-\lambda_{-}}
   \left(
  \begin{array}{cc}
    \lp e^{\lm t}-\lm e^{\lp t}&(e^{\lm t}-e^{\lp t})\lxr\\
    (e^{\lp t}-e^{\lm t})\lxr&\lp e^{\lp t}-\lm e^{\lm t}\\
  \end{array}
\right)
   \define
    \left(
  \begin{array}{cc}
   \mathcal{G}_1(t,\xi)&-\mathcal{G}_2(t,\xi)\\
   \mathcal{G}_2(t,\xi)&\mathcal{G}_3(t,\xi)\\
  \end{array}
\right).
   \eeno
  Note that  $\mathcal{G}_1,\mathcal{G}_2,\mathcal{G}_3$ are actually  well defined everywhere since there is no singularity when $\lambda_{+}=\lambda_{-}$ (see the proof of Lemma \ref{lemmahf}).

   Let us observe  that for low frequencies, ie, when $\vep|\xi|^2\leq 2\kpz<<1$ (since the eigenvalues
    do not cross), we can smoothly diagonalize  $A$ under the form:
   \ben\label{Qdef}
A(D)&=&\left(
  \begin{array}{cc}
    1&1\\
    -\f{\lm(D)}{\lnr}&-\f{\lp(D)}{\lnr}\\
  \end{array}
\right)
\left(
  \begin{array}{cc}
    -\lm & 0\\
   0 & -\lp \\
  \end{array}
\right)
\left(
  \begin{array}{cc}
    \lp&\lnr\\
    -\lm &-\lnr\\
  \end{array}
\right)\f{1}{2ib}\nonumber\\
&\define&Q\left(
  \begin{array}{cc}
    -\lm & 0\\
   0 & -\lp \\
  \end{array}
\right)Q^{-1}, \quad Q^{-1}=\left(
  \begin{array}{cc}
    \lp&\lnr\\
    -\lm &-\lnr\\
  \end{array}
\right)\f{1}{2ib}.
\een
%where $Q^{-1}=\left(
%  \begin{array}{cc}
%    \lp&\lnr\\
%    -\lm &-\lnr\\
%  \end{array}
%\right)\f{1}{2ib}.\nonumber\\$
 Since by Duhamel principle, we can rewrite \eqref{eqV} as
 \beq
 \label{DuhamelV}
 V=e^{-tA}V_0+\int_{0}^{t}e^{-(t-s)A}B(V,V)(s)\d s,
 \eeq
we shall first study the main   properties of $e^{-tA}$ and $B(V,V)$ in the following two subsections.

 \subsection{Linear estimates}
 This subsection is devoted to the study of $e^{-tA}$.
 We shall  carry out the analysis in any space dimension  $\mathbb{R}^{d}$, $d\geq 2$ although in this paper, we only use it for dimension $3$. The behavior will be different for low frequencies $\ep |\xi |^2 \lesssim 1$ where
  uniform in $\ep$ decay estimates will come from the dispersive behavior and for high frequencies
     $\ep |\xi |^2 \gtrsim 1$ where dissipative damping dominates.

 \subsubsection{Linear estimates for low frequencies: $\ep|\xi|^2\leq 2\kpz$}
 For low frequencies, we can get decay estimates that are similar to the  ones of  the linear Klein-Gordon equation
 by using  dispersive properties.
Let us recall that  we  use the notation  $\chi_{\vep,\kpz}(\xi)=\chi\big(\sqrt{\f{\varepsilon}{\kappa_0}}\xi\big)$ (see Section \ref{notations}).
We will fix the threshold $\kpz$ in  the proof of Lemma \ref{lemlow}.

   \begin{lem}\label{lemlow}
   There exists $\kappa_{0} >0$, small enough such that
  uniformly for  $\ep \in (0, 1], $ and for every
   $f\in B_{1,2}^{d}$, we have the estimate
  \beqs
   \|e^{itb(D)}\chidid f\|_{B_{\infty,2}^{0}}\lesssim_{\kpz} (1+|t|)^{-\f{d}{2}}\|f\|_{B_{1,2}^{d}}.\quad \forall t\in \mathbb{R}
  \eeqs
   \end{lem}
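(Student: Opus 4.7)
The strategy is to mimic, uniformly in $\ep$, the classical dyadic dispersive analysis of the Klein--Gordon propagator $e^{it\lnr}$. The key observation is that on the support of $\chidix$ one has $\ep|\xi|^2\leq 2\kpz$, hence $\ep^2|\xi|^4\leq 4\kpz^2$, so that
\beqs
b(\xi)=\sqrt{1+|\xi|^2-\ep^2|\xi|^4}
\eeqs
is a $C^\infty$ perturbation of $\lxr$ whose derivatives admit bounds uniform in $\ep\in(0,1]$ as soon as $\kpz$ is taken small enough. After a Littlewood--Paley decomposition, the estimate will follow from the dyadic $L^1\to L^\infty$ bound
\beqs
\|e^{itb(D)}\chidid\Delta_j f\|_{L^\infty}\lesssim 2^{jd}(1+|t|)^{-d/2}\|\Delta_j f\|_{L^1},\qquad j\geq 0,
\eeqs
with constant independent of $\ep$, $t$ and $j$; squaring, multiplying by $2^{2jd}$ and summing in $\ell^2_j$ then yields the $B^d_{1,2}\to B^0_{\infty,2}$ estimate by Minkowski.

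The dyadic bound in turn reduces to the kernel estimate $\|K_j(t,\cdot)\|_{L^\infty}\lesssim 2^{jd}(1+|t|)^{-d/2}$ for
\beqs
K_j(t,x)=\int_{\mathbb{R}^d} e^{itb(\xi)+ix\cdot\xi}\varphi_j(\xi)\chidix\,d\xi.
\eeqs
For $|t|\leq 1$ this is trivial. For $|t|\geq 1$ I would apply stationary phase to $\Phi(\xi)=tb(\xi)+x\cdot\xi$. A direct computation gives
\beqs
\nabla b(\xi)=\frac{(1-2\ep^2|\xi|^2)\xi}{b(\xi)},
\eeqs
and on $\mathrm{supp}\,\chidix$ one has $1-2\ep^2|\xi|^2\geq 1-4\kpz\geq 1/2$ and $b(\xi)\simeq\lxr$, uniformly in $\ep$. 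Differentiating once more, the Hessian of $b$ splits, as for $\lxr$, into a scalar part plus a rank-one term along the radial direction; for $\kpz$ small its eigenvalues are comparable to $\lxr^{-3}$ in the radial direction and to $\lxr^{-1}$ in the $d-1$ tangential directions, again uniformly in $\ep\in (0,1]$. After the rescaling $\xi=2^j\eta$ the oscillatory integral falls within the standard Klein--Gordon stationary-phase framework (or a direct application of Van der Corput-type lemmas on the rescaled phase), which produces the claimed bound; the factor $2^{jd}$ reflects both the volume of the dyadic support and the radial Hessian degeneration for $|\xi|\gg 1$.

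Combining the kernel bound with Young's inequality $\|K_j\ast g\|_{L^\infty}\leq \|K_j\|_{L^\infty}\|g\|_{L^1}$ and the $\ell^2_j$ summation outlined above closes the argument. The only genuinely delicate point is the uniformity in $\ep$ of the derivative and Hessian bounds on $b$; this is exactly where the smallness of $\kpz$, independent of $\ep$, is used, and it explains why the frequency cutoff is set at $\ep|\xi|^2\leq 2\kpz$ rather than on a fixed compact set, since this set contains frequencies up to $|\xi|\sim\sqrt{\kpz/\ep}$ that blow up as $\ep\to 0$.
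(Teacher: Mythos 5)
Your strategy is the same as the paper's at the structural level: a Littlewood--Paley decomposition, a kernel bound for each dyadic block that is uniform in $\ep$ once $\kpz$ is fixed small, and $\ell^2_j$ summation. The source of uniformity (the cutoff $\ep|\xi|^2\lesssim\kpz$ forces $b$, $\nabla b$, $\nabla^2 b$ to stay within a fixed perturbation of the Klein--Gordon symbol $\lnr$) is also identical. Where you genuinely diverge is in how the dyadic kernel estimate is obtained: you propose a direct $d$-dimensional stationary-phase argument on each annulus, computing radial and tangential Hessian eigenvalues of the full symbol. The paper instead splits off the unit ball $\varphi_0(D)$, where $\det\nabla^2 b$ is uniformly bounded below and non-degenerate stationary phase applies straightforwardly, and treats each higher block $\varphi(D/\lambda)$ by a radial reduction via the Bessel asymptotics of $\cF(\sigma_{\mathbb{S}^{d-1}})$, after which one only faces a one-dimensional oscillatory integral controlled by non-stationary phase (when $|x|\ll t$ or $|x|\gg t$) and by Van der Corput with $b''(r)\gtrsim\lambda^{-1}$ (when $|x|\sim t$). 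The radial reduction is what lets the paper cleanly handle the unbounded degeneration of the Hessian as $\lambda\to\infty$. Your route is viable but requires more care than the sketch suggests: stationary phase on the annulus $|\xi|\sim 2^j$ actually produces the sharper bound $|t|^{-d/2}2^{j(d+2)/2}$, not the $|t|^{-d/2}2^{jd}$ you state (these coincide only when $d=2$; for $d\geq 3$ yours is the weaker, sufficient one), so the remark that $2^{jd}$ ``reflects the volume and the Hessian degeneration'' is not quite how the exponent arises. You would also need to substantiate, with the $\ep$-corrected symbol, that the tangential eigenvalue $b'(r)/r$ and the radial eigenvalue $b''(r)$ stay comparable to their $\ep=0$ values on $\{\ep r^2\leq 2\kpz\}$; the paper does the analogous verification explicitly for the quantities it uses. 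Finally, the sharper dyadic bound (and hence the summation) uses $d\geq 2$, which is stated in the paper's intermediate lemma but not in your sketch.
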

  \begin{proof}
    Note that on the support of $\chi_{\ep,\kpz}$, $b(\xi)$ behaves like $\lxr$, thus, to prove this lemma, we can follow the  proof of the dispersive estimate for the  linear Klein-Gordon equation
   by keeping track of the perturbation. The key point is that this dispersive estimate is uniform with respect to  $\vep$.

%    Of course, as indicated in the statement of this lemma, we need to use Littlewood-Paley decomposition to analysis in different localized frequency.
%We only prove
 % \beqs
  %\|e^{itb(D)}\chidid f\|_{B_{\infty,2}^{0}}\lesssim_{\kpz} t^{-\frac{d}{2}}\|f\|_{B_{1,2}^{\frac{d}{2}+1}}
 % \eeqs
  The proof will thus follow from  the following two lemmas.

  \begin{lem}
  For every  $\kpz$  small enough,
     we have uniformly for  $\ep\in(0,1],$ the estimate
  \beqs
  \|e^{itb(D)}\chidid \varphi_{0}(D)f\|_{L^{\infty}}\lesssim%_{\kpz}
  (1+|t|)^{-\f{d}{2}}\|f\|_{L^{1}}. \quad \forall t\in \mathbb{R}
  \eeqs
  \end{lem}
   \begin{proof}
   By  using the Fourier transform, we only need to show that
   \beqs
   \|\int e^{itb(\xi)}e^{ix\cdot \xi}\chidix \varphi_{0}(\xi) \d \xi\|_{L^{\infty}}\lesssim_{\kpz} (1+|t|)^{-\f{d}{2}}.
   \eeqs
  At first, note that:
   \beqs
   \|\int e^{itb(\xi)}e^{ix\cdot \xi}\chidix  \varphi_{0}(\xi) \d \xi\|_{L^{\infty}}\lesssim \|\varphi_0\|_{L^1}.
   \eeqs
  Thus in the following, we only prove that:
     \beqs
   \|\int e^{itb(\xi)}e^{ix\cdot \xi}\chidix  \varphi_{0}(\xi) \d \xi\|_{L^{\infty}}\lesssim_{\kpz} |t|^{-\f{d}{2}}.
   \eeqs
Let us write
    \beqs
    \int e^{itb(\xi)}e^{ix\cdot \xi}\chidix  \varphi_{0}(\xi) \d \xi
    =\int e^{it\Phi(\xi)}\chidix  \varphi_{0}(\xi) \d \xi, \quad  \Phi(\xi)=b(\xi)+\f{x}{t}\cdot \xi.
    \eeqs
   By direct computations, the first and second derivative of $\Phi(\xi)$ are given by the following expressions:
    \beqs
    \na_{\xi} \Phi(\xi)=\na_{\xi}b+\f{x}{t}=\f{(1-2\ep^{2}|\xi|^{2})}{b(\xi)}\xi +\f{x}{t}
    \eeqs
    \beqs
    \partial_{\xi_i}\partial_{\xi_j}\Phi(\xi)=\f{1-2\ep^2|\xi|^2}{b(\xi)}(\delta_{ij}-\f{(1+4\ep^2)\xi_i\xi_j}{b^{2}(\xi)(1-2\ep^2|\xi|^2)}).
    \eeqs
   We then obtain that on  the support of $\chidix \varphi_{0}(\xi)\subset{\{\xi\big|   |\xi|\leq 2,\ep|\xi|^2\leq 2\kpz\}}$,
    we have
    \beno
   \det (\na^{2}\Phi(\xi))=
%   (\f{1-2\ep^2|\xi|^2}{b(\xi)})^{d}(1-\f{(1+4\ep^2)|\xi|^2}{b^{2}(\xi)(1-2\ep^2|\xi|^2)})\nonumber\\
   (\f{1-2\ep^2|\xi|^2}{b(\xi)})^{d}\f{1-6\ep^2|\xi|^{2}-3\ep^{2}|\xi|^{4}+2\ep^{4}|\xi|^{6}}{b^{2}(\xi)(1-2\ep^2|\xi|^2)}
& \geq & \f{(1-4\ep\kpz)^{d}(1-12\ep\kpz -12\kpz^{2})}{b^{d+2}(\xi)}\nonumber\\
&\geq&\f{1}{2^{d+1}b^{d+2}(\xi)}\geq \f{1}{2^{d+1}\cdot 5^{\f{d+1}{2}}}
    \eeno
   for  $\ep \in (0,1]$  as long as $\kpz$ is  small enough.

  % Moreover, one can check that the first 5 derivative of  $\chidix \varphi_{0}(\xi)$ are uniform bounded with respect to $\vep$.
  By  using the classical stationary phase lemma (we refer to \cite{MR2847755},\cite{MR2952218}
   for example), we  arrive at the desired result.
\end{proof}

   \begin{lem}
   Suppose $d\geq 2.$ For every $\kappa_{0}>0$  small enough and  for every $\lambda \geq 1$,
    we have uniformly for $\ep\in(0,1],$
   \beqs
   \|e^{itb(D)}\chidid \varphi(\f{D}{\lambda})f\|_{L^{\infty}}\lesssim_{\kpz}|t|^{-\f{d}{2}}\lambda^{\f{d+2}{2}}\|f\|_{L^{1}},\qquad \forall t\in \mathbb{R}
  \eeqs
    \beqs
   \|e^{itb(D)}\chidid \varphi(\f{D}{\lambda})f\|_{L^{\infty}}\lesssim_{\kpz}\lambda^{d}\|f\|_{L^{1}}.
  \eeqs
   \end{lem}

   \begin{proof} It suffices to prove:
    \beqs
   \|\int e^{itb(\xi)}e^{ix\cdot \xi}\chidix \varphi(\f{\xi}{\lambda}) \d \xi\|_{L^{\infty}}\lesssim_{\kpz} |t|^{-\f{d}{2}}\lambda^{\f{d+2}{2}}, \qquad \forall t\in \mathbb{R}
   \eeqs
      \beqs
   \|\int e^{itb(\xi)}e^{ix\cdot \xi}\chidix \varphi(\f{\xi}{\lambda}) \d \xi\|_{L^{\infty}}\lesssim_{\kpz}\lambda^{d}.
   \eeqs
 The second estimate just  comes from a  change of variable, we thus only need to  prove the first one. We will also restrict ourselves to the case $t>0$ as the other case is similar.
   As $\chi_{\ep,\kpz},\phi,b$ are all radially symmetric, we actually have:
   \beno
   \int e^{itb(\xi)}e^{ix\cdot \xi}\chidix \varphi(\f{\xi}{\lambda}) \d \xi &=&\int_{0}^{+\infty}e^{itb(r)}\chi_{\ep,\kpz}(r) \phi(\f{r}{\lambda})\cF (\si_{\mathbb{S}^{d-1}})(|x|r)r^{d-1}\d r \nonumber\\
   &=& \lambda^{d}\int_{\f{1}{2}}^{2}e^{itb(\lambda r)}\chidir\phi(r) \cF (\si_{\mathbb{S}^{d-1}})(\lambda|x|r)
   %\tilde{J}}_{\f{d-2}{2}}(\lambda|x|r)
   r^{d-1}\d r
   \eeno
   where we have used the fact that the Fourier transform of  the Lebesgue measure on the sphere $\cF(\si_{\mathbb{S}^{d-1}})(x)$ is (see  \cite{MR2445437}, Appendix B)
   \beqs
  %\tilde{J}}_{\f{d-2}{2}}(|x|)
   \cF(\si_{\mathbb{S}^{d-1}})(x)=|x|^{-\f{d-2}{2}}J_{\f{d-2}{2}}(|x|)=e^{i|x|}Z(|x|)-e^{-i|x|}\bar{Z}(|x|)
   \eeqs
   where $J_{\f{d-2}{2}}(s)$ is the Bessel function and $Z(s)$ satisfies \cite{MR2445437}  for all integer $k\geq 0$ and all $s>0,$
   \beq \label{eq2}
   |\partial^{k}Z(s)|\lesssim _{k,d}(1+s)^{-\f{d-1}{2}-k}.
   \eeq
 % {\color{red}Whose proof could be found in Corollary 2.37 of \cite{MR2847755} for the case $s>1$ and Appendix B of \cite{MR2445437} for the case $0<s\leq 1.$}
Therefore, we can write:
   \beqs
   \int e^{itb(\xi)}e^{ix\cdot \xi}\chidix \varphi(\f{\xi}{\lambda}) \d \xi=\sum_{\pm} \lambda^{d}\int _{\f{1}{2}}^{2}e^{it\Phi_{\lambda}^{\pm}(r)}\chidir\phi(r)Z_{\pm}(\lambda |x| r) \d r
   \eeqs
   where $Z_{\pm}=Z,\bar{Z}$ and $\Phi_{\lambda}^{\pm}(r)=b(\lambda r)\pm \lambda r\f{|x|}{t}$.

For the $'+'$
   case, we can easily check that
\beqs
   \partial_{r}\Phi_{\lambda}^{+}=\lambda b'(\lambda r)+\f{\lambda |x|}{t}=\lambda \f{\lambda r(1-2\ep^2\lambda^2 r^2)}{b(\lambda r)}+\f{\lambda |x|}{t}\geq  \f{\lambda r(1-2\ep^2\lambda^2 r^2)}{b(r)}\gtrsim_{\kpz}\lambda
   \eeqs
   as long as $\kpz$ is small enough.
   Moreover,  for $k\geq 2$, we have
   \beqs
   |\partial_{r}^{k}\Phi_{\lambda}^{+}|=|\lambda ^{k}\partial_{r}^{k}b (\lambda r) |\lesssim_{\kpz} \lambda.
   \eeqs
This yields by  direct induction, that
   \beq \label{eq3}
  | \partial_{r}^{k}\f {1}{\partial_{r}\Phi_{\lambda}^{+}}| \lesssim_{\kpz,k} \lambda^{-1}.
   \eeq
  In addition, by \eqref{eq2}, we have on the support of $\phi$, that
   \beq \label{eq4}
   \partial_{r}^{k} \big(Z_{\pm}(\lambda |x| r)\big)\lesssim (\lambda |x|)^{k}(1+\lambda |x| r)^{-\f{d-1}{2}-k}\lesssim(1+\lambda |x| r)^{-\f{d-1}{2}}\leq 1.
   \eeq
 Consequently,  by using   the classical (non-)stationary phase lemma  and \eqref{eq3},\eqref{eq4}, we have that for any integer $N\geq 0$
   \beqs
  \big |\int _{\f{1}{2}}^{2}e^{it\Phi_{\lambda}^{+}(r)}\chidir\phi(r)Z_{+} \d r\big|\lesssim _{\kpz}(\lambda t)^{-N}.
   \eeqs
To conclude,
 we choose $N=\f{d}{2}$ if $d$ is even, and  we choose $N=\f{d-1}{2}$ and $N=\f{d+1}{2}$ if $d$ is odd to get:
    \beqs
  \big |\int _{\f{1}{2}}^{2}e^{it\Phi_{\lambda}^{+}(r)}\chidir\phi(r)Z_{+} \d r\big|\lesssim _{\kpz}(\lambda t)^{-\f{d}{2}}\lesssim \lambda^{-\f{d-2}{2}}t^{-\f{d}{2}}
   \eeqs
 which is the desired result for '+' case.

    For the $'-'$ case, the first derivative of $\Phi_{\lambda}^{-}(r)$ can vanish. Indeed, we have
    \beqs
   \partial_{r}\Phi_{\lambda}^{-}=\lambda b'(\lambda r)-\f{\lambda |x|}{t}=\lambda \f{\lambda r(1-2\ep^2\lambda^2 r^2)}{b(\lambda r)}-\f{\lambda |x|}{t}.
   \eeqs
 At first,
    if $|x|\leq \f{t}{100}$ or $t\leq \f{|x|}{100} $ , then we have
    \beqs
    | \partial_{r}\Phi_{\lambda}^{-}|\gtrsim_{\kpz}\f{\lambda}{t}(|x|+t)
    \eeqs
and  for $k\geq 2$,
    \beqs
      |\partial_{r}^{k}{\Phi_{\lambda}^{-}}|=|\partial_{r}^{k} {\Phi_{\lambda}^{+}}|\lesssim_{\kpz} \lambda \leq \f{\lambda}{t}(|x|+t).
    \eeqs
 As before, this yields  by induction, for any $l\geq 0,$
    \beqs
  | \partial_{r}^{l}\f {1}{\partial_{r}\Phi_{\lambda}^{-}}| \lesssim_{\kpz} \big(\lambda (|x|+t)\big)^{-1}t.
   \eeqs
Consequently, by using  again the   (non-)stationary phase method, we get:
    \beqs
   |\int _{\f{1}{2}}^{2}e^{it\Phi_{\lambda}^{-}(r)}\chidir\phi(r)\bar{Z}(\lambda r) \d r|\lesssim _{\kpz} \langle\lambda(t+|x|)\rangle^{-N}.
   \eeqs
 If $|x|\approx t$, ie $\f{1}{100}\leq\f{|x|}{t}\leq 100$, we first notice that if $\kpz$ is sufficient small, then on the support of $\chi_{\ep,\kpz}(\lambda r)$, one has $\partial_{r}^{2}\Phi_{\lambda}^{-} =\lambda ^2\f{1-6\ep^2(\lambda r)^2-3\ep^2(\lambda r)^4+2\ep^4 (\lambda r)^6}{b^{3}(\lambda r)}\gtrsim_{\kpz}{\lambda}^{-1}$. Combining this fact with the behavior  of $Z$ (see \eqref{eq2}),
  we then apply Van der Corput Lemma (see for example \cite{stein2016harmonic})
    to get
   \beno
   && |\int _{\f{1}{2}}^{2}e^{it\Phi_{\lambda}^{-}(r)}\chidir\phi(r)\bar{Z}(\lambda |x|r)\d r\\
   &\lesssim_{\kpz}&  (\lambda^{-1} t)^{-\f{1}{2}}\left(\f{1}{2^{d-1}} \chi(\f{1}{2}(\f{\ep}{\kpz})^{\f{1}{2}}\lambda )\phi(\f{1}{2})\bar{Z}(\f{1}{2}\lambda |x|)+ \sup_{r}\partial_{r}(r^{d-1}\chidir \phi(r)\bar{Z}(\lambda |x|r)) \right)\\
   &\lesssim_{\kpz}&(\lambda^{-1} t)^{-\f{1}{2}}(1+\lambda |x|)^{-\f{d-1}{2}}
   \lesssim_{\kpz} \lambda^{-\f{d}{2}+1}t^{-\f{d}{2}}.
   \eeno
   This ends the proof.
    \end{proof}
    Once we have the above two lemmas, we can sum the frequencies over the dyadic decomposition to get Lemma \ref{lemlow}.
    \end{proof}

 %Let us define phase function $\phi_{11}(\xi,\eta)=b(\xi)+b(\eta)-b(\xi+\eta ).$  We can choose  $\kappa_0$ even smaller if necessary, such that the estimate
 %\beq\label{phase estimate crude}0<\f{1}{\phi_{11}}\lesssim \f{\lxr\ler}{\lxr+\ler }\eeq
 %holds. This estimate
 % will be used in the proof of the  bilinear estimate below.

 From now on, we fix  $\kpz$ sufficiently small independent of $\ep$  such that the statement of Lemma \ref{lemlow} and proposition \ref{elementary for phase} in Appendix holds.

\begin{cor}\label{corgreenlowf}
  For $j=1,\,2,\,3$  and $f \in B_{1,2}^{d}$, we have uniformly in $\ep \in (0, 1]$ the estimate
%\beq
 %  \|\mathcal{G}_{j}(D)\chidid f\|_{B_{\infty,2}^{0}}\lesssim_{\kpz} t^{-\frac{d}{2}}\|f\|_{B_{1,2}^{\frac{d}{2}+1}}
  %\eeq
    \beqs%\label{eq99}
   \|\mathcal{G}_{j}(t,D)\chidid f\|_{B_{\infty,2}^{0}}\lesssim%_{\kpz}
   (1+t)^{-\frac{d}{2}}\|f\|_{B_{1,2}^{d}}, \quad \forall t>0.
  \eeqs
\end{cor}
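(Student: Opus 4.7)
The plan is to reduce this to Lemma \ref{lemlow} by writing each Green-matrix entry explicitly as a combination of a (uniformly bounded) heat-type factor, a (uniformly bounded) Mikhlin-type symbol, and a purely oscillatory piece $e^{\pm itb(D)}$. Using $\lambda_{\pm} = -\ep|\xi|^2 \pm ib(\xi)$ and $\lp-\lm = 2ib(\xi)$, a direct computation yields
\beno
\mathcal{G}_1(t,\xi) &=& e^{-\ep|\xi|^2 t}\Big(\cos(b(\xi)t) + \tfrac{\ep|\xi|^2}{b(\xi)}\sin(b(\xi)t)\Big),\\
\mathcal{G}_2(t,\xi) &=& -e^{-\ep|\xi|^2 t}\,\tfrac{\lxr}{b(\xi)}\sin(b(\xi)t),\\
\mathcal{G}_3(t,\xi) &=& e^{-\ep|\xi|^2 t}\Big(\cos(b(\xi)t) - \tfrac{\ep|\xi|^2}{b(\xi)}\sin(b(\xi)t)\Big),
\eeno
so each $\mathcal{G}_j(t,\xi)\chidix$ is a linear combination of terms of the form $e^{-\ep|\xi|^2 t}\,a_j^{\pm}(\xi)\,e^{\pm itb(\xi)}\chidix$ with amplitudes $a_j^{\pm} \in \{\tfrac12,\ \tfrac{i\ep|\xi|^2}{2b(\xi)},\ \tfrac{i\lxr}{2b(\xi)}\}$.

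Next I would check that on the support of $\chidix$ (where $\ep|\xi|^2 \leq 2\kpz$, with $\kpz$ fixed small) one has $b(\xi) \geq \sqrt{1-2\kpz}\,\lxr \gtrsim 1$, so that $\ep|\xi|^2/b(\xi)$ and $\lxr/b(\xi)$ are smooth with derivatives of any order enjoying Mikhlin-type bounds uniformly in $\ep\in(0,1]$. Consequently $a_j^{\pm}(D)\chidid$ defines an operator bounded on $B^0_{\infty,2}$ uniformly in $\ep$. Moreover, the heat multiplier $e^{\ep t\Delta}$ is convolution with a Gaussian of $L^1$-mass equal to $1$, so it is also uniformly bounded on $B^0_{\infty,2}$ for all $t\geq 0$ and $\ep\in(0,1]$.

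Given these two facts, I would write
\beqs
\mathcal{G}_j(t,D)\chidid f \;=\; \sum_{\pm} e^{\ep t\Delta}\,a_j^{\pm}(D)\,\bigl(e^{\pm itb(D)}\chidid f\bigr),
\eeqs
and bound the left-hand side in $B^0_{\infty,2}$ by the operator norms of $e^{\ep t\Delta}$ and $a_j^{\pm}(D)\chidid$ times $\|e^{\pm itb(D)}\chidid f\|_{B^0_{\infty,2}}$. Lemma \ref{lemlow} applied in both time directions (the statement there is symmetric under $t\mapsto -t$) then delivers the factor $(1+t)^{-d/2}\|f\|_{B^{d}_{1,2}}$, yielding the claim.

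The only genuine point requiring care is the uniform-in-$\ep$ Mikhlin verification for the symbols $\ep|\xi|^2/b(\xi)$ and $\lxr/b(\xi)$ truncated by $\chidix$; this follows from the lower bound $b(\xi) \gtrsim \lxr$ on that support together with the fact that differentiation in $\xi$ produces either additional powers of $\lxr^{-1}$ or factors of $\ep|\xi|$ which are $\lesssim \sqrt{\kpz}\,\lxr^{-1/2}$ there. Everything else is a direct bookkeeping built on top of Lemma \ref{lemlow}.
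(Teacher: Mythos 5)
Your proposal is correct and follows essentially the same route as the paper: the paper also starts from the identity $\cG_1(t,D)=\f{1}{2}e^{\ep t\Delta}\big((e^{ib(D)t}+e^{-ib(D)t})+i\f{\ep\Delta}{b(D)}(e^{ib(D)t}-e^{-ib(D)t})\big)$ (and its analogues for $\cG_2,\cG_3$), uses the uniform boundedness of $e^{\ep t\Delta}$, and reduces everything to Lemma \ref{lemlow}. The only difference is in how the amplitudes $\ep|\xi|^2/b(\xi)$ and $\lxr/b(\xi)$ are treated: the paper absorbs them into the stationary-phase proof of Lemma \ref{lemlow}, observing that $\partial_r^k(\ep r^2/b(r))$ is bounded on the support of $\chi_{\ep,\kpz}$, whereas you peel them off as separate Fourier multipliers bounded on $B^0_{\infty,2}$ uniformly in $\ep$ and then apply Lemma \ref{lemlow} as a black box; both are legitimate, and yours is slightly more modular (note that you correctly claim boundedness on $B^0_{\infty,2}$ rather than on $L^\infty$, where Mikhlin-type theorems fail). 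One small correction to your justification: the useful bound on the support of $\tchidid$ is $\ep|\xi|\lesssim \kpz\lxr^{-1}$ (since $\ep|\xi|\le \ep|\xi|^2/|\xi|\lesssim\kpz/|\xi|$ for $|\xi|\gtrsim 1$, and $\ep|\xi|\lesssim 1$ for $|\xi|\lesssim 1$), not merely $\sqrt{\kpz}\,\lxr^{-1/2}$; the weaker bound you state would not give the uniform $\lxr^{-|\alpha|}$-type derivative estimates needed for the dyadic-block kernel bounds at high frequencies, while the true bound does, so your multiplier step goes through with this repair and the argument is complete.
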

\begin{proof}

We focus on the proof  for $\mathcal{G}_1$, the other two terms can be handled with  similar arguments. Simple computations show that:
\beqs
\cG_1(t,D)=\f{1}{2}e^{\ep t\Delta} \big((e^{ib(D)t}+e^{-ib(D)t})+i \f{\vep \Delta}{b(D)}(e^{ib(D)t}-e^{-ib(D)t})\big).
\eeqs

 By Lemma \ref{lemlow} and the continuous property of the  operator $e^{\vep t\Delta}$ on  $L^p $, $1\leq p\leq +\infty$, it suffices for us to show the same result as in  Lemma \ref{lemlow} when $e^{itb(D)}\chidid$ is changed into  $e^{itb(D)}\chidid \f{\ep \Delta}{b(D)}$.  The proof is similar to that of Lemma \ref{lemlow}  once we notice that on the support of $\chi_{\vep,\kpz}$,
 $
 \partial_{r}^{k}\big(\f{\ep r^2}{b(r)}\big)\lesssim C%(\kpz)
$, we thus omit the details.

 \end{proof}

   \subsubsection{Linear estimates for high frequencies: $\ep|\xi|^2\geq \kpz$}

\begin{lem}\label{lemmahf}
 There exists $c_0>0$  such that, for $j=1,2,3$ and for every $\ep \in (0, 1]$, we have the estimate
 \beqs
 |(1-\chi_{\ep,\kpz}) \mathcal{G}_j(t,\xi)|\lesssim e^{-c_0t}, \quad \forall t \geq 0.
 \eeqs
 \end{lem}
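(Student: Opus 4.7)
On the support of $1-\chidix$ we have $\ep|\xi|^2\geq \kpz$, and the key observation is that both eigenvalues of $-A(\xi)$ have real parts uniformly bounded away from zero. I would extract this from the explicit form of $\cG_j$ using the two identities $\lp+\lm=-2\ep|\xi|^2$ and $\lp\lm=\lxr^2$ coming from the characteristic polynomial $\lambda^2+2\ep|\xi|^2\lambda+\lxr^2=0$, together with a case analysis on the sign of $\omega(\xi):=1+|\xi|^2-\ep^2|\xi|^4$. The apparent singularity of the matrix entries when $\lp=\lm$ (i.e.\ at the critical threshold $\omega=0$) is removable, since $e^{-tA(\xi)}$ depends smoothly on $A(\xi)$, but the quantitative bounds require careful tracking near this transition.

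In the oscillatory regime $\omega>0$, set $s=\ep|\xi|^2$ and $b=\sqrt{\omega}>0$, so that $\lambda_\pm=-s\pm ib$ and $\lp-\lm=2ib$. A direct computation yields
\beqs
\cG_1=e^{-st}\bigl(\cos(bt)+\tfrac{s}{b}\sin(bt)\bigr),\qquad \cG_2=\lxr\,e^{-st}\tfrac{\sin(bt)}{b},\qquad \cG_3=e^{-st}\bigl(\cos(bt)-\tfrac{s}{b}\sin(bt)\bigr).
\eeqs
Using $|\sin(bt)/b|\leq \min(t,1/b)$, $\lxr\leq s+b$, and the elementary bound $(1+u)e^{-u/2}\lesssim 1$, every entry is reduced to controlling $s\min(t,1/b)e^{-st}$. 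If $bt\leq 1$ this is $\leq st\,e^{-st}\lesssim e^{-st/2}$; if $bt\geq 1$ then $s/b\leq st$, hence again $(s/b)e^{-st}\leq st\,e^{-st}\lesssim e^{-st/2}$. Since $s\geq \kpz$, one concludes $|\cG_j|\lesssim e^{-\kpz t/2}$ throughout this regime.

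In the overdamped regime $\omega\leq 0$, both eigenvalues are real and negative: $\lambda_\pm=-\mu_\pm$ with $\mu_\pm=s\pm\tilde b$, $\tilde b=\sqrt{-\omega}\geq 0$. The identity $\mu_+\mu_-=\lxr^2$ forces $\mu_-=\lxr^2/(s+\tilde b)\geq \lxr^2/(2s)\geq 1/(2\ep)\geq 1/2$, so that $\mu_\pm\geq c_0:=\min(\kpz,1/2)$ uniformly. Rewriting $\cG_1=e^{-\mu_- t}+\mu_-\beta$ and $\cG_3=e^{-\mu_+ t}-\mu_-\beta$ with $\beta:=(e^{-\mu_- t}-e^{-\mu_+ t})/(\mu_+-\mu_-)$, the pointwise inequality $0\leq \beta\leq t\,e^{-\mu_- t}$ gives $|\cG_1|,|\cG_3|\leq (1+\mu_- t)e^{-\mu_- t}\lesssim e^{-c_0 t/2}$.

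The main technical obstacle is the entry $\cG_2=\sqrt{\mu_+\mu_-}\,\beta=\lxr\,\beta$ near the coalescence $\tilde b\to 0$, since the factor $1/(\mu_+-\mu_-)$ in $\beta$ becomes singular while $\lxr$ may be large. I would resolve this by a dichotomy on the ratio $r:=\mu_+/\mu_-\geq 1$. If $r\leq 2$, then $\sqrt{\mu_+\mu_-}\leq \sqrt 2\,\mu_-$ and $|\beta|\leq t\,e^{-\mu_- t}$ give $|\cG_2|\lesssim \mu_- t\,e^{-\mu_- t}\lesssim e^{-c_0 t/2}$; if $r\geq 2$, then $\sqrt{\mu_+\mu_-}/(\mu_+-\mu_-)=\sqrt{r}/(r-1)\leq \sqrt 2$ (the function $r\mapsto \sqrt{r}/(r-1)$ being decreasing on $[2,\infty)$), and combined with the crude bound $|\beta|\leq e^{-\mu_- t}/(\mu_+-\mu_-)$ this yields $|\cG_2|\lesssim e^{-\mu_- t}\lesssim e^{-c_0 t}$. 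Assembling the four sub-cases yields the announced uniform exponential decay for some $c_0>0$ depending only on $\kpz$.
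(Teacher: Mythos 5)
Your proof is correct and follows essentially the same route as the paper: the explicit formulas for $\cG_1,\cG_2,\cG_3$, the split into the oscillatory regime $1+|\xi|^2\geq\ep^2|\xi|^4$ (where $\ep|\xi|^2\geq\kpz$ on the support of $1-\chi_{\ep,\kpz}$ gives $e^{-c\kpz t}$) and the overdamped regime (where the smaller eigenvalue modulus is $\geq \tfrac{1}{2\ep}\geq\tfrac12$), with a sub-case dichotomy to tame the potentially singular factor $1/(\lp-\lm)$. In fact your ratio dichotomy for $\cG_2=\lxr\beta$ in the overdamped case is slightly more careful than the paper's written estimate, which drops the factor $\lxr$ at that step.
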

 \begin{proof}
 There are two cases:\\
 Case 1: $1+|\xi|^2\geq \ep^2 |\xi|^4$.
  We first observe that
 \beqs
 \mathcal{G}_1(t,\xi)=\frac{\lp e^{\lm t}-\lm e^{\lp t}}{\lp-\lm}=e^{-\ep |\xi|^{2}t}\big(\cos(bt)+\ep \f{\sin(bt)}{b}|\xi|^2\big),
  \,
 \eeqs
 \beqs
 \mathcal{G}_3(t,\xi)=\frac{\lp e^{\lp t}-\lm e^{\lm t}}{\lp-\lm}=e^{-\ep |\xi|^{2}t}
 ((\cos(bt)-\ep \f{\sin(bt)}{b}|\xi|^2)).
\eeqs
 Therefore, for $k=1,3$, we have:
$$
 |\cchidix \mathcal{G}_k| \leq |\cchidix| e^{-\ep |\xi|^{2}t}(1+\ep |\xi|^{2}t)
 \lesssim  |\cchidix|e^{-\frac{1}{2}\ep |\xi|^{2}t} \lesssim e^{-\f{1}{2}\kpz t}.
 $$
 For $\mathcal{G}_2$,
 if $b(\xi)\geq \f{\lxr}{2},$ then we have
 \beqs
 |\cchidix \mathcal{G}_2|=|\cchidix |e^{-\ep |\xi|^{2}t}\big|\frac{\sin(bt)}{b}\lxr\big| \lesssim e^{-\f{1}{2}\kpz t}.
 \eeqs
 If $b(\xi)\leq \f{\lxr}{2},$
 we have $\lxr \leq \f{2}{\sqrt{3}}\ep|\xi|^2$, thus
 \beqs
  |\cchidix \mathcal{G}_2|\leq e^{-\ep |\xi|^2 t}2\ep|\xi|^2 t I_{\{\ep|\xi|^2\geq \kpz\}}\lesssim  e^{-\f{1}{2} \kpz t}.
 \eeqs
 Case 2.  $1+|\xi|^2\leq \ep^2 |\xi|^4$.  Let us introduce  $\bt=\sqrt{\ep^2 |\xi|^4-(1+|\xi|^2)} $ then $\lambda_{\pm}=-\ep |\xi|^2 \mp \bt (\xi)$.

Firstly, we have
 \beno
 |\cchidix \mathcal{G}_1|&=&|\cchidix|e^{\lm t}(1+\frac{1-e^{-2\bt t}}{2\bt}(-\lm))|\\
 &\leq& e^{\lm t}(1+(-\lm )t)
 \lesssim e^{\frac{1}{2}\lm t }
 \lesssim e^{-\frac{1}{4\ep}t}\lesssim e^{-\frac{1}{4}t}
 \eeno
 Here we have used the fact $\lp >-2\ep |\xi|^2, \lm=\frac{1+|\xi|^2}{\lp}\leq -\f{1}{2\ep}$.

Secondly, we also have
 \beno
  |\cchidix \mathcal{G}_2|&\leq&\big|e^{\lm t}(\f{1-e^{-2\bt t}}{2\bt})\big|
  \lesssim e^{\lm t}t
  \lesssim  2\ep  e^{-\f{1}{4\ep}t}\lesssim e^{-\f{1}{4\ep}t}\lesssim e^{-\f{1}{4}t}.
  \eeno
Finally, for
  $\mathcal{G}_3=e^{\lm t}[1+\lp \frac{1-e^{-2\bt t}}{2\bt}]$, we write
\begin{itemize}
 \item if $\bt >\frac{\ep |\xi|^2}{2},$  then
  $
  |\cchidix \mathcal{G}_3| \leq e^{\lm t}(1+\frac{-\lp}{\bt})\lesssim5 e^{-\lm t}\lesssim e^{-\f{1}{2}t};
 $
\item  if $0\leq \bt \leq \f{\ep |\xi|^2}{2},$ then $\lm \leq -\frac{1}{2}\ep |\xi|^2$, and  therefore,
  \beno
  |\cchidix \mathcal{G}_3|\leq e^{\lm t}(1+(-\lp)t)I_{\{\ep|\xi|^2\geq \kpz\}}
  &\lesssim &e^{\lm t}(1+2\ep |\xi|^2 t)I_{\{\ep|\xi|^2\geq \kpz\}}\\
&  \lesssim &  e^{-\f{1}{4}\ep|\xi|^2 t}I_{\{\ep|\xi|^2\geq \kpz\}}
 \lesssim  e^{-\f{1}{4}\kpz t}.
  \eeno
  \end{itemize}
  This ends the proof.
 \end{proof}

\subsubsection{Additional estimates of $e^{-t A}$}
   \begin{lem} \label{LemmaHN}
    For j=1,2,3, for every  $s \geq 0$ and uniformly for $\ep \in (0, 1]$, we have the estimate
    \beqs
    \|\mathcal{G}_j(t,D)f\|_{H^s}\lesssim%_{\kpz}
    \|f\|_{H^{s}}.
    \eeqs
   \end{lem}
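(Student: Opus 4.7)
I will reduce the statement to a uniform bound on the symbol. By Parseval,
\begin{equation*}
\|\mathcal{G}_j(t,D)f\|_{H^s}^2 \;=\; \int_{\mR^d}\lxr^{2s}|\mathcal{G}_j(t,\xi)|^2|\hat f(\xi)|^2\,d\xi,
\end{equation*}
so the statement follows immediately from the uniform symbol bound
$\sup_{t\geq 0,\,\ep\in(0,1],\,\xi\in\mR^d}|\mathcal{G}_j(t,\xi)|\lesssim 1$ for $j=1,2,3$. My plan is to establish this bound by splitting via the partition of unity $1=\chi_{\ep,\kpz}+(1-\chi_{\ep,\kpz})$ already used in this section to separate the dispersive from the dissipative regime.

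For the high frequency piece $(1-\chi_{\ep,\kpz})\mathcal{G}_j$, I will simply invoke Lemma \ref{lemmahf}, which provides the strictly stronger estimate $|(1-\chi_{\ep,\kpz})\mathcal{G}_j(t,\xi)|\lesssim e^{-c_0 t}\leq 1$. For the low frequency piece $\chi_{\ep,\kpz}\mathcal{G}_j$, the support condition gives $\ep|\xi|^2\leq 4\kpz$, hence $\ep^2|\xi|^4\leq 16\kpz^2\leq \tfrac{1}{2}\lxr^2$ once $\kpz$ is chosen small (which has been arranged before the statement of the lemma). In particular $b(\xi)$ will be real with $b(\xi)\geq \lxr/\sqrt 2\geq 1/\sqrt 2$, and the explicit Case 1 expressions derived in the proof of Lemma \ref{lemmahf},
\begin{equation*}
\mathcal{G}_1 = e^{-\ep|\xi|^2 t}\!\Big(\cos(bt)+\tfrac{\ep|\xi|^2}{b}\sin(bt)\Big),\ \ \mathcal{G}_3 = e^{-\ep|\xi|^2 t}\!\Big(\cos(bt)-\tfrac{\ep|\xi|^2}{b}\sin(bt)\Big),\ \ \mathcal{G}_2 = e^{-\ep|\xi|^2 t}\tfrac{\lxr}{b}\sin(bt),
\end{equation*}
will then yield $|\chi_{\ep,\kpz}\mathcal{G}_j|\leq 1+\ep|\xi|^2/b\leq 1+4\sqrt 2\kpz\lesssim 1$ for $j=1,3$ and $|\chi_{\ep,\kpz}\mathcal{G}_2|\leq \lxr/b\leq \sqrt 2$ for $j=2$.

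Summing the two contributions will give $|\mathcal{G}_j(t,\xi)|\lesssim 1$ uniformly in $(t,\ep,\xi)$ and hence the lemma. I do not expect any serious difficulty in carrying this out: the argument is essentially a bookkeeping step on top of the matrix exponential calculations and the high frequency estimates already performed for Lemma \ref{lemmahf}. The only point that requires care is ensuring that the threshold $\kpz$, fixed earlier for the dispersive estimate of Lemma \ref{lemlow}, is also small enough to keep $b\geq \lxr/\sqrt 2$ on the low frequency region --- but this is exactly the inequality $\ep^2|\xi|^4\leq 16\kpz^2$ above.
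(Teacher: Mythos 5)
Your proposal is correct and follows essentially the same route as the paper: reduce to a uniform bound on the symbols $\mathcal{G}_j(t,\xi)$, use Lemma \ref{lemmahf} on the region $\ep|\xi|^2\gtrsim\kpz$, and bound the explicit low-frequency expressions directly (the paper bounds $\ep|\xi|^2\frac{|\sin(bt)|}{b}\leq \ep|\xi|^2 t$ instead of using the lower bound $b\gtrsim\lxr$, but this is an immaterial variation).
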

  \begin{proof}
   We only need to show that  $|\mathcal{G}_j(t,\xi)|
  \leq C$. %where $C$ is only dependent on $\kpz$.
   Note that  we have proven in the last lemma that if $\ep |\xi|^2\geq \kpz$,
   then we have $|\mathcal{G}_j(t,\xi)|\leq e^{-c_0t}$. In the remaining region  $\ep |\xi|^2\leq 2\kpz $, we have,
  $$ |\mathcal{G}_1|=|e^{-\ep |\xi|^{2}t}(\cos(bt)+\ep |\xi|^2\f{\sin(bt)}{b})|\leq e^{-\ep |\xi|^{2}t}(1+\ep |\xi|^{2}t )\leq C,$$
   $$|\mathcal{G}_2|=|e^{-\ep |\xi|^{2}t}\f{\sin(bt)}{b}\lxr|\leq e^{-\ep |\xi|^{2}t}\frac{\lxr}{\sqrt{1-4\kpz^{2}+|\xi|^2}}\leq C.$$
  The estimate of $\cG_3$ is similar to that of $\cG_1$. This ends the proof.
 \end{proof}
By  combining Corollary \ref{corgreenlowf} and Lemma \ref{LemmaHN}, we also obtain:
   \begin{cor}\label{corlowf}
   For $p\geq 2$, we have uniformly for $\ep \in (0, 1]$ the estimates
   \beno
   \|e^{itb(D)}\chidid f\|_{L^p}&\lesssim%_{\kpz}
   & (1+ |t|)^{-\f{d}{2}(1-\f{2}{p})}\|f\|_{W^{(1-\frac{2}{p})d,p'}},\\
   \|\mathcal{G}_j(t,D)\chidid f\|_{L^p}&\lesssim%_{\kpz}
   & (1+t)^{-\f{d}{2} (1-\f{2}{p})}\|f\|_{W^{(1-\frac{2}{p})d,p'}}.
   \eeno
   \end{cor}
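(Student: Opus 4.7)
The plan is to interpolate between the dispersive bound of Corollary \ref{corgreenlowf} and the trivial $L^{2}\to L^{2}$ boundedness of the operators. Write $T_{t}$ for either $e^{itb(D)}\chidid$ or $\mathcal{G}_{j}(t,D)\chidid$. The $L^{2}$ endpoint is immediate: $e^{itb(D)}$ is a unimodular Fourier multiplier, the cutoff $\chidid$ is uniformly bounded on $L^{2}$ in $\ep$, and the bound for $\mathcal{G}_{j}(t,D)\chidid$ on $L^{2}$ is the case $s=0$ of Lemma \ref{LemmaHN}. Combined with Corollary \ref{corgreenlowf}, we obtain, uniformly in $\ep \in (0,1]$ and $t>0$, the two endpoint estimates
\begin{align*}
\|T_{t} f\|_{B^{0}_{\infty,2}} &\lesssim (1+t)^{-d/2}\,\|f\|_{B^{d}_{1,2}}, \\
\|T_{t} f\|_{B^{0}_{2,2}} &\lesssim \|f\|_{B^{0}_{2,2}}.
\end{align*}

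Interpolation of Besov spaces (real or complex) then produces, for every $\theta\in(0,1)$, the intermediate bound
$$\|T_{t} f\|_{B^{0}_{p_{\theta},2}} \lesssim (1+t)^{-\frac{d(1-\theta)}{2}}\,\|f\|_{B^{d(1-\theta)}_{p_{\theta}',2}},\qquad \frac{1}{p_{\theta}}=\frac{\theta}{2},\quad \frac{1}{p_{\theta}'}=1-\frac{\theta}{2}.$$
Setting $\theta=2/p$ for $p\in[2,\infty)$ yields $p_{\theta}=p$, $p_{\theta}'=p/(p-1)=p'$, and $d(1-\theta)=d(1-2/p)$, which reproduces exactly the decay rate and regularity exponent appearing in the statement.

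It remains to pass from Besov to the classical Sobolev form. The Littlewood-Paley characterization of $L^{p}$ together with Minkowski's inequality yields $B^{0}_{p,2}\hookrightarrow L^{p}$ for $p\geq 2$ on the target side, while in the dual range $W^{s,p'}\hookrightarrow B^{s}_{p',2}$ for $p'\leq 2$ and $s\geq 0$ on the source side; both embeddings have $\ep$-independent constants. There is no substantive obstacle in this argument; the only point one must track is that every constant is independent of $\ep$, which reduces to the observation that $\chidid$, being a fixed smooth function composed with the dilation $\xi \mapsto \sqrt{\ep/\kpz}\,\xi$, acts with $\ep$-independent norm on every $L^{q}$, $1\leq q\leq\infty$. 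Consequently the whole statement follows by combining the two uniform endpoint bounds with standard interpolation and the Littlewood-Paley--Minkowski embeddings above.
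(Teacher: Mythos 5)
Your proposal is correct and follows essentially the same route as the paper: interpolation between the Besov-space dispersive bound (Lemma \ref{lemlow} / Corollary \ref{corgreenlowf}) and the uniform $L^2$ bound (unimodularity of $e^{itb(D)}$, resp.\ Lemma \ref{LemmaHN}), followed by the embeddings $B^{0}_{p,2}\hookrightarrow L^{p}$ and $W^{s,p'}\hookrightarrow B^{s}_{p',2}$, with all constants uniform in $\ep$.
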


    \begin{cor}\label{cor100}
    For j=1,2,3, we have uniformly for $\ep \in (0, 1]$ the estimate
    \beqs
    \|\mathcal{G}_j(t,D)f\|_{L^{\infty}}\lesssim (1+t)^{-\f{d}{2}}\|f\|_{W^{d ,1}}+e^{-c_0t}\|f\|_{H^{\f{d+1}{2}}}.
    \eeqs
    \end{cor}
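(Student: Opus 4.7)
The plan is to split $f$ into its low- and high-frequency parts via the cutoff $\chi_{\ep,\kpz}(D)$ that has already been used throughout the section, and then apply Corollary \ref{corlowf} to the low-frequency piece and Lemma \ref{lemmahf} (combined with a Sobolev embedding) to the high-frequency piece. More precisely, I would write
\beqs
\mathcal{G}_j(t,D)f = \mathcal{G}_j(t,D)\chi_{\ep,\kpz}(D) f + \mathcal{G}_j(t,D)(1-\chi_{\ep,\kpz})(D) f,
\eeqs
and estimate each term separately in $L^\infty$.

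For the low-frequency contribution, I would simply invoke Corollary \ref{corlowf} with $p=\infty$ (so that $p'=1$ and $(1-2/p)d = d$), which gives directly
\beqs
\|\mathcal{G}_j(t,D)\chi_{\ep,\kpz}(D) f\|_{L^\infty} \lesssim (1+t)^{-d/2} \|f\|_{W^{d,1}},
\eeqs
uniformly in $\ep \in (0,1]$. This already produces the first term on the right-hand side of the claim.

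For the high-frequency contribution, the key input is the pointwise bound $|(1-\chi_{\ep,\kpz})(\xi)\,\mathcal{G}_j(t,\xi)| \lesssim e^{-c_0 t}$ from Lemma \ref{lemmahf}, valid uniformly in $\ep$ and $\xi$. Rather than trying to bound $L^\infty$ directly from the (non-dispersive) high-frequency symbol, I would first go through $L^2$: since the multiplier is bounded pointwise by $e^{-c_0 t}$, Plancherel yields
\beqs
\|(1-\chi_{\ep,\kpz})(D)\mathcal{G}_j(t,D) f\|_{H^{(d+1)/2}} \lesssim e^{-c_0 t}\|f\|_{H^{(d+1)/2}}.
\eeqs
Then I would close by the Sobolev embedding $H^s(\mathbb{R}^d) \hookrightarrow L^\infty(\mathbb{R}^d)$, valid as soon as $s > d/2$, applied with $s = (d+1)/2$ (which exceeds $d/2$ for every $d \geq 1$); this yields exactly the second term on the right-hand side.

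Combining the two bounds gives the corollary. There is no serious obstacle here: the dispersive decay uniformly in $\ep$ for low frequencies is already packaged in Corollary \ref{corlowf}, and the exponential damping for high frequencies in Lemma \ref{lemmahf}, so the only mildly delicate point is ensuring that one has enough Sobolev regularity to embed into $L^\infty$ on the high-frequency side, which is precisely what the exponent $(d+1)/2$ encodes.
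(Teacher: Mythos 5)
Your proposal is correct and follows essentially the same route as the paper: the same low/high frequency splitting via $\chi_{\ep,\kpz}(D)$, Corollary \ref{corlowf} with $p=\infty$ for the low frequencies, and the uniform pointwise bound of Lemma \ref{lemmahf} for the high frequencies. The only cosmetic difference is that you pass from the symbol bound to $L^\infty$ via Plancherel plus the Sobolev embedding $H^{(d+1)/2}\hookrightarrow L^\infty$, whereas the paper uses the equivalent Fourier-side inequality $\|f\|_{L^\infty}\le\|\hat f\|_{L^1}\lesssim\|f\|_{H^{\frac{d+1}{2}}}$.
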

    \begin{proof}[Proof of Corollaries \ref{corlowf},\, \ref{cor100}]

    For Corollary \ref{corlowf}, we can interpolate in a classical way between the estimates of
    Corollary \ref{corgreenlowf} and Lemma \ref{LemmaHN} and
    use the  embeddings $B_{p,2}^{s}\hookrightarrow  W^{s,p}$, $W^{s,p'}\hookrightarrow B_{p',2}^{s}$ with $p\geq 2,s\geq 0$.
    One can refer for instance  to the books \cite{bahouri2011fourier}
    \cite{MR2463316} for the relations between Besov spaces and Sobolev spaces.

     For Corollary \ref{cor100}, we 
    write  $\mathcal{G}_j(t,D)f=\mathcal{G}_j(t,D)\chidid f+\mathcal{G}_j(t,D)\cchidid f,$
    and the result follows from Corollary \ref{corlowf}, Lemma \ref{lemmahf} and the inequality:
    $\|\hat{f}\|_{L^1}\lesssim \|f\|_{H^{\f{d+1}{2}}}.$
\end{proof}

  \begin{lem}\label{lemmultiplier}
  Let us define the operators
  \beqs
  n_{1}(D)=|\na| \quad or \quad
   \ep \Delta  \tchidid \f{\div}{|\na|} \quad or \quad ib(D)\tchidid \f{\div}{|\na|},
  \eeqs
  \beqs
  n_2=\f{\ep\Delta+b(D)}{\lnr}\tchidid \quad  or \quad \R \quad or
  \quad \f{|\na|}{\lnr}.
  \eeqs
Then, for any $p\in (1,\infty),$ we have the estimate:
  \beqs
  \|n_1(D)f\|_{L^p}\lesssim \|f\|_{W^{1,p}}\qquad
  \|n_2(D)f\|_{L^p}\lesssim \|f\|_{L^p}
  \eeqs
  \end{lem}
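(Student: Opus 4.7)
The plan is to verify, in each case and uniformly in $\varepsilon \in (0,1]$, the H\"ormander--Mikhlin multiplier condition $|\xi|^{|\alpha|}|\partial^{\alpha} m(\xi)| \leq C_{\alpha}$ (for $|\alpha|$ up to $\lfloor d/2 \rfloor + 1$), possibly after first extracting one factor of $|\nabla|$. The only non-routine point is the uniformity in $\varepsilon$ when the symbol involves the cutoff $\tilde{\chi}_{\varepsilon,\kappa_{0}}$ or the dispersion relation $b(\xi)$.

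For the three $n_{2}$ multipliers I would verify the $L^{p}\to L^{p}$ bound directly. The operator $\R$ is the classical vector Riesz transform. For $|\nabla|/\langle\nabla\rangle$, the symbol $|\xi|/\langle\xi\rangle$ is smooth on $\mathbb{R}^{d}$ (vanishing linearly at the origin, tending to $1$ at infinity), so its Mikhlin seminorms are immediate. For the delicate multiplier $(\varepsilon\Delta + b(D))\tilde{\chi}_{\varepsilon,\kappa_{0}}(D)/\langle\nabla\rangle$ I would use that on $\mathrm{supp}\,\tilde{\chi}_{\varepsilon,\kappa_{0}}$ one has $\varepsilon|\xi|^{2} \lesssim \kappa_{0}$, which makes $b(\xi) = \sqrt{1+|\xi|^{2}-\varepsilon^{2}|\xi|^{4}}$ comparable to $\langle\xi\rangle$ and the ratio $(-\varepsilon|\xi|^{2} + b(\xi))/\langle\xi\rangle$ uniformly bounded. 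The Mikhlin seminorms are then controlled via the rescaling $\tilde{\chi}_{\varepsilon,\kappa_{0}}(\xi) = \tilde{\chi}(\sqrt{\varepsilon/\kappa_{0}}\,\xi)$: each derivative of the cutoff costs a factor $\sqrt{\varepsilon/\kappa_{0}}$, balanced by $|\xi| \sim \sqrt{\kappa_{0}/\varepsilon}$ on the support of its derivatives.

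For the $n_{1}$ multipliers I would factor $n_{1}(D) = \tilde{n}_{1}(D)\,|\nabla|$ with $\tilde{n}_{1}(\xi) := n_{1}(\xi)/|\xi|$ Mikhlin, and then use $\||\nabla| f\|_{L^{p}} \leq C\|\nabla f\|_{L^{p}} \leq C\|f\|_{W^{1,p}}$. For $n_{1}=|\nabla|$ this is trivial. For $n_{1} = \varepsilon\Delta\,\tilde{\chi}_{\varepsilon,\kappa_{0}}(D)\,\div/|\nabla|$ acting componentwise on vector fields, the reduced symbol simplifies remarkably to $\tilde{n}_{1,j}(\xi) = -i\varepsilon\,\xi_{j}\,\tilde{\chi}_{\varepsilon,\kappa_{0}}(\xi)$, a smooth compactly supported symbol whose Mikhlin bound is uniform since $\varepsilon|\xi| \lesssim \sqrt{\kappa_{0}\varepsilon} \leq 1$ on the support. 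For $n_{1} = ib(D)\,\tilde{\chi}_{\varepsilon,\kappa_{0}}(D)\,\div/|\nabla|$ I would further split as $\bigl(ib(D)\tilde{\chi}_{\varepsilon,\kappa_{0}}(D)/\langle\nabla\rangle\bigr)\circ\bigl(\langle\nabla\rangle\,\div/|\nabla|\bigr)$; the first factor is Mikhlin-bounded by the same analysis as above (using $b(\xi)/\langle\xi\rangle \lesssim 1$ on the support), while the second equals, componentwise, $\R_{j}\langle\nabla\rangle$, giving $\|(\langle\nabla\rangle\,\div/|\nabla|) f\|_{L^{p}} \lesssim \|\langle\nabla\rangle f\|_{L^{p}} \lesssim \|f\|_{W^{1,p}}$.

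The main obstacle, as stressed above, is the uniform-in-$\varepsilon$ control of the Mikhlin seminorms of the symbols involving $\tilde{\chi}_{\varepsilon,\kappa_{0}}$ and $b(\xi)$. This will come down to careful chain-rule bookkeeping on the rescaled cutoff together with the smallness of $\kappa_{0}$ fixed in Lemma \ref{lemlow}, which guarantees that $b(\xi)$ remains smooth and comparable to $\langle\xi\rangle$ throughout the relevant frequency region, so that derivatives of $b$ and of all quotients $\varepsilon|\xi|^{2}/b$, $b/\langle\xi\rangle$, etc., stay uniformly bounded.
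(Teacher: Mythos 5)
Your proposal is correct and follows essentially the same route as the paper, which simply invokes the H\"ormander--Mikhlin theorem and asserts that the symbols satisfy the order-zero conditions uniformly in $\ep\in(0,1]$; your write-up just fills in the bookkeeping (the factorization $n_1=\tilde n_1(D)|\na|$, the comparability $b(\xi)\sim\lxr$ on $\Supp\tchidix$, and the balance between the $\sqrt{\ep/\kpz}$ cost of differentiating the cutoff and $|\xi|\sim\sqrt{\kpz/\ep}$ on the support of its derivatives) that the paper leaves to the reader.
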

\begin{proof}
We can apply the  H\"ormander-Mikhlin Theorem (we refer for instance  to Theorem 5.2.7 in \cite{MR2445437}). One can easily check  that $n_1(\xi)$, $ n_2(\xi)$ satisfy homogeneous 0 type conditions uniformly in $\ep\in (0,1]$.
  %For example, we have  $|\xi|^{|\alpha|}|\partial_{\xi}^{\alpha} \frac{\ep |\xi|^2}{\lxr}\chidix|\leq C$.
    % In fact, we have the  estimate:
 %\beq \label{eq100} \partial_{r}^{k}\big(\f{\vep r^2}{b(r)}\big)I_{\{\vep r^2\leq 2\kpz\}}\lesssim_{\kpz} \langle r\rangle ^{-(k+1)}\eeq
    \end{proof}

 From the definition of $Q(D), \,Q^{-1}(D)$ (see \eqref{Qdef}),  we also have  the following property for $Q(D),\, Q^{-1}(D)$: %$\frac{2ib}{\lnr}\chidid,$
   % $\frac{\ep \Delta}{\lnr}\chidid,$
    %$\frac{\lnr}{2ib}\chidid,$
     %$\frac{\ep \Delta}{2ib}\chidid$
    % both are $L^{p}$ multipliers.
     \begin{cor}\label{corQ}
     For any $1< p< +\infty$,
     $\chidid Q(D),\chidid Q^{-1}(D)$  are both continous in $L^{p}$ uniformly in $\ep \in (0, 1]$:
          \beno
     \|\chidid Q(D)F\|_{L^p}\lesssim%_{\kpz}
     \|F\|_{L^p}, \qquad
     \|\chidid Q^{-1}(D)F\|_{L^p}\lesssim%_{\kpz}
     \|F\|_{L^p}.
     \eeno
     \end{cor}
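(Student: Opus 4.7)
The plan is to verify that each scalar entry of the symbols $\chi_{\ep,\kpz}(\xi)\,Q(\xi)$ and $\chi_{\ep,\kpz}(\xi)\,Q^{-1}(\xi)$ satisfies the hypotheses of the H\"ormander--Mikhlin multiplier theorem uniformly in $\ep\in(0,1]$, whereupon the conclusion follows in exactly the same way as in the proof of Lemma \ref{lemmultiplier}.

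The essential observation is that on the support of $\chi_{\ep,\kpz}$ we have $\ep|\xi|^{2}\leq 2\kpz$; since $\kpz$ has already been fixed small enough, this yields
\[
1+|\xi|^{2}-\ep^{2}|\xi|^{4}\;\geq\;\tfrac{1}{2}\langle\xi\rangle^{2},
\]
so $b(\xi)\sim\langle\xi\rangle$ uniformly in $\ep$. All the pointwise factors that appear in the entries of $Q$ and $Q^{-1}$---namely $\ep|\xi|^{2}/\langle\xi\rangle$, $b/\langle\xi\rangle$, $\langle\xi\rangle/b$, $\lambda_{\pm}/\langle\xi\rangle$, $\lambda_{\pm}/(2ib)$ and $\langle\xi\rangle/(2ib)$---are then uniformly bounded on the support of $\chi_{\ep,\kpz}$, which gives the $|\alpha|=0$ Mikhlin bound.

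To upgrade this to the derivative condition $|\partial^{\alpha}m(\xi)|\lesssim_{\alpha}|\xi|^{-|\alpha|}$, I combine two ingredients. First, the cutoff itself is harmless: writing $\chi_{\ep,\kpz}(\xi)=\chi(\sqrt{\ep/\kpz}\,\xi)$ gives $|\partial^{\alpha}\chi_{\ep,\kpz}(\xi)|\lesssim_{\alpha}(\ep/\kpz)^{|\alpha|/2}$ with support contained in $|\xi|\lesssim\sqrt{\kpz/\ep}$, so $|\xi|^{|\alpha|}|\partial^{\alpha}\chi_{\ep,\kpz}(\xi)|\lesssim_{\alpha}1$ uniformly in $\ep$. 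Second, the required uniform derivative estimates for $b(\xi)$ and $1/b(\xi)$ on the low-frequency region $\ep|\xi|^{2}\leq 2\kpz$ are precisely the content of Proposition \ref{elementary for phase} in the appendix; together with the trivial bounds $|\partial^{\alpha}\langle\xi\rangle|\lesssim\langle\xi\rangle^{1-|\alpha|}$ and the observation that each $\partial_{\xi}$ applied to $\ep|\xi|^{2}$ keeps the product $\ep|\xi|^{2-|\alpha|}\lesssim\kpz|\xi|^{-|\alpha|}$ (after using $\ep|\xi|^{2}\lesssim\kpz$ to absorb one power), the Leibniz rule yields the desired uniform Mikhlin bounds on every entry of $\chi_{\ep,\kpz}(\xi)Q(\xi)$ and $\chi_{\ep,\kpz}(\xi)Q^{-1}(\xi)$ up to any finite order $|\alpha|\leq \lfloor d/2\rfloor+1$.

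Applying the H\"ormander--Mikhlin theorem (Theorem 5.2.7 of \cite{MR2445437}, as in Lemma \ref{lemmultiplier}) then delivers $L^{p}$ boundedness for every $1<p<\infty$, with constants independent of $\ep$. The only non-mechanical point is the control of the derivatives of $b$ and $1/b$ uniformly in $\ep$, which is exactly what Proposition \ref{elementary for phase} is designed to provide; once that is in hand, the entry-by-entry verification is routine.
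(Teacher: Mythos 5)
Your proposal is correct and follows essentially the route the paper intends: the corollary is just an entry-by-entry H\"ormander--Mikhlin check, uniform in $\ep$, exactly as in Lemma \ref{lemmultiplier}, using that $b(\xi)\sim\langle\xi\rangle$ on the support of $\chi_{\ep,\kpz}$ and that $|\partial^{\alpha}\chi_{\ep,\kpz}(\xi)|\lesssim_{\alpha}|\xi|^{-|\alpha|}$. The one inaccuracy is your attribution of the derivative bounds for $b$ and $1/b$ to Proposition \ref{elementary for phase}: that proposition concerns the bilinear symbols $m/\phi_{jk}(\xi-\eta,\eta)$ in two frequency variables and does not state the one-variable Mikhlin bounds you need; instead you should verify directly from $b(\xi)=\sqrt{1+|\xi|^{2}-\ep^{2}|\xi|^{4}}$ that $|\partial^{\alpha}b(\xi)|\lesssim_{\alpha,\kpz}\langle\xi\rangle^{1-|\alpha|}$ and $|\partial^{\alpha}b(\xi)^{-1}|\lesssim_{\alpha,\kpz}\langle\xi\rangle^{-1-|\alpha|}$ on $\{\ep|\xi|^{2}\leq 4\kpz\}$, which is immediate because every $\ep$-dependent factor appears together with $\ep|\xi|^{2}\lesssim\kpz$ (e.g. $\partial_{j}b=(1-2\ep^{2}|\xi|^{2})\xi_{j}/b$), and this is precisely the "easy check" invoked in the proof of Lemma \ref{lemmultiplier}.
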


We will  also need to use some elementary parabolic estimates.
%\begin{lem}\label{lemmahk}
%For any integer $k\in \mathbb{N}^{*}, 1\leq q\leq\infty$, we have:
%\beqs
%\|\mathcal{F}^{-1}\big(e^{-t\ep|\xi|^2}(\ep |\xi|^2)^{k}\big)\|_{L^q}\lesssim t^{-k}(\ep t)^{-\f{d}{2}(1-\f{1}{q})}
%\eeqs
%\end{lem}
%\begin{proof}
%{\color{red} why are you using different notations afterwards ?}

% It is easy to see
%\beqs
%\mathcal{F}^{-1}\big(e^{-t\ep|\xi|^2}(\ep |\xi|^2)\big)=t^{-k}(\ep t)^{-\f{d}{2}}\left (\Delta^{k} H \right)\bigg(\f{x}{(\ep t)^{\f{1}{2}}}\bigg)
%\eeqs
%where $H$ is the heat kernel. This lemma is proved by taking $L^q$ norm.
%\end{proof}
\begin{lem}\label{lemed}
 For any integer $k\in \mathbb{N}^{*}$ and $1<q< +\infty$, we have:
\beqs
\|e^{\ep t\Delta}(\ep \Delta)^{k}\chidid f\|_{L^{q}}\lesssim (1+t)^{-k}\|f\|_{L^{q}}.
\eeqs
\end{lem}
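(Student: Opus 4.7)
I would split the estimate into the regimes $t \leq 1$ and $t \geq 1$ and reduce, in each, the operator to a convolution whose kernel has $L^1$ norm uniform in $\ep$ and $t$, so that Young's inequality yields the $L^q \to L^q$ bound. The key observation, used throughout, is that for a fixed Schwartz function $H$ and any $\lambda > 0$, the Fourier multiplier $H(\lambda D)$ is convolution with $\lambda^{-d}\check H(x/\lambda)$, whose $L^1$ norm equals $\|\check H\|_{L^1}$ and is therefore independent of $\lambda$.

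For $t \leq 1$, write $e^{\ep t \Delta}(\ep\Delta)^k\chidid = e^{\ep t\Delta}\cdot (\ep\Delta)^k\chidid$. The second factor has symbol $(-\kpz)^{k}|\tilde\xi|^{2k}\chi(\tilde\xi)$ in the variable $\tilde\xi = \sqrt{\ep/\kpz}\,\xi$, i.e.\ it is of the form $G(\sqrt{\ep/\kpz}\,D)$ with $G$ a fixed smooth compactly supported function, so its kernel has $L^1$ norm $\|\check G\|_{L^1}$, uniform in $\ep$. Since $e^{\ep t \Delta}$ is the convolution with a Gauss kernel of mass $1$ and is a contraction on $L^q$, we obtain $\|e^{\ep t\Delta}(\ep\Delta)^k\chidid f\|_{L^q}\lesssim \|f\|_{L^q}$, which matches $(1+t)^{-k}$ up to a constant on $[0,1]$.

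For $t\geq 1$, I would factor the expected decay at the symbol level by writing $(\ep\Delta)^k = t^{-k}(\ep t\Delta)^k$, so that $e^{\ep t \Delta}(\ep\Delta)^k \chidid = t^{-k}\,M_{\ep,t}\,\chidid$, where $M_{\ep,t}$ has symbol $(-1)^k F(\sqrt{\ep t}\,\xi)$ with $F(\eta) = |\eta|^{2k}e^{-|\eta|^2}$ a fixed Schwartz function. By the dilation invariance recalled above, the kernel of $M_{\ep,t}$ has $L^1$ norm equal to $\|\check F\|_{L^1}$, uniformly in $\ep$ and $t$; the same holds for $\chidid$. Young's inequality then gives $\|e^{\ep t\Delta}(\ep\Delta)^k\chidid f\|_{L^q}\leq C\,t^{-k}\|f\|_{L^q}$. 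Combining the two regimes yields the claimed bound.

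No real obstacle arises: the entire content is the dilation invariance of $\|\check H\|_{L^1}$ for a fixed Schwartz multiplier $H$, which makes the $L^q$-boundedness uniform in the dilation parameter $\sqrt{\ep t}$ (and in $\sqrt{\ep/\kpz}$). An equally short alternative uses the H\"ormander--Mikhlin theorem as in the proof of Lemma \ref{lemmultiplier}: after the rescalings $\tilde\xi = \sqrt{\ep/\kpz}\,\xi$ and $\eta = \sqrt{\ep t}\,\xi$, the symbol $(1+t)^{k}e^{-\ep t|\xi|^2}(\ep|\xi|^2)^{k}\chi(\sqrt{\ep/\kpz}\,\xi)$ obeys $|\xi|^{|\alpha|}|\partial_\xi^{\alpha}(\cdot)| \leq C_{\alpha}$ uniformly in $\ep\in(0,1]$ and $t\geq 0$, so Mikhlin delivers the bound directly.
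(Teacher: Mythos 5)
Your proof is correct and follows essentially the same route as the paper: the paper likewise combines the bound $\|e^{\ep t\Delta}(\ep\Delta)^k g\|_{L^q}\lesssim t^{-k}\|g\|_{L^q}$ (Young's inequality applied to the dilated Schwartz kernel of $(\ep t\Delta)^k e^{\ep t\Delta}$) with the uniform $L^q$-boundedness of the multiplier $(\ep\Delta)^k\chidid$, taking the minimum of the two rather than splitting into $t\le 1$ and $t\ge 1$. Your write-up just makes the dilation-invariance of the kernel's $L^1$ norm explicit, which the paper leaves implicit.
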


\begin{proof}
On the  one hand, by Young's inequality,  we have %and Lemma \ref{lemmahk},
\beqs
\|e^{\ep t\Delta}(\ep \Delta)^{k}\chidid f\|_{L^{q}}
\lesssim t^{-k}\|\chidid f\|_{L^{q}}
\lesssim  t^{-k}\|f\|_{L^{q}}.
\eeqs
On the other hand, as $(\ep \Delta)^{k}\chidid$ is a $L^{q}$ multiplier, we also have:
\beqs
\|e^{\ep t\Delta}(\ep \Delta)^{k}\chidid f\|_{L^{q}}
\lesssim \|(\ep \Delta)^{k}\chidid f\|_{L^{q}}\lesssim \|f\|_{L^q}.\\
\eeqs
\end{proof}

\subsection{Nonlinear and bilinear estimates}

%{\color{red}Do we really need it
 \begin{lem}\label{lemnonlinear}
 For every  $1<p< +\infty,$ $\f{1}{p}=\f{1}{q_1}+\f{1}{r_1}=\f{1}{q_2}+\f{1}{r_2}$
$1<r_1,q_1< +\infty$, $1< r_2,q_2< +\infty$, we have the estimate
 \beq
 \|B(V,V)\|_{W^{s,p}}\lesssim \|V\|_{W^{s+1,q_1}}\|V\|_{L^{r_1}}+\|V\|_{L^{r_2}}\|V\|_{W^{s+1,q_2}}.
 \eeq
 \end{lem}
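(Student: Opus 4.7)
The plan is to exploit the structure of $B(V,V)$: each of its two components can be written as a single order-one derivative applied to a product of zero-order multiplications of $h$ and $c$. After peeling off that derivative via the Bessel-potential scale and absorbing the zero-order multipliers by Lemma~\ref{lemmultiplier}, the estimate reduces to a classical tame Moser-type product inequality in $W^{s+1,p}$.

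First I would rewrite both components in that normalized form. For the second component, since $\R^*=-\div/|\na|$, one has the identity $\R^*\na |\R c|^2 = |\na|\,|\R c|^2$, i.e.\ one derivative applied to a product of zero-order images of $c$. For the first component, $\lnr$ already plays the role of the single order-one operator acting on $\R^*\bigl(\tfrac{|\na|}{\lnr} h\cdot \R c\bigr)$, and $\R$, $\R^*$, $\tfrac{|\na|}{\lnr}$ are all $L^p$-bounded zero-order multipliers for $1<p<\infty$ by Lemma~\ref{lemmultiplier}. Using the characterization $\|f\|_{W^{s,p}}\sim\|\lnr^s f\|_{L^p}$, we get
$\|\lnr F\|_{W^{s,p}}\sim\|F\|_{W^{s+1,p}}$ and $\||\na|F\|_{W^{s,p}}\lesssim\|F\|_{W^{s+1,p}}$,
so both components are controlled by quantities of the form $\|(Mh)(Nc)\|_{W^{s+1,p}}$ and $\|(Nc)(Nc)\|_{W^{s+1,p}}$ with $M,N\in\{\R,\R^*,\tfrac{|\na|}{\lnr}\}$.

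Second I would apply the tame product estimate in $W^{s+1,p}$ (Kato--Ponce/Moser-type, of the kind recalled in the appendix): under the H\"older relations $1/p=1/q_i+1/r_i$,
\[
\|fg\|_{W^{s+1,p}}\lesssim \|f\|_{W^{s+1,q_1}}\|g\|_{L^{r_1}}+\|f\|_{L^{r_2}}\|g\|_{W^{s+1,q_2}}.
\]
Combining this with the fact that the zero-order multipliers $M,N$ are bounded on $L^p$ and on $W^{s+1,p}$ (they commute with $\lnr^{s+1}$ on the Fourier side and their symbols satisfy the H\"ormander--Mikhlin condition uniformly in $\ep$), one concludes
\[
\|B(V,V)\|_{W^{s,p}}\lesssim \|h,c\|_{W^{s+1,q_1}}\|h,c\|_{L^{r_1}}+\|h,c\|_{L^{r_2}}\|h,c\|_{W^{s+1,q_2}},
\]
which is the claimed bound once one recalls $V=(h,c)^\top$.

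The main analytic input is the tame product inequality above; for integer $s+1$ it is immediate from the Leibniz rule combined with Gagliardo--Nirenberg, and for general $s\geq 0$ it follows from a Bony paraproduct decomposition. The only point requiring minor care is that the $W^{s,p}$-boundedness of $\R$, $\R^*$ and $\tfrac{|\na|}{\lnr}$ must be valid on the full scale, not merely on $L^p$; but this is automatic since all three symbols are $\ep$-independent and homogeneous of degree zero outside a fixed neighborhood of the origin, hence Mikhlin multipliers on every inhomogeneous Sobolev space with $1<p<\infty$.
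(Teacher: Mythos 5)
Your proposal is correct and follows essentially the same route as the paper: peel off the single order-one operator ($\lnr$ or $|\na|$) to trade $W^{s,p}$ for $W^{s+1,p}$, absorb the zero-order multipliers $\R$, $\R^*$, $|\na|\lnr^{-1}$ via Mikhlin/Lemma~\ref{lemmultiplier}, and conclude with the Kato--Ponce tame product estimate of Lemma~\ref{lemmakp}. No gaps.
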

\begin{proof}
By the definition of B, the boundedness of the  Riesz transform  in $L^q$$(1 <q< +\infty)$ and  the Kato-Ponce inequality (recalled in  Lemma \ref{lemmakp}), %in  the appendix), 
we have
\beno
\|B(V,V)_{1}\|_{W^{s,p}}&\lesssim&\|(|\na|\lnr^{-1}h) \R c\|_{W^{s+1,p}}\\
&\lesssim&\||\na|\lnr^{-1}h\|_{W^{s+1,q_1}}\|\R c\|_{L^{r_1}}+\||\na|\lnr^{-1}h\|_{L^{r_2}}\|\R c\|_{W^{s+1,q_2}}\\
&\lesssim&\|h\|_{W^{s+1,q_1}}\|c\|_{L^{r_1}}+\|h\|_{L^{r_2}}\|c\|_{W^{s+1,q_2}}.
\eeno
The estimates of the other components follow from the same arguments, we omit the proof.
\end{proof}

%As a consequence, we also get that  \begin{cor}\label{cornonlinear}
   % For every  $p\geq 2, \, \sigma\geq 2,$ and $k \geq 0$, we have uniformly for $\ep \in (0, 1]$ that
   % \beqs
   % \|Q^{-1}\chidid B(V,V)\|_{H^{k}}\lesssim\|V\|_{H^{k+1}}\|V\|_{W^{\si,p}}.
  %  \eeqs
     %\end{cor}
 % %\begin{proof}
% It suffices to combine Corollary \ref{corQ} and Lemma \ref{lemnonlinear} to get
 % \beqs
%  \|Q^{-1}\chidid B(V,V)\|_{H^{k}}\lesssim\| B(V,V)\|_{H^{k}}\leq \|V\|_{H^{k+1}}\|\R V\|_{L^{\infty}}\lesssim \|V\|_{H^{k+1}}\|V\|_{W^{\si,p}}.
 % \eeqs
%  {\color{red} the first inequality needs a justification quote the previous results that you use}
%\end{proof}
We finally %end this section by
state the bilinear estimate
that will be heavily used in Section 4. We will give the proof in the appendix.
 % This is the only statement of this section that is specific to dimension $3$.
   \begin{lem} \label{lembilinear}
     Let us assume that  d=3, and let us define
     $$\phi_{j,k}(\xi,\eta)=(-1)^{j+1}b(\xi)+(-1)^{k+1}b(\eta)-b(\xi+\eta ), \quad j,k=1,2$$
     and
     \beno
    m(\xi,\eta)=\tchidix \tchidie \tchidiepx %\chi_{\ep,\kpz}(\xi,\eta)
    \f{\lxper}{2ib(\xi+\eta)}.
     \eeno
     Then, we have the following estimates that are uniform for $\ep \in (0, 1]$:
     \beqs
     \|T_{\f{m}{\phi_{jk}}}(f,g)\|_{W^{\sigma,p}}\lesssim \|f\|_{W^{\sigma+2_{+},q_1}}\|g\|_{W^{2,r_1}}+\|f\|_{W^{2,r_2}}\|g\|_{W^{\sigma+2_{+},q_2}},
     \eeqs
 \beqs
 \|T_{\f{m}{\phi_{jk}^{2}}}(f,g)\|_{W^{\sigma,p}}\lesssim \|f\|_{W^{\sigma+2_{+},q_1}}\|g\|_{W^{3,r_1}}+\|f\|_{W^{3,r_2}}\|g\|_{W^{\sigma+2_{+},q_2}}.
\eeqs
     where    $\f{1}{p}=\f{1}{q_1}+\f{1}{r_1}=\f{1}{q_2}+\f{1}{r_2},$ $1<r_1,r_2\leq +\infty, 1\leq q_1,q_2<+\infty$
     and $T_{\f{m}{\phi_{jk}}}$ is the bilinear operator defined in \eqref{eqbilinear}.
     \end{lem}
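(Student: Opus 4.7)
The plan is to reduce both estimates to an application of a Coifman-Meyer type bilinear multiplier theorem, after carefully splitting the output Sobolev weight between the two factors. The crucial ingredient is that on the support of the triple truncation, namely the set $\Sigma_{\ep}$ where $\ep|\xi|^2,\,\ep|\eta|^2,\,\ep|\xi+\eta|^2\lesssim \kpz$, the phase $\phi_{jk}$ is bounded away from zero and the relevant multipliers are smooth with derivatives controlled uniformly in $\ep\in(0,1]$. Proposition \ref{elementary for phase} in the appendix should provide, for $\kpz$ small enough, that $|\phi_{jk}(\xi,\eta)|\gtrsim 1$ on $\Sigma_{\ep}$, together with Mikhlin-type bounds
$$|\partial^\alpha_\xi \partial^\beta_\eta \phi_{jk}^{-1}(\xi,\eta)| \lesssim \lxr^{-|\alpha|}\ler^{-|\beta|},\qquad |\partial^\alpha_\xi b(\xi)|\lesssim \lxr^{1-|\alpha|},$$
uniformly in $\ep$. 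The second bound follows from $b(\xi)=\sqrt{1+|\xi|^2-\ep^2|\xi|^4}$ by direct differentiation on $\Sigma_{\ep}$, along the lines already used in the proofs of the linear dispersive estimates of Section \ref{section useful lemmas}.

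The next step is to redistribute the output weight $\lxper^{\sigma}$. Fix a smooth partition of unity $1=\chi_1(\xi,\eta)+\chi_2(\xi,\eta)$ with $\chi_1$ supported in $\{|\xi|\geq|\eta|/4\}$ and $\chi_2$ in $\{|\eta|\geq|\xi|/4\}$, homogeneous of degree zero outside a ball. On $\operatorname{supp}\chi_1$ one has $\lxper\lesssim \lxr$, and symmetrically on $\operatorname{supp}\chi_2$. We then write
$$\lnr^\sigma T_{m/\phi_{jk}}(f,g) = T_{M_1}(\lnr^{\sigma+2_+}f,\,\lnr^2 g) + T_{M_2}(\lnr^2 f,\,\lnr^{\sigma+2_+}g),$$
where
$$M_i(\xi,\eta) = \chi_i(\xi,\eta)\,\frac{\tchidix\,\tchidie\,\tchidiepx\,\lxper^{\sigma+1}}{2ib(\xi+\eta)\,\phi_{jk}(\xi,\eta)\,\Lambda_i(\xi,\eta)},$$
with $\Lambda_1(\xi,\eta)=\lxr^{\sigma+2_+}\ler^{2}$ and $\Lambda_2(\xi,\eta)=\lxr^{2}\ler^{\sigma+2_+}$. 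Combining the first paragraph with $\lxper\sim b(\xi+\eta)$ on $\Sigma_{\ep}$ shows that $M_i$ is a Coifman-Meyer symbol of rapidly decreasing type, satisfying $|\partial^\alpha_\xi \partial^\beta_\eta M_i|\lesssim \lxr^{-2_+-|\alpha|}\ler^{-2-|\beta|}$ uniformly in $\ep$.

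Applying the classical Coifman-Meyer bilinear multiplier theorem then gives $\|T_{M_i}(F,G)\|_{L^p}\lesssim \|F\|_{L^{q_i}}\|G\|_{L^{r_i}}$ for any H\"older triple with $1<p<\infty$, $1<q_i<\infty$, $1<r_i\leq\infty$. Setting $F=\lnr^{\sigma+2_+}f$, $G=\lnr^{2}g$ (and symmetrically) produces the first stated estimate. The second estimate, with $\phi_{jk}^{2}$ in place of $\phi_{jk}$, follows by the identical scheme: since $|\phi_{jk}|\gtrsim 1$ on $\Sigma_{\ep}$, the additional factor $\phi_{jk}^{-1}$ preserves the symbol class of $M_i$ but enlarges its Mikhlin seminorms by one extra power of $\lxr^{-1}$ or $\ler^{-1}$, which is compensated by shifting $W^{2,r_i}\hookrightarrow W^{3,r_i}$ on the corresponding factor.

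The principal technical difficulty will be verifying the Coifman-Meyer bounds on $M_i$ \emph{uniformly in} $\ep$. Derivatives falling on the cutoffs $\tchidix,\tchidie,\tchidiepx$ produce factors of $\sqrt{\ep/\kpz}$; these are harmless only because such terms are supported on the annulus $\lxr\sim\sqrt{\kpz/\ep}$, where $\sqrt{\ep/\kpz}\sim \lxr^{-1}$, preserving the correct symbol scaling. Derivatives of $b$ require analogous tracking, and the uniform lower bound $|\phi_{jk}|\gtrsim 1$ on $\Sigma_{\ep}$ is the content of Proposition \ref{elementary for phase}, which fixes the value of $\kpz$ sufficiently small once and for all.
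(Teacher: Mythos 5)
Your overall architecture --- split according to which input frequency dominates, move the weight $\lxper^{\sigma}$ onto the dominant factor, and reduce to showing that the resulting bilinear symbol is a bounded multiplier uniformly in $\ep$ --- coincides with the paper's proof. But the quantitative heart of your argument is wrong: the claim that $|\phi_{jk}|\gtrsim 1$ on the support of the truncations is false for $(j,k)\neq(2,2)$, and it is false precisely in the uniform-in-$\ep$ sense that matters. On that support $b$ is comparable to the Japanese bracket, so e.g. $\phi_{11}(\xi,\eta)=b(\xi)+b(\eta)-b(\xi+\eta)$ behaves for large parallel $\xi,\eta$ like $\min\{\lxr,\ler\}^{-1}\to 0$; and since the truncation only imposes $\ep|\xi|^2\lesssim\kpz$, arbitrarily large frequencies occur as $\ep\to 0$. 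What Proposition \ref{elementary for phase} actually gives is $|\pab (m/\phi_{jk})|\lesssim \min\{\lxr,\ler,\lxper\}$ and $|\pab(m/\phi_{jk}^2)|\lesssim\min\{\lxr,\ler,\lxper\}^{2}$: each power of $\phi_{jk}^{-1}$ costs one bracket of the smallest frequency, and differentiation produces no Mikhlin-type gain. This degeneracy is exactly why the lemma requires $W^{2,r_i}$ rather than $L^{r_i}$ on the low-frequency factor, and $W^{3,r_i}$ in the $\phi_{jk}^{-2}$ case; your explanation of that shift as compensating ``one extra power of $\lxr^{-1}$ or $\ler^{-1}$'' is inconsistent with your own assertion that $\phi_{jk}^{-1}$ is bounded. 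Consequently the Mikhlin-with-gain bounds you posit for $M_i$ are not supplied by the proposition and, as stated, your symbol estimates are off by a factor of the smallest bracket.

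The decomposition can still be salvaged, and this is what the paper does: with the correct phase estimates one gets, for finitely many derivatives, $|\pab M_1|\lesssim \lxr^{-2_{+}}\ler^{-1}$ on the region $\lxr\gtrsim\ler$ (no gain under differentiation), which is enough to show $M_i,\ \partial_{\xi}^{4}M_i,\ \partial_{\eta}^{4}M_i\in L^2(\mathbb{R}^6)$, hence $\cF^{-1}M_i\in L^1(\mathbb{R}^6)$, and the bilinear bound follows from Minkowski and H\"older for \emph{every} H\"older triple, including the endpoint $q_i=1$ that the statement allows but that the classical Coifman--Meyer theorem you invoke excludes. So the two repairs needed are: replace the false lower bound $|\phi_{jk}|\gtrsim 1$ by the estimates of Proposition \ref{elementary for phase}, and replace the appeal to Coifman--Meyer by the $L^1$-kernel argument.
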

%{ \color{red}Note that it seems that the loss of derivatives in the above estimates is not optimized,  we have just given the statement in  the space dimension $3$, which is the one we need.

%WIll we keep this sentence?}

\section{Proof of theorem \ref{thminviscid} }
This section is devoted  to the proof of Theorem \ref{thminviscid}. Let us observe that from a  standard iteration argument, (similar to the one for the  compressible Navier-Stokes system as in  \cite{MR564670}), one can show that the system \ref{NSPlow} admits a unique solution in  $C([0,T_{\ep}),H^3)$ for some $T_{\ep}>0$ and
that if the initial data are in $H^l$, $l \geq 3$, then this additional regularity also propagates on $[0, T_{\ep})$
. We thus only focus on the proof of  a priori estimates that are uniform in time and in $\ep$.

We shall use the norms :
\begin{align*}
\|X\|_{Y}&\define \|X\|_{W^{\si+3(1-\f{2}{p}),p'}}+\|X\|_{H^{N}},\\
\|U\|_{X_{T}}&\define \sup_{t\in [0,T)} \left( \ltr ^{\frac{3}{2}(1-\f{2}{p})}\|U (t)\|_{W^{\si,p}}+\ltr ^{\frac{3}{2}(1-\f{2}{p})}\|%(1-\chi_{\ep,\kpz})
\chi^{H}(D)U(t)\|_{H^{N-1}}%\sigma+1+3(1-\f{2}{p})}}
+\|U(t)\|_{H^N}\right).
\end{align*}
where $U=(\vr,\na \phi,u)$,
$\chi^{H}(D)=(1-\chi_{\ep,\kpz})(D)$,
%$\kappa _0=\f{1}{200}$ % is a small constant which has been fixed by the  lemmas in Section \ref{section useful lemmas}
 $6<p<\infty,\sigma\geq 3, N\geq \sigma+7$.
 %here $(k)_{+}$ denotes a real number  larger but can be arbitrary close to $k$.}

By  standard  bootstrap argument, it suffices to prove that there exists  $\tilde \delta_1>0$,  and $C>0$
that are independent of $T$  such that for every $\delta_{1} \in (0, \tilde \delta_{1}],$
    if $\|U\|_{X_T} \leq \delta_1$, then we have uniformly for $\ep \in (0, 1]$  an estimate under the form
\beq \label{eqbtstrap}
\|U\|_{X_T}\leq  C\left(\|U(0)\|_{Y}+\|U\|_{X_T}^{\f{3}{2}}+\|U\|_{X_T}^2+\|U\|_{X_T}^{3}\right).
\eeq
 Indeed, let us set
 \beqs
 T_{*}=\sup\{T \in [0, T_{\ep}), \, \|U\|_{X_T} \leq \delta_{1}\}.
 \eeqs
 Then, we can deduce from \eqref{eqbtstrap} that $T_{*}= T_{\ep}= +\infty$ by choosing
  $\delta$ (which is such that $\|U(0)\|_{Y} \leq \delta$)  and $\delta_{1}$ small enough such that by \eqref{eqbtstrap},
  $\|U\|_{X_T} \leq C (\delta + 3 \delta_{1}^{\frac{3} {2}})< \delta_{1}$ for any $T<T_{*}$.
   The result follows by time continuity and a local well-posedness result.

The a priori estimate \eqref{eqbtstrap} will follow  from the following two propositions.
%Firstly, we can do energy estimate in a more simple way:
%by lemma 3.6 and lemma 3.8,we have
%\beno
%\|U\|_{H^{N}}
%&\lesssim&\|U_0\|_{H^{N}}+\int_{0}^{t}\|U(s)\|_{W^{\sigma,p}}\|U(s)\|_{H^{N}}\d s\\
%&\lesssim&\|U_0\|_{H^{N}}+\int_{0}^{t}(1+t)^{-\f{3}{2}(1-\f{2}{p})}\|U\|_{X_T}^{2}\d s\\
%&\lesssim&\|U_0\|_{H^{N}}+\|U\|_{X_T}^{2}
%\eeno
%&here we use the assumption $\sigma\geq 3, 6<p<+\infty$
\begin{prop}[Energy estimates]
\label{propenergy}

 We define the energy functional $$E_N=\sum_{|\alpha|\leq N}E_{\alpha}=\sum_{|\alpha|\leq N}\int \frac{|\pa \vr|^2}{2}+\frac{|\pa \nabla \phi|^2}{2}+\rho \frac{|\pa u|^2}{2}\d x. $$
  Assuming that  $\|\vr\|_{H^{2}}\leq\delta_{1}$ and that  $\delta_1$ is small enough  so that $\|\vr\|_{L^{\infty}}\leq \f{1}{6}, \|\na \vr\|_{L^3}<\f{1}{2\tilde{c}}$
  where $\tilde{c}$ is the biggest one among the Sobolev constants coming from the embedding $H^2\hookrightarrow L^{\infty}, \dot{H}^{\f{1}{2}}\hookrightarrow L^3, \dot{H}^{1}\hookrightarrow L^6$,
   then there exists a constant $C>0$ which depends only on $\tilde{c}$, such that
  % $$\frac{\d}{\d t }E_N+\sum_{|\alpha|\leq N}2\ep \int \rho|\pa \na u|^2\d x\lesssim (\|\na u\|_{L^{\infty}}+\|\vr\|_{W^{1,\infty}})E_N$$
  \beq \label{eq5}
  \sup_{0\leq t<T}E_N(t)\leq E_N(0)+ C \|U\|_{X_T}^3.
  \eeq
   \end{prop}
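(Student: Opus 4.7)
The plan is to derive for each multi-index $\alpha$ with $|\alpha|\leq N$ an identity for $\frac{d}{dt}E_\alpha$ in which all quadratic interactions cancel by symmetrization, so that only trilinear remainders survive, and then to sum over $\alpha$ and integrate in time. Applying $\pa$ to both scalar equations of \eqref{NSPlow1}, I test the first against $\pa\vr$ and the second against $\rho\,\pa u$, and integrate over $\mathbb{R}^3$. Using $\pt\rho=-\div(\rho u)$, the $\rho$-weighted kinetic piece satisfies
\[
\frac{d}{dt}\!\int \rho\frac{|\pa u|^2}{2}\,dx = \int \rho\,\pa u\cdot \pt\pa u\,dx + \int \rho u \cdot \na\frac{|\pa u|^2}{2}\,dx,
\]
and the transport commutator identity $\pa(u\cdot\na u)=u\cdot\na\pa u+[\pa,u\cdot\na]u$ absorbs the first-order transport term, leaving only $-\int\rho\,\pa u\cdot[\pa,u\cdot\na]u\,dx$ on the right. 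This is exactly why the $\rho$-weight is used in $E_\alpha$.

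Two further algebraic cancellations make the acoustic and electrostatic couplings cubic. For the acoustic coupling, an integration by parts yields
\[
-\int \pa\vr\,\pa\div(\rho u)\,dx-\int \rho\,\pa u\cdot \pa\na\vr\,dx = \int \pa\vr\,\div\!\bigl(\rho\,\pa u-\pa(\rho u)\bigr)\,dx = -\int \pa\vr\,\div\!\bigl([\pa,\vr]u\bigr)\,dx,
\]
so the singular top-order contribution disappears. For the Poisson coupling, $\Delta\phi=\vr$ together with $\pt\vr=-\div(\rho u)$ gives $\frac{d}{dt}\int |\pa\na\phi|^2/2\,dx =-\int\pa\na\phi\cdot\pa(\rho u)\,dx$, and combining with $\int\rho\,\pa u\cdot\pa\na\phi\,dx$ from the momentum equation produces $-\int \pa\na\phi\cdot[\pa,\vr]u\,dx$, again a commutator. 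Finally, integration by parts in the dissipative term gives
\[
\ep\int \rho\,\pa u\cdot \mathcal{L}\pa u\,dx = -\ep\int\rho\bigl(|\na\pa u|^2+|\div\pa u|^2\bigr)\,dx + \mathcal{D}_\alpha,
\]
where $|\mathcal{D}_\alpha|\lesssim \ep\|\na\vr\|_{L^\infty}\|\pa u\|_{L^2}\|\na\pa u\|_{L^2}$ is absorbed into the non-positive main part via Young's inequality using the lower bound $\rho\geq 5/6$ coming from $\|\vr\|_{L^\infty}\leq 1/6$.

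Dropping the non-positive dissipation yields
\[
\frac{d}{dt}E_\alpha \leq C\,\mathcal{R}_\alpha(t), \qquad \mathcal{R}_\alpha := \Big|\!\int\rho\,\pa u\cdot[\pa,u\cdot\na]u\Big|+\Big|\!\int\pa\vr\,\div([\pa,\vr]u)\Big|+\Big|\!\int \pa\na\phi\cdot[\pa,\vr]u\Big|.
\]
Each commutator is controlled by Kato--Ponce/Moser-type estimates (Lemma~\ref{lemmakp}) so that, with $U=(\vr,\na\phi,u)$,
\[
\mathcal{R}_\alpha(t)\lesssim \bigl(\|\na u\|_{L^\infty}+\|\na\vr\|_{W^{1,\infty}}+\|u\|_{W^{1,\infty}}\bigr)\|U(t)\|_{H^N}^2.
\]
Putting the decaying factor in $L^\infty$ and using the Sobolev embedding $W^{\sigma,p}\hookrightarrow W^{2,\infty}$ (valid since $\sigma\geq 3$ and $p>6$), the definition of $\|\cdot\|_{X_T}$ yields $\|\na u\|_{L^\infty}+\|\na\vr\|_{W^{1,\infty}}+\|u\|_{W^{1,\infty}}\lesssim \langle t\rangle^{-\frac{3}{2}(1-\frac{2}{p})}\|U\|_{X_T}$, while $\|U\|_{H^N}\leq\|U\|_{X_T}$. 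Since $\frac{3}{2}(1-\frac{2}{p})>1$ for $p>6$, time integration gives
\[
\int_0^T \mathcal{R}_\alpha(t)\,dt\lesssim \|U\|_{X_T}^3\int_0^\infty \langle t\rangle^{-\frac{3}{2}(1-\frac{2}{p})}\,dt\lesssim \|U\|_{X_T}^3,
\]
and summing over $|\alpha|\leq N$ delivers \eqref{eq5}.

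The main subtlety is the top-order step $|\alpha|=N$: naive estimates would require control of $\|u\|_{H^{N+1}}$ or $\|\vr\|_{H^{N+1}}$, which is not available uniformly in $\ep$. The structural cancellations above are precisely what transform the singular quadratic couplings into commutators in which one derivative can be transferred onto the low-order factor $\vr$ taken in $L^\infty$, leaving only $H^N$ norms on the top-order pieces. The cubic dissipation remainder $\mathcal{D}_\alpha$ is the unique place where $\|\na\pa u\|_{L^2}$ reappears explicitly, and that is what the coercive part of the viscous term is used to absorb.
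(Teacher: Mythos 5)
Your argument is essentially the paper's own proof: the same $\rho$-weighted energy functional, the same three commutator cancellations (transport, acoustic coupling $\na\vr$ versus $\div(\rho u)$, and Poisson coupling via $\Delta\pt\phi=-\div(\rho u)$), the same Kato--Ponce commutator bounds, and the same time integration of the $\langle t\rangle^{-\frac{3}{2}(1-\frac{2}{p})}$ factor supplied by the $X_T$ norm. Two small points to tighten: inside $\div([\pa,\vr]u)$ the term $u\cdot\na\pa\vr$ still carries $N+1$ derivatives on $\vr$ and is not controlled by a direct product/commutator estimate --- it must be symmetrized by one further integration by parts (yielding the harmless $\int|\pa\vr|^2\div u$ contribution, exactly as in the paper's expansion of $J_2$); and your Young-inequality treatment of $\mathcal{D}_\alpha$ leaves a residual $\ep\|\na\vr\|_{L^\infty}^2\|\pa u\|_{L^2}^2$ that is only quartic in $\|U\|_{X_T}$ after time integration (acceptable in the bootstrap since $\|U\|_{X_T}\le\delta_1\le 1$), whereas the paper absorbs the whole term into the dissipation using the smallness of $\|\na\vr\|_{L^3}$ and the embedding $\dot H^{1}\hookrightarrow L^6$.
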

 \begin{proof}
By taking the time derivative of  the energy functional and by using the equations, we get:
 \beno
\frac{\d}{\d t}E_{\alpha}%&=&\int \pt \vr \frac{|\pa u|^2}{2}+\rho \pa u\pa\pt u+\pa \vr\pa\pt\vr+\pa\nabla\phi\cdot\pa\na \pt \phi \d x\\
&=&-\int \div(\rho u)\frac{|\pa u|^2}{2}+\rho \pa u\cdot\pa \big[(u\cdot \na u)+\na \vr-\na \phi-2\ep \Delta u\big]\\
&&\qquad\qquad+\pa\vr\pa\div(\rho u)-\pa\na\phi\cdot\pa\na\pt \phi \d x\\
&=&\int\rho \pa u\cdot \big[u,\pa\big]\na u+\pa \rho \div\big([\rho,\pa ]u\big)\\
&&+\big[\pa\na\phi \cdot\pa\pt\na \phi+\rho\pa u\cdot\pa\na \phi\big]+2
\ep \rho \pa u\cdot\pa \Delta u \d x\\
&\triangleq&J_1+J_2+J_3+J_4.
\eeno
 We now estimate these four terms.
For $J_1$,  by using Lemma \ref{lemmakp} in the appendix, we have,
 \beno
 J_1=-\int\rho \pa u\cdot \big[\pa,u\big]\na u\d x
 &\leq&\|\rho\|_{L^{\infty}}\|\pa u\|_{L^2}\|\big[\pa,u\big]\na u\|_{L^2}\leq  2\|u\|_{\dot{H}^{|\alpha|}}^2\|\na u\|_{L^{\infty}}.
 \eeno
 For $J_2$, which is non-zero only if  $|\alpha|\geq 1$, by
 using Lemma \ref{lemmakp} again, we have
 \beno
 J_2&=&\int \pa \vr \big(\div(\rho \pa u)-\pa\div(\rho u)\big)\nonumber\\
 &=&\int |\pa \vr|^2 \div u-\pa\vr([\pa,u]\na\vr+[\pa,\rho]\div u-\na\vr\pa u) \d x\nonumber\\
 &\lesssim& \| ( u,  \vr)\|_{\dot H^{|\alpha|}}^2( \| (\na u,\na \vr)\|_{L^{\infty }}.
 \eeno
In a similar way, we estimate $J_3$ and $J_4$ as follows:
 \beno
 J_3&=&\int\rho \pa u\pa\na\phi+\pa\phi\pa\div(\rho u)\d x
 =-\int \pa\na\phi[\pa,\rho]u \d x \\
% =-\int\pa\na\phi[\pa,\vr]u \d x\\
 &\lesssim&\|\na\phi\|_{\dot H^{|\alpha|}}(\|u\|_{\dot H^{|\alpha|-1}}\|\na\vr\|_{L^{\infty}}+\|u\|_{L^{\infty }}\| \vr\|_{\dot H^{|\alpha|}}),
 \eeno
 \beno
 J_4&=& 2\varepsilon \int \rho \pa u\pa \Delta u \d x =-2\varepsilon \int \rho|\pa\na u|^2 +\na \vr \pa u\cdot \pa \na u  \d x.
 \eeno
 We estimate the second term in the above equality by
 $ \tilde{c}\ep\|\na \vr\|_{L^3}\|\na u\|_{\dot{H}^{|\alpha|}}^{2}$
 where $\tilde{c}$ is the Sobolev constant associated to Sobolev embedding $\dot{H}^{1}\hookrightarrow L^6$.
We finally get:
\beq\label{energyineq}
\frac{\d}{\d t}E_N +\varepsilon\sum_{|\alpha|\leq N} \int \rho|\pa\na u|^2 \d x \lesssim (\|u\|_{W^{1,\infty}}+\| \vr\|_{W^{1,\infty}}) \|U\|_{H^N}^2.
\eeq
By integrating in time and by  using the definition of $\|U\|_{X_T}$, %Sobolev embedding $H^{2}\hookrightarrow L^{\infty}$,
we get the  inequality \eqref{eq5}.

\end{proof}

  \begin{rmk}
   %If $\rho=1+\vr\geq c_0>0$ and $\|\vr\|_{L^{\infty}} \leq c$,
% If we suppose that:
% $\rho=1+\vr\geq c_0>0$ and $\|\vr\|_{L^{\infty}} \leq c$
Note that  by  the assumptions of Proposition \ref{propenergy},
  we have $\f{5}{6}\leq \rho\leq \f{7}{6}$  so that $ E_N\approx \|U\|_{H^N}^2 $
 %where the equivalent constants depend only on $c_0$ and sobolev embedding constant.
 %More precisely, if $\delta_1$ sufficiently small, such that
%  $\|\vr\|_{L^{\infty}}\leq c\|\vr\|_{H^2}\leq \f{1}{6}$
which combine with \eqref{eq5} gives:
$
\sup_{0\leq t<T}\|U(t)\|_{H^N}\lesssim \|U(0)\|_{H^N}+\|U\|_{X_T}^{\f{3}{2}}.
$
\end{rmk}

%\begin{rmk}
%By a more precise  analysis of the term $J_3$, we can replace $\|u\|_{W^{1,\infty}}$ with $\|\na u\|_{L^{\infty}}$ in \eqref{energyineq}.
%Indeed, by integrating by parts, one gets for $|\alpha|\geq 1$,
%\beno
%J_3&=&\int\vr \pa u\pa \na \phi+\pa \phi \pa\div(\vr u)\d x\\
%&=&\int\vr \pa u\pa \na \phi+\pa(\vr \div u)\pa\phi+[\pa,u]\na\vr\pa\phi+\pa\na\vr\cdot u\pa\na\phi\d x\\
%&=&\int\pa\phi([\pa,\vr]\div u+\na\vr\cdot\pa u)+[\pa,u]\na\vr\pa\phi-\pa\vr\div u\pa\phi-\f{\pa |\na\phi|^2}{2}\div u \\
%&&\qquad\qquad\qquad\qquad\qquad \qquad\qquad\qquad+\pa\na\phi\cdot\na u\cdot \pa\na\phi\,\d x
%\eeno
%then the desired result (namely replacing  the $\|u\|_{W^{1,\infty}}$ norm  by $\|\na u\|_{L^{\infty}}$ in \eqref{energyineq}) follows from the commutator estimate %Kato-Ponce inequality (see Lemma \ref{lemmakp} in appendix).
%%{\color{red} here we need a much weaker version}
%We note here such a replacement do not effect our arguments in this paper, nevertheless, it is crucial when one try to generalize the Theorem \ref{thminviscid} to two dimension as it is $\|\na u\|_{L^{\infty}}$ rather than $\|u\|_{L^{\infty}}$ has critical decay $(1+t)^{-1}$ in two dimension.
%\end{rmk}
%{\color{red} Do we really need to keep this remark ?}
Now we begin to deal with the other two terms in the definition of the  $X_T$ norm.
By the definition of $V$ and the boundedness of the  Riesz transform  in $L^q$,     $1<q<+\infty$, we have $\|U\|_{X_{T}}\sim \|V\|_{X_{T} }$ %{\color{red} can't you quote something more precise already stated ?}
 which leads us to prove the corresponding estimate for $V$ :
 \begin{prop}\label{propdecay}
For any $6<p<\infty$, we have the decay estimate:
 \begin{equation*}
\sup_{t\in [0,T)} \left( \ltr ^{\frac{3}{2}(1-\f{2}{p})}\|V\|_{W^{\si,p}}+\ltr ^{\frac{3}{2}(1-\f{2}{p})}\|\chi^{H}(D)V\|_{H^{N-1}} \right) \lesssim \|V(0)\|_{Y}+\|V(0)\|_{Y}^{2}+\|V\|_{X_T}^2+\|V\|_{X_T}^3.
 \end{equation*}
 \end{prop}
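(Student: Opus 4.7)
The plan is to work from the Duhamel representation $V(t)=e^{-tA}V_{0}+\int_{0}^{t}e^{-(t-s)A}B(V,V)(s)\,ds$ and split $V=\chi^{L}(D)V+\chi^{H}(D)V$ with $\chi^{L}=\chi_{\varepsilon,\kpz}(D)$, treating the two frequency regimes separately since they are governed by completely different linear estimates. The high-frequency contribution is easy: by Lemma \ref{lemmahf} the symbols $\chi^{H}\mathcal{G}_{j}(t,\xi)$ are exponentially damped, while Lemma \ref{lemnonlinear} with the Sobolev embedding $W^{\sigma,p}\hookrightarrow W^{1,\infty}$ (valid for $\sigma\geq 3$, $p>6$) yields $\|B(V,V)(s)\|_{H^{N-1}}\lesssim\langle s\rangle^{-\frac{3}{2}(1-\frac{2}{p})}\|V\|_{X_T}^{2}$; convolving with $e^{-c_{0}(t-s)}$ closes the bound on $\langle t\rangle^{\frac{3}{2}(1-\frac{2}{p})}\|\chi^{H}V\|_{H^{N-1}}$. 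The low-frequency linear contribution is handled directly by Corollary \ref{corlowf}, since $V_{0}\in Y\hookrightarrow W^{\sigma+3(1-2/p),p'}$.

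The heart of the argument is the low-frequency part of the nonlinear Duhamel integral. Using the diagonalization in \eqref{Qdef} and writing $V=Q\alpha$ with $\alpha_{\pm}=e^{\mp itb(D)}\beta_{\pm}$, I would unfold the bilinear kernel so that each scalar contribution takes the schematic form
\begin{equation*}
\int_{0}^{t}e^{\varepsilon(t-s)\Delta}\chi^{L}(D)\int_{\mathbb{R}^{3}}e^{is\phi_{jk}(\xi,\eta)}m(\xi,\eta)\hat{\beta}(s,\xi-\eta)\hat{\beta}(s,\eta)\,d\eta\,ds,
\end{equation*}
with $\phi_{jk}$ and $m$ as in Lemma \ref{lembilinear}. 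Since $\kpz$ has been chosen small enough for $|\phi_{jk}|\gtrsim 1$ to hold on the truncated support uniformly in $\varepsilon$, the natural move is a normal form: integrate by parts in $s$ via $e^{is\phi_{jk}}=\partial_{s}e^{is\phi_{jk}}/(i\phi_{jk})$.

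This generates three classes of terms. Boundary contributions at $s=0$ and $s=t$ are quadratic and absorbed into $\|V_{0}\|_{Y}^{2}+\|V\|_{X_T}^{2}$ using Lemma \ref{lembilinear} and Corollary \ref{corlowf}. Cubic contributions, arising when $\partial_{s}$ hits a bilinear factor and is replaced using the equation $\partial_{s}\beta=\varepsilon\Delta\beta+(\textrm{quadratic})$, are time-integrable and controlled by $\|V\|_{X_T}^{3}$ via Lemma \ref{lembilinear}. The genuinely new quadratic terms carry an $\varepsilon|\xi|^{2}$ prefactor coming from $\partial_{s}e^{-\varepsilon(t-s)|\xi|^{2}}$; combined with $e^{\varepsilon(t-s)\Delta}$, Lemma \ref{lemed} with $k=1$ converts this into an $\varepsilon$-uniform gain of $(1+t-s)^{-1}$. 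Together with the bilinear bound from Lemma \ref{lembilinear} and the $X_T$ decay of $V(s)$, this closes the low-frequency $W^{\sigma,p}$ estimate whenever the target rate is not too large.

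The main obstacle is that for $p$ near the top of the range the required decay rate $\frac{3}{2}(1-\frac{2}{p})$ exceeds a single gain of $(1+t-s)^{-1}$ after convolution. I would then perform a second integration by parts in $s$ on the residual quadratic term: the new bilinear symbol becomes $m/\phi_{jk}^{2}$, for which the second bound in Lemma \ref{lembilinear} gives the needed continuity, and the resulting $(\varepsilon|\xi|^{2})^{2}$ prefactor combines with $e^{\varepsilon(t-s)\Delta}$ via Lemma \ref{lemed} with $k=2$ to yield an additional $(1+t-s)^{-1}$ factor, closing the estimate for all $6<p<\infty$. The delicate point throughout will be to absorb the loss of $3(1-2/p)$ derivatives inherent in the $W^{s,p'}\to W^{s,p}$ dispersive estimate of Corollary \ref{corlowf} into the high regularity $\|V\|_{H^{N}}$, which is supplied by the energy estimate of Proposition \ref{propenergy}.
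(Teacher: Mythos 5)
Your proposal follows the same architecture as the paper: Duhamel split into high and low frequencies, exponential damping of high frequencies via Lemma~\ref{lemmahf}, linear dispersive decay at low frequencies via Corollary~\ref{corlowf}, and a normal form (integration by parts in $s$) for the low-frequency quadratic Duhamel integral, repeated a second time with $m/\phi_{jk}^2$ to close the range $6<p<\infty$. This matches the paper's treatment of $J_4 = \sum I_j$ and its further decomposition.

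There is, however, one class of terms your accounting elides. After the first integration by parts, when $\partial_s$ falls on a bilinear factor and you substitute $\partial_s \beta = \ep\Delta\beta + (\text{quadratic})$, the ``quadratic'' part indeed produces cubic, time-integrable terms, but the $\ep\Delta\beta$ part produces terms that are still \emph{quadratic}, with the factor $\ep\Delta$ sitting \emph{inside} the bilinear operator (the paper's $I_4$, $I_6$, and after the second normal form $I_{44}$, $I_{46}$). Your description attributes the $\ep|\xi|^2$ prefactor only to $\partial_s e^{-\ep(t-s)|\xi|^2}$, i.e.\ to the $\ep\Delta$ that comes out of the heat semigroup and can be absorbed by $e^{\ep(t-s)\Delta}$ via Lemma~\ref{lemed}. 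That mechanism does not apply to $\ep\Delta\beta$ inside $T_{m/\phi_{jk}}(\cdot,\cdot)$: there you need a decay estimate for $\|\ep\Delta R\|_{H^k}$ and $\|(\ep\Delta)^2 R\|_{H^k}$ themselves, which does not follow directly from the $X_T$ norm. The paper supplies this as a separate auxiliary step (Lemma~\ref{lemepd}), derived by feeding the Duhamel formula for $R$ back through Lemma~\ref{lemed}, yielding $\|\ep\Delta R\|_{H^k}\lesssim (1+t)^{-1}$ and $\|(\ep\Delta)^2 R\|_{H^k}\lesssim (1+t)^{-\frac{3}{2}(1-2/p)}$. Without this your argument is incomplete; with it, the rest of your outline goes through exactly as in the paper.
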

\begin{proof}
 For notational convenience, we denote $V^{L}=\chi^{L}(D)V=\chi_{\ep,\kpz}(D)V, V^{H}=\chi^{H}(D)V=(1-\chi_{\ep,\kpz})(D)V$.  By using \eqref{DuhamelV}, Lemma \ref{lemmahf}, Lemma \ref{lemnonlinear}  and Sobolev embedding, we get:
 \beno
 \|V^{H}(t)\|_{H^{N-1}}
 &\lesssim&\|e^{-tA}\chi^{H}(D) V(0)\|_{H^{{N-1}}}+\int_{0}^{t}\|e^{-(t-s)A}\chi^{H}(D) B(V,V)(s)\|_{H^{{N-1}}}\d s\\
 &\lesssim&e^{-c_0t}\|V(0)\|_{H^{{N-1}}}+\int_{0}^{t}e^{-c_0(t-s)}\|B(V,V)(s)\|_{H^{{N-1}}}\d s\\
 &\lesssim&e^{-c_0t}\|V(0)\|_{H^{{N-1}}}+\int_{0}^{t}e^{-c_0(t-s)}\|V(s)\|_{H^{N}}\|V(s)\|_{W^{\si,p}}\d s\\
 &\lesssim&e^{-c_0t}\|V(0)\|_{H^{{N-1}}}+(1+t)^{-\indxf}\|V\|_{X_T}^2.
 \eeno
 For the $W^{\si,p}$ estimate, we just use Sobolev embedding and the above estimate,
$$\| V^{H}(t)\|_{W^{\si,p }}
\lesssim \| V^{H}(t)\|_{H^{\sigma+3(\f{p-2}{2p}) }}
\lesssim e^{-c_0t}\|V(0)\|_{H^{N-1}}+(1+t)^{-\indxf}\|V\|_{X_T}^2.$$
We shall now prove the decay estimate for low frequencies.
 By applying  $Q^{-1}\chi^{L} $  to  the system for $V$ (see \eqref{eqV}) and
 by setting $R=(r_1,r_2)^{\top}=Q^{-1}\chi^{L}(D) V,$ we find that  $R$ solves  the system:
 \beq \label{systemforR}
 \pt R+\left(
  \begin{array}{cc}
    -\lm(D)&0\\
    0& -\lp(D)
  \end{array}
\right)R
=Q^{-1}\chi^{L}(D) B(V,V)
\eeq
with initial data $R(0)=Q^{-1}\chi^{L}(D) V(0).$ We thus obtain from the Duhamel formula that
 \ben \label{eq for R}
 R&=&\left(
  \begin{array}{cc}
    e^{\lm(D) t}&0\\
    0& e^{\lp(D) t}\\
  \end{array}
\right)R_0
+\int_{0}^{t}\left(
  \begin{array}{cc}
    e^{\lm(D) (t-s)}&0\\
    0& e^{\lp(D) (t-s)}\\
  \end{array}
\right)Q^{-1}\chi^{L}(D) B(V,V) \d s \nonumber\\
&\define&J_1+J_2+J_3+J_4
 \een
 where:
 \beno
 J_1&=&\left(
  \begin{array}{cc}
    e^{\lm(D)t}&0\\
    0& e^{\lp(D) t}\\
  \end{array}
\right)R_0 ,\\
J_2&=&\int_{0}^{t}\left(
  \begin{array}{cc}
    e^{\lm(D) (t-s)}&0\\
    0& e^{\lp(D) (t-s)}\\
  \end{array}
\right)Q^{-1}\chi^{L}(D) B\big( V^{H},V\big) %{\color{green} Is it V^H ?}
\d s,\\
J_3&=&\int_{0}^{t}\left(
  \begin{array}{cc}
    e^{\lm(D)(t-s)}&0\\
    0& e^{\lp{(D)} (t-s)}\\
  \end{array}
\right)Q^{-1}\chi^{L}(D) B\big( V^{L},V^{H}\big) \d s,\\
J_4&=&\int_{0}^{t}\left(
  \begin{array}{cc}
    e^{\lm(D) (t-s)}&0\\
    0& e^{\lp(D) (t-s)}\\
  \end{array}
\right)Q^{-1}\chi^{L}(D) B( V^{L}, V^L) \d s.
 \eeno
 For the term $J_1$, note that $R_0=Q^{-1}\chi^{L}(D) V(0)=\tchidid Q^{-1}\chi^{L}(D) V(0)$, thus by Corollary \ref{corlowf}, \ref{corQ} we have:
 \beno
 \|J_1\|_{W^{\si,p}}%&=&\|\lef%t( \begin{array}{cc} e^{\lm t}&0\\ 0& e^{\lp t}\\end{array}\right)R_0\|_{W^{\si,p}}\\
\lesssim (1+t)^{-\indxf}\|Q^{-1}\chi^{L}(D) V(0)\|_{W^{\indxs,p'}}
\lesssim(1+t)^{-\indxf}\|V(0)\|_{W^{\indxs,p'}}.
 \eeno
 For the term $J_2$, we use Corollaries \ref{corlowf}, \ref{corQ} and  Lemma \ref{lemnonlinear} to get:
 \beno
  \|J_2\|_{W^{\si,p}}
 % &=& \|\int_{0}^{t}\left(\begin{array}{cc}  e^{\lm (t-s)}&0\\ 0& e^{\lp (t-s)}\\\end{array}\right)Q^{-1}\chidid B\big(\cchidid V,V\big) \d s\|_{W^{\si,p}}\\
&\lesssim&\int_{0}^{t} (1+t-s)^{-\indxf}\|Q^{-1}\chi^{L}(D) B\big(V^{H},V\big)\|_{W^{\indxs,p'}}\d s\\
&\lesssim & \int_{0}^{t} (1+t-s)^{-\indxf}(\|V^H\|_{H^{\indxs+1}}\|V\|_{L^{\f{2p}{p-2}}}+\|V^H\|_{L^2}\|V\|_{W^{\indxs+1,\f{2p}{p-2}}})\d s\\
&\lesssim&\int_{0}^{t} (1+t-s)^{-\indxf}(1+s)^{-\indxf}\|V\|_{X_T}^2 \d s
\lesssim(1+t)^{-\indxf}\|V\|_{X_T}^{2}.
 \eeno
 The estimate for $J_3$ is similar to the one for  $J_2$, we thus skip it.

 It remains to estimate  $J_4$ which is the most difficult one. To get over the difficulty of the  quadratic nonlinearity, we need to use  the normal form method.
 %or'space-time
%resonant' method which is introduced by Germain, Masmoudi and Shatah. The reader can refer to \cite{germain2010space}\cite{germain2013nonneutral}\cite{shatah1985normal}
By the definition of $Q, Q^{-1}$ in \eqref{Qdef} and $R=Q^{-1}\chi^{L}(D)V$, we have that
 \beno
 B( V^L,V^L)=B(QR,QR)\nonumber
 &=&
 \left(
  \begin{array}{c}
    \lnr \R^{*}\{\f{|\na|}{\lnr}(r_1+r_2) \R[i\f{b(D)}{\lnr}(r_2-r_1)+\f{\ep\Delta}{\lnr}(r_1+r_2)]\}\\
    |\na| \big| \R[i\f{b(D)}{\lnr}(r_2-r_1)+\f{\ep\Delta}{{\lnr}}(r_1+r_2)]\big|^2\\
  \end{array}
\right)  \nonumber\\
&\define& \left(
 \begin{array}{c}
 B_1(R,R)\\
 B_2(R,R)
 \end{array}
 \right).
 \eeno
 Define:
 \beno %\label{eq7}
 A(R,R)&=& Q^{-1}B(QR,QR)
 =
 \left(
 \begin{array}{c}
 \f{1}{2ib}(\lp B_1+\lnr B_2)\nonumber\\
 \f{-1}{2ib}(\lm B_1-\lnr B_2)
 \end{array}
 \right)
 \define \left(
 \begin{array}{c}
 A_1(R,R)\\
 A_2(R,R)\\
 \end{array}
 \right).
\eeno
 We shall only study  the first component,
 ie.
 $$J_{41}=\int_{0}^{t}e^{(t-s)\lm(D)}\chidid A_1(R,R)\d s, $$
 the other can be handled in a  similar way.
 For  notational convenience (although with a little abuse of notation), we write $A_1(R,R)=\f{\lnr}{2ib}\sum n_1(D)(n_2(D)r_1\cdot n_2(D)r_2)$,  here the summation runs over all the possibilities in the definition of $n_1(D),n_2(D)$ defined in Lemma $\ref{lemmultiplier}$ from the definition of $\lambda_{\pm}$.

 Set $\tilde{R}= n_2(D)R$, %(although $n_2(D)$ can be different, but it does not matter in the following estimate since we only use that it is $L^p$ multiplier {\color{red} I don't understand}),
 %As easily can be seen, by the formulation of $A$ in $\eqref{eq7}$ , the definition of multiplier $n_1,n_2$ and $m$ defined in Corollary $\ref{cormultiplier}$ and Lemma $\ref{lembilinear}$,
 then by recalling $\tilde{\chi}_{\ep,\kpz}\chi_{\ep,\kpz}=\chi_{\ep,\kpz}$,
 $J_{41}$ is the sum of the following term:
 \beno
 G_{jk}=e^{-ib(D)t}\cF^{-1}\bigg(\int_{0}^{t}\int_{\mathbb{R}^3} e^{-\ep|\xi|^2 (t-s)}e^{ib(\xi)s}m(\xi-\eta,\eta)n_1(\xi)\chidix \hat{\tilde{r}}_{j}(s,\xi-\eta)\hat{\tilde{r}}_{k}(s,\eta)\d \eta \d s\bigg).
 \eeno
 where $m(\xi-\eta,\eta)=\tchidiemx \tchidie \tchidix \f{\lxr}{2ib(\xi)}
 $.

Set  $W=(W_1,W_2)^{\top}=\left(
  \begin{array}{cc}
    e^{ib(D)t}&0\\
    0& e^{-ib(D) t}\\
  \end{array}
\right)R$,
then from \eqref{systemforR}, $W$ satisfies:
\beno
\pt W=\left(
  \begin{array}{cc}
    e^{ib(D)t}&0\\
    0& e^{-ib(D) t}\\
  \end{array}
\right)\big[\ep \Delta R+Q^{-1}\chi^{L}(D) B(V,V)\big].
\eeno
 By the definition of $W$, we have $w_j=e^{i(-1)^{j+1}b(D)}r_j$, so by defining $\tilde{w}_j=e^{i(-1)^{j+1}b(D)}\tilde{r}_j$ we get:
 \beqs
 G_{jk}=e^{-ib(D)t}\cF^{-1}\bigg(\int_{0}^{t}\int_{\mR^3} e^{-\ep|\xi|^2 (t-s)}e^{-is\phi_{jk}}m(\xi-\eta,\eta)n_1(\xi)\chidix \widehat{\tilde{w}}_{j}(s,\xi-\eta)\widehat{\tilde{w}}_{k}(s,\eta)\d \eta \d s\bigg)
 \eeqs
 thus, by using  that $\phi_{jk}$ does  not vanish in the support of $\chidid$, we can integrate by parts in time:
 \begin{align}\label{def of Ij}
 e^{ib(D)t} G_{jk} & =%\cF^{-1}(\int_{0}^{t}\int_{\mR^3} e^{-\ep|\xi|^2 (t-s)}e^{-is\phi_{jk}(\xi)}m(\xi-\eta,\eta)n_1(\xi)\chidix \widehat{\tilde{w}}_{j}(s,\xi-\eta)\widehat{\tilde{w}}_{k}(s,\eta)\d \eta \d s) \\
 \cF^{-1}\int_{0}^{t}\int_{\mR^3} e^{-\ep|\xi|^2 (t-s)}\f{\p_{s}e^{-is\phi_{jk}}}{-i\phi_{jk}}m(\xi-\eta,\eta)n_1(\xi)\chidix \widehat{\tilde{w}}_{j}(s,\xi-\eta)\widehat{\tilde{w}}_{k}(s,\eta)\d \eta \d s \nonumber\\
% \\
% &=&\cF^{-1}\bigg(\int_{\mR^3} e^{-\ep|\xi|^2 (t-s)}\f{e^{-is\phi_{jk}}}{-i\phi_{jk}}m(\xi-\eta,\eta)n_1(\xi)\chidix  \widehat{\tilde{w}}_{j}(s,\xi-\eta)\widehat{\tilde{w}}_{k}(s,\eta)\big|_{0}^{t}
% &&+\int_{0}^{t}\int_{\mR^3} e^{-\ep|\xi|^2 (t-s)}(-\ep|\xi|^2)\f{e^{-is\phi_{jk}}}{-i\phi_{jk}}m(\xi-\eta,\eta)n_1(\xi)\chidix \widehat{\tilde{w}}_{j}(s,\xi-\eta)\widehat{\tilde{w}}_{k}(s,\eta)\d \eta \d s\\
% &&-\int_{0}^{t}\int_{\mR^3} e^{-\ep|\xi|^2 (t-s)}\f{e^{-is\phi_{jk}}}{-i\phi_{jk}}m(\xi-\eta,\eta)n_1(\xi)\chidix\\
% &\qquad&\qquad\qquad\qquad\qquad\quad \big[\partial_{s}\widehat{\tilde{w}}_{j}(s,\xi-\eta)\widehat{\tilde{w}}_{k}(s,\eta)+\widehat{\tilde{w}}_{j}(s,\xi-\eta)\partial_{s}\widehat{\tilde{w}}_{k}(s,\eta)\big]\d \eta\d s\bigg) \\
 &= e^{ib(D) t}\big(\sum_{j=1}^{7}I_{j}\big) %\left(   %\right)
 \end{align}
% &i
% e^{itb(D)}\chidid n_1(D) T_{\f{m}{\phi_{jk}}}(\tilde{r}_{j}(t),\tilde{r}_{k}(t))
% -ie^{\ep\Delta t}\chidid n_1(D) T_{\f{m}{\phi_{jk}}}(\tilde{r}_{j}(0),\tilde{r}_{k}(0))\\
% &&+i\int_{0}^{t}e^{\ep \Delta(t-s)}e^{isb(D)} (\ep \Delta)\chidid n_1(D)T_{\f{m}{\phi_{jk}}}(\tilde{r}_{j}(s),\tilde{r}_{k}(s))\d s\\
% &&-i \int_{0}^{t}e^{\ep \Delta(t-s)}e^{isb(D)} \chidid n_1(D) T_{\f{m}{\phi_{jk}}}(\ep \Delta \tilde{r}_j + \tilde{B}_{j},\tilde{r}_{k}(s))+ T_{\f{m}{\phi_{jk}}}(\tilde{r}_j,\ep \Delta \tilde{r}_{k}(s)+\tilde{B}_{k})\d s.
% \eeno
 where
 \begin{align*}
  I_1 & =i\chidid n_1(D)T_{\f{m}{\phi_{jk}}}(\tilde{r}_{j}(t),\tilde{r}_{k}(t)),\\
 I_2&=-ie^{\ep t\Delta}e^{-itb(D)} \chidid n_1(D)T_{\f{m}{\phi_{jk}}}(\tilde{r}_{j}(0),\tilde{r}_{k}(0)),\\
 I_3&=i\int_{0}^{t}e^{\ep (t-s)\Delta}e^{i(t-s)b(D)} (\ep \Delta)\chidid n_1(D)T_{\f{m}{\phi_{jk}}}(\tilde{r}_{j}(s),\tilde{r}_{k}(s))\d s,\\
 I_4&=-i \int_{0}^{t}e^{\ep (t-s)\Delta}e^{i(t-s)b(D)} \chidid n_1(D) T_{\f{m}{\phi_{jk}}}(\ep \Delta \tilde{r}_j(s),\tilde{r}_{k}(s))\d s,\\
 I_5&=\int_{0}^{t}e^{\ep (t-s)\Delta}e^{i(t-s)b(D)} \chidid n_1(D) T_{\f{m}{\phi_{jk}}}(\tilde{B}_{j}(s),\tilde{r}_{k}(s))\d s,
 \end{align*}
 \begin{align*}
 I_6&=\int_{0}^{t}e^{\ep (t-s)\Delta}e^{i(t-s)b(D)} \chidid n_1(D) T_{\f{m}{\phi_{jk}}}(\tilde{r}_j(s),\ep \Delta \tilde{r}_{k}(s))\d s,\\
 I_7&=\int_{0}^{t}e^{\ep (t-s)\Delta}e^{i(t-s)b(D)} \chidid n_1(D) T_{\f{m}{\phi_{jk}}}(\tilde{r}_j(s),\tilde{B}_{k}(s))\d s
 \end{align*}
 and $\tilde{B}=n_2(D)Q^{-1}\chi^{L}B(V,V)$, we recall that $B(V,V),Q^{-1}$ are  defined in \eqref{eqV} and \eqref{Qdef}.
  We now estimate $I_1$ to $I_7$.
  In the following, we
  shall  use the   estimates  for the  bilinear operator $T_{\f{m}{\phi_{jk}}}$ in Lemma \ref{lembilinear} with
  the choice $(k)_{+}=\f{3}{p}$.

  By Lemma \ref{lembilinear} and  Sobolev embedding,
  we can estimate $I_1$ as follows:
 \beno
 \|I_1\|_{W^{\si,p}}%&=&\|\chidid n_1(D)T_{\f{m}{\phi_{jk}}}(\tilde{r}_{j}(t),\tilde{r}_{k}(t))\|_{W^{\si,p}}\\
 \lesssim \|T_{\f{m}{\phi_{jk}}}(\tilde{r}_{j}(t),\tilde{r}_{k}(t))\|_{W^{\si+1,p}}
 &\lesssim&\|\tilde{r}_j(t)\|_{W^{\si+3_{+},p}}\|\tilde{r}_k(t)\|_{W^{2,\infty}}+\|\tilde{r}_j(t)\|_{W^{2,\infty}}
 \|\tilde{r}_k(t)\|_{W^{\si+3_{+},p}}\\
 &\lesssim&\|R(t)\|_{H^{\si+\f{3}{2}(1-\f{2}{p})+3_{+}}}\|R(t)\|_{W^{3,p}}
 %&\lesssim&\|V(t)\|_{H^N}\|V(t)\|_{W^{\si,p}}\\
 \lesssim (1+t)^{-\indxf}\|V\|_{X_T}^2.
 \eeno
 By Corollaries \ref{corlowf}, \ref{corQ}, Lemma \ref{lembilinear} and the Sobolev embedding, we have for $I_2$:
 \beno
 \|I_2\|_{W^{\si,p}}
 %&=&\|e^{\ep t\Delta}e^{-itb(D)} \chidid n_1(D) T_{\f{m}{\phi_{jk}}}(\tilde{r}_{j}(0),\tilde{r}_{k}(0))\|_{W^{\si+1,p}}\\
 &\lesssim&(1+t)^{-\indxf}\|T_{\f{m}{\phi_{jk}}}(\tilde{r}_{j}(0),\tilde{r}_{k}(0))\|_{W^{\indxs+1,p'}}\\
 &\lesssim&(1+t)^{-\indxf}(\|r_j(0)\|_{H^{\indxs+3_{+}}}\|r_k(0)\|_{W^{2,\f{2p}{p-2}}}+\|r_j(0)\|_{W^{2,\f{2p}{p-2}}}\|r_k(0)\|_{H^{\indxs+3_{+}}} )\\
& \lesssim& (1+t)^{-\indxf}\|R(0)\|_{H^{N}}^{2}
\lesssim (1+t)^{-\indxf}\|V(0)\|_{H^{N}}^{2}.
 \eeno
 For the term $I_5$, we have:
 \beno
 \|I_5\|_{W^{\si,p}}&\lesssim& \int_{0}^{t}(1+t-s)^{-\indxf}\|T_{\f{m}{\phi_{jk}}}(\tilde{B}_{j},\tilde{r}_k)\|_{W^{\indxs+1,p'}}\d s\\
 &\lesssim&\int_{0}^{t}(1+t-s)^{-\indxf}
 (\|B_{j}\|_{H^{\indxs+3_{+}}}\|r_k\|_{W^{2,\f{2p}{p-2}}}+\|B_{j}\|_{W^{2,\f{2p}{p-2}}}\|r_k\|_{H^{\indxs+3_{+}}}) \d s\\
 &\lesssim& \int_{0}^{t}(1+t-s)^{-\indxf}(\|B_{j}\|_{H^{\indxs+3_{+}}}\|R\|_{H^3}+\|{B}_{j}\|_{H^3}\|R\|_{H^{\indxs+3_{+}}})\d s\\
 &\lesssim&\int_{0}^{t}(1+t-s)^{-\indxf}(1+s)^{-\indxf}\|V\|_{X_T}^{3}\d s
 \lesssim(1+t)^{-\indxf}\|V\|_{X_T}^3.
 \eeno
 Here, we have used  Corollaries \ref{corlowf}, \ref{corQ}, Lemma
 \ref{lemnonlinear} \ref{lembilinear},  and Sobolev embedding. %the estimate for $B$:
 %\beno
 %\|B(s)\|_{H^{\indxs+10}}%&=&\|Q^{-1}\chidid B(\chidid V,\chidid V)\|_{H^{\indxs+10}}\\
% \lesssim\|V(s)\|_{H^{\indxs+11}}\|V(s)\|_{W^{\si,p}}
% \lesssim (1+s)^{-\indxf}\|V\|_{X_T}^{2} \eeno
The estimate for $I_7$ is very similar to that of $I_5$, we omit the details.

The terms  $I_3, I_4$ correspond to the error terms created by $\ep\Delta$. As explained in the introduction, since one can only expect that  $\|\ep \Delta u\|_{H^{N-1}}\lesssim (1+t)^{-1}$ which is not a fast enough decay,  to  control them, we need to perform normal form transformation again.
 %{\color{red} Explain that these terms are similar to the ones you singled out in the introduction}

 By integrating by parts again, we get
 \begin{align}
\nonumber I_4&=i\int_{0}^{t}e^{\ep(t-s)\Delta}e^{-i(t-s)b(D)}n_1(D)T_{\f{m}{\phi_{jk}}}(\ep\Delta \tilde{r}_j,\tilde{r}_k)\d s \\
\nonumber &=-\chidid n_1(D)T_{\f{m}{\phi_{jk}^2}}(\ep\Delta \tilde{r}_j(t),\tilde{r}_k(t))+e^{\ep t\Delta}e^{-itb(D)}\chidid n_1(D)T_{\f{m}{\phi_{jk}^2}}(\ep\Delta \tilde{r}_j(0),\tilde{r}_k(0))\\
\nonumber &\quad-\int_{0}^{t} e^{\ep(t-s)\Delta}e^{-i(t-s)b(D)}(\ep \Delta)\chidid n_1(D)T_{\f{m}{\phi_{jk}^2}}(\ep\Delta \tilde{r}_j,\tilde{r}_k)\d s \\
\nonumber& +\int_{0}^{t}e^{\ep(t-s)\Delta}e^{-i(t-s)b(D)}\chidid n_1(D) \big[T_{\f{m}{\phi_{jk}^2}}((\ep\Delta)^2 \tilde{r}_j+\ep \Delta \tilde{B}_{j},\tilde{r}_k)
  +T_{\f{m}{\phi_{jk}^2}}(\ep \Delta\tilde{r}_j,\ep\Delta \tilde{r}_k+\tilde{B}_{k})\big]\d s\\
\label{I4def} &\define I_{41}+\cdots +I_{47}.
 \end{align}
 The terms $I_{41}, I_{42}$ are similar to $I_1,I_2$.
 For instance, by  using the fact that $\ep\Delta\chidid$ is a bounded multiplier in $L^{p}$,  $1<p<\infty$, we get
 \beno
 \|I_{41}\|_{W^{\si,p}}&\lesssim&
 \|T_{\f{m}{\phi_{jk}^2}}(\ep\Delta \tilde{r}_j(t),\tilde{r}_k(t))\|_{W^{\si+1,p}}\\
 &\lesssim&\|\ep \Delta r_j(t)\|_{W^{\si+3_{+},\infty}}\|r_k(t)\|_{W^{3,p}}+
 \|\ep \Delta r_j(t)\|_{W^{3,p}}\|r_k(t)\|_{W^{\si+3_{+},\infty}}\\
 &\lesssim&\|R\|_{W^{\si+3_{+},\infty}}\|R\|_{W^{\si,p}}\lesssim\|R\|_{H^{\si+5}}\|R\|_{W^{\si,p}}
 \lesssim(1+t)^{-\indxf}\|V\|_{X_T}^2.
 \eeno
 Up to now, we have only used the dispersive estimates, but not yet the viscous dissipation, we shall  use it in the estimate for
 $I_{43}$.
 We write $$I_{43}=\int_{0}^{t-1}+\int_{t-1}^{t}e^{\ep(t-s)\Delta}e^{-i(t-s)b(D)}(\ep \Delta)\chidid n_1(D) T_{\f{m}{\phi_{jk}^2}}(\ep\Delta \tilde{r}_j,\tilde{r}_k)\d s\define I_{431}+I_{432}$$
 By Young's inequality, %Lemma \ref{lemmahk},
  Lemma \ref{lembilinear},
 Sobolev embedding, we get:
 \beno
 \|I_{431}\|_{W^{\si,p}}&\lesssim&\int_{0}^{t-1}\|\cF^{-1}(\ep |\xi|^2 e^{-\ep(t-s)|\xi|^2})\|_{L^{\f{2p}{p+2}}}\|e^{-i(t-s)b(D)} T_{\f{m}{\phi_{jk}^2}}(\ep\Delta \tilde{r}_j,\tilde{r}_k)\|_{H^{\si+1}} \d s\\
 &\lesssim&\int_{0}^{t-1} \ep^{\f{1}{4}+\f{3}{2p}}(t-s)^{-(\f{7}{4}-\f{3}{2p})}\| T_{\f{m}{\phi_{jk}^2}}(\Delta \tilde{r}_j,\tilde{r}_k)\|_{H^{\si+1}}\d s\\
 &\lesssim & \int_{0}^{t-1}(t-s)^{-(\f{7}{4}-\f{3}{2p})}\|R(s)\|_{H^{\si+\f{3}{p}+5_{+}}}\|R(s)\|_{W^{3,p}} \d s\\
 &\lesssim&\int_{0}^{t-1}(t-s)^{-(\f{7}{4}-\f{3}{2p})}(1+s)^{-\indxf}\|V\|_{X_T}^2\d s
 \lesssim(1+t)^{-\indxf}\|V\|_{X_T}^2.
 \eeno
  Here, we have used that:  $\ep^{\f{1}{4}+\f{3}{2p}}\leq \ep^{\f{1}{4}}\leq 1$,
  $\f{7}{4}-\f{3}{2p}>\indxf$
  and $\f{3}{p}<\f{1}{2}$ if  $6<p<+\infty$.
 By using again the fact that $\ep\Delta\chidid$ is a bounded multiplier in $L^{p}$ ($1<p<\infty$), Lemma \ref{lembilinear} and Sobolev embedding, we have:
 \beno
 \|I_{432}\|_{W^{\si,p}}&\lesssim&\int_{t-1}^{t}\| T_{\f{m}{\phi_{jk}^2}}(\ep \Delta \tilde{r}_j,\tilde{r}_k)\|_{H^{\si+1+3(\f{p-2}{2p})}}\d s\\
 &\lesssim&\int_{t-1}^{t}\|\ep \Delta r_j\|_{H^{\si+(\f{9}{2})_{+}}}\|r_k\|_{W^{3,p}}+\|\ep \Delta r_j\|_{W^{3,p}}\|r_k\|_{H^{\si+(\f{9}{2})_{+}}}\d s\\
 &\lesssim&\int_{t-1}^{t}\|R\|_{H^{\si+5}}\|R\|_{W^{\si,p}} \d s
 \lesssim (1+t)^{-\indxf}\|V\|_{X_T}^2.
 \eeno
 For $I_{44}$, we need to use the following lemma.
 \begin{lem}\label{lemepd}
 For $k\leq N-1,$ we have  the following uniform for $\ep \in (0, 1]$ estimates:
 \beno
 \|\ep\Delta R\|_{H^k}&\lesssim& (1+t)^{-1}(\|V(0)\|_{Y}+\|V\|_{X_T}^2),\\
 \|(\ep\Delta)^{2} R\|_{H^k}&\lesssim& (1+t)^{-\indxf}(\|V(0)\|_{Y}+\|V\|_{X_T}^2).
 \eeno
 \end{lem}
\begin{proof} By using the equation \eqref{eq for R} for $R$ ,
  we obtain from  Lemma \ref{lemed}, tame estimate and Sobolev embedding,
  \beno
  \|\ep\Delta R\|_{H^k}&\leq &\|e^{\ep t\Delta}\ep\Delta \chidid Q^{-1}V(0)\|_{H^{k}}+\int_{0}^{t}\|e^{\ep (t-s)\Delta}\ep\Delta \chidid Q^{-1}B(V,V)\|_{H^{k}}\d s\\
  &\lesssim&(1+t)^{-1}\|V(0)\|_{H^{k}}+\int_{0}^{t}(1+t-s)^{-1}\|\tchidid Q^{-1}B(V,V)\|_{H^{k}}\d s\\
  &\lesssim&(1+t)^{-1}\|V(0)\|_{H^{k}}+\int_{0}^{t}(1+t-s)^{-1}(1+s)^{-\indxf}\|V\|_{X_T}^2\d s\\
  &\lesssim&(1+t)^{-1}(\|V(0)\|_{H^{k}}+\|V\|_{X_T}^2).
  \eeno
 where we have used   that for $6<p<+\infty,$ $1<\indxf<\f{3}{2}.$
The other inequality follows from the same arguments by noticing that  $\f{3}{2}(1-\f{2}{p})<2$.
\end{proof}
  We go back to the estimate of $I_{44}$ in \eqref{I4def}.
  By using the last  lemma, we get:
   \begin{eqnarray*}
 &&\|I_{44}\|_{W^{\si,p}}\lesssim\int_{0}^{t}(1+t-s)^{-\indxf}\|T_{\f{m}{\phi_{jk}^2}}(\ep\Delta)^{2}\tilde{r}_j,\tilde{r}_k)\|_{W^{\indxs+1,p'}}\d s\\
 &\lesssim&\int_{0}^{t}(1+t-s)^{-\indxf}\big(\|(\ep\Delta )^{2}r_j\|_{H^{\indxs+3_{+}}}\|r_k\|_{W^{3,\f{2p}{p-2}}}+\|(\ep\Delta )^{2}r_j\|_{W^{3,\f{2p}{p-2}}}\|r_k\|_{H^{\indxs+3_{+}}}\big)\d s  \\
&\lesssim&\int_{0}^{t}(1+t-s)^{-\indxf}(1+s)^{-\indxf}\big(\|V\|_{X_T}^{3}+\|V\|_{X_T}^{2}+\|V(0)\|_{Y}^{2}\big) \d s\\
 &\lesssim&(1+t)^{-\indxf}\big(\|V\|_{X_T}^{3}+\|V\|_{X_T}^{2}+\|V (0)\|_{Y}^{2}\big).
 \end{eqnarray*}
 The term $I_{46}$ can be estimated in the same way as  $I_{44}$:
 \begin{eqnarray*}
&&\|I_{46}\|_{W^{\si,p}}
\lesssim\int_{0}^{t}(1+t-s)^{-\indxf}\|T_{\f{m}{\phi_{jk}^2}}(\ep\Delta \tilde{r}_j,\ep\Delta \tilde{r}_k)\|_{W^{\indxs+1,p'}}\d s \\
&\lesssim&\int_{0}^{t}(1+t-s)^{-\indxf}\big(\|\ep\Delta r_j\|_{H^{\indxs+3_{+}}}\|\ep\Delta r_k\|_{W^{3,\f{2p}{p-2}}}+\|\ep\Delta r_k\|_{H^{\indxs+3_{+}}}\|\ep\Delta r_j\|_{W^{3,\f{2p}{p-2}}}\big)\d s\\
&\lesssim&\int_{0}^{t}(1+t-s)^{-\indxf}(1+s)^{-2}\big(\|V\|_{X_T}^{2}+\|V(0)\|_{Y}\big)^{2}\d s\lesssim(1+t)^{-\indxf}\big(\|V\|_{X_T}^{4}+\|V(0)\|_{Y}^2\big).
\end{eqnarray*}
The terms  $I_{45},I_{47}$ are similar to $I_5,I_7,$ we thus skip them.

 It remains  to estimate
 $$I_3=i\int_{0}^{t}e^{\ep (t-s)\Delta}e^{i(t-s)b(D)} \ep \Delta\chidid T_{\f{m}{\phi_{jk}}}(r_{j}(s),r_{k}(s))\d s.$$
Integrating by parts in time again, we get
  \beno
 I_3&=& i\int_{0}^{t}e^{\ep (t-s)\Delta}e^{i(t-s)b(D)} (\ep \Delta) \chidid n_1(D)T_{\f{m}{\phi_{jk}}}(\tilde{r}_{j}(s),\tilde{r}_{k}(s))\d s\\
 &=&-\ep \Delta \chidid n_1(D)T_{\f{m}{\phi_{jk}^2}}( \tilde{r}_j(t),\tilde{r}_k(t))+e^{\ep t\Delta}e^{-itb(D)}\ep \Delta\chidid n_1(D)T_{\f{m}{\phi_{jk}^2}}( \tilde{r}_j(0),\tilde{r}_k(0))\\
 &&-\int_{0}^{t} e^{\ep(t-s)\Delta}e^{-i(t-s)b(D)}(\ep \Delta)^{2}\chidid n_1(D)T_{\f{m}{\phi_{jk}^2}}( \tilde{r}_j,\tilde{r}_k)(s)\d s \\
 &&+\int_{0}^{t}e^{\ep(t-s)\Delta}e^{-i(t-s)b(D)}\ep \Delta \chidid n_1(D) [T_{\f{m}{\phi_{jk}^2}}(\ep\Delta \tilde{r}_j+\tilde{B}_{j},r_k)+T_{\f{m}{\phi_{jk}^2}}(\tilde{r}_j,\ep\Delta \tilde{r}_k+ \tilde{B}_{k})]\d s\\
 &\define&I_{31}+\cdots +I_{37}.
 \eeno

Note that $I_{34}=I_{43},$ and  that the estimates for $I_{31},I_{32},I_{35},I_{37}$ are similar to the ones  for    $I_1,I_2,I_{45},I_{47},$ we thus skip them.

For $I_{33},$ we use Lemma $\ref{lembilinear}$ and Lemma $\ref{lemed}$ to get:
\beno
\|I_{33}\|_{W^{\si,p}}&\lesssim&
\int_{0}^{t}\|e^{\ep(t-s)\Delta} (\ep \Delta)^{2}\chidid n_1(D) T_{\f{m}{\phi_{jk}^2}}( \tilde{r}_j,\tilde{r}_k)(s)\|_{H^{\si+3(\f{1}{2}-\f{1}{p})}}\d s \\
&\lesssim&\int_{0}^{t}(1+t-s)^{-2}\|T_{\f{m}{\phi_{jk}^2}}( \tilde{r}_j,\tilde{r}_k)(s)\|_{H^{\si+3(\f{1}{2}-\f{1}{p})+1}}\d s\\
&\lesssim& \int_{0}^{t}(1+t-s)^{-2}(1+s)^{-\indxf}\|V\|_{X_T}^{2}\d s
\lesssim(1+t)^{-\indxf}\|V\|_{X_T}^{2}.
\eeno
 This ends the proof of Proposition \ref{propdecay}.
\end{proof}
 We thus have $\eqref{eqbtstrap}$ by combining Proposition \ref{propenergy} and \ref{propdecay}. Theorem \ref{thminviscid} then follows from the interpolation inequality:
for any $1<p'<2,$
 \beno
\|(\vr_0^{\varepsilon},\na \phi_0^{\varepsilon},\mathcal{P}^{\perp}u_0^{\varepsilon})\|_{W^{\si+3,p'}}\lesssim\|(\vr_0^{\varepsilon},\na \phi_0^{\varepsilon},\mathcal{P}^{\perp}u_0^{\varepsilon})\|_{W^{\si+3,1}}^{\theta}\|(\vr_0^{\varepsilon},\na \phi_0^{\varepsilon},\mathcal{P}^{\perp}u_0^{\varepsilon})\|_{H^{\si+3}}^{1-\theta}.
\eeno
\begin{rmk}
If we only prove the Theorem \ref{thminviscid} for $6<p\leq 12$, the decay estimate for $I_3,I_4$ will be easier, that is, we do not need to integrate by parts in time again.
Indeed, for example, when $p=12$, we could estimate $I_3$ as follows:
\beno
\|I_3\|_{W^{\si,12}}\lesssim\|I_3\|_{W^{\si+1,\f{12}{5}}}&\lesssim& \int_{0}^{t}(1+t-s)^{-\f{5}{4}}\|T_{\f{m}{\phi_{jk}}}(%\ep\Delta
\tilde{r},\tilde{r})\|_{W^{\si+\f{3}{2},\f{12}{7}}}\d s\\
&\lesssim&\int_{0}^{t}(1+t-s)^{-\f{5}{4}}\|\tilde{r}\|_{W^{2,12}}\|\tilde{r}\|_{H^{\si+(\f{7}{2})_{+}}}\d s\\
&\lesssim&\int_{0}^{t}(1+t-s)^{-\f{5}{4}}(1+s)^{-\f{5}{4}}\d s\|U\|_{X_T}^2\lesssim (1+t)^{-\f{5}{4}} \|U\|_{X_T}^3.
\eeno
For the estimate of $I_4$, we can use the  identity $$T_{\f{m}{\phi_{jk}}}( \ep\Delta\tilde{r}, \tilde{r})=\ep \Delta  T_{\f{m}{\phi_{jk}}}(\tilde{r}, \tilde{r})-2\sum_{l=1}^{3} T_{\f{m}{\phi_{jk}}}( \ep^{\f{1}{2}}\partial_{l}\tilde{r}, \ep^{\f{1}{2}}\partial_{l}\tilde{r}).$$
%{\color{green} Is this identity really correct ? }
    and the  a priori estimates:
    \beno
   \|\ep^{\f{1}{2}}\na\tilde{r}\|_{H^{N-1}}\lesssim(1+s)^{-\f{1}{2}}\|U\|_{X}, \quad
    \|\ep^{\f{1}{2}}\na\tilde{r}\|_{W^{N-2,\f{12}{5}}}\lesssim(1+s)^{-\f{3}{4}}\|U\|_{X}.
    \eeno
Nevertheless, we are interested also in  $12<p<\infty$,  and in this case,  it is necessary
to use a normal form transformation again because $p'$ is too small to allows us to conclude the estimate directly.
\end{rmk}
%\begin{rmk}

 %As $\|U\|_{L^{\infty}}\leq \|U\|_{W^{\si,p}},$ for all $6<p<+\infty$ we have that $\|U\|_{L^{\infty}}$ decays like $(1+t)^{-\f{3}{2}-\kappa}$ for $\kappa>0$ small. Then by (3.7) and Gronwall's inequality, we have actually
 %\beqs
 % \|\om(t)\|_{L^2}^2\leq e^{c\int_{0}^{+\infty}\|\na u(s)\|_{L^{\infty}}\d s}\|\om _0\|_{L^2}^2 \leq C\|\om _0\|_{L^2}^2 =0
 %\eeqs
%ie. $\omega =\curl u\equiv 0 $ on $[0,\infty)$
%\end{rmk}
\begin{rmk}\label{remdecay}
We now choose $24\leq p<+\infty.$
  By interpolation, for any $2\leq q\leq p$, we have the  decay estimate:
  \beqs
  \|(\varrho,u)\|_{W^{\sigma,q}}\lesssim(1+t)^{-\f{3}{2}(1-\f{2}{q})}\|(\varrho,u)\|_{X},
  \eeqs
  \beq\label{Linftydecay}
  \|(\varrho,u)(t)\|_{W^{\sigma,\infty}}\lesssim(1+t)^{-\f{4}{3}}\|(\varrho,u)\|_{X}.
  \eeq
   Indeed,  we only need to prove \eqref{Linftydecay} for $\na^{\sigma}(\varrho,u)$ as the other is almost obvious. By the Gagliardo-Nirenberg inequality, we have:
 \beqs
 \|\na^{\sigma}(\varrho,u)\|_{L^{\infty}}
 \lesssim\|(\varrho,u)\|_{\dot W^{\sigma,p}}^{\theta}\|(\varrho,u)\|_{\dot H^{\sigma+l}}^{1-\theta}\lesssim(1+t)^{-\f{3}{2}(1-\f{2}{p})\theta}\|(\varrho,u)\|_{X},
 \eeqs
where $\theta=1-\f{1}{(\f{l}{3}-\f{1}{2})p+1}$ and $l= 7$.
 When $p\geq24$, we have: $\f{3}{2}(1-\f{2}{p})\theta>\f{4}{3}$.
 %Samely,
% \beq
% \|\na^2 u\|_{\dot H^{k}}\lesssim \|u\|_{\dot W^{k,10}}^{\vartheta}\|u\|_{\dot H^{k+l}}^{1-\vartheta}\lesssim(1+t)^{-\f{6}{5}\vartheta}\|u\|_{X}
 %\eeq
 %where $\vartheta=\f{l-2}{l-6/5}\geq \f{65}{69}>\f{5}{6}$ if $l\geq 15$.\\
 %So, we conclude that there eixst some constant $a>1$, st.
 %\beq
 %\|(u,\varrho)\|_{W^{\si,\infty}}+\|\na^2(u,\varrho)\|_{H^{\si}}\lesssim (1+t)^{-a}\|(u,\vr)\|_{X}
% \eeq
 %Actually, as easily can be computed, as $p$ goes to infinity, $a$ goes to $\f{3}{2}.$
 %What is more, we have also:
%\beq\label{eqgradient}
 %\|(\na \varrho,\na u)\|_{W^{\sigma,3}}\lesssim (1+t)^{-(\f{3}{2}-\kappa)}\|u\|_{X}
 %\eeq
 %for any sufficient small $\kappa$.
% Because for any $0\leq k\leq\sigma,$
 %\beno
 %\|\na u\|_{\dot W^{k,3}}\lesssim %\|u\|_{\dot{W}^{k,p}}^{\vartheta}\|u\|_{\dot{H}^{k+l}}^{1-\vartheta}
% \eeno
 %$\vartheta$ goes to 1 when $p$ goes to $+\infty$, we thus has
%$\ref{eqgradient}$.

\end{rmk}

 \section {Proof of Theorem \ref{thmp}}
 %Energy estimate for $(v,n,\na \psi)$}

 Now our aim is to prove Theorem \ref{thmp}, that is to say, to get global existence for  system \eqref{NSPP} under the assumption that the incompressible part of the  initial velocity is small compared  to $\varepsilon$. We adapt the energy estimate showed in \cite{MR2917409} where the original (ENSP) system was treated. However, we need to focus more on the perturbation term where we should make the most use of the integrability of time decay of $(\varrho,u)$ in some Sobolev spaces. Global existence for $(n,u,\na\psi)$ follows from the energy estimate (see lemma \ref{lemenergy1} and lemma \ref{lemenergy2}) and classical bootstrap arguments. To prove the decay estimate, again, inspired by \cite{MR2917409}
 we use an interpolation argument between the energy estimate and an   $\dot{H}^{-s}$ estimate
  which is true if  the initial data is in this space. This yields
a  good energy inequality (see \eqref{eq22}), which finally gives  the decay estimate.

For the reader's convenience, we recall that we are talking about the system \eqref{NSPP} which takes the form:
 \beq \label{NSPP2}
 \left\{
\begin{array}{l}
\displaystyle \pt n +\div( \rho v+nu+nv)=0,\\
\displaystyle \pt v+u\cdot {\na v}+v\cdot (\na u+\na v)-\varepsilon\mathcal{L}v +\na n -\na \psi
%\displaystyle \qquad \qquad \qquad \qquad \qquad \qquad \qquad \qquad \qquad \qquad
=\varepsilon(\f{1}{\rho+n}-1) (\mathcal{L}v+\mathcal{L}u) ,  \\
\displaystyle \Delta \psi=n,\\
\displaystyle v|_{t=0} =\mathcal{P}u_0^{\varepsilon}, n|_{t=0}=0.
\end{array}
\right.
\eeq
    We define the  energy functional:
    \beq \label{energyfunele}
    \mathcal{E}_M(n,u,\na\psi)=\sum_{|\alpha|\leq M} \mathscr{E}_{\alpha}=\sum_{|\alpha|\leq M}\f{1}{2}\int\rho|\pa v|^2+|\pa n|^2+|\pa \na \psi|^2\d x.
    \eeq
     Denote also $\mathscr{E}_{k}=\sum_{|\alpha|=k} \mathscr{E}_{\alpha}$.
  We carry out energy estimates in the following two lemmas.
   \begin{lem}\label{lemenergy1}
   Assuming that  $(\rho=\vr+1,\na\phi,u)$ are given by Theorem
   \ref{thminviscid}, so that in particular
    $\|(\vr,\na\phi,u)\|_{H^3}\leq  C\delta,$  and  that $ \mathcal{E}_3\leq  C\delta\varepsilon$, with
    $C$ an absolute constant and  $\delta$ small enough, such that $\|\vr,n\|_{L^{\infty}}\leq C\|\vr,n\|_{H^2}\leq C\delta\leq \f{1}{6}$.  Then the following energy inequality holds: %there are some constant $C_1$ $ c=c(\mu,\lambda)$ st.:
   for any $k=1,2\cdots M$  we have:
   \beq\label{eqenergy1}
   \begin{split}
   %&\quad
  &\f{\d}{\d t}\mathscr{E}_{k}+\f{1}{2} \varepsilon\|\na v\|_{\dot{H}^{k}}^2
   \lesssim
   \|(u,\varrho)\|_{W^{M+2,\infty}}^{\f{3}{4}}\mathscr{E}_{k}+(\mathcal{E}_{3}^{\f{1}{2}}+\varepsilon \|\varrho\|_{H^M})\|(\na v,n)\|_{\dot{H}^{k}}^2%(\|\na v\|_{\dot{H}^{|\alpha|}}^2+\| n\|_{\dot{H}^{|\alpha|}}^2)
   \\
   &\qquad\qquad\qquad\qquad+{\varepsilon}^2\| u\|_{W^{k+2,\infty}}^{\f{5}{4}}(\|\varrho\|_{H^{|\alpha|}}^{2}+\mathcal{E}_{3})+\mathcal{E}_{3}\|\varrho\|_{W^{k+1,\infty}}^{\f{5}{4}}+\varepsilon \mathcal{E}_{3}^{\f{1}{2}}\|\varrho\|_{\dot{W}^{k,6}}^{2}.
   \end{split}
   \eeq
  % \beq
  % \begin{split}
  % &\quad\f{\d}{\d t}\mathcal{E}_{M-1}(\na n,\na u,\na\na \psi)+C \varepsilon\|\na^2 v\|_{H^{M-1}}^2 \\
   %+\f{2\mu+\lambda}{16}\|\div v\|_{H^{M}}^2 \nonumber\\
  % &\lesssim \big(\|(u,\varrho)\|_{W^{M,\infty}}+\|(\na u,\na\varrho)\|_ {W^{M,3}}+\|\na^2 u\|_{H^M}\big)\mathcal{E}_1^{M}+\mathcal{E}_{3}^{\f{1}{2}}(\|\na^2 v\|_{H^{M-1}}^2+\|\na n\|_{H^{M-1}}^2)\\
  % & +\varepsilon (\mathcal{E}_{3}^{\f{1}{2}}+\|\varrho\|_{H^M})\|\na^2 v\|_{H^{M-1}} +{\varepsilon}^2(\|\na^2 u\|_{H^{M}}\|\varrho\|_{W^{1,\infty}}^{2}+\|\na^2 u\|_{L^{\infty}}\|\na^2 \varrho\|_{H^{M-2}}^2)
  % \end{split}
  % \eeq
 where  %$C=\f{\min\{\mu,2\mu+\lambda\}}{4}$ and
 $3\leq M\leq \si-2$.
    \end{lem}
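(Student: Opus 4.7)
The plan is a higher-order energy estimate, carried out carefully to track the $\varepsilon$-dependence and to exploit the decay of the background data $(\varrho,u,\na\phi)$ coming from Theorem \ref{thminviscid}. For each multi-index $|\alpha|=k$, we apply $\pa$ to the three equations of \eqref{NSPP2}, then take the $L^2$ pairing of the differentiated $v$-equation with $\rho\pa v$, of the differentiated $n$-equation with $\pa n$, and use the Poisson relation $\Delta\psi=n$ to produce a time derivative of $\tfrac12\|\pa\na\psi\|_{L^2}^2$. Reproducing $\pt \mathscr{E}_\alpha$ on the left side then relies on standard ingredients: the continuity equation $\pt\rho+\div(\rho u)=0$ from \eqref{NSPlow1} turns $\int\rho\pa v\cdot\pt\pa v\,\d x$ into $\tfrac12\pt\int\rho|\pa v|^2\,\d x$ up to a harmless $\|u\|_{W^{1,\infty}}\mathscr{E}_\alpha$ contribution; the pressure coupling $\int\rho\pa v\cdot\na\pa n\,\d x$ cancels by integration by parts with $\int\pa n\,\div(\rho\pa v)\,\d x$ coming from the differentiated $n$-equation; and the electric coupling $-\int\rho\pa v\cdot\na\pa\psi\,\d x$, combined with $\Delta\pa\psi=\pa n$ and the $n$-equation, produces $\tfrac12\pt\int|\na\pa\psi|^2\,\d x$.

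The dissipation $\tfrac12\ep\|\na v\|_{\dot H^k}^2$ is extracted from the viscous term: since $\mathcal{L}=\Delta$, integration by parts gives
\[
-\ep\int\rho\pa v\cdot\pa\Delta v\,\d x=\ep\int\rho|\na\pa v|^2\,\d x+\ep\int\na\rho\cdot(\pa v\otimes\na\pa v)\,\d x,
\]
where the first term dominates (using $\rho\ge 5/6$) while the commutator, together with analogous $[\pa,\rho]$ remainders from the viscous piece, is controlled by Young's inequality by $\ep\|\vr\|_{H^M}\|\na v\|_{\dot H^k}^2$ plus a fraction of the dissipation absorbed on the left. This accounts for the $(\mathcal{E}_3^{1/2}+\ep\|\vr\|_{H^M})\|(\na v,n)\|_{\dot H^k}^2$ contribution. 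The remaining source terms are all estimated using the Kato-Ponce inequality (Lemma \ref{lemmakp}), product rules, and Sobolev embeddings, and fall into three families: (a) transport and commutator terms from the perturbation such as $\int\rho\pa v\cdot[\pa,u]\na v\,\d x$, $\int\pa n\,\div([\pa,\rho]v)\,\d x$, $\int\rho\pa v\cdot\pa(v\cdot\na u)\,\d x$, which produce the $\|(u,\varrho)\|_{W^{M+2,\infty}}\mathscr{E}_k$ factor, reduced to exponent $\tfrac34$ after Young against the dissipation; (b) purely cubic self-interactions such as $v\cdot\na v$ and $\div(nv)$, dominated by $\mathcal{E}_3^{1/2}\|(\na v,n)\|_{\dot H^k}^2$ via smallness of $\mathcal{E}_3$; (c) the inviscid-scale forcing $\ep(\tfrac{1}{\rho+n}-1)\mathcal{L}u$, quadratically small via the identity $\tfrac{1}{\rho+n}-1=-\tfrac{\varrho+n}{1+\varrho+n}$, whose pairing with $\rho\pa v$ after Leibniz, Cauchy-Schwarz and Young produces the $\ep^2\|u\|_{W^{k+2,\infty}}^{5/4}(\|\varrho\|_{H^{|\alpha|}}^2+\mathcal{E}_3)$, $\mathcal{E}_3\|\varrho\|_{W^{k+1,\infty}}^{5/4}$, and $\ep\mathcal{E}_3^{1/2}\|\varrho\|_{\dot W^{k,6}}^2$ terms.

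The main obstacle is the bookkeeping of the $\ep\mathcal{L}u$ forcing: since $u$ itself is not small (only $\mathcal{P}u_0$ is $O(\ep)$), we cannot put all its derivatives uniformly in $L^\infty$ without using time decay. The smallness is recovered from the factor $\varrho+n$ (small in $H^M$) while time-integrability ultimately comes from the $L^\infty$ decay of $u,\varrho$ stated in Remark \ref{remdecay}. The fractional exponents $\tfrac34$ and $\tfrac54$ in \eqref{eqenergy1} are optimised by choosing the Young conjugate pair so as to absorb part of the resulting terms into $\tfrac12\ep\|\na v\|_{\dot H^k}^2$ on the left while keeping the right-hand side time-integrable once the decay bounds $\|(u,\varrho)\|_{W^{\sigma,\infty}}\lesssim(1+t)^{-4/3}$ from Remark \ref{remdecay} are substituted.
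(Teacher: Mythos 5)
Your plan follows essentially the same route as the paper's proof: apply $\pa$, pair with $\rho\pa v$ and $\pa n$, use $\Delta\psi=n$ to produce $\tfrac12\pt\|\pa\na\psi\|_{L^2}^2$, extract the viscous dissipation by integration by parts, and control the remaining commutator, cubic and $\ep$-forcing terms with Kato--Ponce, Sobolev embedding and Young's inequality with the same exponents $\tfrac34$, $\tfrac54$. Two minor points of bookkeeping: with the paper's normalization $\mathcal{L}v=\Delta v+\na\div v$ (not just $\Delta v$), which only adds the nonnegative dissipation $\ep\int\rho|\pa\div v|^2$ and one more commutator of the same type, and the term $\ep\bigl(\tfrac{1}{\rho+n}-1\bigr)\mathcal{L}v$ must be treated alongside the $\mathcal{L}u$ forcing (in the paper it is this term, estimated after one integration by parts, that produces the contributions $\ep(\mathcal{E}_3^{1/2}+\|\varrho\|_{H^M})\|(n,\na v)\|_{\dot H^k}^2$ and $\ep\,\mathcal{E}_3^{1/2}\|\varrho\|_{\dot W^{k,6}}^2$).
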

 \begin{proof}
   By local existence, we have enough regularity to do energy estimates.  We take the time derivative of the  energy functional $\mathscr{E}_{\alpha},|\alpha|=k$, and  we make use of the equation \eqref{NSPP2}
 to  get:
\ben \label{energyeq1}
 \frac{\d}{\d t}\mathscr{E}_{\alpha}+\varepsilon\int \rho \big(|\pa \na v|^2+|\pa \div v|^2\big)\d x
 %&=&\int \pa n(\div(\rho\pa v)-\pa\div(\rho v))\d x +\int\pa \na \psi \rho \pa v-\pa \na \psi \pa \na(\Delta)^{-1}\div(\rho v)\d x\\
 %&&+\int\rho  \pa v u \cdot\pa \na v -\rho \pa v\pa(u\cdot \na v)\d x-\int \rho \pa v\pa(v\cdot \na u)\d x\\
% &&-\int\pa n \pa\div(nu)\d x-\int \pa \na\psi \pa\na(\Delta)^{-1}\div(nu)\d x\\
% &&-\varepsilon\int \na\rho\pa v\pa(\mu \na v+(\mu+\lambda)\div v)\d x\nonumber\\
% &&+\varepsilon\int \rho\pa v\pa((\f{1}{\rho+n}-1)\mathcal{L}v)\d x+\varepsilon\int \rho\pa v\pa\big((\f{1}{\rho+n}-1)\mathcal{L}u\big)\d x \nonumber\\
% &&-\int \rho \pa v\pa(v\cdot \na v)+\pa n \pa\div(nv)+\pa \na\psi \pa\na(\Delta)^{-1}\div(nv)\d x\nonumber\\
 \triangleq\sum_{j=1}^{10}F_j
 \een
 where
 \begin{align*}
F_1&=\int \pa n\big(\div(\rho\pa v)-\pa\div(\rho v)\big)\d x,\qquad
F_2=\int\pa \na \psi \cdot\rho \pa v-\pa \na \psi \cdot \pa %\na(\Delta)^{-1}\div
(\rho v)\d x,\\
F_3&=\int\rho \pa v \big(u\cdot \pa \na v-\pa(u\cdot \na v)\big)\d x, \qquad
F_4=-\int \rho \pa v\pa(v\cdot \na u)\d x,
\\
F_5&=-\int\pa n \pa\div(nu)\d x,
\qquad\qquad \qquad\qquad
F_6=-\int \pa \na\psi\cdot \pa(nu)\d x,
\\
 F_7&=-\varepsilon\int \na\vr\cdot\pa v\pa\div v+(\na\vr\otimes\pa v):\pa \na v\d x,\\
 %\end{align*}
 %\begin{align*}
 F_8&=\varepsilon\int \rho\pa v\pa\big((\f{1}{\rho+n}-1)\mathcal{L}v\big)\d x,\qquad\qquad
 F_9=\varepsilon\int \rho\pa v\pa\big((\f{1}{\rho+n}-1)\mathcal{L}u\big)\d x, \nonumber\\
 F_{10}&=-\int \rho \pa v\pa(v\cdot \na v)+\pa n \pa\div(nv)+\pa \na\psi \pa%\na(\Delta)^{-1}\div
 (nv)\d x.\nonumber
\end{align*}
One first notice that $F_1,F_2,F_3$ equal to 0 when $|\alpha|=k=0.$
When $k\geq 1$,
using product estimate and Young's inequality, we have for $F_1$
\ben \label{eq10}
     F_1 &=& -\int\pa n \big([\pa,\rho]\div v+\pa(\na\vr\cdot v)-\na\vr\cdot\pa v\big) \d x \nonumber\\
     %{\color{green} a parenthesis is missing}
   %  &\lesssim&\|\pa n\|_{L^2}\big((\|\na v\|_{L^2}+\|\na v\|_{\dot{H}^{|\alpha|-1}})\|\na \rho\|_{W^{|\alpha|-1,\infty}}+(\|v\|_{L^2}+\|v\|_{\dot{H}^{|\alpha|}})\|\na \rho\|_{W^{|\alpha|,\infty}}\big)\nonumber\\
     &\lesssim&\|n\|_{\dot{H}^{|\alpha|}}\|\na\vr\|_{W^{|\alpha|,\infty}}(\|v\|_{\dot{H}^{|\alpha|}}+\|v\|_{H^1})\nonumber\\
     &\lesssim&\|\na\vr\|_{W^{|\alpha|,\infty}}\| (n,v)\|_{\dot{H}^{|\alpha|}}^2%+\|v\|_{\dot{H}^{|\alpha|}}^2)
     +\|\na\varrho\|_{W^{|\alpha|,\infty}}^{\f{3}{4}}\|n\|_{\dot{H}^{|\alpha|}}^{2}+\|\na\varrho\|_{W^{|\alpha|,\infty}}^{\f{5}{4}}\|v\|_{H^{1}}^{2}\nonumber\\
     &\lesssim&(\|\na \varrho\|_{W^{|\alpha|,\infty}}+\|\na\varrho\|_{W^{|\alpha|,\infty}}^{\f{3}{4}})\| (n,v)\|_{\dot{H}^{|\alpha|}}^2+\mathcal{E}_3 \|\na\varrho\|_{W^{|\alpha|,\infty}}^{\f{5}{4}}\nonumber\\
      &\lesssim&\|\na\varrho\|_{W^{|\alpha|,\infty}}^{\f{3}{4}}\mathscr{E}_{k}+\mathcal{E}_{3}\|\na \varrho\|_{W^{k,\infty}}^{\f{5}{4}}.
     \een
 Here,  in the last inequality we have used that $\|\na \vr\|_{W^{M,\infty}}$ is small. We point out that we use the  power $\f{5}{4}$ in the above mainly to get more time integrability for the ‘perturbation term’, that is we want $b$ larger than $\f{3}{2}$ in \eqref{eq19} and \eqref{eq20} which will lead to the better decay estimate for $(n,\na\psi,v)$.

    The estimates for $F_2$ and $F_3$. For $F_2$, we write
    \begin{align}
F_2 &=\int \pa\na \psi\cdot(\vr\pa v-\pa(\vr v))\d x
    \leq\|\pa\na \psi\|_{L^2}\| \vr\|_{W^{|\alpha|,\infty}}(\|v\|_{L^2}+\|v\|_{\dot{H}^{|\alpha|}})\nonumber\\
      &\lesssim(\| \varrho\|_{W^{|\alpha|,\infty}}+\| \varrho\|_{W^{|\alpha|,\infty}}^{\f{3}{4}})(\|\na \psi\|_{\dot{H}^{|\alpha|}}^2+\|v\|_{\dot{H}^{|\alpha|}}^2)+\mathcal{E}_{3}\|\na\varrho\|_{W^{|\alpha|-1,\infty}}^{\f{5}{4}}\nonumber\\
      &\lesssim\|\varrho\|_{W^{k,\infty}}^{\f{3}{4}}\mathscr{E}_{k}+\mathcal{E}_{3}\| \varrho\|_{W^{k,\infty}}^{\f{5}{4}},
     \end{align}
   and we can get in the same way
     \begin{align}
     F_3
%     =-\int\rho \pa v\cdot [\pa,u]\na v \d x
%     &\leq 2 \|\na u\|_{W^{|\alpha|-1,\infty}}\| v\|_{\dot{H}^{|\alpha|}}(\|\na v\|_{L^2}+\|\na v\|_{\dot{H}^{|\alpha|-1}})\nonumber \\
%   &  \lesssim &  (\|\na u\|_{W^{|\alpha|-1,\infty}}+\|u\|_{W^{|\alpha|-1,\infty}}^{\f{3}{4}})\|v\|_{\dot{H}^{|\alpha|}}^2+\mathcal{E}_{3}\|u\|_{W^{|\alpha|,\infty}}^{\f{5}{4}}\nonumber\\
      &\lesssim\|\na u\|_{W^{k-1,\infty}}^{\f{3}{4}}\mathscr{E}_{k}+\mathcal{E}_{3}\|u\|_{W^{k,\infty}}^{\f{5}{4}}.
     \end{align}
     We now estimate $F_4-F_7$ with $|\alpha|=k\geq 0$. By product estimates and Young's inequality again, we have for $F_4$,
         \begin{multline}
         F_4
      \lesssim  \|\pa v\|_{L^2}\big(\|\na u\|_{W^{|\alpha|,\infty}}
     (\|v\|_{L^2}+\|v\|_{\dot H^{|\alpha|}})\big)
     \lesssim(\|\na u\|_{W^{|\alpha|,\infty}}+\|\na u\|_{W^{|\alpha|,\infty}}^{\f{3}{4}})\|v\|_{\dot{H}^{|\alpha|}}^2+\mathcal{E}_{3}\|\na u\|_{W^{|\alpha|,\infty}}^{\f{5}{4}}  \\
     \lesssim \|\na u\|_{W^{k,\infty}}^{\f{3}{4}}\mathscr{E}_{k}+\mathcal{E}_{3}\|\na u\|_{W^{k,\infty}}^{\f{5}{4}}.
     \end{multline}
    For $F_5$, we integrate by parts for the first term and use H\"{o}lder's inequality for the other two terms to get
     \begin{align}
     F_5
     & =-\int \pa n \big(\pa \na n\cdot u+[\pa,u ]\na n+\pa(n\div u)\big)\d x\nonumber \\
    &  \lesssim \|\pa n\|_{L^2}^2\|\na u\|_{L^{\infty}}+\|\pa n\|_{L^2}\| \na u\|_{W^{|\alpha|,\infty}}(\| n\|_{\dot{H}^{|\alpha|}}+\|n\|_{H^1}) \nonumber\\
    &  \lesssim(\|\na u\|_{W^{|\alpha|,\infty}}+\|\na u\|_{W^{|\alpha|,\infty}}^{\f{3}{4}})\|n\|_{\dot{H}^{|\alpha|}}^2+\mathcal{E}_{3}\|u\|_{W^{|\alpha|+1,\infty}}^{\f{5}{4}}
      \lesssim \|\na u\|_{W^{k,\infty}}^{\f{3}{4}}\mathscr{E}_{k}+\mathcal{E}_{3}\|\na u\|_{W^{k,\infty}}^{\f{5}{4}}.
     \end{align}
     %\end{split}
    % \end{equation}
In a similar way, we have
\ben
 F_6
  &\lesssim&\|u\|_{W^{|\alpha|,\infty}}^{\f{3}{4}}\|(n,\na\psi)\|_{\dot{H}^{|\alpha|}}^2%+\|\na \psi\|_{\dot{H}^{|\alpha|}}^2)
  +\mathcal{E}_{3}\|u\|_{W^{|\alpha|,\infty}}^{\f{5}{4}}
   \lesssim \|u\|_{W^{k+1,\infty}}^{\f{3}{4}}\mathscr{E}_{k}+\mathcal{E}_{3}\|u\|_{W^{k+1,\infty}}^{\f{5}{4}}.
\een
as well as
\ben
F_7
\lesssim \varepsilon\|\na \varrho\|_{L^{\infty}}(\|\na v\|_{\dot H^{k}}^{2} +\|v\|_{\dot{H}^{k}}^2).
 \een
 For $F_8$, we only handle  $k=|\alpha|>0$ since the case $k=|\alpha|=0$ is easier. Integrating by parts, and denoting $\pa=\partial_j \partial^{\tilde{\alpha}}$ and using Lemma \ref{lemGN} we get that:
 %{\color{green} why this $\approx$ ?}
     \ben
     F_8&=& -\varepsilon  \int (\rho\partial_j\pa v+\partial_j\varrho \pa v)\pta ((\f{1}{\rho+n}-1)\mathcal{L} v)
     \nonumber\\
     &\lesssim&\varepsilon(\|\rho\|_{L^{\infty}}+\|\na \varrho\|_{L^3})\|\na v\|_{\dot H^{|\alpha|}}\|(\f{1}{\rho+n}-1)\mathcal{L} v\|_{\dot H^{|\alpha|-1}} \nonumber\\
 &\lesssim&\varepsilon%(\|\rho\|_{L^{\infty}}+\|\na \varrho\|_{L^3})
 \|\na v\|_{\dot H^{|\alpha|}}(\|\varrho+n\|_{L^{\infty}}\|\na ^2 v\|_{\dot{H}^{|\alpha|-1}}+\|\f{1}{\rho+n}-1\|_{\dot{W}^{|\alpha|-1,6}}\|\na^2 v\|_{L^3})\nonumber\\
 &\lesssim&\varepsilon \|\na v\|_{\dot H^{|\alpha|}}%(\|\rho\|_{L^{\infty}}\|\na v\|_{\dot H^{|\alpha|}}+\|\na \varrho\|_{L^{\infty}}\|v\|_{\dot H^{|\alpha|}})
 (\|\varrho+n\|_{L^{\infty}}\|\na ^2 v\|_{\dot{H}^{|\alpha|-1}}+\|\varrho +n\|_{\dot{W}^{|\alpha|-1,6}}\|\na ^2 v\|_{L^3})\nonumber\\
 &\lesssim& \varepsilon  (\mathcal{E}_{3}^{\f{1}{2}}+\|\varrho\|_{H^M})\|(n,\na v)\|_{\dot{H}^{k}}^2
 %\big(\|n\|_{\dot H^{|\alpha|}}^{2}+\|\na v\|_{\dot{H}^{|\alpha|}}^{2})%+\varepsilon \|\na\varrho\|_{L^{\infty}}\|v\|_{\dot{H}^{|\alpha|}}^2
 +\varepsilon \mathcal{E}_{3}^{\f{1}{2}}\|\varrho\|_{W^{k-1,6}}^{2},
 \een
     where we have used the fact that $\|\vr\|_{H^2}$ is bounded in the second inequality.\\
   We now deal with $F_{9}$ in the same fashion:
        %\ben
   % F_9&=&-\varepsilon\int\rho v(\f{\varrho+n}{\rho+n})\mathcal{L}u\d x\nonumber\\
%&\approx&-\varepsilon\int\rho v(\f{\varrho+n}{\rho+n})\na^2 u\d x\nonumber\\
%&\lesssim&\varepsilon\|v\|_{L^2}\|v\|_{\dot{H}^2}(\|\rho\|_{L^{\infty}}+\|n\|_{L^{\infty}})\nonumber\\
%&\lesssim&\|v\|_{\dot{H}^2} (\|v\|_{L^2}^2+\|n\|_{H^2}^2)+\varepsilon^2\|v\|_{\dot{H}^2}\|\rho\|_{L^{\infty}}
   %\een
     \ben\label{eq11}
     F_9%&=&\varepsilon \int\rho\pa v\pa\big((\f{1}{\rho+n}-1)\Delta u\big)\nonumber\\
     &\lesssim& \varepsilon \|\pa v\|_{L^2}\|\pa\big((\f{1}{\rho+n}-1)\mathcal{L}u\big)\|_{L^2}\nonumber\\
     &\lesssim&\varepsilon \|\pa v\|_{L^2}\big(\|\na^2 u\|_{W^{|\alpha|,\infty}}(\|\varrho+n\|_{L^2}+\|\varrho+n\|_{\dot{H}^{|\alpha|}})\big)\nonumber\\
     &\lesssim&\|\na ^2 u\|_{W^{|\alpha|,\infty}}^{\f{3}{4}}\|v\|_{\dot{H}^{|\alpha|}}^2+\varepsilon^2 \|\na^2 u\|_{W^{|\alpha|,\infty}}^{\f{5}{4}}(\| \varrho\|_{{H}^{|\alpha|}}^2+\mathcal{E}_{3})\nonumber\\
     &&+\varepsilon \|\na^2 u\|_{W^{|\alpha|,\infty}} (\|v \|_{\dot{H}^{|\alpha|}}^2+\|n\|_{\dot{H}^{|\alpha|}}^2)\nonumber\\
     &\lesssim&\|\na ^2 u\|_{W^{k,\infty}}^{\f{3}{4}}\mathscr{E}_{k}+\varepsilon^2 \|\na^2 u\|_{W^{k,\infty}}^\f{5}{4}(\| \varrho\|_{{H}^{k}}^2+\mathcal{E}_{3}).
     \een
 Finally, for $F_{10}$, inspired by \cite{MR2917409}, we actually have:
\ben\label{eq12}
F_{10}\lesssim \mathcal{E}_{3}^{\f{1}{2}}(\|\na v\|_{\dot{H}^{k}}^2+\|n\|_{\dot{H}^{k}}^2).
\een
 We just give details for  the third term of $F_{10}$, the first two terms  are similar and easier. Integrating by parts and using the Poisson equation, we have:
 \begin{align}
&  \nonumber \int\pa\na\psi\cdot\pa%\na(\Delta)^{-1}\div
 (nv)\d x
 =\int\pa\na\psi\cdot\pa v \Delta \psi)\d x
 =-\int \pa\na^{2}\psi:\pa(\na\psi\otimes v)+\pa\na\psi\cdot\pa\big((\na\psi\cdot \na)v\big)\d x \\
 & \label{F_{10.3}} \define \eqref{F_{10.3}}_{1}+\eqref{F_{10.3}}_{2}.
 \end{align}
 For the estimate of $\eqref{F_{10.3}}_{1}$, we use Kato-Ponce inequality (see Lemma \ref{lemmakp}) again to get:
 \beno
 |\eqref{F_{10.3}}_{1}|&\lesssim&\|\na^2\psi\|_{\dot{H}^{|\alpha|}}(\|\na\psi\|_{\dot{W}^{|\alpha|,6}}\|v\|_{L^3}+\|v\|_{\dot W^{|\alpha|,6}}\|\na\psi\|_{L^3})
 \lesssim \mathcal{E}_{3}^{\f{1}{2}}(\|n\|_{\dot{H}^{k}}^2+\|\na v\|_{\dot{H}^{k}}^2).
 \eeno
 For $\eqref{F_{10.3}}_{2}$, by using Kato-Ponce inequality and Gagliardo-Nirenberg  inequality, we have:
 \beno
 |\eqref{F_{10.3}}_{2}|&\lesssim& \|\na\psi\|_{\dot{W}^{|\alpha|,6}}\|(\na\psi\na v)\|_{\dot{W}^{|\alpha|,\f{6}{5}}}\\
 &\lesssim&\|n\|_{\dot{H}^{|\alpha|}}(\|\na v\|_{\dot{H}^{|\alpha|}}\|\na \psi\|_{L^3}+\|\na\psi\|_{\dot{W}^{|\alpha|,3}}\|\na v\|_{L^2})\\
 &\lesssim&\|n\|_{\dot{H}^{|\alpha|}}(\|\na v\|_{\dot{H}^{|\alpha|}}\|\na \psi\|_{L^3}+\|\na \psi\|_{\dot{H}^{|\alpha|+1}}^{\theta}\|\na \psi\|_{\dot{H}^\f{1}{2}}^{1-\theta}\|v\|_{\dot{H}^{\f{1}{2}}}^{\theta}\|v\|_{\dot{H}^{|\alpha|+1}}^{1-\theta})\\
 &\lesssim&\mathcal{E}_{3}^{\f{1}{2}}\|(n,\na\psi)\|_{\dot{H}^{k}}^2,
 %(\|n\|_{\dot{H}^{|\alpha|}}^2+\|\na v\|_{\dot{H}^{|\alpha|}}^2).
 \eeno
   where in the above  $\theta=\f{|\alpha|}{|\alpha|+\f{1}{2}}$.\\
  %and $s=\f{|\alpha|+1}{2|\alpha|+1}\in(\f{1}{2},1)$.
    Using (\ref{eq10}-\ref{eq12})
    , and summing up for any $|\alpha|=k$  we get the Lemma \ref{lemenergy1}.
   % summing up from $|\alpha|=0$ to $|\alpha|=M$ and from  $|\alpha|=1$ to $|\alpha|=M$ respectively.
\end{proof}
    As indicated in \cite{MR2917409}, to close the energy estimate, we must get some damping for $n$, this can be achieved by doing the ‘cross' energy estimate.
     \begin{lem}\label{lemenergy2}
    Under the assumption of Lemma \ref{lemenergy1}, we have for any $k=0,1\cdots M-1$,
     \beq\label{eqenergy2}
     \begin{split}
     &\sum_{|\alpha|=k}\f{d}{dt}\int\pa\na n\cdot\pa v\d x+\f{1}{2}(\|n\|_{\dot{H}^{k}}^2+\|n\|_{\dot{H}^{k+1}}^2) \\
     &\lesssim \quad (\| v\|_{\dot{H}^{k+1}}^2+\| v\|_{\dot{H}^{k+2}}^2)+\|(\varrho,u)\|_{W^{k+2,\infty}}^{\f{3}{4}}(\mathscr{E}_{k}+\mathscr{E}_{k+1}) \\
     &%(\mathcal{E}_3+\|\varrho\|_{H^M})^2(\|\na v\|_{H^M}
     \qquad \qquad\qquad+\varepsilon\mathcal{E}_{3}(\|(\varrho,u)\|_{W^{k+2,\infty}}^{\f{5}{4}}+\|(\varrho,u)\|_{\dot{W}^{k,6}}^2).
     \end{split}
     \eeq
     %\beq
     %\begin{split}
     %&\sum_{1\leq|\alpha|\leq M-1}\f{d}{dt}\int\pa\na n\cdot\pa v\d x+\f{1}{2}\|\na n\|_{H^{M-1}}^2  \\
    % \lesssim& \quad(\|(\varrho,u)\|_{L^{\infty}}+\|(\na\varrho,\na u)\|_{W^{M-1,3}}+\|\na^2 u\|_{H^{M}})(\|\na v\|_{H^{M-1}}^2+\|\na n\|_{H^{M-1}}^2)\\
     %(\mathcal{E}_3+\|\varrho\|_{H^M})^2(\|\na v\|_{H^M}
    % + \|\na^2 v\|_{H^{{M-1}}}^2+\|\na^2 u\|_{{H^{M-1}}}(\|\na^2\varrho\|_{H^{M-2}}^2+\|\varrho\|_{L^{\infty}}^2)
    % \end{split}
     %\eeq
     \end{lem}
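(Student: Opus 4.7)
The plan is to differentiate the cross quantity $\int \pa\na n\cdot\pa v\,dx$ in time and to substitute the equations of \eqref{NSPP2}, so that
\begin{equation*}
\f{d}{dt}\int \pa\na n\cdot\pa v\,dx \;=\; \int \pa\na(\pt n)\cdot\pa v\,dx \;+\; \int \pa\na n\cdot\pa(\pt v)\,dx.
\end{equation*}
The decisive algebraic point lies in the second integral. The leading part of the $v$--equation reads $\pt v=-\na n+\na\psi+(\text{lower order})$, and so
\begin{equation*}
\int \pa\na n\cdot\pa \pt v\,dx \;=\; -\|\pa\na n\|_{L^2}^2 \;+\; \int \pa\na n\cdot\pa\na\psi\,dx \;+\; \text{remainders}.
\end{equation*}
A single integration by parts combined with the Poisson relation $\Delta\psi=n$ turns the middle term into $-\|\pa n\|_{L^2}^2$. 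After summing over $|\alpha|=k$ this produces the full damping $\|n\|_{\dot H^k}^2+\|n\|_{\dot H^{k+1}}^2$ with coefficient $1$, from which the factor $1/2$ in \eqref{eqenergy2} is kept after the small cross contributions are absorbed by Young's inequality.

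For the first integral I use $\pt n=-\div(\rho v+nu+nv)$ and move one derivative via integration by parts, so that one is left with $\int \pa\div(\rho v+nu+nv)\,\pa\div v\,dx$. The principal piece $\int \rho(\pa\div v)^2\,dx$ together with its commutators is bounded by $\|v\|_{\dot H^{k+1}}^2+\|v\|_{\dot H^{k+2}}^2$ (the extra derivative being caused by the $\na$ originally sitting in front of $\pt n$), plus nonlinear remainders of the form $\|(\varrho,u)\|_{W^{k+2,\infty}}^{3/4}(\mathscr E_k+\mathscr E_{k+1})$ and perturbative contributions $\mathcal E_3\|(\varrho,u)\|_{W^{k+2,\infty}}^{5/4}$. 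The subprincipal pieces of the $v$--equation --- the convective terms $u\cdot\na v$ and $v\cdot(\na u+\na v)$, and the quotient correction $\ep(\tfrac{1}{\rho+n}-1)(\mathcal L v+\mathcal L u)$ --- are treated in complete analogy with the bounds on $F_3,F_4,F_8,F_9,F_{10}$ of Lemma \ref{lemenergy1}, via Kato--Ponce, Gagliardo--Nirenberg and Sobolev embeddings; the $L^\infty$/$\dot W^{k,6}$ split on $\rho+n$ produces exactly the term $\ep\mathcal E_3\|(\varrho,u)\|_{\dot W^{k,6}}^2$ on the right of \eqref{eqenergy2}. Throughout, the systematic $3/4$--$5/4$ splitting used in the proof of Lemma \ref{lemenergy1} is what secures time-integrable remainders through the decay estimates of Theorem \ref{thminviscid}.

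The main obstacle is the viscous contribution $-\ep\int \pa\na n\cdot\pa\mathcal L v\,dx$ coming from the $v$--equation. A naive Young's inequality would give $\tfrac{1}{4}\|\pa\na n\|_{L^2}^2+C\ep^2\|v\|_{\dot H^{k+2}}^2$; since $\ep\le 1$ this is exactly compatible with the $\|v\|_{\dot H^{k+2}}^2$ term allowed on the right-hand side of \eqref{eqenergy2}, but one must carefully check that the absorption is made against the $-\|\pa\na n\|_{L^2}^2$ coming from the algebraic identity above, so that the final coefficient in front of the damping can be chosen $\geq 1/2$. The second delicate point is the perturbation term $\ep\mathcal E_3\|(\varrho,u)\|_{W^{k+2,\infty}}^{5/4}$, which originates from $\int \pa\na n\cdot\pa[\ep(\tfrac{1}{\rho+n}-1)\mathcal L u]\,dx$: one expands by Leibniz, uses $\|\varrho,n\|_{H^2}\lesssim\delta$ to control $\tfrac{1}{\rho+n}-1$ in $L^\infty$ for the subcritical derivatives and in $\dot W^{k,6}$ for the top-order contributions, and splits powers via Young's inequality with exponent $3/4$ exactly as in $F_1$--$F_9$. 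Putting all of these pieces together gives \eqref{eqenergy2}.
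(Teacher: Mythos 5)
Your plan is correct and follows essentially the same route as the paper: differentiate the cross term $\int\pa\na n\cdot\pa v$, extract the damping $\|n\|_{\dot H^{k}}^2+\|n\|_{\dot H^{k+1}}^2$ from the $\na n$ and $\na\psi$ terms via integration by parts and $\Delta\psi=n$, absorb the viscous cross contribution $-\ep\int\pa\na n\cdot\pa\mathcal{L}v$ by Young's inequality against a small fraction of that damping (leaving $\|v\|_{\dot H^{k+2}}^2$ on the right), treat the term with $\ep(\tfrac{1}{\rho+n}-1)\mathcal{L}u$ as in $F_9$, and use the same $3/4$–$5/4$ Young splitting for the remainders. The only cosmetic discrepancy is your attribution of the $\|v\|_{\dot H^{k+2}}^2$ term partly to the $\pt n$ integral (after integrating by parts that piece only needs $\|v\|_{\dot H^{k+1}}^2$; the $k+2$ derivatives genuinely come from the viscous and cubic terms), which does not affect the validity of the estimate.
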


     \begin{proof}
Taking $\pa\na$ (respectively $\pa)$ on the first (respectively second) equation in system \eqref{NSPP2}, multiplying by $\pa v$(respectively $\pa\na n$), integrating in space and adding together, we get
   \begin{multline}
     \label{energyeq2}
    \f{\d}{\d t}\int\pa\na n\cdot\pa v\d x+\int|\pa\na n|^2+|\pa n|^2\d x   = G_1+G_2+G_3+G_4+G_5
  \\  \triangleq -\int\pa\na\div(\rho v+nu)\cdot\pa v\d x-\int\pa\na n\cdot\pa(u\cdot\na v+v\cdot\na u)\d x
    -\varepsilon\int\pa \na n\cdot \pa(\f{1}{\rho+n}\mathcal{L}v)\d x \\-\varepsilon\int\pa \na n\cdot \pa\big((\f{1}{\rho+n}-1)\mathcal{L}u\big)\d x
     -\int\pa\na\div(nv)\cdot\pa v+\pa\na n\cdot\pa(v\cdot\na v)\d x.
         \end{multline}
    We handle the estimates for  $|\alpha|=k\geq 1,$ $k= 0$ being easier.

     Similar to the estimate in Lemma \ref{lemenergy1}, by H\"{o}lder's and Young's inequality, we have that
     \ben \label{eq13}
     G_1&=&\int \pa \div v\cdot\pa\div(v+\varrho v+nu)\d x \leq\|\pa\div v\|_{L^2}^2+\|\pa\div v\|_{L^2}\|\varrho v+nu\|_{\dot H^{|\alpha|+1}}\nonumber\\
     &\lesssim&\|\pa\div v\|_{L^2}^2+\|\pa\div v\|_{L^2} \|(\varrho,u)\|_{W^{|\alpha|+1,\infty}}(\|(n,v)\|_{L^2}+\|(n,v)\|_{\dot H^{|\alpha|+1}})%+\|u\|_{W^{|\alpha|+1,\infty}}(\|n\|_{L^2}\|+\| n\|_{\dot H^{|\alpha|+1}})
     \nonumber\\
     &\lesssim&\|\na v\|_{\dot{H}^{|\alpha|}}^2+(\|(\varrho,u)\|_{ W^{|\alpha|+1,\infty}}+\|(\varrho,u)\|_{ W^{|\alpha|+1,\infty}}^{\f{3}{4}})\| (n,v)\|_{\dot{H}^{|\alpha|+1}}^2%+\|n\|_{\dot{H}^{|\alpha|+1}}^2)
     +\mathcal{E}_3\|(\varrho,u)\|_{ W^{|\alpha|+1,\infty}}^{\f{5}{4}}\nonumber\\
     &\lesssim&\|\na v\|_{\dot{H}^{k}}^2+%(\|(\varrho,u)\|_{ W^{|\alpha|+1,\infty}}+
     \|(\varrho,u)\|_{ W^{k+1,\infty}}^{\f{3}{4}}%)
     \mathscr{E}_{k+1}+\mathcal{E}_3\|(\varrho,u)\|_{ W^{k+1,\infty}}^{\f{5}{4}},
     \een
     as well as
     \ben
     G_2&=&-\int\pa\na n\cdot\pa(u\cdot\na v+v\cdot\na u)\d x \nonumber\\
     &\lesssim&\|\pa \na n\|_{L^2}(\|u\cdot \na v\|_{\dot H^{|\alpha|}}+\|v\cdot\na u\|_{\dot H^{|\alpha|}})\nonumber\\
     &\lesssim&\|\pa \na n\|_{L^2}\| u\|_{W^{|\alpha|+1,\infty}}(\|v\|_{H^1}+\|\na v\|_{\dot{H}^{|\alpha|}}+\|v\|_{\dot{H}^{|\alpha|}})\nonumber\\
     &\lesssim&(\|u\|_{{W^{k+1,\infty}}}+\|u\|_{{W^{k+1,\infty}}}^{\f{3}{4}})(\mathscr{E}_{k}+\mathscr{E}_{k+1})+\mathcal{E}_3\|u\|_{{W^{k+1,\infty}}}^{\f{5}{4}}.
     \een
     By Lemma \ref{lemmakp}, \ref{lemGN} in the  appendix, we estimate $G_3$ as follows:
     \ben\label{eq520}
     G_3&=&-\varepsilon\int\pa \na n\cdot \pa(\f{1}{\rho+n}\mathcal{L}v)\d x\nonumber\\
     &\lesssim&\varepsilon \|\pa\na n\|_{L^2}(\|\f{1}{\rho+n}-1\|_{\dot{W}^{|\alpha|,6}}\|\na^2 v\|_{L^3}+\|\f{1}{\rho+n}\|_{L^{\infty}}\|\na^2 v\|_{\dot{H}^{|\alpha|}})\nonumber\\
     &\lesssim&\varepsilon \|\pa\na n\|_{L^2}\big(\|\na^2 v\|_{L^3}\|(\varrho,n)\|_{\dot W^{|\alpha|,6}}+\|\na^2 v\|_{\dot{H}^{|\alpha|}}\big)\nonumber\\
     &\leq&(\f{1}{8}+c\mathcal{E}_{3}^{\f{1}{2}})\| n\|_{\dot{H}^{|\alpha|+1}}^2+c\|\na^2 v\|_{\dot H ^{|\alpha|}}^2+c \varepsilon^{2}\mathcal{E}_3\|\varrho\|_{W^{|\alpha|,6}}^2\nonumber\\
      &\leq&\f{1}{4}\|\pa\na n\|_{L^2}^2+c\| \na^2 v\|_{ \dot{H} ^{k}}^2+c \varepsilon^{2}\mathcal{E}_3\|\varrho\|_{W^{k,6}}^2
     \een
    using that  $c\mathcal{E}_3^{\f{1}{2}}\leq c\delta \varepsilon\leq \f{1}{8}$ where $\delta$ is small enough. Note that the first term in \eqref{eq520} could be absorbed by the left hand side of \eqref{energyeq2}.
     % is chosen to be small enough.\\
   Next, $G_4$ can be estimated exactly as $F_9$.
     For $|\alpha|=k\geq 1,$ we have:
     \ben
     G_4&=&-\varepsilon\int\pa \na n\cdot \pa\big((\f{1}{\rho+n}-1\big)\mathcal{L}u)\d x\nonumber\\
     &\lesssim&\varepsilon \|\pa\na n\|_{L^2} \|\na^2 u\|_{W^{|\alpha|,\infty}}(\|\varrho+n\|_{L^2}+\|\varrho+n\|_{\dot{H}^{|\alpha|}})
     \nonumber\\
      &\lesssim& (\|\na^2 u\|_{W^{|\alpha|,\infty}}+\|\na^2 u\|_{W^{|\alpha|,\infty}}^{\f{3}{4}})\|( \na n, n)\|_{\dot H^{|\alpha|}}^2+
      \varepsilon\mathcal{E}_{3}\|\na^2 u\|_{W^{|\alpha|,\infty}}^{\f{5}{4}}
%      \nonumber\\
      +\varepsilon^2\|\varrho\|_{H^{|\alpha|}}^{2}\|\na^2 u\|_{W^{|\alpha|,\infty}}^2 \nonumber\\
      &\lesssim& %(\|\na^2 u\|_{W^{|\alpha|,\infty}}+
      \|\na^2 u\|_{W^{k,\infty}}^{\f{3}{4}}%)
      (\mathscr{E}_{k}+\mathscr{E}_{k+1})+
      \varepsilon\mathcal{E}_{3}\|\na^2 u\|_{W^{k,\infty}}^{\f{5}{4}}+\varepsilon^2\|\varrho\|_{H^{k}}^{2}\|\na^2 u\|_{W^{k,\infty}}^2.
     \een
     For $G_5$, as in \cite{MR2917409}, one can show that if $\mathcal{E}_{3}^{\f{1}{2}}\leq \delta\varepsilon$ with $\delta$ small enough, we have:
     \beq \label{eq14}
     G_5\leq\f{1}{8}(\|n\|_{\dot{H}^{k}}^2+\|\na n\|_{\dot{H}^{k}}^2)+c(\|\na u\|_{\dot{H}^{k}}^2+\|\na^2 u\|_{\dot{H}^{k}}^2).
     \eeq
     Summing up from \eqref{eq13} to \eqref{eq14}, we get Lemma \ref{lemenergy2}.
    \end{proof}

    \begin{proof}[Proof of Theorem \ref{thmp}]

   We first prove global existence. %result in Theorem \ref{thmp}.
   Summing up from $k=0$ to $k=M$, we can conclude from Lemma \ref{lemenergy1} and Lemma \ref{lemenergy2} , Remark \ref{remdecay} that if $\mathcal{E}_{3}\leq \delta^2\ep^2$ and $\delta$ is small enough, then there are some constants which depend only on $M$, such that
     \beq \label{eq18}
     \begin{split}
   &\f{\d}{\d t}\mathcal{E}_M+C \varepsilon\|\na v\|_{H^{M}}^2 \\
   %+\f{2\mu+\lambda}{16}\|\div v\|_{H^{M}}^2 \nonumber\\
   \leq &C_1 (1+t)^{-a} \|(\varrho,u)\|_{X}^{\f{3}{4}}\mathcal{E}_{M}
  +C_2 \delta\varepsilon\|(\na v, n)\|_{H^M}^{2}
   +C_3{\varepsilon}^2(1+t)^{-b}(\delta \|(\varrho,u)\|_{X}^2+\|(\varrho,u)\|_{X}^3)\\
  \leq& C_1 \delta^{\f{3}{4}} (1+t)^{-a}\mathcal{E}_{M}
  +C_2 \delta\varepsilon\|(\na v, n)\|_{H^M}^{2}+C_3\delta^{3}{\varepsilon}^2(1+t)^{-b},
  \end{split}
   \eeq
   and
    \beq \label{eq19}
    \begin{split}
     &\sum_{|\alpha|\leq M-1}\f{d}{dt}\int\pa\na n\cdot\pa v\d x+\f{1}{2}\|n\|_{H^{M}}^2 \\
     \leq & C_4\|\na v\|_{H^{M}}^2+C_5 (1+t)^{-a} \|(\varrho,u)\|_{X}^{\f{3}{4}}\mathcal{E}_{M} +C_6 \mathcal{E}_{3}(1+t)^{-b}\|(\varrho,u)\|_{X}\\
      \leq&C_4\|\na v\|_{H^{M}}^2+C_5 \delta^{\f{3}{4}}\mathcal{E}_{M}+C_6\delta^{3}\varepsilon^2(1+t)^{-b},
     \end{split}
      \eeq
     where $a>1,b>\f{5}{3}$ (here we use $\|(\varrho,u)\|_{W^{\sigma,\infty}}\lesssim(1+t)^{-\f{4}{3}}$).

    Multiplying \eqref{eq19} by $8C_2\delta \varepsilon$ and add it to \eqref{eq18}, if $\mathcal{E}_{3}^{\f{1}{2}}\leq \delta \varepsilon$ and $\delta$ is small enough, (say, $32C_2C_4\delta \leq C$) we get that there exist constant $C_7,C_8,C_9,$ such that
    \beq \label{eq20}
    \begin{split}
    &\f{\d}{\d t}(\mathcal{E}_M+8C_2\delta\varepsilon \sum_{|\alpha|\leq M-1}\int\pa\na n\cdot\pa v\d x)+C_7 \varepsilon\|(n,
    \nabla v)\|_{H^M}^2
    \leq C_8 (1+t)^{-a}\delta^{\f{3}{4}}\mathcal{E}_{M}+C_9  \delta^3\varepsilon^{2}(1+t)^{-b}.
    \end{split}
    \eeq

        %\beq
       % \begin{split}
    %&\f{\d}{\d t}(\mathcal{E}^{M-1}(\na n,\na\na v,\na v)+8C_2\delta\varepsilon \sum_{1\leq|\alpha|\leq M-1}\int\pa\na n\cdot\pa  v\d x)+C_7 \varepsilon(\|\na n\|_{H^{M-1}}^2+\|\na^{2} v\|_{H^{M-1}}^2)\\
   % &\leq C_8 (1+t)^{-a}\|(\varrho,u)\|_{X}\mathcal{E}_1^{M}+C_9  \varepsilon^{2}(1+t)^{-b}\|(\varrho,u)\|_{X}^3
   % \end{split}
    %\eeq

       Define $\tilde{\mathcal{E}}_M=\mathcal{E}_M+8C_2\delta\varepsilon \sum_{|\alpha|\leq M-1}\int\pa\na n\cdot\pa v\d x$, we see that $\tilde{\mathcal{E}}_M\approx \mathcal{E}_M$(say $\f{1}{2}\mathcal{E}_M \leq\tilde{\mathcal{E}}_M\leq 2\mathcal{E}_M$) if $\delta$ is small enough.

       From inequality \eqref{eq20},    Grönwall's inequality and the fact $\|(\varrho,u)\|_{X}\leq \delta$, we achieve that:
       \beq \label{eq21}
       \begin{split}
      \mathcal{E}_M(t)+C_7\varepsilon \int_{0}^{t}\|(n, \na v)\|_{H^M}^2\d s
      & \leq e^{C_8\delta^{\f{3}{4}} \int_{0}^{t}(1+s)^{-a}\d x}(4\mathcal{E}_M(0)+2C_9 \delta^3 \varepsilon^2\int_{0}^{t}(1+s)^{-b}\d s)\\
      &\leq C_{10}\mathcal{E}_M(0)+C_{11}\delta^3 \varepsilon^2.
      \end{split}
      \eeq
         Global existence of $(n,\na \psi, v)$ in $C([0,+\infty),H^3)$ then is direct by bootstrap arguments. Moreover, we have $\mathcal{E}_3(t)\leq \delta^2 \varepsilon^2$ if $\mathcal{E}_3(0)\leq \f{1}{16}\delta^2 \varepsilon^2$ and $\delta$ is small enough (Note that $C_{10}\leq 8$ if $\delta$ is small enough.)
        Finally, as can be seen easily from \eqref{eq21}, if in addition, $\mathcal{E}_{M}(0)<+\infty$, then the solution constructed also belongs to $C([0,\infty),H^M)$.
        %for $3<M\leq \si-2$, there are also global existence in $C((0,+\infty),H^M)$ as long as $\mathcal{E}_M(0)$ is finite and $\mathcal{E}_3(0)$
        %is small enough with respect to $\varepsilon$ (say $\mathcal{E}_3^{\f{1}{2}}(0)\leq \f{1}{4}\delta\varepsilon$).\\

        \begin{rmk}
          If we define $\mathcal{E}_{k}^{M}=\sum_{l=k}^{M}\mathscr{E}_{l}$,
          then by adding \eqref{eqenergy1} from $k$ to $M$, \eqref{eqenergy2} from $k$ to $M-1$ and the same arguments for proving \eqref{eq20}, we can have(with another constant $C_7$):
     \beq \label{eqhigh}
        \begin{split}
    &\f{\d}{\d t}(\mathcal{E}_{k}^{M}+8C_2\delta\varepsilon \sum_{k\leq|\alpha|\leq M-1}\int\pa\na n\cdot\pa  v\d x)+C_7 \varepsilon(\|\na^{k} n\|_{H^{M-k}}^2+\|\na^{k+1} v\|_{H^{M-k}}^2)\\
    &\leq C_8 (1+t)^{-a}\delta^{\f{3}{4}}\mathcal{E}_{k}^{M}+C_9  \varepsilon^{2}\delta^{3}(1+t)^{-b}.
    \end{split}
    \eeq
     \end{rmk}

          Motivated by \cite{MR2917409},
       we can prove that if the initial data belongs to some negative Sobolev spaces, the solution for system \eqref{NSPP} will propagate in this space.  This will allow us to obtain some time decay for  $(n,\na\psi,v)$.

        \begin{lem}\label{lemne}
        For $0<s\leq \f{1}{2}$, we have:
        \beqs
        \begin{split}
        &\f{d}{\d t}\int|\Lambda^{-s}n|^2+|\Lambda^{-s}\na \psi|^2+|\Lambda^{-s}v|^2\d x+\int|\Lambda^{-s}\na v|^2+|\Lambda^{-s}\div v|^2\d x\\
       & \lesssim\|\Lambda^{-s}(n,\na\psi,v)\|_{L^2}%+\|\Lambda^{-s}\na \psi\|_{L^2}^2+\|\Lambda^{-s}v\|_{L^2}^2)
       \big(\|n\|_{H^2}^2+\|\na v\|_{H^1}^2+%\|(\varrho,u)\|_{ W^{1,3/s}}^2+
        \|( \varrho, u)\|_{W^{2,3/s}}(\|(n,v)\|_{H^1}+\|(\vr,u)\|_{H^1})\big).\\
    \end{split}
    \eeqs
%        For $\f{1}{2}<s<1,$
        %\beq
       % \begin{split}
        %&\f{d}{\d t}\int|\Lambda^{-s}n|^2+|\Lambda^{-s}\na \psi|^2+|\Lambda^{-s}v|^2\d x+\int\mu|\Lambda^{-s}\na v|^2+(\mu+\lambda)|\Lambda^{-s}\div v|^2\d x\\
       % &\lesssim
        %  (\|\Lambda^{-s}n\|_{L^2}^2+\|\Lambda^{-s}\na \psi\|_{L^2}^2+\|\Lambda^{-s}v\|_{L^2}^2)\\
        %&\qquad(\|(n,v)\|_{L^2}^{s-1/2}\|(n,\na v)\|_{H^1}^{5/2-s}+\|n\|_{H^2}^2+\|\na v\|_{H^1}^2+\|(\varrho,u)\|_{ W^{1,3/s}}^2+\|( \varrho, u)\|_{W^{1,3/s}}\|(n,v)\|_{L^2})\\
   % \end{split}
       % \eeq
        \end{lem}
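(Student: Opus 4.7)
I would perform a weighted $L^{2}$ energy estimate in $\dot H^{-s}$ directly on the system~\eqref{NSPP2}. Apply $\Lambda^{-s}$ to the three equations, take the $L^{2}$ inner product of the density equation with $\Lambda^{-s}n$ and of the momentum equation with $\Lambda^{-s}v$, and close the $\|\Lambda^{-s}\nabla\psi\|_{L^{2}}^{2}$ piece via the Poisson relation $\Delta\psi=n$, which gives $\partial_{t}\nabla\psi=-\nabla\Delta^{-1}\div(\rho v+nu+nv)$. Since $\Lambda^{-s}$ commutes with every Fourier multiplier at hand, the computation parallels the standard $L^{2}$ energy identity.

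\textbf{Linear structure.} The cross-couplings $\nabla n\leftrightarrow\div v$ between the two equations cancel by integration by parts, and the linear electric-field term $+\int\Lambda^{-s}\nabla\psi\cdot\Lambda^{-s}v\,\d x$ coming from the momentum equation combines with $\tfrac{1}{2}\tfrac{\d}{\d t}\|\Lambda^{-s}\nabla\psi\|_{L^{2}}^{2}$ via Poisson, cancelling to leave only nonlinear contributions of the form $\int\Lambda^{-s}\nabla\psi\cdot\Lambda^{-s}(\vr v+nu+nv)\,\d x$. The viscous term $-\ep\mathcal L v$ produces the dissipation $\ep\bigl(\|\Lambda^{-s}\nabla v\|_{L^{2}}^{2}+\|\Lambda^{-s}\div v\|_{L^{2}}^{2}\bigr)$ on the left-hand side.

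\textbf{Nonlinear estimates.} The workhorse is the negative-Sobolev product inequality
\[\|\Lambda^{-s}(fg)\|_{L^{2}}\lesssim \|f\|_{L^{3/s}}\|g\|_{L^{2}},\qquad 0<s<\tfrac{3}{2},\]
obtained by duality from $H^{s}\hookrightarrow L^{6/(3-2s)}$ and H\"older, combined with the Sobolev embedding $H^{3/2-s}\hookrightarrow L^{3/s}$; the hypothesis $s\leq 1/2$ ensures $3/2-s\in[1,3/2)$ so that $\|n\|_{L^{3/s}}\lesssim\|n\|_{H^{2}}$ and $\|v\|_{L^{3/s}}\lesssim\|v\|_{H^{2}}$. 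For the cubic contribution $\int\Lambda^{-s}\div(nv)\cdot\Lambda^{-s}n\,\d x$ I split $\div(nv)=n\,\div v+v\cdot\nabla n$ and obtain $\|n\|_{L^{3/s}}\|\div v\|_{L^{2}}+\|v\|_{L^{3/s}}\|\nabla n\|_{L^{2}}\lesssim \|n\|_{H^{2}}\|\nabla v\|_{H^{1}}$, absorbable into the $\|n\|_{H^{2}}^{2}+\|\nabla v\|_{H^{1}}^{2}$ part of the right-hand side by Cauchy--Schwarz and AM--GM. The semilinear pieces $\int\Lambda^{-s}\div(\vr v)\cdot\Lambda^{-s}n$, $\int\Lambda^{-s}\div(nu)\cdot\Lambda^{-s}n$, and $\int\Lambda^{-s}(u\cdot\nabla v+v\cdot\nabla u)\cdot\Lambda^{-s}v$ are all of the form $\|(\vr,u)\|_{W^{1,3/s}}\|(n,v)\|_{H^{1}}$ after the same key inequality, controlled by $\|(\vr,u)\|_{W^{2,3/s}}\|(n,v)\|_{H^{1}}$.

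\textbf{Main obstacle.} The delicate pieces are the two viscous remainders $\ep\Lambda^{-s}\bigl[(\tfrac{1}{\rho+n}-1)\mathcal L v\bigr]$ and $\ep\Lambda^{-s}\bigl[(\tfrac{1}{\rho+n}-1)\mathcal L u\bigr]$, since $\mathcal L$ costs two derivatives. Writing $\tfrac{1}{\rho+n}-1=-(\vr+n)/(\rho+n)$ and exploiting the $L^{\infty}$ smallness of $\vr+n$ (so that the denominator is harmless), the first term is bounded by $\|\vr+n\|_{L^{3/s}}\|\mathcal Lv\|_{L^{2}}$, contributing to the $\|\nabla v\|_{H^{1}}^{2}$ part after $\ep\leq 1$ is dropped; the second must be handled by placing $(\vr+n)$ in $L^{2}$ and $\mathcal Lu$ in $L^{3/s}$, producing exactly the $\|(\vr,u)\|_{W^{2,3/s}}\|(\vr,u)\|_{H^{1}}$ summand on the right-hand side. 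Choosing the correct factor to place in $L^{3/s}$ versus $L^{2}$ at each step, and matching the result against the precise shape of the right-hand side, is the technical heart of the argument.
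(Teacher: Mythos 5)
Your proposal is correct and follows essentially the same route as the paper: an $L^2$ energy estimate on $\Lambda^{-s}(n,\nabla\psi,v)$ using the Poisson equation for the $\nabla\psi$ part, the product bound $\|\Lambda^{-s}(fg)\|_{L^2}\lesssim\|f\|_{L^{3/s}}\|g\|_{L^2}$ (the paper cites Hardy--Littlewood--Sobolev plus H\"older, which is equivalent to your duality derivation), the embedding $H^{3/2-s}\hookrightarrow L^{3/s}$ for $s\leq\tfrac12$, and exactly the same placement of factors ($\mathcal L v$ in $L^2$ with $\vr+n$ in $L^{3/s}$, versus $\mathcal L u$ in $L^{3/s}$ with $\vr+n$ in $L^2$) for the viscous remainders. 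No gaps worth flagging; your explicit $\ep$ in front of the dissipation is in fact the more precise statement, and is harmless since that term is only used as a nonnegative contribution.
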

        \begin{proof}
        Applying  $\Lambda^{-s}$ to  the equations \eqref{NSPP} and multiplying by  $\Lambda^{-s}n,\Lambda^{-s}v$ respectively, we get, after using the Poisson equation:
        \beno
        &&\f{1}{2}\f{d}{\d t}\int|\Lambda^{-s}n|^2+|\Lambda^{-s}\na \psi|^2+|\Lambda^{-s}v|^2\d x+\int |\Lambda^{-s}\na v|^2+|\Lambda^{-s}\div v|^2\d x\nonumber\\
        &=&-\int \Lambda^{-s}v\Lambda^{-s}(u\cdot\na v+ v\cdot\na u+v\cdot\na v)-\Lambda^{-s}v\Lambda^{-s}(\f{1}{\rho+n}-1)(\mathcal{L}u+\mathcal{L}v))\d x
        \nonumber\\
         &&-\int \Lambda^{-s}n\Lambda^{-s}\div(\varrho v+nu+nv)+\Lambda^{-s}{\na \psi}\Lambda^{-s}(\varrho v+nu+nv)\d x
         \nonumber\\
         &\triangleq& H_1+H_2+H_3+H_4.
        \eeno
         We only estimate $H_1,H_2,$ since the other two terms can be handled by similar arguments.
        Using Hölder's inequality and the Hardy-Littlewood-Sobolev inequality: $$\|\Lambda ^{-s}f\|_{L^2}\lesssim\|f\|_{L^{\f{1}{\f{1}{2}+\f{s}{3}}}} \quad0\leq s<\f{3}{2},$$ we get:
        \beno
        H_1&=&-\int \Lambda^{-s}v\Lambda^{-s}(u\cdot\na v+ v\cdot\na u+v\cdot\na v )\d x\nonumber\\
        &\lesssim&\|\Lambda^{-s}v\|_{L^2}\big(\|\na v\|_{L^2}\|u\|_{L^{3/s}}+\|\na u\|_{L^{3/s}}\|v\|_{L^2}+\|\na v\|_{L^2}\|v\|_{L^{3/s}}\big)\nonumber\\
        &\lesssim&\|\Lambda^{-s}v\|_{L^2}\big(\|\na v\|_{H^1}^2+\|u\|_{W^{1,s/3}}\|v\|_{H^1}\big).
       \eeno
       \beno
       H_2&=&\int \Lambda^{-s}v\Lambda^{-s}(\f{1}{\rho+n}-1)(\mathcal{L}u+\mathcal{L}v))\d x\nonumber
       \lesssim \|\Lambda^{-s}v\|_{L^2}\big(\|\na^2 u\|_{L^{\f{3}{s}}}\|(\vr, n )\|_{L^2}
       +\|\na^2 v\|_{L^2}\|(\varrho, n)\|_{L^{3/s}}   \big)\nonumber\\
       &\lesssim&\|\Lambda^{-s}v\|_{L^2}\big(\|\na^2 u\|_{L^{\f{3}{s}}}\|(\vr, n)\|_{L^2}+\|\na^2 v\|_{L^2}^2+\|n\|_{H^2}^2+\|\varrho\|_{L^{3/s}}\|\na^2 v\|_{L^2}\big).
       \eeno
        %In the above, we have used that $\|(\varrho,u)\|_{H^3},\|(n,v)\|_{H^3}$ is bounded.

%    Secondly, for $\f{1}{2}<s<1$, everything is the same except the term only involve $n$ and $v,$ but these term has been treated in $\cite{wang2012decay}$ Actually, for example the terms which $\int \Lambda^{-s}v \Lambda^{-s}(v\cdot \na v)\d s,$ we can just use interpolation inequality to get :
   % \beq
   % \|\Lambda^{-s}(v\cdot\na v)\|_{L^2}\lesssim \|v\|_{L^{s/3}}\|\na v\|_{L^2}\lesssim \|v\|_{L^2}^{s-1/2}\|\na v\|_{L^2}^{5/2-s}
   % \eeq
    This ends the proof.
\end{proof}
Now we can prove the decay estimate for $(n,v,\na\psi)$ which is stated in Theorem \ref{thmp}. Here we follow the arguments in \cite{MR2917409} with a few considerations on perturbation terms.

     Step 1: \\
    Prove $(n,\na \psi, v)$ propagate in the negative Sobolev space $\dot{H}^{-s}$.
   We should make use of the damping property of $(n,\na v)$ and decay estimate in time of $(\varrho,u)$ .

   Define $$\mathcal{E}_{-s}=\|(n,\na\psi,v)\|_{H^{-s}}^2.$$
   By Lemma \ref{lemne}, the decay estimate of $(\varrho,u)$: $\|(\vr,u)\|_{W^{2,\f{3}{s}}}\lesssim (1+t)^{-\f{3}{2}(1-\f{2s}{3})}$
    (note $\f{3}{2}(1-\f{2s}{3})>1$ if $0<s<\f{1}{2}$)
    and the damping property of $(n,\na v)$  (see \eqref{eq21}), we have:
    \beno
    &&\sup_{0\leq \tau \leq t}\mathcal{E}_{-s}(\tau)\\
    &\leq &\mathcal{E}_{-s}(0)+C\int_{0}^{t}\|n\|_{H^2}^2+\|\na v\|_{H^1}^2+%\|(\varrho,u)\|_{ W^{1,3/s}}^2+
   \|( \varrho, u)\|_{W^{2,3/s}}(\|(n,v)\|_{H^1}+\|(\vr,u)\|_{H^1})\d\tau \sup_{0\leq \tau \leq t}\mathcal{E}_{-s}^{\f{1}{2}}\\
    &\leq&\mathcal{E}_{-s}(0)+(\sup_{0\leq \tau \leq t}\mathcal{E}_{-s})^{\f{1}{2}},
    \eeno
    which yields the boundedness of $\|(n,\na\psi,v)\|_{H^{-s}}$ if we suppose $\mathcal{E}_{-s}(0)< +\infty$.
\begin{rmk}
The case $s=\f{1}{2}$ is critical in the sense that the source term $\ep (\vr\mathcal{L}u)$ (which comes from $\ep[(\f{1}{1+\vr+n}-1)\mathcal{L}u)]$) has critical decay $(1+t)^{-1}$ in $\dot{H}^{\f{1}{2}}$.
% It seems no way to prove $(n,\na\psi,v)$ remains bounded in ${\dot H^{-\f{1}{2}}}$ if there is no additional assumption imposed on %the initial data of
%$(\vr,\na\phi,u)(0)$.
%$(n,\na\psi,v)\in{H^{-s}$ for any $0<s<\f{1}{2}$, which
\end{rmk}

    Step 2: Using interpolation and energy estimate to get new energy inequality,
    and then get the time decay estimate.\\
    By interpolation, we have:
    \beqs
    \|u\|_{L^2}\leq \|u\|_{\dot{H}^{-s}}^{\f{1}{1+s}}\| u\|_{\dot H ^{1}}^{\f{s}{1+s}},
    \eeqs
    which is equivalent to
    \beqs
    \| u\|_{\dot H ^{1}}\geq\|u\|_{L^2}^{\f{1+s}{s}}\|u\|_{\dot{H}^{-s}}^{-\f{1}{s}}.
    \eeqs
    Combined with \eqref{eq20}, we get that:
   \beq \label{eq22}
   \begin{split}
    &\f{\d}{\d t}\mathcal{E}_M+C_{12} \varepsilon(\|n\|_{H^M}^2+\|v\|_{H^M}^2)^{1+\f{1}{s}}
    \leq C_{13} (1+t)^{-b}\varepsilon^{2}+C_{14}  (1+t)^{-a}\mathcal{E}_{M}.
    \end{split}
    \eeq

    We now prove the time decay estimate when $M=3$.
     We recall that we assume $\mathcal{E}_{3}(0)$ is small respect to $\varepsilon$.
     %we can also have another kind of decay estimate for $\|(n,\na\psi,v)\|_{H^{3}}^{2}$.\\
 Defining firstly $\beta_s=\f{2}{s}+1$, $f=\exp({-\f{C_{14}}{a-1}(1+t)^{-(a-1)}})\mathcal{E}_3$, then multiplying \eqref{eq22} by $(1+\varepsilon^{\beta_s}t)^{\gamma},
 (s<\gamma<b-1)$
    and integrating in time, we have by Young's inequality:
    \beno
    &&(1+\varepsilon^{\beta_s}t)^{\gamma}f+C_{12}\varepsilon \int_{0}^{t}(1+\varepsilon^{\beta_s}\tau)^{\gamma}f^{1+\f{1}{s}}(\tau)\d\tau\leq f(0)+\gamma \varepsilon^{\beta_s} \int_{0}^{t}(1+\varepsilon^{\beta_s}\tau)^{\gamma-1}f\d \tau+C_{15}\ep^2%\f{C_{13}}{b-1}\varepsilon^{2}(1+t)^{-(b-1)}(1+\varepsilon^{\beta_s}t)^{\gamma}
    \nonumber\\
    &\leq&(f(0)+C_{15}\ep^2)+\f{C_{12}}{2}\varepsilon \int_{0}^{t}(1+\varepsilon^{\beta_s}\tau)^{\gamma}f^{1+\f{1}{s}}(\tau)\d\tau+C_{16}\varepsilon^2 (1+\varepsilon^{\beta_s}t)^{\gamma-s}\nonumber
    % +\f{C_{13}}{b-1}\varepsilon^{2}(1+t)^{-(b-1)}(1+\varepsilon^{\beta_s}t)^{\gamma},
    \eeno
   which yields:
   \beno
   f&\lesssim &\varepsilon^2(1+\varepsilon^{\beta_s}t)^{-\gamma}+\varepsilon^{2}(1+\varepsilon^{\beta_s}t)^{-s}%+\varepsilon^{2}(1+t)^{-(b-1)}\nonumber
   \lesssim \varepsilon^2 (1+\varepsilon^{\beta_s}t)^{-s}.%+\varepsilon^{2}(1+t)^{-(b-1)}.
   \eeno
   %where we use the assumption that $\varepsilon \leq1$.
 We thus get that:
        \beno
         \|(n,\na\psi,v)(t)\|_{H^3}\lesssim \varepsilon(1+\varepsilon^{\beta_s}t)^{-\f{s}{2}}.%+\ep(1+t)^{-\f{b-1}{2}}.
        \eeno
   which, by considering $\ep^{\beta_s}t \lesssim 1$ and  $\ep^{\beta_s}t\gtrsim 1$ respectively, yields
  % for any $\kappa<\f{1}{4}$
   \ben\label{eqdecay}
   \|(n,\na\psi,v)(t)\|_{H^3}%\leq \varepsilon(1+\varepsilon^{\beta_s}t)^{-\f{s}{2}}%+\ep(1+t)^{-\f{b-1}{2}}
   \lesssim \min \{\ep,(1+t)^{-\f{s}{2+s}}\}.%+\ep(1+t)^{-\f{b-1}{2}}.
   \een
  This ends the proof of Theorem \ref{thmp}.
\end{proof}

   \begin{rmk}
For $M>3$, as we do not assume the initial data
$\|\cP u_{0}^{\ep}\|_{H^M}$ is small proportional to $\ep$, we do not expect that $\|(n,\na\psi,u)\|_{H^M}$has decay like \eqref{eqdecay} which is independent of $\ep$. However, we  could still get some decay in the slow variable "$\ep t$". Defining $g=\exp({-C_{14}\int_{0}^t(1+\tau)^{-a}\d \tau})\mathcal{E}_M=\exp({-\f{C_{14}}{a-1}(1+t)^{-(a-1)}})\mathcal{E}_M$.
    We choose again a constant $\gamma$ with condition $s<\gamma<b-1$  multiply \eqref{eq22} by $(1+\varepsilon t)^{\gamma}$,
   and  integrate then in time, we achieve that:
\begin{align*}
    &(1+\varepsilon t)^{\gamma}g+C_{12}\varepsilon \int_{0}^{t}(1+\varepsilon \tau)^{\gamma}g^{1+\f{1}{s}}(\tau)\d\tau\leq   g(0)+C_{15}\ep^2 +\gamma \varepsilon \int_{0}^{t}(1+\varepsilon \tau)^{\gamma-1}g(\tau)\d \tau\nonumber\\ %+\f{C_{13}}{a-1}\varepsilon^{2}(1+t)^{-(b-1)}(1+\varepsilon t)^{\gamma}
   & \leq g(0)+C_{15}\ep^2+\f{C_{12}}{2}\varepsilon \int_{0}^{t}(1+\varepsilon\tau)^{\gamma}g^{1+\f{1}{s}}(\tau)\d\tau+C_{16}(1+\varepsilon t)^{\gamma-s}
   %\nonumber+\f{C_{13}}{b-1}\varepsilon^{2}(1+t)^{-(b-1)}(1+\varepsilon t)^{\gamma},
    \end{align*}
   which yields  $g\lesssim(1+\varepsilon t)^{-s}. $
   \end{rmk}

   \begin{rmk}
     By \eqref{eqhigh} and interpolation $\|v\|_{\dot{H}^{k+1}}  \geq\|v\|_{\dot{H}^{k}}^{1+\f{1}{k+s}}\|u\|_{\dot{H}^{-s}}^{-\f{1}{k+s}}$,
     we can also have:
     \beqs
     \|\na^{k}(n,\na\psi,v)\|_{H^{M-k}}\lesssim %\varepsilon(1+t)^{-\f{b-1}{2}}+
     (1+\varepsilon t)^{-\min\{\f{k+s}{2},\f{1}{3}-\}},
     \eeqs
     \beqs
     \|\na^{l}(n,\na\psi,v)\|_{H^{3-l}}\lesssim   \varepsilon(1+\varepsilon^{\beta_{s,l}} t)^{-\min\{\f{l+s}{2},\f{1}{3}-\}}%+\ep(1+t)^{-\f{b-1}{2}}
     \lesssim \min \{\ep,(1+t)^{
     -\min\{\f{l+s}{2+l+s},\f{1}{3}-\}}\}.%+\ep(1+t)^{-\f{b-1}{2}}.
     \eeqs
     where $k=0,1,2\cdots M-1$, $l=0,1,2$ and $\beta_{s,l}=1+\f{2}{l+s}$.
   \end{rmk}

   % Step 3:   Proof for $\f{1}{2}\leq s<1$\\
%If $\mathcal{P}u_{0}^{\varepsilon}\in \dot{H}^{-s}$, where $s\in[\f{1}{2},\f{3}{4})$, then by interpolation, it belongs to $\dot{H}^{-\f{1}{2}+\kappa}$ for any sufficient small $\kappa$, so by above results, there are some decay on $(v,n)$:
 %\ben
  % \|(n,\na\psi,v)(t)\|_{H^M}\lesssim \varepsilon(1+\varepsilon t)^{-\f{1-2\kappa}{4}}
  % \een
  % \ben
  % \|\na n\|_{H^1}=\| \Delta \na\psi\|_{H^1}\lesssim\|\na\na\psi\|_{H^2}\lesssim \varepsilon(1+\varepsilon t)^{-\f{3-2\kappa}{4}}
  % \een
   % thus we have:
    %\ben \label{eq40}
  % && \int_{0}^{t}\|(n,v)\|_{L^2}^{s-1/2}\|(n,\na v)\|_{H^1}^{5/2-s}+\|n\|_{H^2}^2+\|\na v\|_{H^1}^2\nonumber\\
   % &&\qquad\qquad+\|(\varrho,u)\|_{ W^{1,3/s}}^2+\|( \varrho, u)\|_{W^{1,3/s}}\|(n,v)\|_{L^2}(\tau)\d\tau <+\infty \nonumber\\
   % \een
   % thus we still have that $\|\Lambda^{-s}(n,\na\psi,v)\|_{L^2}$ is bounded. Then repeat the above arguments, we still have that:
  %\ben
  % \|(n,\na\psi,v)(t)\|_{H^M}\leq \varepsilon(1+\varepsilon^{\beta_s}t)^{-\f{s}{2}}
  % \een
  % Once we proved for $s\in[\f{1}{2},\f{3}{4})$, we have more time decay on $(n,v)$ (the last term of \ref{eq40} can be integrable) then we can prove for $s \in[\f{1}{4},1)$  by repeating the above arguments.

  % \qquad \qquad\qquad\qquad\qquad\qquad\qquad\qquad\qquad$\hfill\square$

  % \begin{rem}{Remark 5.1}
  % The restriction on $s$ come from the decay estimate of $\|(\varrho,u)\|_{ W^{1,3/s}}$ whose time decay is only $(1+t)^{-\f{3}{2}(1-\f{2s}{3})}$ which is critical when $s=1$
   %\end{rem}

\section{Remarks on more  general pressure laws  and  viscosity coefficients}
In this short section, we will explain briefly how  our results can be easily extended to  more  general pressure laws  and viscosity coefficients. Here, we suppose that the pressure $p(\rho^{\ep})=\f{1}{\gamma}(\r^{\ep})^{\gamma},\gamma>1$,
and $\mu=\mu(\r^{\ep}),\lambda=\lambda(\r^{\ep})$ are both density dependent. We assume that $\mu,\lambda$ are smooth 
 functions in the vicinity of $1$ and that  $\mu(1)>0,\,  2\mu(1)+\lambda(1)> 0$. As previously , we write $(\r^{\ep},\na\phi^{\ep},u^{\ep})=(1+\vr,\na\phi,u)+(n,\na\psi,v)$ where $(1+\vr,\na\phi,u)$ and $(n,\na\psi,v)$ satisfy the following two systems respectively:

 \begin{equation}\label{NSPlow1g}
 \left\{
\begin{array}{l}
\displaystyle \pt \vr +\div u=-\div(\vr u),\\
\displaystyle  \pt u+u \cdot {\na u}-\varepsilon \mathcal{L}_{1} u+
\nabla \vr-\nabla \phi=-\na\big(\f{(1+\vr)^{\gamma-1}}{\gamma-1}-\vr\big), \\
\displaystyle \Delta \phi =\vr,\\
\displaystyle u|_{t=0} =\mathcal{P}^{\perp}u_0^{\varepsilon} ,\vr|_{t=0}=\vr_0=\rho_{0}^{\varepsilon}-1.
\end{array}
\right.
\end{equation}

\beq \label{NSPPg}
\left\{
\begin{array}{l}
\displaystyle \pt n +\div v+\div( \vr v+nu+nv)=0,\\
\displaystyle \pt v+u\cdot {\na v}+v\cdot (\na u+\na v)-\varepsilon\mathcal{L}_{1}v + p'(1+\vr+n)\na n -\na \psi
=\varepsilon(\f{1}{1+\vr+n}-1) (\mathcal{L}_{\rho^{\ep}}v+\mathcal{L}_{\rho^{\ep}}u)   \\
\displaystyle \qquad\qquad\qquad
+\ep(\mathcal{L}_{\rho^{\ep}}-\mathcal{L}_{1})(u+v)+
(p'(1+\vr+n)-p'(1+\vr))\na\vr,\\
\displaystyle \Delta \psi=n,\\
\displaystyle v|_{t=0} =\mathcal{P}u_0^{\varepsilon}, n|_{t=0}=0.
\end{array}
\right.
\eeq
where we denote $$\mathcal{L}_1(u)=\mu(1)\Delta u+(\mu(1)+\lambda(1))\na\div u,
\qquad \mathcal{L}_{\rho^{\ep}}(u)=\div\big(\mu(\rho^{\ep})\na u \big)+\na\big((\mu+\lambda)\big(\rho^{\ep})\div u)\big).$$
For the system \eqref{NSPlow1g},
only minor modifications need to be taken into account due to the extra
term $((1+\vr)^{\gamma-2}-1)\na\vr$.
%even though we have an additional term compared
%to  \eqref{NSPlow1}, the arguments for  the proof of global existence of \eqref{NSPlow1} are almost .
Indeed, on the one hand, the solution is still irrotational as long as it exists.
On the other hand, the term $((1+\vr)^{\gamma-2}-1)\na\vr$ is essentially quadratic term, since we can expand  it as $\big((\gamma-2)\vr+g(\vr)\big)\na\vr$ where $g(x)$ is smooth when $x>-1$ and $g(0)=g'(0)=0$, therefore we can consider the term $g(\vr)\na\vr$ as cubic term. In the process of decay estimate for the solutions to \eqref{NSPlow1}, we just have to  perform 
an additional  normal form transformation for the quadratic  term $(\gamma-2)\vr \na\vr$, since the term $g(\vr)\na\vr$ is already cubic.
 The energy estimates can be obtained in a classical way for general pressure laws.

As for the system \eqref{NSPPg}, one can still perform energy estimates.  The last two extra terms also can be controlled easily. In fact, we can write:
 \beno
 &&\qquad \qquad \qquad\ep(\mathcal{L}_{\rho^{\ep}}-\mathcal{L}_{1})(u+v)\\
&&= \mu'(1)\Delta(u+v)+(\mu+\lambda)(1)\na\div(u+v)+\div\big(h_1(\vr+n)\na (u+v)\big)+\na\big(h_2(\vr+n)\div(u+v)\big),\\
&&\qquad \qquad \qquad(p'(1+\vr+n)-p'(1+\vr))\na\vr
%+ (p'(1+\vr+n)-p'(1))\na n\\
 =p''(1) n\na\vr+ (h_{3}(\rho)  +h_4 (\rho + n))\na \vr
\eeno
where $h_j(j=1,2,3,4)$ are smooth functions and satisfy:
$h(0)=h'(0)=0.$
All the terms in the above two identities can be controlled similarly as in
Section 5,  indeed the new  higher order terms have faster decay.
     \section {Navier-Stokes-Poisson system for ion dynamics}
    In this section, we consider the ion dynamic Navier-Stokes-Poisson system \eqref{NSPION}. We shall  give a sketch of the proof of  Theorem \ref{thmion}.
\subsection{A viscous perturbation of ion Euler-Poisson }

Following the global scheme of the proof for the electrons case, we shall first study
 the following intermediate system which has the property of propagating curl free solutions.
 \begin{equation}\label{NSPlowion}
 \left\{
\begin{array}{l}
\displaystyle \pt \vr +\div u=-\div(\vr u),\\
\displaystyle  \pt u+(u \cdot \na) u-2\ep \Delta u+
\nabla \vr-\nabla \phi=0, \\
\displaystyle (\Delta-1) \phi =\vr,\\
\displaystyle u|_{t=0} =\mathcal{P}u_0^{\varepsilon} ,\vr|_{t=0}=\vr_0=\rho_{0}^{\varepsilon}-1.
\end{array}
\right.
\end{equation}
  %We first consider system \ref{NSPlow1} (with the poisson equation changed to $\Delta \phi-\phi =\rho -1$ )
  We first  prove the following result:
\begin{prop}\label{propiNSPlow}
  There exists $\delta _{3}>0$ %for any $\varepsilon\in (0,1]$,
such that for any family of initial data satisfying
\beno
&&\sup_{\varepsilon\in(0,1]} \left( \|(\vr_0^{\varepsilon},\mathcal{P}^{\perp}u_0^{\varepsilon})\|_{W^{\si+\f{9}{4}(1+\kappa),8_{\kappa}'}}+\||\na|^{-1}(\vr_0^{\varepsilon},\mathcal{P}^{\perp}u_0^{\varepsilon})\|_{H^{N}} \right) \leq \delta_3
\eeno
with  $\sigma \geq 5, N\geq 2\sigma+1,$ $8_{\kappa}=\f{8}{1-3\kappa},\kappa=\f{1}{200}.$
Then %for any $6< p\leq p_0$,
for every $\ep \in (0, 1]$,
there exist a unique solution for system \eqref{NSPlowion} in
$C([0,\infty),H^N)$.
Moreover, there exists a constant $C>0$ %which is independent of  $\ep\in (0,1]$,
such that for every $\ep \in (0, 1]$, we have the estimate
\beqs
\|(\vr,\na\phi,u)(t)\|_{W^{\si,8_{\kappa}}}\leq C\delta_3 (1+t)^{-(1+\kappa)}, \quad \forall t \geq 0.
\eeqs

\end{prop}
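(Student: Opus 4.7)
The plan is to mirror the proof of Theorem \ref{thminviscid}, adapting the dispersive and normal form analysis to the ion phase $p(\xi) = |\xi|\sqrt{(2+|\xi|^2)/(1+|\xi|^2)}$, which behaves like $|\xi|$ near the origin (wave-like) rather than like $\lnr$ (Klein-Gordon-like) as in the electron case. Local well-posedness in $H^N$ and the propagation of $\curl u = 0$ are obtained exactly as in Section \ref{section useful lemmas}, so the viscous term reduces to $2\ep\Delta u$ and the parabolic energy estimate of Proposition \ref{propenergy} applies; running the same estimate also on the $|\na|^{-1}$-rescaled unknown delivers control of $\||\na|^{-1}U(t)\|_{H^N}$ by cubic terms in the working norm.

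I would set up a bootstrap norm
\[
\|U\|_{X_T} = \sup_{t\in[0,T)} \Bigl( \langle t\rangle^{1+\kappa} \|U(t)\|_{W^{\si, 8_\kappa}} + \langle t\rangle^{1+\kappa} \|\chi^H(D) U(t)\|_{H^{N-1}} + \||\na|^{-1} U(t)\|_{H^N} \Bigr)
\]
and prove $\|U\|_{X_T} \lesssim \|U(0)\|_Y + \|U\|_{X_T}^{3/2}$, with $Y$ the norm appearing in the hypothesis. For high frequencies $\ep|\xi|^2 \geq \kpz$ the Green matrix still decays exponentially uniformly in $\ep$ (the proof of Lemma \ref{lemmahf} only uses $\mathrm{Re}\,\lambda_\pm \leq -c < 0$ and carries over verbatim). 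For low frequencies, diagonalization of the symmetrized linearization produces eigenvalues $\lambda_\pm = -\ep|\xi|^2 \pm i b_+(\xi)$ with $b_+$ an $\ep$-perturbation of $p$; a stationary phase argument in the spirit of Lemma \ref{lemlow}, combined with the non-degeneracy of the Hessian of $p$ on the support of $\chi^L$, yields a uniform-in-$\ep$ dispersive estimate with the weaker rate $(1+t)^{-(1+\kappa)}$ in $L^{8_\kappa}$, at the cost of the $W^{\si + \frac{9}{4}(1+\kappa), 8_\kappa'}$ regularity required on the data.

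The main obstacle is the normal form step, because the trilinear phases $\Phi_{\pm\pm}(\xi,\eta) = \mp p(\xi) \pm p(\xi-\eta) \pm p(\eta)$ now vanish on the time-resonance set $\{\eta=0\} \cup \{\xi - \eta = 0\}$. After integration by parts in time in the Duhamel integral, the bilinear multiplier $m/\Phi_{\pm\pm}$ acquires a singularity of order $|\eta|^{-1}$ or $|\xi-\eta|^{-1}$ along these sets. The key is to prove a singular-multiplier bilinear estimate analogous to Lemma \ref{lembilinear} in which the offending factor is absorbed through the identity $|\eta|^{-1}\widehat{\tilde R}(\eta) = \widehat{|\na|^{-1}\tilde R}(\eta)$; the rescaled multiplier then satisfies Coifman--Meyer type bounds and continues to map products of $L^{q_j}$ into $L^p$ with controlled Sobolev loss. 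This is precisely what necessitates $\||\na|^{-1}(\vr_0,\mathcal{P}^\perp u_0)\|_{H^N} \leq \delta_3$ in the hypothesis: via the energy estimate on the rescaled unknown, it bootstraps to uniform-in-time control of $\||\na|^{-1}\tilde R\|_{H^N}$. The constraints $\si \geq 5$ and $N \geq 2\si + 1$ then come from balancing the Sobolev index needed to close the singular bilinear estimate against the regularity absorbed by the stationary phase argument.

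Once the singular bilinear estimate is in place, the Duhamel decomposition into $J_1,\dots,J_4$ from Proposition \ref{propdecay} carries over, and the viscous error terms generated by the normal form (the ion analogs of $I_3, I_4$) are controlled by a second integration by parts combined with a heat-decay lemma modeled on Lemma \ref{lemed}. The margin $\kappa > 0$ is chosen small enough that the stationary phase calculations are non-degenerate, but strictly positive so that the Duhamel prefactors $(1+t-s)^{-(1+\kappa)}(1+s)^{-(1+\kappa)}$ remain time-integrable; this integrability is exactly what the electron case obtained from the stronger Klein-Gordon rate $3/2$ without needing a positive margin. Closing the bootstrap by the same continuity argument as in the proof of Theorem \ref{thminviscid} yields the global smooth solution in $H^N$ together with the $L^{8_\kappa}$ decay statement of the proposition.
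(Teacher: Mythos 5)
Your proposal is broadly aligned with the paper's strategy in its setup (curl-free reduction, the weighted $X_T$ bootstrap norm with a $\||\na|^{-1}U\|_{H^N}$ component, high-frequency exponential smoothing, and a singular bilinear estimate that absorbs the $|\eta|^{-1}$ factor via $|\na|^{-1}$ weights), but the step where you control the viscous error terms of the normal form is wrong, and it is precisely the step where the ion case genuinely differs from the electron case. You propose to handle the ion analogues of $I_3,I_4$ by ``a second integration by parts combined with a heat-decay lemma modeled on Lemma~\ref{lemed}.'' This cannot work here. In the ion case the phase $\phi_{jk}$ vanishes to first order on $\{\eta=0\}\cup\{\xi-\eta=0\}$, so $1/\phi_{jk}$ already has a singularity of order one, which the bilinear estimate (Lemma~\ref{lembilinearion}) absorbs by paying one factor of $|\na|^{-1}$ on each input; a second integration by parts in time would produce multipliers $m/\phi_{jk}^{2}$ with an order-two singularity, which would require control of $|\na|^{-2}$ of the solution --- not available from the $|\na|^{-1}$ hypothesis, and not covered by the multiplier bounds of Proposition~\ref{sketch of multiplier}. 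The paper explicitly remarks that, because of the time resonances, ``we can only perform the normal form transformation one time,'' and instead closes the estimate by a delicate balance of Lebesgue exponents: it picks $p=8_{\kappa}$ close to $8$ so that the dispersive rate in $L^{8_\kappa}$ is $(1+t)^{-(1+\kappa)}$ while the dual index $r$ appearing in the bilinear estimate is close enough to $2$ that the interpolated ``slow'' decay $\||\na|^{-1}R\|_{W^{\lambda,r}}\lesssim(1+t)^{-\kappa}$ together with $\|\ep\Delta|\na|^{-1}R\|_{W^{\lambda,r}}\lesssim(1+t)^{-(1+\kappa)}$ (Claims~1,~2) makes the Duhamel integral summable. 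For $I_3$ the paper uses the algebraic identity $\ep\Delta T_{m/\phi_{jk}}(\tilde r,\tilde r)=T_{m/\phi_{jk}}(\ep\Delta\tilde r,\tilde r)+2\sum_l T_{m/\phi_{jk}}(\ep^{1/2}\partial_l\tilde r,\ep^{1/2}\partial_l\tilde r)$ to reduce to the same type of term, again without a second integration by parts.

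A secondary inaccuracy: you justify the dispersive estimate by invoking ``the non-degeneracy of the Hessian of $p$ on the support of $\chi^L$,'' but Lemma~\ref{elementary} shows that $b''(r)$ vanishes at exactly one point $r_0^{\ep,\kappa_0}\in[1,10]$ inside the low-frequency region, i.e.\ the Hessian \emph{is} degenerate there. This is precisely why the ion dispersion rate drops from $(1+t)^{-3/2}$ to $(1+t)^{-4/3}$; the degeneracy is handled not by a non-degenerate Hessian but by the cubic non-degeneracy $b'''\geq c_2>0$ near $r_0$ and a Van der Corput-type argument, following Guo--Pausader.
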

\begin{rmk}
The choice of the $L^p$ type exponent $8_{\kappa}$ in the above result comes from  a constraint in order to get  continous properties of the bilinear operators used in the normal form transformation  and the slow decay of viscous term. More explanations will be given  after  Proposition \ref{sketch of multiplier}.
\end{rmk}

% We see that as $p(\xi)=|\xi|\sqrt{1+\f{1}{1+|\xi|^2}}$ is singular (we mean its derivative is not bounded) near 0, the dispersive estimate for $e^{itp(|\na|)}$ will be weaker than the electron case $e^{it\lnr}$. However, one can easily adapt the proof of Lemma $\ref{lemmahf}$ and Lemma $\ref{LemmaHN}$ to see that they are still true when $\lnr$(or$\lxr$) is changed into $p(|\na|)$(or $p(\xi)$). So there will be no problem for the high frequency. In the following, we will focus on the low frequency.

    %To get the decay estimate for low frequency (namely the dispersive estimate of operator $\chidid e^{itb(D)}$),
      Let $h=\sqrt{1+(1-\Delta)^{-1}}\varrho,$
    $c=\f{\div}{|\na|}u$, then we get as a counterpart of  \eqref{eqsym}, %with the multiplier $\lnr$ changed by $p(|\na|)$
     \beqs
 \left\{
\begin{array}{l}
\displaystyle \pt h + p(|\na|) c=q(|\na|)\div\big(\f{h}{\sqrt{1+(1-\Delta)^{-1}}}\cdot \R c\big) ,\\
\displaystyle \pt c-p(|\na|) h-2\ep \Delta c=|\na| |\R c|^2,\\
\displaystyle h|_{t=0}=\f{1} {\sqrt{1+(1-\Delta)^{-1}}}\vr_0,c|_{t=0}= \R^{*} u_0 .\\
\end{array}
\right.
 \eeqs
 where $p(|\na|)=|\na|\sqrt{1+(1-\Delta)^{-1}}$.
 We note that we still have $\eqref{eqV}-\eqref{eqA}$ by replacing $\lnr$ with $p(|\na|)$.
 We will analyze the high and low frequency separately as before.
   As for high frequency, similar
   arguments as in Lemma \ref{lemmahf}
   show that the smoothing effect of
   $\chi^{H}e^{-tA}$ is still true. We now focus on the low frequency. To start, we  need to analysis $b(r)=\sqrt{p(r)^{2}-(\ep r^2)^2}$ precisely.

    \begin{lem}\label{elementary}
    Suppose $0<\ep\leq 1$, $\kpz$ small enough, then on the region  $\{\ep r^2\leq 2\kpz\}$, $b(r)$ satisfies the following property:\\
    1. $b'(r)\geq c_1(\kpz)>0,$\\
    2. $b''(r)$ only have one zero point $r_0^{\ep,\kpz}$ and $1\leq r_0^{\ep,\kpz}\leq 10,$\\
    3. There exists a  small interval %$[-\sqrt{\f{3}{2}}-\iota, 4+\iota]$
    $[r_0^{\ep,\kpz}-\iota,r_0^{\ep,\kpz}+\iota]$
    st. $b'''(r)\geq c_2> 0$, where $c_2$ is a small constant independent of $\ep$.\\
    \end{lem}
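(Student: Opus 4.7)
The strategy is to view $b(r)=\sqrt{b_0(r)^2-\ep^2 r^4}$ as a perturbation of the inviscid phase $b_0(r):=r\sqrt{(2+r^2)/(1+r^2)}$ (corresponding to $\ep=0$). The key uniform fact is that on $\{\ep r^2\leq 2\kpz\}$ the perturbation parameter satisfies $\ep^2 r^4\leq 4\kpz^2$, independently of $\ep$.

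I would first analyze $b_0$ by direct computation:
\[
b_0 b_0'=\f{r(2+2r^2+r^4)}{(1+r^2)^2}\geq r, \qquad b_0''(r) = \f{r(r^4-2r^2-6)}{(1+r^2)^{5/2}(2+r^2)^{3/2}}.
\]
Combined with $b_0\leq \sqrt{2}\,r$, the first identity gives $b_0'\geq 1/\sqrt{2}$. The polynomial $r^4-2r^2-6$ has the unique positive root $r_*:=\sqrt{1+\sqrt{7}}\in(1,2)$, so $b_0''$ vanishes only at $r_*$; evaluating $(r(r^4-2r^2-6))'=5r^4-6r^2-6$ at $r_*$ gives $28+4\sqrt{7}>0$, hence $b_0'''(r_*)>0$.

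Differentiating $b^2=b_0^2-\ep^2 r^4$ twice yields $bb'=b_0 b_0'-2\ep^2 r^3$ and, after some algebra,
\[
b^3 b'' = b_0^3 b_0''(r) - \ep^2 r^4 \tilde P(r) + 2\ep^4 r^6, \qquad \tilde P(r)=\f{6+14r^2+15r^4+3r^6}{(1+r^2)^3}.
\]
For (1), the bound $\ep^2 r^3=(\ep r^2)(\ep r)\leq 2\kpz r$ on the region yields $bb'\geq r(1-4\kpz)\geq r/2$ and hence $b'\geq 1/(2\sqrt{2})$. For (2)--(3), the key quantitative input is that the ratio $\ep^2 r^4\tilde P(r)/|b_0^3 b_0''(r)|$ is $O(\kpz^2)$ uniformly on $\{\ep r^2\leq 2\kpz\}\setminus[r_*-\iota,r_*+\iota]$: as $r\to 0$ both the numerator and denominator vanish like $r^4$ with ratio $\ep^2\leq 4\kpz^2$; for bounded $r$ away from $r_*$ the ratio is $\lesssim \ep^2$; and for $r$ large in the region, $|b_0^3 b_0''|$ tends to a positive constant while $\ep^2 r^4\leq 4\kpz^2$. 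Hence outside $[r_*-\iota,r_*+\iota]$, $b''$ has the same sign as $b_0''$ for $\kpz$ small. Inside this interval, $b_0'''(r_*)>0$ gives strict monotonicity of $b_0''$, which transfers to $b''$ since further perturbations are $O(\kpz^2)$; this yields a unique zero $r_0^{\ep,\kpz}\in[r_*-\iota,r_*+\iota]\subset(1,10)$. One further differentiation shows $b'''=b_0'''+O(\kpz^2)$ uniformly near $r_*$, which proves (3) upon choosing $\iota$ small enough so that $b_0'''\geq b_0'''(r_*)/2$ there and then $\kpz$ small.

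The main technical obstacle is the uniform control of the perturbation ratios over the full region $\{\ep r^2\leq 2\kpz\}$, which includes $r$ potentially as large as $\sqrt{2\kpz/\ep}\to\infty$ as $\ep\to 0$: the argument hinges entirely on the universal bound $\ep^2 r^4\leq 4\kpz^2$, which makes the perturbation small uniformly in both $\ep$ and $r$ once the asymptotic behaviors of $b_0$, $b_0''$, $\tilde P$ at $r\to 0$ and $r\to\infty$ are matched.
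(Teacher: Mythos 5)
Your overall strategy (explicit computation plus treating the viscous symbol as a perturbation of a quartic polynomial) is close in spirit to the paper's, and several of your computations check out: part 1 is fine, the formula $b_0''(r)=\f{r(r^4-2r^2-6)}{(1+r^2)^{5/2}(2+r^2)^{3/2}}$ agrees with the paper's $b''$ at $\ep=0$, and the identity $b^3b''=b_0^3b_0''-\ep^2r^4\tilde P(r)+2\ep^4r^6$ with $\tilde P(r)=\f{6+14r^2+15r^4+3r^6}{(1+r^2)^3}$ is correct (it follows from $4b^3b''=2ff''-(f')^2$ with $f=b^2$). However, there is a genuine flaw in the key quantitative step for parts 2--3: you claim that $\ep^2r^4\tilde P(r)/|b_0^3b_0''(r)|$ is $O(\kpz^2)$ uniformly on $\{\ep r^2\le2\kpz\}\setminus[r_*-\iota,r_*+\iota]$, and in the regime $r\to0$ you justify this by writing ``ratio $\ep^2\le4\kpz^2$''. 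That inequality is false: $\ep$ ranges over all of $(0,1]$ and is not tied to $\kpz$. Since $b_0^3b_0''=\f{r^4(r^4-2r^2-6)}{(1+r^2)^4}\sim-6r^4$ and $\ep^2r^4\tilde P(r)\sim6\ep^2r^4$ as $r\to0$, the ratio is of size $\ep^2$, which is $O(1)$ when $\ep\sim1$ (and note that for $\ep\sim1$ the region $\{\ep r^2\le2\kpz\}$ consists precisely of such small $r$). So the assertion ``the perturbation is small uniformly in both $\ep$ and $r$'' does not hold near $r=0$, and the sign comparison as you argue it breaks down exactly there.

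The conclusion can be rescued, but it needs an argument you did not give: for $r<r_*$ the extra term $-\ep^2r^4\tilde P(r)$ is \emph{negative}, i.e.\ has the same sign as $b_0^3b_0''$, and it dominates the positive remainder since $2\ep^4r^6=2(\ep^2r^2)\ep^2r^4\le4\kpz\,\ep^2r^4\le\ep^2r^4\tilde P(r)$ for $\kpz$ small (as $\tilde P\ge3$); hence $b''<0$ on $(0,r_*-\iota]$ with no smallness of the ratio required. On the other hand, for $r\ge r_*-\iota$ the constraint $\ep r^2\le2\kpz$ forces $\ep\lesssim\kpz$, so there the $O(\kpz^2)$ perturbation bounds you invoke (including for $b'''$ near $r_*$) are genuinely valid and the uniqueness of the zero and the lower bound on $b'''$ follow. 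This is essentially how the paper sidesteps the issue: it computes $b''$ for the full viscous symbol and compares the resulting bracket with the $\ep$-dependent polynomial $r^4-2r^2-6(1+\ep^2)$, i.e.\ it keeps the not-necessarily-small $\ep^2$ contribution inside the comparison polynomial (whose unique positive simple root still lies in a fixed interval uniformly in $\ep$), rather than comparing with the inviscid polynomial $r^4-2r^2-6$ and declaring the difference negligible. Either repair is short, but as written your uniform-smallness claim is incorrect and is the step the whole of parts 2--3 rests on.
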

   \begin{proof}
    \beqs
    b'(r)=\sqrt{\f{2+r^2}{1+r^2}-\ep^2 r^2}-r\f{\f{r}{(1+r^2)^2}+\ep^2 r}{\sqrt{\f{2+r^2}{1+r^2}-\ep^2 r^2}}=\f{1+\f{1}{(1+r^2)^2}-2\ep^2 r^2}{\sqrt{1+\f{1}{1+r^2}-\ep^2 r^2}}\geq \f{1}{2\sqrt{2}}
    \eeqs
    on the support of $\chidix$
    if $4\ep \kpz\leq 4\kpz\leq \f{1}{2}$.

    2. After direct computations, we have that:
    \beno
   && b''(r)=(1+r^2)^{-4}(\f{2+r^2}{1+r^2}-\ep^2 r^2)^{-\f{3}{2}}r\{[1-(5-8\ep^2)\ep^2r^4]r^4\\
   &&\qquad\qquad-[2-2\ep^4 r^8 +(22-12\ep^2)\ep^2 r^4]r^2-[6+(31-8\ep^2)\ep^2 r^4+(20\ep-2\ep^3)\ep r^2+6\ep^2]\}.
  \eeno
  %\qquad -[6+(31-8\ep^2)\ep^2 r^4+(20\ep-2\ep^3)\ep r^2+6\ep^2])

   Note that if $\ep\leq 1$ and  $\kappa_0$ is small enough, then on the region  $\{\ep r^2\leq 2\kpz\}$, we have
   that for $\kappa_{0}$ sufficiently small, the polynomial in the bracket is a small perturbation of
   $ r^4 -  2 r^2 -  6(1 + \ep^2)$
    which %for $\ep \in (0, 1]$
    has only one real simple positive root   that is uniformly in  $[2, 9]$.
    Therefore, for $\kappa_{0}$ small enough $b''$ has only one nonnegative zero
     which is uniformly for $\ep \in (0, 1]$ in $[1, 10]$.
%    \beqs
%    |(5-8\ep^2)\ep^2 r^4|\leq 20\kpz^2\leq \f{1}{2},
%    \eeqs
%    \beqs
%    |-2\ep^4 r^8+(22-12\ep^2)\ep^2r^4|\leq 32\kpz^4+88\kpz^2\leq \f{1}{2},
%    \eeqs
%    \beqs
%    |(31-8\ep^2)\ep^2 r^4+(20\ep-2\ep^3)\ep r^2+6\ep^2|\leq 124\kpz^2+2(20\ep-2\ep^3)\kpz+6\ep^2\leq 2.
%    \eeqs
%   The zero point of function $f(x)=ax^2-cx-d$ is $x_0=\f{c+\sqrt{c^2+4ad}}{2a}$
%   if $a\in[\f{1}{2},\f{3}{2}],c\in[\f{3}{2},\f{5}{2}],d\in [4,8]$
%   then $\f{3}{2}\leq x_0\leq \f{\sqrt{c^2+4ad}}{a}\leq 16.$
%

   3. For simplicity, we write $r_0=r_{0}^{\ep,\kappa_0}$. One can check that:
   \beno
   b'''(r_0)
  &=&(1+r_{0}^2)^{-4}(\f{2+r_0^2}{1+r_0^2}-\ep^2r_0^2)^{-\f{3}{2}}\{[4-8(5-8\ep^2)\ep^2 r_{0}^{4}]r_0^{4}\\
  &&\qquad\qquad\qquad\qquad-[4-20\ep^4 r_0^{8}+6(22-12\ep^2)\ep^2r_0^{4}]r_0^2-[4(31-8\ep^2)\ep^2r_0^2-2\ep r_0^{2}]\}\\
  &\define&(1+r_{0}^2)^{-4}(\f{2+r_0^2}{1+r_{0}^2}-\ep^2r_0^2)^{-\f{3}{2}}
  (a_1 r_0^4+a_2 r_0^2-a_3)
  \eeno
 Notice that when $\kpz$ is very close to 0, $a_1$ and $a_2$ are very close to 4 while $a_3$ is very close to 0. %In light of the fact that when $\ep=0$, $\inf_{r\in[\sqrt{\f{3}{2}},4]}b'''(r)\gtrsim 1$,
   Therefore, as long as $\kpz$  is small enough, there exist constants $\iota, c_2$ which are independent of $\ep,$ st.
 $b'''(r)\geq c_2> 0$ on the interval $[r_{0}-\iota,r_0+\iota]$. %$[\sqrt{\f{3}{2}}-\iota,4+\iota]$
\end{proof}
    This lemma in hand, we could keep track of the proof of Lemma 3.1-3.3 of \cite{MR2775116} to get:

    \begin{lem}\label{lemdecayion}
    Suppose $\kpz$ satisfy the assumptions of the Lemma \ref{elementary}, then
    \beqs
   \|e^{itb(D)}\chidid f\|_{L^{\infty}}\lesssim_{\kpz}(1+| t |)^{-\f{4}{3}}\|f\|_{W^{3,1}},\quad \forall t \in \mathbb{R}
  \eeqs
    \beqs
   \|e^{itb(D)}\chidid f\|_{L^{p}}\lesssim_{\kpz}(1+ | t |)^{-\f{4}{3}(1-\f{2}{p})}\|f\|_{W^{3(1-\f{2}{p}),p'}}, \quad \forall t \in \mathbb{R}.
    \eeqs
    \end{lem}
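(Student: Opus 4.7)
The approach will closely follow the dispersive estimates for the linearized ion Euler--Poisson system derived in \cite{MR2775116}, while carefully tracking uniformity in $\ep \in (0,1]$. The crucial input is the geometric information about the phase $b(r)$ provided by Lemma \ref{elementary}: on the support of $\chi_{\ep,\kpz}$, we have $b' \geq c_1 > 0$ everywhere, the second derivative $b''$ has a unique simple zero at some $r_0 \in [1,10]$, and $b''' \geq c_2 > 0$ on an $\ep$--independent neighborhood of $r_0$. Crucially, all of these constants are uniform in $\ep$, which is exactly what is needed to transfer the Guo--Pausader argument. It is the loss of one order of stationary phase decay at $r_0$ (Van der Corput of order $3$ instead of order $2$) that is responsible for the $(1+|t|)^{-4/3}$ rate in place of the $(1+|t|)^{-3/2}$ Klein--Gordon rate of Lemma \ref{lemlow}.

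Concretely, one dyadically localizes in frequency via Littlewood--Paley and estimates each piece separately. For a given dyadic scale $\lambda = 2^k$, exploiting the radial symmetry of $b$ and the asymptotic expansion $\cF(\sigma_{\mathbb{S}^{2}})(s) = e^{is}Z(s) - e^{-is}\bar Z(s)$ with $|Z^{(j)}(s)| \lesssim (1+s)^{-1-j}$, the convolution kernel reduces to one--dimensional oscillatory integrals with phases
$$\Phi_\lambda^{\pm}(r) = b(\lambda r) \pm \lambda r \tfrac{|x|}{t}, \qquad r \in [1/2, 2].$$
For the $+$ phase, $\partial_r \Phi_\lambda^+ \geq \lambda c_1$, so iterated non-stationary phase as in Lemma \ref{lemlow} yields arbitrarily fast decay. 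For the $-$ phase, critical points satisfy $b'(\lambda r) = |x|/t$; one partitions according to whether such a critical point falls inside the degenerate interval $[r_0 - \iota, r_0 + \iota]/\lambda$ or not. Outside, $|b''(\lambda r)|$ is bounded below and Van der Corput of order $2$ yields $(\lambda^2 t)^{-1/2}$; inside, only $b''' \geq c_2 > 0$ is available, so Van der Corput of order $3$ yields $(\lambda^3 t)^{-1/3}$. Combining with $|Z_{\pm}(\lambda|x|r)| \lesssim (1+\lambda|x|r)^{-1}$ and the constraint $|x|/t \sim b'(r_0) \sim 1$ forced inside the degenerate regime, the worst case kernel bound after integrating against $r^2\, dr$ takes the form $|K_\lambda(t,x)| \lesssim \lambda^{\alpha}(1+|t|)^{-4/3}$ for some $\alpha > 0$ that depends only on the dimension.

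The conclusion then follows by combining this dispersive bound with the trivial estimate $|K_\lambda(t,x)| \lesssim \lambda^3$ and summing over dyadic frequencies: the three derivatives afforded by the $W^{3,1}$ norm, through $\|\Delta_k f\|_{L^1} \lesssim \lambda^{-3}\|f\|_{W^{3,1}}$ at high frequency, absorb the $\lambda^\alpha$ growth and yield the claimed $(1+|t|)^{-4/3}\|f\|_{W^{3,1}}$ bound uniformly in $\ep$. The second estimate, for $p \in [2,\infty)$, follows by complex interpolation between the above $L^\infty$ bound and the trivial $L^2 \to L^2$ bound $\|e^{itb(D)}\chi_{\ep,\kpz}(D)f\|_{L^2} \lesssim \|f\|_{L^2}$ (immediate since $b$ is real).

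The principal technical difficulty, essentially settled by Lemma \ref{elementary}, is to secure the Van der Corput constants uniformly in $\ep$ on the shrinking support $\{\ep r^2 \leq 2\kpz\}$; without this, the degenerate stationary phase estimate would degrade as $\ep \to 0$. Once this is granted, the remaining work is a careful bookkeeping across the regions $|x| \ll t$, $|x| \sim t$, and $|x| \gg t$, performed exactly as in the proof of Lemma \ref{lemlow} for the Klein--Gordon case, with the single substitution of the degenerate $(\lambda^3 t)^{-1/3}$ Van der Corput estimate near the critical shell in place of the standard $(\lambda^2 t)^{-1/2}$ one.
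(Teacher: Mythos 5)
Your proposal is correct and follows essentially the same route as the paper, which omits the proof entirely and simply invokes the argument of Lemmas 3.1--3.3 in \cite{MR2775116}, made uniform in $\ep$ by the properties of $b$ established in Lemma \ref{elementary}: non-stationary phase for the $+$ phase, Van der Corput of order $2$ away from the zero of $b''$ and of order $3$ (via $b'''\geq c_2$) near it combined with the $(1+\lambda|x|)^{-1}$ amplitude decay at $|x|\sim t$ to produce the $(1+|t|)^{-4/3}$ rate, dyadic summation absorbed by the three derivatives in $W^{3,1}$, and interpolation with the trivial $L^2$ bound for the $L^p$ estimate.
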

 We omit the proof, since thanks to the above properties of $b$, it  follows exactly the same lines as in \cite{MR2775116}
  in the same way as the proof of Lemma \ref{lemlow} was following the proof for the classical Klein-Gordon equation. Note again that the above estimates are independent of  $\ep$.

    To treat  low frequencies, we need also to get some continous property of $T_{m/\phi_{j,k}}$ on $L^p$.
    \begin{lem}\label{lembilinearion}
    Bilinear estimate:
Define $\phi_{j,k}(\xi,\eta)=(-1)^{j+1}b(\xi)+(-1)^{k+1}b(\eta)-b(\xi+\eta ),$ $j,k=1,2$
     \beno
    m(\xi,\eta)=\tchidix \tchidie \tchidiemx |\xi|n_1(\xi)n_2(\xi-\eta)n_3(\eta).
     \eeno
     where $n_1,n_2,n_3$ are homogeneous-0 functions whose corresponding multiplier is bounded in $L^p(1< p<+\infty)$.
     By choosing $\kpz$ smaller if necessary, we have similar results as in Proposition 6.1 in \cite{MR2775116}:
     ie.
     \ben\label{ineqbilinearion}
     \|T_{\f{m}{\phi_{j,k}}}(f,g)\|_{W^{\sigma,p'}}\lesssim_{\kpz}\||\na|^{-1}f\|_{H^{\sigma+\lambda}}\||\na|^{-1}g\|_{W^{\lambda,r}}+\||\na|^{-1}f\|_{W^{\lambda,r}}\||\na|^{-1}g\|_{H^{\sigma+\lambda}}.
     \een
     where $\lambda\geq \f{9}{4}+\kappa$, and $\f{1}{r}+\f{1}{p}=1-\f{\f{5}{4}-\kappa}{3}$, $2\leq p,r \leq \f{12}{1+4\kappa}$, $\kappa$ can be chosen very small.
    \end{lem}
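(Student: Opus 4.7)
The plan is to adapt the analysis carried out for the inviscid ion Euler-Poisson phase $p(\xi)=|\xi|\sqrt{(2+|\xi|^2)/(1+|\xi|^2)}$ in Proposition 6.1 of \cite{MR2775116} to the perturbed phase $b(\xi)=\sqrt{p(\xi)^2-\ep^2|\xi|^4}$. The key structural input is that on the support of the cutoff $\tchidix\tchidie\tchidiemx$ one has $b(\xi)=p(\xi)+O(\ep^2|\xi|^3/p(\xi))$, and, more importantly, the qualitative properties of $b$ established in Lemma \ref{elementary} --- strict monotonicity $b'\geq c_1(\kpz)>0$, a single nondegenerate inflection point at $r_0^{\ep,\kpz}\in[1,10]$, and $b'''\geq c_2>0$ on a fixed neighborhood of it --- are uniform in $\ep\in(0,1]$ and mirror those of $p$ used in \cite{MR2775116}. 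The bilinear estimate can thus be transported to the present setting by an essentially identical argument with constants that depend only on $\kpz$.

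First, I would identify the zero set of $\phi_{j,k}(\xi,\eta)=(-1)^{j+1}b(\xi)+(-1)^{k+1}b(\eta)-b(\xi+\eta)$. Since $b$ is smooth, even, and behaves like $\sqrt{2}|\xi|$ near the origin, the locus $\{\phi_{j,k}=0\}$ inside the cutoff is contained in $\{\eta=0\}\cup\{\xi-\eta=0\}$ for the sign choices that can vanish (the $(+,+)$ phase does not vanish away from the origin), and the vanishing is exactly of order one in the transverse direction. In particular, uniformly in $\ep\in(0,1]$ and on the support of the cutoff, one obtains pointwise bounds
\begin{equation*}
\frac{|\eta|}{|\phi_{j,k}(\xi,\eta)|}+\frac{|\xi-\eta|}{|\phi_{j,k}(\xi,\eta)|}\lesssim_{\kpz} 1
\end{equation*}
in the regions where the respective variables are small. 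This is where Lemma \ref{elementary} enters decisively: $b'\geq c_1>0$ rules out any additional singularity of $1/\phi_{j,k}$ in the interior of the support, while the control on $b''$ and $b'''$ handles the crossing points of the stationary-phase geometry.

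Next, I would decompose the multiplier as
\begin{equation*}
\frac{m(\xi,\eta)}{\phi_{j,k}(\xi,\eta)}=a_1(\xi,\eta)\cdot\frac{1}{|\eta|}+a_2(\xi,\eta)\cdot\frac{1}{|\xi-\eta|}+a_3(\xi,\eta),
\end{equation*}
where $a_1,a_2,a_3$ are bilinear Coifman--Meyer type symbols of total order one (the factor $|\xi|$ in $m$ providing the necessary derivative) whose symbol seminorms are bounded uniformly in $\ep$ by Lemma \ref{elementary}. Each resulting piece is a bilinear operator where one factor is smoothed by $|\na|^{-1}$, which matches the right-hand side of \eqref{ineqbilinearion}. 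Applying the Coifman--Meyer bilinear multiplier theorem in mixed Lebesgue norms, together with the fractional Leibniz rule and the embedding $W^{\lambda,r}\hookrightarrow L^\infty$ (valid since $\lambda r>3$ whenever $\lambda\geq 9/4+\kappa$ and $r\leq 12/(1+4\kappa)$), then yields the stated estimate. The scaling relation $1/p+1/r=1-(5/4-\kappa)/3$ encodes exactly the H\"older balance between the Sobolev gain on one factor and the target exponent $p'$ on the output.

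The main obstacle is the uniform-in-$\ep$ verification of the decomposition above: one must check that no spurious zeros of $\phi_{j,k}$ appear inside the support of the cutoff as $\ep$ varies in $(0,1]$, and that the quotients $|\eta|/\phi_{j,k}$ and $|\xi-\eta|/\phi_{j,k}$ extend to genuine Coifman--Meyer symbols with $\ep$-independent seminorms. This is precisely what requires choosing $\kpz$ possibly smaller than in Lemma \ref{elementary} --- corresponding to the caveat in the statement --- so that the deviation of $b$ from $p$ is negligible compared to the cancellations driving the inviscid argument. Once this is secured, the remainder of the proof is a routine transcription of the argument in \cite{MR2775116}.
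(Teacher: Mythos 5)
The core of your argument does not match what is actually needed here, and the gap is precisely at the multiplier theorem you invoke. You propose to decompose $m/\phi_{j,k}$ into Coifman--Meyer symbols attached to homogeneous singularities $1/|\eta|$, $1/|\xi-\eta|$, and then apply the Coifman--Meyer bilinear theorem. But the quotient $1/\phi_{j,k}$ for the ion phase does \emph{not} satisfy the pointwise Hörmander--Mikhlin derivative bounds that the Coifman--Meyer theorem requires. Even after the factors $|\eta|$ and $|\xi-\eta|$ from $|\na|^{-1}f$, $|\na|^{-1}g$ are absorbed (giving the symbol $\cM_{jk}$ of Proposition \ref{sketch of multiplier}), the multiplier only has about $5/4-\kappa$ fractional Sobolev regularity transverse to the resonance set, not full smoothness. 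This is precisely why Guo and Pausader (and the present paper, via Proposition \ref{sketch of multiplier}) use their $L_\xi^\infty \dot H_\eta^s$ multiplier criterion (Theorem~6.1 of \cite{MR2775116}) rather than Coifman--Meyer. That fractional criterion is what produces the non-Hölder exponent relation $\tfrac{1}{p}+\tfrac{1}{r}=1-\tfrac{s}{3}$ with $s=\tfrac{5}{4}-\kappa$: a genuine Coifman--Meyer bound would give a Hölder relation $\tfrac1{p'}=\tfrac1{q_1}+\tfrac1{q_2}$ instead, not the relation stated in the lemma, and the claim that it ``encodes exactly the Hölder balance'' is where the reasoning silently breaks.

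What you get right is the framing: the uniform-in-$\ep$ structural facts of Lemma \ref{elementary} (strict monotonicity of $b$, the single inflection point, the control of $b''$, $b'''$) are indeed what make the Guo--Pausader argument transportable, and the lower bound $|\eta|/|\phi_{j,k}|\lesssim_{\kpz} 1$ near $\{\eta=0\}$ is correct. But the step from those pointwise lower bounds to a bilinear $L^p$ estimate cannot go through Coifman--Meyer. The correct route, taken in the paper, is: (i) prove that the rescaled symbol $\cM_{jk}$ has finite $\|\cdot\|_{L_\xi^\infty \dot H_\eta^{5/4-\kappa}}+\|\cdot\|_{L_\eta^\infty \dot H_\xi^{5/4-\kappa}}$ norm uniformly in $\ep$ (this is Proposition \ref{sketch of multiplier}, which requires the elementary estimates on $\phi_{11}$ and its first two derivatives, and is where the $\lambda>9/4+\kappa$ loss appears in the interpolation between $L^2$ and $\dot H^2$ on dyadic blocks); (ii) invoke the Guo--Pausader fractional multiplier theorem to conclude. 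If you replace the Coifman--Meyer paragraph by this two-step structure, the rest of your write-up is essentially the paper's proof.
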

    \begin{proof}
        This Lemma is a consequence  of the next proposition along with Theorem 6.1 of \cite{MR2775116} which
        states that if
        $\cM(\xi,\eta)$ satisfies
        $\|\cM\|_{L_{\xi}^{\infty}\dot{H}_{\eta}^{s}}+\|\cM\|_{L_{\eta}^{\infty}\dot{H}_{\xi}^{s}}<\infty$, then
        \beno
       \|T_{\cM}(f,g)\|_{L^{p'}}\lesssim\|g\|_{L^2}\|f\|_{L^{r}},
        \eeno
     where $\f{1}{r}+\f{1}{p}=1-\f{s}{3}, 2\leq p,r \leq \f{6}{3-2s}$.
     \end{proof}
   \begin{prop}\label{sketch of multiplier}
   Define $\cM_{jk}(\xi,\eta)=\f{\lxr^{\si}|\xi||\eta||\xi-\eta|}{\phi_{jk}\lxmer^{\lambda+\si}\ler^{\lambda}}\Phi(\f{|\eta|}{|\xi-\eta|})\tchidix \tchidie \tchidiemx$ where
   $\Phi\in C_{c}^{\infty}(\mathbb{R})$ is supported in $B_2(\mathbb{R})$, then for any $\kappa>0$, if $\lambda>\f{9}{4}+\kappa$, then
   the following estimate holds:
   \beqs
   \|\cM_{jk}\|_{L_{\xi}^{\infty}\dot{H}_{\eta}^{\f{5}{4}-\kappa}}+\|\cM_{jk}\|_{L_{\eta}^{\infty}\dot{H}_{\xi}^{\f{5}{4}-\kappa}}\lesssim _{\kappa} 1.
   \eeqs
   \end{prop}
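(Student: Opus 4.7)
The plan is as follows. First, I would exploit the support of the cutoff $\Phi(|\eta|/|\xi-\eta|)$, which restricts us to the region $\{|\eta|\leq 2|\xi-\eta|\}$. Combined with $|\xi|\leq |\eta|+|\xi-\eta|$, this gives $|\xi|\lesssim |\xi-\eta|$ on the support, so $\xi-\eta$ is kept away from $0$ and $b(\xi-\eta)$ is bounded below there. Consequently, the only relevant resonance of $\phi_{jk}$ on the support is $\{\eta=0\}$, where $b(\eta)\to 0$. Using the expansion $p(r)=\sqrt{2}\,r+O(r^3)$, hence $b(\eta)=\sqrt{2}|\eta|+O(|\eta|^3)$, together with a Taylor expansion in $\eta$ of $b(\xi\pm\eta)$ (justified by smoothness of $b$ on the support of $\tchidid$), I obtain $\phi_{jk}(\xi,\eta)=c_{jk}(\xi)|\eta|+O(|\eta|^2)$ with $c_{jk}(\xi)$ bounded away from $0$ uniformly in $\vep$ (using the lower bound $b'(r)\geq c_1(\kpz)>0$ of Lemma \ref{elementary}, provided $\kpz$ is small enough). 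The singular factor $1/\phi_{jk}$ is therefore absorbed by the $|\eta|$ in the numerator.

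Given this, the multiplier satisfies the pointwise bound
\beqs
|\cM_{jk}(\xi,\eta)|\lesssim \f{|\xi|^2\lxr^{\si}}{\lxmer^{\lambda+\si}\ler^{\lambda}}\lesssim \ler^{-\lambda},
\eeqs
using $|\xi|\lesssim |\xi-\eta|$ on the support. I would then estimate the $\dot H^{5/4-\kappa}_{\eta}$ norm by dyadic decomposition in $|\eta|$. For $|\eta|\lesssim 1$, the function $\eta\mapsto \cM_{jk}(\xi,\eta)$ is Lipschitz (it inherits only Lipschitz, not $C^2$, regularity from the factor $|\eta|$); by the Gagliardo seminorm characterization $\|f\|_{\dot H^s}^2\sim \int\int|f(\eta_1)-f(\eta_2)|^2/|\eta_1-\eta_2|^{3+2s}\,d\eta_1 d\eta_2$, a compactly supported Lipschitz function on $\mathbb{R}^3$ lies in $\dot H^s$ for every $s<\f{3}{2}$; the choice $s=\f{5}{4}-\kappa$ is admissible. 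For $|\eta|\gtrsim 1$, the smooth decay $\ler^{-\lambda}$ with $\lambda>\f{9}{4}+\kappa$ provides more than enough integrability to close the seminorm. All constants are uniform in $\vep\in(0,1]$ because each derivative of $\tchidix$ is bounded uniformly in $\vep$ (with constants depending only on $\kpz$), and the lower bound on $c_{jk}(\xi)$ is likewise uniform.

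The symmetric estimate $\|\cM_{jk}\|_{L^{\infty}_{\eta}\dot H^{5/4-\kappa}_{\xi}}\lesssim 1$ follows from a parallel analysis: the potential singularity at $\xi=0$ (where $b(\xi)\to 0$) is neutralized by the factor $|\xi|$ in the numerator, while the cutoff $\Phi$ again keeps us away from $\xi-\eta=0$. The main obstacle is verifying, uniformly in $\vep$ and in the frozen variable, that the ratios $|\eta|/\phi_{jk}$ and $|\xi|/\phi_{jk}$ extend to Lipschitz functions across the resonance sets $\eta=0$, resp. $\xi=0$, with derivatives controlled on all scales up to the truncation radius $|\xi|,|\eta|\lesssim \sqrt{\kpz/\vep}$; this is exactly the mechanism that forces the sharp threshold $\f{5}{4}-\kappa$ (the $|\eta|$ factor precludes reaching $\f{3}{2}$) and that ties the decay exponent to $\lambda>\f{9}{4}+\kappa$. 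Once this is in place, the proof follows the strategy of Proposition 6.1 of \cite{MR2775116}, with only bookkeeping changes to track the $\tchidix,\tchidie,\tchidiemx$ cutoffs.
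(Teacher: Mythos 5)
Your central simplification of the phase is incorrect, and the rest of the argument rests on it. First, the support condition $|\eta|\le 2|\xi-\eta|$ coming from $\Phi$ does \emph{not} keep $\xi-\eta$ away from $0$: it only forbids $|\eta|\gg|\xi-\eta|$, and all three frequencies $\xi,\eta,\xi-\eta$ may vanish simultaneously. Second, and more seriously, because the ion dispersion is wave-like at low frequencies, the expansion $\phi_{jk}=c_{jk}|\eta|+O(|\eta|^2)$ with $c_{jk}$ bounded below uniformly is false: for $\phi_{11}$ the coefficient of $|\eta|$ is of the form $\sqrt2-b'(|\xi|)\cos\theta$, which degenerates like $|\xi|^2$ as $\xi\to0$ in the parallel regime (e.g.\ for $\xi,\eta$ parallel with $|\xi|=|\eta|=\delta$ small one gets $\phi_{11}\approx\f{3\sqrt2}{2}\,\delta^{3}\ll|\eta|$), and at high frequencies, which must be treated up to $|\xi|\sim\sqrt{\kpz/\ep}$ and hence uniformly as $\ep\to0$, the phase for nearly parallel inputs is only of size $\sim 1/|\eta|$. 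The lower bound that is actually available (and that the paper records) is, when $|\eta|$ is the smallest frequency, $|\phi_{11}|\gtrsim_{\kpz}\f{|\xi||\eta||\xi-\eta|}{\ler^{2}\lxmer\lxr}+|\eta|(1-\cos\beta+1-\cos\theta)$; the bound $b'\ge c_1(\kpz)>0$ gives nothing of this kind, since the phase is a difference. Consequently your pointwise bound $|\cM_{jk}|\lesssim\ler^{-\lambda}$, the claim that $\eta\mapsto\cM_{jk}(\xi,\eta)$ is Lipschitz with uniform constant, and the Gagliardo-seminorm conclusion all fail. Note also that your own heuristic (compactly supported Lipschitz functions lie in $\dot H^{s}$ for all $s<\f32$) would yield every exponent below $\f32$, which is inconsistent with the threshold $\f54-\kappa$ in the statement; the exponent $\f54$ is forced by the angular and high-frequency degeneracies of $\phi_{jk}$, not by the $|\eta|$ factor, which is a sign that the model of the singularity is too coarse.

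What the paper actually does is adapt Proposition 6.1 of Guo--Pausader: split $\mathbb{R}^3$ into the regions $\{|\eta|<\f13|\xi|\}$, $\{|\xi|<\f13|\eta|\}$ and $\{\f14\le|\xi|/|\eta|\le4\}$, decompose dyadically in the free variable, estimate each dyadic piece in $L^2$ and $\dot H^{2}$ and interpolate, the optimization of the dyadic summation being exactly what produces $\f54-$. The inputs are the degenerate lower bound on $\phi_{11}$ quoted above, the first and second derivative bounds on $\phi_{11}$, and the verification that $b$ and $q(r)=b(r)/r$ satisfy, uniformly for $\ep\in(0,1]$ on $\{\ep r^2\le 2\kpz\}$, the same structural estimates as in the inviscid case ($b''\lesssim_{\kpz}1$; $-q'(r)\approx\f1r$ for $r\le K_1$; $-q'(r)\approx\f{1}{r^3}$, $b''(r)\approx\f{1}{r^3}$, $b'''(r)\approx\f{1}{r^4}$ for $r\ge K_2$). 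Your proposal defers precisely this part to ``the strategy of Guo--Pausader'' after replacing its key content with the incorrect linear-in-$|\eta|$ nondegeneracy, so the essential work of the proof is missing.
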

   \begin{proof}
 For the proof of this
  proposition, we can  adapt the proof of Proposition 6.1 in \cite{MR2775116}. We only explain for the case $\phi_{11}$ as  other cases could be obtained by symmetry.
   We  split $\mathbb{R}^3$ into three regions $\{|\eta|<\f{1}{3}|\xi|\},
  $ $\{|\xi|<\f{1}{3}|\eta|\}$ and $\{\f{1}{4}\leq\f{|\xi|}{|\eta|}\leq 4\}$. For example, on the region $\{|\eta|<\f{1}{3}|\xi|\},$
  to estimate $\|\cM_{11}\|_{L_{\eta}^{\infty}\dot{H}_{\xi}^{s}}$, one first fix $\eta$ and compute the  $\|\varphi_{l}(\xi)\cM_{11}\|_{\dot{H}_{\xi}^s}$ norm by interpolation between $\|\varphi_{l}(\xi)\cM_{11}\|_{L^2}$ and
  $\|\varphi_{l}(\xi)\cM_{11}\|_{\dot{H}_{\xi}^2}$ for any $l$ (recall $\phi_{l}$ is $l$-th dyadic function),
  and find the optimal number $s$ (which finally turns out to be $\f{5}{4}-$) such that it is summable for $l$ uniformly for $\eta$.
  In light of this strategy, one sees that the main ingredients are the elementary estimates for
  $\phi_{11}$. %(when $\ep=0$)
   %see Lemma 6.3, 6.4 in \cite{MR2775116}.
   We list briefly the properties needed for $\phi_{11}$ which are essentially the same as Lemma 6.3 and Lemma 6.4 of \cite{MR2775116}.

  1. Lower boundedness of $\phi_{11}$.

  If $|\xi|\leq \min\{|\xi-\eta|,|\eta|\}$, then
   $|\phi_{11}(\xi,\eta)|=|b(\xi-\eta)+b(\eta)-b(\xi)|\geq_{\kpz} \max\{|\xi-\eta|,|\eta|\};$\qquad

   if $|\xi|$ is not smallest, for example, $|\eta|\leq\min\{|\xi-\eta|,|\xi|\}$, then $|\phi_{11}(\xi,\eta)|\gtrsim_{\kpz} \f{|\xi||\eta||\xi-\eta|}{\ler^2\lxmer\lxr}+|\eta|(1-\cos\beta+1-\cos\theta).$ where
   $\beta,\theta$ are the angle between $\eta$ and $\xi-\eta$, $\eta$ and $\xi$ respectively.

  2. The first and second derivative for $\phi_{11}$ can be estimated as
  \beno
  |\p_{\xi}\phi_{11}|\lesssim_{\kpz}
  \f{|\eta|}{\langle\max\{|\xi-\eta|,|\xi|\}\rangle\langle\min\{|\xi-\eta|,|\xi|\}\rangle^2}+2|\sin \f{\gamma}{2}|,\\
  |\p_{\eta}\phi_{11}|\lesssim_{\kpz} \f{|\xi|}{\langle\max\{|\xi-\eta|,|\xi|\}\rangle\langle\min\{|\xi-\eta|,|\xi|\}\rangle^2}+2|\sin \f{\beta}{2}|,\\
  |\Delta_{\xi}\phi_{11}(\xi,\eta)|\lesssim_{\kpz}\f{|\eta|}{|\xi-\eta||\xi|}, \qquad  |\Delta_{\eta}\phi_{11}(\xi,\eta)|\lesssim_{\kpz}\f{1}{\min\{|\xi-\eta|,|\eta|\}}.
  \eeno
  where $\gamma$ denotes the angle between $\xi$ and $\xi-\eta$.

  Nevertheless, as in \cite{MR2775116}, all the information needed for $b(r)$ and $q(r)=\f{1}{r}b(r)=\sqrt{\f{2+r^2}{1+r^2}-\ep^2 r^2}$ to prove the above two properties are the following facts which are consistent  with the case $\ep=0$.

  (1) $b''(r)\lesssim_{\kpz} 1$,

  (2) if $\kappa_0$ is sufficient small,
  one still has that  there exists two constants $K_1,K_2$ which are independent of $\ep\in(0,1]$, st.
$$
  -q'(r)\approx \f{1}{r}, \quad when \quad r\leq K_1,$$
  $$-q'(r)\approx \f{1}{r^3}, b''(r)\approx \f{1}{r^3}, b'''(r)\approx \f{1}{r^4} \quad when \quad r\geq K_2.$$

 Since the above two facts are easy to see, we omit the proof.
 \end{proof}
 From now on, we fix $\kpz$ such that Lemma \ref{elementary}(1-3), Lemma \ref{lemdecayion} and Lemma \ref{lembilinearion} holds.

   In view of Lemma \ref{lemdecayion}, \ref{lembilinearion},
   we can  only expect to get decay estimates in some $L^p$ framework with $8<p<12$ (due to the appearance of ‘time resonances’, we can only perform the  normal form transformation  one time). To overcome the difficulty that $\|\f{\ep \Delta}{|\na|}\chidid R\|_{L^2}$ decays only like $(1+t)^{-1}$, we need to use a  ‘slow’ decay estimate for $\||\na|^{-1}R\|_{L^r}$ where $r$ is larger but  close to 2. By Lemma \ref{lembilinearion}, if  we choose $p$ larger,  we need to estimate $\||\na|^{-1}R\|_{L^r}$ for a smaller $r$ which
   obviously has slower decay. Therefore, to close our decay estimate, we need to choose $p$ small,
   this is why we  choose $p=8_{\kappa}$, where $\f{1}{8_{\kappa}}=\f{1}{8}-\f{3\kappa}
   {8}$. By this choice, we have that:
    $$\|e^{itb(D)}\chidid f\|_{L^{8_{\kappa}}}\lesssim (1+t)^{-(1+\kappa)}\|f\|_{W^{\f{9}{4}(1+\kappa),8_{\kappa}'}}.$$

\begin{proof}[Proof of Proposition \ref{propiNSPlow}]
     We shall use the norm:
    \beno
    \|V\|_{X_T}\define (1+t)^{-(1+\kappa)}\|V\|_{W^{\sigma,\ek}}+(1+t)^{-(1+\kappa)}\|\cchidid V\|_{H^{N-2}}+\||\na|^{-1}V\|_{H^N},
    \eeno
    \beno
    \|f\|_{Y}\define\|f\|_{W^{\sigma+\f{9(1+\kappa)}{4},8_{\kappa}'}}+\||\na|^{-1}f\|_{H^N}.
      \eeno
      where $\si\geq 5, N\geq 2\sigma+1.$\\
      Global existence for $(\rho,u)$ follows if we prove the a priori estimate:
      \beq \label{ineqion}
 \|V\|_{X_T} \lesssim \|V_0\|_{Y}+\|V\|_{X_T}^{\f{3}{2}}+\|V\|_{X_T}^2+\|V\|_{X_T}^3.
     \eeq
      Sketch of the proof of \eqref{ineqion}:\\
      1. The bound for $H^N$ norm. We can perform energy estimates in the same way as in  Proposition \ref{propenergy}. One only needs to change the norm a little bit by
      \beno
     E_N=\sum_{|\alpha|\leq N}E_{\alpha}=\sum_{|\alpha|\leq N}\int \frac{|\pa \vr|^2}{2}+\frac{|\pa \lnr \phi|^2}{2}+\rho \frac{|\pa u|^2}{2}\d x.
      \eeno
      2. The bound for $H^{-1}$ norm.\\
    It can easily be seen that the nonlinear terms are under the form  $B_{l}(V,V)= \sum |\na|n_1(D)\big(n_2(D)V n_3(D)V\big)$ where $n_1(D),n_2(D)$, $n_3(D)$ are  $L^p (1<p<+\infty)$ multipliers.
    So by Duhamel's principle, tame estimates and Sobolev embedding, we have:
    \begin{align*}
    \|\na|^{-1}V\|_{L^2}&\lesssim \|\na|^{-1}V_0\|_{L^2}+\int_{0}^{t}\||\na|^{-1}B(V,V)\|_{L^2}\d s
   \lesssim\|\na|^{-1}V_0\|_{L^2}+\int_{0}^{t}\|V\|_{L^2}\|V\|_{W^{\sigma,\ek}}\d s\nonumber\\
&\lesssim\|\na|^{-1}V_0\|_{L^2}+\|V\|_{X_T}^2.
    \end{align*}
    3. Estimates of $\|\chi^{H} V\|_{H^{N-2}}$
    and $\|\chi^{H} V\|_{W^{\si,\ek}}$ can be performed  in the same fashion as in the electron case, we thus skip them.\\
   4. Estimate of $\|\chidid V\|_{W^{\si,\ek}}$.
    For clarity, we will use the same notation as in the electron case. More precisely, we set $R=Q^{-1}\chidid V=\sum_{k=1}^{4}J_k$
    where $J_1-J_4$ are defined in \eqref{eq for R}. Nevertheless,
    $J_1,J_2,J_3$ can be easily estimated using the Kato-Ponce inequality (Lemma $\ref{lemmakp}$), we thus  omit their estimate.

  Now, it remains to estimate the typical term of $J_4$: $G_{jk}=\sum_{j=1}^{7}I_j$,  which is  defined in the same way as in the electron case (with slightly adaptation of multiplier m and $n_j$), see \eqref{def of Ij}.
 % which is defined in the same way as the electron case.
  We need to prove that
  \beno
  \|G_{j,k}\|_{W^{\sigma,8_{\kappa}}}\lesssim_{\kpz}(1+t)^{-(1+\kappa)}\|V\|_{X_T}.
  \eeno
 In the above decomposition,    $I_1,I_2,I_5,I_7$ correspond to boundary terms and cubic terms,
    which have essentially been treated in \cite{MR2775116} where the authors proved global existence for the ions Euler-Poisson system. Note that in \cite{MR2775116}, the authors
    proved $L^{10}$ decay estimate. Nevertheless, it is the same  to prove decay in $L^{8_{\kappa}}$ framework, we
    leave the details.
    We will only detail the estimate of the  'viscous term' $I_4$, since $I_6$ is ‘symmetric’ term and the estimate for  $I_3$  can be reduced to that for $I_4$.
   %where we need to use normal form method.

    To start, we prove the following two claims:\\
    Claim 1:
    $$
    \||\na|^{-1}R(t)\|_{W^{\lambda,r}}\lesssim (1+t)^{-\kappa}\|V\|_{X_T},$$
    where $\f{1}{r}=\f{11+17\kappa}{24}$, $\lambda=\f{9}{4}+\kappa.$
    \\
 Claim 2:
    \beno
   && \|\f{\ep \Delta}{|\na|}R\|_{H^{N-1}}\lesssim (1+t)^{-1}(\||\na|^{-1}V_0\|_{H^{N-1}}+\|V\|_{X_T}^2),\\
&& \|\f{\ep \Delta}{|\na|}R\|_{W^{\lambda,r}}\lesssim (1+t)^{-(1+\kappa)}(\|V_0\|_{Y}+\|V\|_{X_T}^2).
    \eeno
{\bf Proof of  Claim 1: } By interpolation, we have
    \beno
    \||\na|^{-1}R\|_{W^{\lambda,\ek}}\lesssim \|R\|_{W^{\lambda,\f{24}{11-9\kappa}}}\lesssim \|R\|_{W^{\lambda,\ek}}^{\theta}\|R\|_{H^{\lambda}}^{1-\theta}\lesssim(1+t)^{-\f{1+9\kappa}{9}}\|V\|_{X_T}.
    \eeno
    where $\theta=\f{1+9\kappa}{9(1+\kappa)}$.
    Claim 1 follows from  another interpolation, that is:
    $$\||\na|^{-1}R(t)\|_{W^{\lambda,r}}\lesssim \||\na|^{-1}R(t)\|_{W^{\lambda,8_{\kappa}}}^{\vartheta}\||\na|^{-1}R(t)\|_{H^{\lambda}}^{1-\vartheta}\lesssim(1+t)^{-\f{1}{100}}\|V\|_{X_T}.$$
    where $\vartheta=\f{1-17\kappa}{9(1+\kappa)}$ and $\f{1-17\kappa}{9(1+\kappa)}\f{1+9\kappa}{9}\geq \f{1}{100}$ if we choose $\kappa$ small enough,
    say $\kappa\leq \f{1}{200}$. \\
{\bf Proof of  Claim 2:}
    The first inequality can be proved like Lemma \ref{lemepd}, we thus do not detail it.
    For the second inequality, we have by
    the decay estimate
    \eqref{lemdecayion} and  the Kato-Ponce inequality (Lemma \ref{lemmakp}) that
    \begin{align*}
     &\|\f{\ep \Delta}{|\na|}R\|_{W^{\lambda,r}}
    \lesssim\| \left(
  \begin{array}{cc}
    e^{\lm(D) t}&0\\
    0& e^{\lp(D) t}\\
  \end{array}
\right)\f{\ep \Delta}{|\na|}R_0\|_{W^{\lambda,r}}\\
&\qquad\qquad+\int_{0}^{t}\|\left(
  \begin{array}{cc}
    e^{\lm(D) (t-s)}&0\\
    0& e^{\lp(D) (t-s)}\\
  \end{array}
\right)\f{\ep\Delta}{|\na|}Q^{-1}\chidid B(V,V)\|_{W^{\lambda,r}}\d s\\
&\lesssim (1+t)^{-\f{10-17\kappa}{9}}\||\na|^{-1}V_0\|_{W^{\lambda+3(1-\f{2}{r}),r'}}+\int_{0}^{t} (1+t-s)^{-\f{10-17\kappa}{9}}\||\na|^{-1}B(V,V)\|_{W^{\lambda+3(1-\f{2}{r}),r'}}\d s\\
&\lesssim(1+t)^{-\f{10-17\kappa}{9}}\||\na|^{-1}V_0\|_{{W^{\si,r'}}}+\int_{0}^{t} (1+t-s)^{-\f{10-17\kappa}{9}}\|V\|_{H^{\si}}\|V\|_{W^{\lambda+3(1-\f{2}{r}),{\f{24}{1-17\kappa}}}}\d s\\
&\lesssim(1+t)^{-(1+\kappa)}(\||\na|^{-1}V_0\|_{{W^{\si,r'}}}+\|V\|_{X_T}^2)\lesssim (1+t)^{-(1+\kappa)}\|V_0\|_{Y}+\|V\|_{X_T}^2),
\end{align*}
  where $r'=\f{24}{13-17\kappa}$. Note that  $\lambda+3(1-\f{2}{r})\leq \si-1.$ In the last inequality, we used the fact $\f{1}{r'}+\f{1}{3}<\f{1}{\ek'}$ and interpolation to get:
    $$\||\na|^{-1}V_0\|_{W^{\si,r'}}\lesssim\|V_0\|_{W^{\si,\f{24}{21-17\kappa}}}\lesssim \|V_0\|_Y.$$
  These two claims, combine with
   the bilinear estimate \eqref{ineqbilinearion}, we can estimate
   $$I_4=-i \int_{0}^{t}e^{\ep (t-s)\Delta}e^{i(t-s)b(D)} \chidid T_{\f{m}{\phi_{jk}}}(\ep \Delta \tilde{r}_j,\tilde{r}_{k})$$ as follows:
    \beno
   \|I_4\|_{W^{\sigma,\ek}}&\lesssim&\int_{0}^{t}(1+t-s)^{-(1+\kappa)}\|T_{\f{m}{\phi_{jk}}}(\ep \Delta \tilde{r}_j,\tilde{r}_{k})\|_{W^{\si+\f{9(1+\kappa)}{4},\ek'}}\d s\nonumber\\
    &\lesssim&\int_{0}^{t}(1+t-s)^{-(1+\kappa)}(\|\f{\ep \Delta}{|\na|}\tilde{r}_j\|_{H^{\si+\lambda+\f{9(1+\kappa)}{4}}}\||\na|^{-1}\tilde{r}_{k}\|_{W^{\lambda, r}}+\|\f{\ep \Delta}{|\na|}\tilde{r}_j\|_{W^{\lambda,r}}\||\na|^{-1}\tilde{r}_{k}\|_{H^{\si+\lambda+\f{9(1+\kappa)}{4}}})\d s\nonumber\\
    &\lesssim&\int_{0}^{t}(1+t-s)^{-(1+\kappa)}(1+s)^{-(1+\kappa)}\|V\|_{X_T}^2\d s\lesssim(1+t)^{-(1+\kappa)}\|V\|_{X_T}^2.
    \eeno
   For the estimate of $I_3$,
we use the  identity
$$\ep \Delta  T_{\f{m}{\phi_{jk}}}(\tilde{r}, \tilde{r})=T_{\f{m}{\phi_{jk}}}( \ep\Delta\tilde{r}, \tilde{r})+2 \sum_{l=1}^{3} T_{\f{m}{\phi_{jk}}}( \ep^{\f{1}{2}}\partial_{l}\tilde{r}, \ep^{\f{1}{2}}\partial_{l}\tilde{r})$$
 and the following inequalities whose proofs are similar to that of Claim 2.
     \beno
   && \|\f{\ep ^{\f{1}{2}}\na}{|\na|}R\|_{H^{N-1}}\lesssim (1+t)^{-\f{1}{2}}(\|V_0\|_{Y}+\|V\|_{X_T}^2),\\
&& \|\f{\ep ^{\f{1}{2}}\na}{|\na|}R\|_{w^{\lambda,r}}\lesssim (1+t)^{-(\f{11-34\kappa}{18})}(\|V_0\|_{Y}+\|V\|_{X_T}^2).
    \eeno
This ends the proof of a priori estimate $\ref{ineqion}$.
\end{proof}

   %we set $\psi_1,\psi_2$ which satisfy that: $\psi_1+\psi_2=1$ and
 %$$\Supp \psi_1 \subset \{ (\xi,\eta)\big| |\xi-\eta|\geq 2|\eta|\},$$ $$\Supp \psi_2 \subset \{ (\xi,\eta)\big| |\eta|> \f{|\xi-\eta|}{3}\}$$\\
 % write multiplier $m=m(\psi_1+\psi_2)=m\psi_1\f{1}{\ep|\eta|^2}\ep|\eta|^2+m\psi_2\f{1}{\ep|\xi-\eta|^2}\ep|\xi-\eta|^2$
 %where $\psi_1=\Phi(\f{|\xi-\eta|}{|\eta|})$
 % where $\Phi\in C_{c}^{\infty}(\mathbb{R})$ and supported on $B_2(\mathbb{R})$.
 % Without much more efforts, one can still prove the same result as \eqref{ineqbilinearion} when $m$ is replaced by $m\psi_1\f{\ep|\xi|^2}{\ep|\eta|^2}$ and $m\psi_2\f{\ep|\xi|^2}{\ep|\xi-\eta|^2}$ respectively in \eqref{ineqbilinearion}.
 %The estimate for $I_3$ then is reduced to that of $I_4.$

\subsection{Perturbing the ion Navier-Stokes-Poisson by the solution of (\ref{NSPlowion})} %}

 As before, we consider now the following system:
  \beq \label{ionNSPP}
 \left\{
\begin{array}{l}
\displaystyle \pt n +\div( \rho v+nu+nv)=0,\\
\displaystyle \pt v+u\cdot {\na v}+v\cdot (\na u+\na v)-\varepsilon\mathcal{L}v +\na n -\na \psi

%\displaystyle \qquad \qquad \qquad \qquad \qquad \qquad \qquad \qquad \qquad \qquad
=\varepsilon(\f{1}{\rho+n}-1) (\mathcal{L}v+\mathcal{L}u), \\
\displaystyle \Delta \psi-\psi=n\\
\displaystyle v|_{t=0} =\mathcal{P}u_{+0}^{\varepsilon},

n|_{t=0}=0.
\end{array}
\right.
\eeq
 then
 $(\rho_{+}^{\varepsilon},u_{+}^{\varepsilon},\phi_{+}^{\varepsilon})=(n,\psi,v)+(\rho,u,\phi)$.

 We define the energy functional similar to \eqref{energyfunele}:
  \beqs\label{energyfunion}
    \mathcal{E}_M(n,u,\na\psi)=\sum_{|\alpha|\leq M} \mathscr{E}_{\alpha}=\sum_{|\alpha|\leq M}\f{1}{2}\int\rho|\pa v|^2+|\pa n|^2+|\pa \lnr \psi|^2\d x.
    \eeqs
     %denote also $$\mathscr{E}_{k}=\sum_{|\alpha|=k} \mathscr{E}_{\alpha}$$

 We can derive similar  energy estimates as in the electron case by using almost  the same  computations as  in Lemma \ref{lemenergy1} and Lemma \ref{lemenergy2}.
In fact, one can check that \eqref{energyeq1} in Lemma \ref{lemenergy1} do not change, while \eqref{energyeq2} in Lemma \ref{lemenergy2} is changed by replacing $\int |\pa n|^2\d x$ by $\int |\pa \psi|^2+|\pa\Delta\psi|^2\d x$.
We finally get  the following a priori estimate: if we have
 $\|(u,\vr)\|_{X}\lesssim \delta, \mathcal{E}_3 \lesssim \delta \ep$ for some $\delta$ sufficiently small independent
of $\ep$, then  we have uniformly in $\ep$:
$$\mathcal{E}_3(t)\lesssim \mathcal{E}_3(0)+\int_{0}^{t}(1+s)^{-(1+\kappa)}(\delta\mathcal{E}_3(s)+\ep^2\delta^3)\d s.$$
Global existence for system \eqref{ionNSPP} follows again  by Gr\"{o}nwall's inequality and
bootstrap arguments.
The decay estimate follows in the similar way as that in electron case, the only difference  is now  that it is the $L^8$ norm  rather than the $L^6$  has the critical decay $(1+t)^{-1}$.

    \section{Appendix}
    We first recall two classical estimates:
    \begin{lem}[Kato-Ponce inequality]\label{lemmakp}
    Given  real number $s>0$,% positive integer $k$
     two functions $f,g$, we have:
    \beq
       \|fg\|_{{W}^{s,q}}\lesssim\|f\|_{{W}^{s,p_1}}\|g\|_{L^{r_1}}+\|f\|_{L^{r_2}}\|g\|_{{W}^{s,p_2}},
    \eeq
    \beq
    \|fg\|_{\dot{W}^{s,q}}\lesssim\|f\|_{\dot{W}^{s,p_1}}\|g\|_{L^{r_1}}+\|f\|_{L^{r_2}}\|g\|_{\dot{W}^{s,p_2}}
    \eeq
   % \beq
   % \|[\na^{k},f]g\|_{L^{q}}\lesssim \|f\|_{W^{k,p_1}}\|g\|_{L^{r_1}}+\|\na f\|_{L^{r_2}}\|g\|_{\dot{W}^{k-1,p_2}}
    %\eeq
    where $\f{1}{p_j}+\f{1}{r_j}=\f{1}{q}$, $q\leq p_j<+\infty,$ $q<r_j\leq +\infty$.
    \end{lem}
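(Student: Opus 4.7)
The plan is to prove the inequality via Bony's paraproduct decomposition, writing
\begin{equation*}
fg = T_f g + T_g f + R(f,g),
\end{equation*}
where $T_f g = \sum_j S_{j-2} f \cdot \Delta_j g$ is the paraproduct and $R(f,g) = \sum_{|j-k| \leq 1} \Delta_j f \cdot \Delta_k g$ is the remainder, with the Littlewood-Paley operators $\Delta_j, S_j$ as defined in Section \ref{notations}. Each of the three pieces will be controlled separately, and assembled using the Littlewood-Paley characterization of Sobolev norms $\|f\|_{\dot{W}^{s,q}} \approx \|(\sum_{j} 2^{2js}|\Delta_j f|^2)^{1/2}\|_{L^q}$ (valid for $1<q<\infty$), together with the embeddings $\dot{W}^{s,q} \hookrightarrow \dot{B}^{s}_{q,2}$ and its converse.

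First I would treat $T_f g$. The key observation is that $\Delta_j(T_f g)$ has Fourier support in an annulus of size comparable to $2^j$, so Bernstein's inequality and H\"older give
\begin{equation*}
\|\Delta_j(T_f g)\|_{L^q} \lesssim \|S_{j-2} f\|_{L^{r_2}} \|\Delta_j g\|_{L^{p_2}} \lesssim \|f\|_{L^{r_2}} \|\Delta_j g\|_{L^{p_2}}.
\end{equation*}
Multiplying by $2^{js}$, taking the $\ell^2$ norm in $j$, and passing back through the Littlewood-Paley characterization produces the bound $\|f\|_{L^{r_2}}\|g\|_{\dot{W}^{s,p_2}}$. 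The symmetric term $T_g f$ handled identically yields $\|f\|_{\dot{W}^{s,p_1}}\|g\|_{L^{r_1}}$.

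For the remainder $R(f,g)$, each summand $\Delta_j f \cdot \tilde{\Delta}_j g$ has Fourier support in a ball of radius $\sim 2^j$ (not an annulus), so I would use the Bernstein-based characterization $\|h\|_{\dot{W}^{s,q}} \lesssim \|(\sum_j 2^{2js}|\Delta_j^{\mathrm{ball}} h|^2)^{1/2}\|_{L^q}$ which is valid precisely because $s>0$ (this is where the positivity hypothesis enters). Writing $2^{js}\Delta_j f \tilde{\Delta}_j g = (2^{js}\Delta_j f)\tilde{\Delta}_j g$ and applying H\"older in $L^q = L^{p_1} \cdot L^{r_1}$ together with the Fefferman-Stein vector-valued maximal inequality (again requiring $1 < p_1, r_1 < \infty$) controls $R(f,g)$ by either of the two terms. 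The inhomogeneous estimate then follows from the homogeneous one combined with the trivial bound $\|fg\|_{L^q} \lesssim \|f\|_{L^{p_j}}\|g\|_{L^{r_j}}$ from H\"older.

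The main technical obstacle is handling the endpoint case $r_j = \infty$, since the Fefferman-Stein inequality in the remainder step fails at $\infty$. In that regime one replaces the square-function argument by a direct $\ell^1$ summation of $\|\Delta_j f\|_{L^{p_j}}\|\tilde{\Delta}_j g\|_{L^\infty}$ weighted by $2^{js}$, which converges thanks to $s>0$ and the embedding $\dot{B}^{s}_{p_j,1} \hookleftarrow \dot{W}^{s,p_j}$ used after interpolating with an auxiliary term; alternatively one can invoke the refined Kato-Ponce-Vega argument as carried out in Grafakos-Oh or Bahouri-Chemin-Danchin, to which I would refer for the endpoint detail.
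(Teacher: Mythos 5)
The paper itself offers no proof of this lemma: it is stated in the appendix as a classical estimate (Kato--Ponce \cite{MR951744}, see also \cite{MR2445437}, \cite{bahouri2011fourier}), so there is no internal argument to compare with. Your paraproduct strategy is the standard modern route to it, and its overall architecture (Bony decomposition, square-function characterization of $W^{s,q}$ for $1<q<\infty$, the $s>0$ hypothesis entering only through the remainder $R(f,g)$, H\"older for the inhomogeneous/low-frequency part) is the right one.

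One step, however, would fail as literally written. For the paraproduct term you pass from $\|\Delta_j(T_f g)\|_{L^q}\lesssim \|f\|_{L^{r_2}}\|\Delta_j g\|_{L^{p_2}}$ to the conclusion by ``multiplying by $2^{js}$, taking the $\ell^2$ norm in $j$, and passing back through the Littlewood--Paley characterization''. Taking the $L^q$ norm first and $\ell^2$ in $j$ afterwards controls only the Besov norm $\|T_f g\|_{\dot B^{s}_{q,2}}$ by $\|f\|_{L^{r_2}}\|g\|_{\dot B^{s}_{p_2,2}}$; to return to the stated Sobolev bound you would need both $\dot B^{s}_{q,2}\hookrightarrow \dot W^{s,q}$ and $\dot W^{s,p_2}\hookrightarrow \dot B^{s}_{p_2,2}$, and these hold simultaneously only when $p_2\le 2\le q$ --- the ``converse'' embedding you invoke is false in general. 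The repair is standard and uses exactly the device you already deploy for $R(f,g)$: stay inside the Triebel--Lizorkin framework, bound $|\Delta_j(S_{k-2}f\,\Delta_k g)|\lesssim M(S_{k-2}f\,\Delta_k g)$ and $\sup_k|S_k f|\lesssim Mf$ pointwise, and use the Fefferman--Stein vector-valued maximal inequality to get $\bigl\|\bigl(\sum_j 2^{2js}|\Delta_j T_f g|^2\bigr)^{1/2}\bigr\|_{L^q}\lesssim \bigl\|Mf\,\bigl(\sum_j 2^{2js}|\Delta_j g|^2\bigr)^{1/2}\bigr\|_{L^q}\lesssim \|f\|_{L^{r_2}}\|g\|_{\dot W^{s,p_2}}$ by H\"older. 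Note also that your worry about the endpoint $r_j=\infty$ is largely unnecessary for the exponent range in the statement: in the remainder term one does not need Fefferman--Stein at all, since $\sup_j|\tilde\Delta_j g|\lesssim Mg$ pointwise and the Hardy--Littlewood maximal operator is trivially bounded on $L^\infty$, so the same H\"older argument goes through; the genuinely delicate endpoints (e.g.\ $q\le 1$ or derivatives landing on the $L^\infty$ factor) are not needed here, and deferring them to the literature is harmless. With these corrections your proof is complete and consistent with the references the paper cites.
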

%    \begin{proof}
%   The first inequality could be proved by the Coifman-Meyer's theorem while the second one follows from the first one by scaling arguments. For more detail, please refer to \cite{MR1084601}\cite{MR951744}
%  \end{proof}
    \begin{lem}\label{lemGN}
    Suppose $F$: $\mathbb{R}\rightarrow \mathbb{R}$ is a smooth function with the condition $F(0)=0$. Then for any function $u$ that  belongs to $L^{\infty}\cap W^{k,p}$ ($k\geq 0$ is an integer and $1\leq p\leq+\infty$), we have:
    \beq
    \|F(u)\|_{\dot{W}^{k,p}}\lesssim C(\|u\|_{L^{\infty}})\|u\|_{\dot{W}^{k,p}}.
    \eeq
    \end{lem}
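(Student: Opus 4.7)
The strategy is the classical Moser composition argument: expand $\nabla^k F(u)$ by the Faà di Bruno formula, estimate each resulting term by H\"older's inequality, and then rebalance the derivatives with a Gagliardo--Nirenberg interpolation so that only one copy of the homogeneous $\dot W^{k,p}$ norm survives. The remaining factors are absorbed in $\|u\|_{L^\infty}$.

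The case $k=0$ is immediate: since $F(0)=0$ and $F$ is smooth, the mean value theorem yields $|F(u(x))|\leq \bigl(\sup_{|s|\leq\|u\|_{L^\infty}}|F'(s)|\bigr)|u(x)|$, so $\|F(u)\|_{L^p}\leq C(\|u\|_{L^\infty})\|u\|_{L^p}$. Assume henceforth $k\geq 1$. Then Faà di Bruno's formula gives
\[
\partial^\alpha F(u)=\sum_{m=1}^{k}\sum_{\substack{k_1+\cdots+k_m=k\\ k_j\geq 1}} c_{\alpha,k_1,\dots,k_m}\, F^{(m)}(u)\,\prod_{j=1}^{m}\partial^{\beta_j}u,
\]
where $|\beta_j|=k_j$. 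Each term is then estimated pointwise in the following way: by H\"older's inequality with exponents $p_j=kp/k_j$ (so that $\sum_j 1/p_j = 1/p$ because $\sum_j k_j=k$),
\[
\Bigl\|F^{(m)}(u)\prod_j\partial^{\beta_j}u\Bigr\|_{L^p}\leq \|F^{(m)}(u)\|_{L^\infty}\prod_{j=1}^m\|\partial^{\beta_j}u\|_{L^{p_j}}.
\]

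The key step is now the Gagliardo--Nirenberg interpolation
\[
\|\nabla^{k_j}u\|_{L^{p_j}}\lesssim\|\nabla^k u\|_{L^p}^{\theta_j}\,\|u\|_{L^\infty}^{1-\theta_j},\qquad \theta_j=k_j/k,
\]
valid for all $1\leq p\leq\infty$ with the above choice of $p_j$. Plugging this in and using $\sum_j\theta_j=1$, one gets
\[
\prod_{j=1}^m\|\partial^{\beta_j}u\|_{L^{p_j}}\lesssim \|\nabla^k u\|_{L^p}\,\|u\|_{L^\infty}^{m-1}.
\]
Since $|F^{(m)}(u(x))|\leq\sup_{|s|\leq\|u\|_{L^\infty}}|F^{(m)}(s)|$, summing over $m$ and over the finitely many partitions $(k_1,\dots,k_m)$ yields the claimed bound with
\[
C(\|u\|_{L^\infty})=\sum_{m=1}^{k} C_{k,m}\Bigl(\sup_{|s|\leq\|u\|_{L^\infty}}|F^{(m)}(s)|\Bigr)\|u\|_{L^\infty}^{m-1},
\]
which is finite because $F$ is smooth.

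The only subtle point is checking that the interpolation exponents match exactly, i.e.\ that the sum $\sum\theta_j=1$ is forced by $\sum k_j=k$. This is why the homogeneous norm $\dot W^{k,p}$, and not just $W^{k,p}$, appears on the right-hand side; the borderline cases $p=1$ and $p=\infty$ are covered by the same Gagliardo--Nirenberg statement without modification. No additional ingredient is required beyond Faà di Bruno, H\"older and Gagliardo--Nirenberg.
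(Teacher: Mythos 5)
Your proof is correct and follows essentially the same route as the paper: expand $\partial^\alpha F(u)$ via the chain rule (Faà di Bruno), apply H\"older with the exponents $p_j$ satisfying $p_j|\alpha_j|=kp$, and use the Gagliardo--Nirenberg interpolation $\|\nabla^{|\alpha_j|}u\|_{L^{p_j}}\lesssim\|u\|_{L^\infty}^{1-|\alpha_j|/k}\|\nabla^k u\|_{L^p}^{|\alpha_j|/k}$, with the $k=0$ case handled by a first-order Taylor/mean-value bound. The only difference is that you spell out the bookkeeping of the exponents and the constant $C(\|u\|_{L^\infty})$ more explicitly than the paper does.
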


    \begin{proof}
    For $k=0$, we Taylor expand $F$ at first order. For $k>0$, we use the Gagliardo-Nirenberg  interpolation inequality.
    Indeed, for any $|\alpha|=k>0,$ we have:$$\partial^{\alpha}F(u)=\sum F^{(l)}(u)\partial^{\alpha_1}u\partial^{\alpha_2}u\cdots\partial^{\alpha_l}u.$$
    where $\alpha_1+\alpha_2+\cdots\alpha_l=\alpha$
     and by using
    \beqs
    \|u\|_{\dot{W}^{|\alpha_j|,p_j}}\lesssim\|u\|_{L^{\infty}}^{1-\f{|\alpha_j|}{k}}\|u\|_{\dot{W}^{k,p}}^{\f{|\alpha_j|}{k}}.
    \eeqs
    where $p_j|\alpha_j|=kp.$
    The result  follows from the  H\"{o}lder inequality.
\end{proof}
At last, we present the proof of the bilinear estimate stated in  Lemma \ref{lembilinear}. We first give a proposition which shows that $\f{m}{\phi_{jk}}$ has the same properties as the  Klein-Gordon phase $\f{1}{\pm \lxr \pm \ler -\lxper}$ (\cite{MR3024265}\cite{MR3274788}) as long as the threshold $\kpz$ is small enough.
\begin{prop}\label{elementary for phase}
Let $m$ and $\phi_{jk}$ defined in  \ref{lembilinear},  if $\kpz$ is small enough %(say $\kpz\leq\f{1}{200}$),
then for any multi-index $\alpha,\beta\in \mathbb{N}^3$, we have the following estimate uniformly in $\ep\in(0,1]$:
\beqs
|\pab \f{m}{\phi_{jk}}(\xi-\eta,\eta)|\lesssim_{\alpha,\beta,\kpz} \min\{\lxr, \ler, \lxmer\},
\eeqs
\beqs
|\pab \f{m}{\phi_{jk}^2}(\xi-\eta,\eta)|\lesssim_{\alpha,\beta,\kpz} \min\{\lxr^2, \ler^2, \lxmer^2\}.
\eeqs
\end{prop}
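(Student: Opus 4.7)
The plan is to view $\phi_{jk}$ as a uniform-in-$\varepsilon$ perturbation of the Klein--Gordon phase $(-1)^{j+1}\langle\xi-\eta\rangle + (-1)^{k+1}\langle\eta\rangle - \langle\xi\rangle$, for which analogous symbol-type estimates are classical (see the treatment of Klein--Gordon-type and Euler--Poisson-type phases in \cite{MR3024265,MR3274788}). The essential preliminary fact is that on the support of the cutoffs one has $\varepsilon|\zeta|^{2}\le 4\kappa_0$, so
\[
b(\zeta)^{2}=\langle\zeta\rangle^{2}-\varepsilon^{2}|\zeta|^{4},\qquad \frac{\varepsilon^{2}|\zeta|^{4}}{1+|\zeta|^{2}}\le 16\kappa_0^{2}.
\]
Differentiating this identity and iterating, I will obtain, uniformly in $\varepsilon\in(0,1]$ and for every multi-index $\gamma$, the bounds
\[
|\partial^{\gamma}(b(\zeta)-\langle\zeta\rangle)|\lesssim \kappa_0\,\langle\zeta\rangle^{1-|\gamma|},\qquad |\partial^{\gamma}b(\zeta)|\lesssim \langle\zeta\rangle^{1-|\gamma|}.
\]
The same scaling argument gives $|\partial^\gamma\tilde{\chi}_{\varepsilon,\kappa_0}(\zeta)|\lesssim\langle\zeta\rangle^{-|\gamma|}$ on the relevant region, and $\langle\zeta\rangle/b(\zeta)$ is smooth and bounded; consequently $|\partial_{\xi}^{\alpha}\partial_{\eta}^{\beta}m(\xi-\eta,\eta)|\lesssim 1$.

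Next I will establish the key lower bound
\[
|\phi_{jk}(\xi-\eta,\eta)|\gtrsim \frac{1}{\min\{\langle\xi\rangle,\langle\eta\rangle,\langle\xi-\eta\rangle\}},
\]
uniformly in $\varepsilon$. The ``non-resonant'' cases $\phi_{12},\phi_{21}$ are in fact uniformly bounded below by an absolute positive constant, from the triangle-type inequalities that $b$ inherits from $\langle\cdot\rangle$ once $\kappa_0$ is small. For the resonant cases $\phi_{11}$ and (symmetrically) $\phi_{22}$, I will write
\[
\phi_{11}(\xi-\eta,\eta)=\frac{(b(\xi-\eta)+b(\eta))^{2}-b(\xi)^{2}}{b(\xi-\eta)+b(\eta)+b(\xi)}
\]
and expand the numerator; its leading part is $\ge 1 + 2b(\xi-\eta)b(\eta)-2(\xi-\eta)\cdot\eta$, which is bounded below by a positive constant thanks to Cauchy--Schwarz and $b(\zeta)\ge|\zeta|$ (valid for $\kappa_0$ small), while the $\varepsilon$-dependent corrections are $O(\kappa_0)$ times the leading order and are absorbed. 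The denominator is $\lesssim\max\{\langle\xi\rangle,\langle\eta\rangle,\langle\xi-\eta\rangle\}$, but a careful case analysis (splitting according to which of the three frequencies realizes the minimum) shows that the ``gain'' in the numerator actually controls one of the large factors, yielding the sharper $\min$ instead of $\max$ in the denominator of the bound --- this is exactly the classical Klein--Gordon computation, robust to the perturbation $\langle\cdot\rangle\rightsquigarrow b(\cdot)$.

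The proposition will then follow by induction on $|\alpha|+|\beta|$ via the Leibniz and Fa\`a di Bruno rules, the central auxiliary estimate being
\[
|\partial_{\xi}^{\gamma}\partial_{\eta}^{\gamma'}\phi_{jk}(\xi-\eta,\eta)|\lesssim \big(\min\{\langle\xi\rangle,\langle\eta\rangle,\langle\xi-\eta\rangle\}\big)^{-(|\gamma|+|\gamma'|)}, \qquad |\gamma|+|\gamma'|\ge 1,
\]
which exploits the difference structure $\partial_{\xi}^{\gamma}\phi_{11}(\xi-\eta,\eta)=\partial^{\gamma}b(\xi-\eta)-\partial^{\gamma}b(\xi)$ combined with the decay $|\partial^{\gamma}b(\zeta)|\lesssim\langle\zeta\rangle^{1-|\gamma|}$; plugging these into Fa\`a di Bruno together with the lower bound on $|\phi_{jk}|$ gives the desired bounds on $\partial_{\xi}^{\alpha}\partial_{\eta}^{\beta}(m/\phi_{jk}^{l})$ for $l=1,2$. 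The principal technical obstacle is precisely this derivative bookkeeping: a naive application of $\partial(1/\phi)=-\partial\phi/\phi^{2}$ loses a factor of $\min$ at each step, and the correct uniform-in-order bound is obtained only by tracking, case by case, the directional cancellations in $\partial\phi_{jk}$. This case analysis is carried out for the exact Klein--Gordon phase in \cite{MR3024265,MR3274788}, and the perturbation estimates in the first paragraph guarantee that it extends robustly to the present setting provided $\kappa_0$ is fixed once and for all small enough, in accordance with the standing convention of Section \ref{section useful lemmas}.
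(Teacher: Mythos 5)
Your plan has a genuine gap, and two of its stated intermediate estimates are false. First, the claim that the ``non-resonant'' phases $\phi_{12},\phi_{21}$ are bounded below by an absolute constant fails: the cutoffs in $m$ only enforce $\ep|\cdot|^2\lesssim\kpz$, so frequencies of size $\sqrt{\kpz/\ep}$ are admissible, and uniformity in $\ep$ forces you to handle arbitrarily large frequencies. Taking $\xi-\eta=2w$, $\eta=-w$ (so $\xi=w$) with $|w|=R\lesssim\sqrt{\kpz/\ep}$ gives $\phi_{12}(\xi-\eta,\eta)=b(2w)-2b(w)\approx -\tfrac{3}{4R}\to 0$; only $\phi_{22}$ enjoys a constant lower bound (it is the easy case, not a resonant one). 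Second, your central auxiliary estimate $|\partial^{\gamma}_{\xi}\partial^{\gamma'}_{\eta}\phi_{jk}|\lesssim\big(\min\{\lxr,\ler,\lxmer\}\big)^{-(|\gamma|+|\gamma'|)}$ is also false: $\nabla b(\zeta)=\f{(1-2\ep^2|\zeta|^2)\zeta}{b(\zeta)}$ has modulus $\approx 1$ at large frequencies, so e.g.\ at $\eta=w$, $\xi-\eta=-2w$, $|w|=R$ one has $|\partial_{\eta}\phi_{11}(\xi-\eta,\eta)|=|\nabla b(\eta)-\nabla b(\xi-\eta)|\approx 2$ while $\min\approx R$. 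At such points $\phi_{11}$ happens to be large, so the quotient rule is harmless there---but exploiting this is exactly the region-by-region ``directional cancellation'' analysis that you defer to the Klein--Gordon literature without carrying it out; since that bookkeeping is the entire content of the proposition (the boundedness of $\pab m$ and the lower bound $|\phi_{jk}|\gtrsim 1/\min$ are the easy parts), the proof is not actually supplied, and the Fa\`a di Bruno step as written does not go through.

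For comparison, the paper sidesteps this bookkeeping altogether by rationalizing: in the variables $(\xi,\eta)$ with output $\xi+\eta$ it writes $\f{1}{\phi_{11}}=\f{b(\xi)+b(\eta)+b(\xi+\eta)}{A}$ with $A=(b(\xi)+b(\eta))^2-b(\xi+\eta)^2$, proves the lower bound $A\gtrsim\f{(\lxr+\ler)^2}{\lxr\ler}$ (which is what produces the $\min$ rather than the $\max$), and then establishes the self-reproducing estimate $|\pab A|\lesssim_{\kpz} A$ for all multi-indices; its only delicate ingredient is the first-order inequality $\big|\f{b(\eta)}{b(\xi)}\xi-\eta\big|\lesssim 1+b(\xi)b(\eta)-\xi\cdot\eta$, proved by minimizing over the angle between $\xi$ and $\eta$, after which higher derivatives of $A$ are crudely bounded by $\f{\lxr}{\ler}+\f{\ler}{\lxr}\lesssim A$. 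With $|\pab(1/A)|\lesssim 1/A$ in hand, Leibniz gives the stated bounds for $m/\phi_{jk}$ and $m/\phi_{jk}^2$ with no loss per derivative. If you want to keep your route, you must either prove the region-dependent cancellation estimates you invoke (uniformly in $\ep$, since $b$ depends on $\ep$), or adopt a device like the paper's identity that converts the problem into bounding a quantity whose derivatives are controlled by itself.
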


We  postpone the proof of this proposition %until the end of the section
and prove firstly Lemma \ref{lembilinear}.
\begin{proof}[Proof of Lemma \ref{lembilinear}]
 We choose two smooth functions $\psi_1,\psi_2\in C_{b}^{\infty}(\mathbb{R}^{6})$ which satisfy the following conditions:
 \begin{equation*}
\left\{
\begin{array}{l}
\displaystyle  \psi_1+\psi_2=1 \qquad\forall (\xi,\eta), \\
\displaystyle \Supp \psi_1 \subset \{ (\xi,\eta)\big| \lxmer\geq \f{\ler}{2}\},\\
\displaystyle \Supp \psi_2 \subset \{ (\xi,\eta)\big| \ler> \lxmer\}.\\
\end{array}
\right.
\end{equation*}
And write
\beno
\lxr^{\si}\frac{m}{\phi_{jk}}(\xi-\eta,\eta)&=&\f{m\psi_1(\xi-\eta,\eta)\lxr^{\si}}{\phi_{jk}\lxmer^{\si+2_{+}}\ler^2} \lxmer^{2_{+}}\ler^2+\f{m\psi_2(\xi-\eta,\eta)\lxr^{\si}}{\phi_{jk}\ler^{\si+2_{+}} \lxmer^2}\ler^{2_{+}} \lxmer^2\\
&\define& M_1(\xi-\eta,\eta)\lxmer^{2_{+}}\ler^2+M_2(\xi-\eta,\eta)\ler^{2_{+}} \lxmer^2.
\eeno
By Proposition \ref{elementary for phase}, we have for any $\alpha,\beta\in \mathbb{N}^3$ with $|\alpha|+|\beta|\leq 4$,
\beqs
|\pab{M_1}|\leq \mathrm{1}_{\lxmer\geq \f{\ler}{2}}\lxmer^{-2_{+}}\ler^{-1}.
\eeqs
In particular, we have proved that:
 $ M_{1}, \partial_{\xi}^{4}M_{1}, \partial_{\eta}^{4}M_{1}\in L^2(\mathbb{R}^{6})$.
 So we get that $\mathcal{F}^{-1}(M_1)(x,y)\in L^{1}_{x,y}$, as
 \beqs
 \|\mathcal{F}^{-1}(M_1)(x,y)\|_{L_{x,y}^{1}}\lesssim \|(1+|x|^{4}+|y|^{4})^{-1}\|_{L_{x,y}^2}(\|M_1\|_{L^2}+\|\partial_{\xi}^{4}M_{1}\|_{L^2}+\|\partial_{\eta}^{4}M_{1}\|_{L^2}).
 \eeqs
By using the definition of the bilinear operator $T_{m}$ (\ref{eqbilinear}) and properties of the Fourier transform, we can write:
\beqs
T_{M_{1}\lxr^{\si+2_{+}}\ler^2}(f,g)=\int (\cF^{-1}M_{1})(x',y')(\langle D_{x}\rangle^{\si+2_{+}}f)(x-x')\langle D_{x}\rangle^2 g(x-y')\d x'\d y',
\eeqs
thus by the  Minkowski's inequality, we have:
\beno
\|T_{M_{1}\lxr^{\si+2_{+}}\ler^2}(f,g)\|_{L^{p}}
&\leq& \int \|\langle D_{x}\rangle^2 g\|_{L^{r_1}}\|\int (\cF^{-1}M_{1})(x',y')(\langle D_{x}\rangle^{\si+2_{+}}f)(x-x')\d x'\|_{L^{p_1}}\d y'\nonumber\\
&\leq& \|\cF^{-1}M_{1}\|_{L_{x,y}^{1}} \|f\|_{W^{\si+2_{+},p_1}}\|g\|_{W^{2,r_1}}.
\eeno
The similar result for  $M_2$ can be derived in  the same fashion.
\end{proof}

\begin{proof}[Proof of Proposition \ref{elementary for phase}]

     We only prove the estimate  of  $\f{m}{\phi_{11}}$, the ones of  $\f{m}{\phi_{12}},\f{m}{\phi_{21}}$ can be obtained by symmetry, $\f{m}{\phi_{22}}$ is  easier.
    At first, we have
\beno
\f{1}{\phi_{11}}(\xi,\eta)&=&\f{b(\xi)+b(\eta)+b(\xi+\eta)}{(b(\xi)+b(\eta))^2-b^2(\xi+\eta)}
\define \f{b(\xi)+b(\eta)+b(\xi+\eta)}{A}.
\eeno
%where
%$A=\big(b(\xi)+b(\eta)\big)^2-b^2(\xi+\eta)$.%-Z^2(\xi,\eta)+2iZ(\xi,\eta)\big(b(\xi)+b(\eta)\big)\\
%&\define& A-Z^2(\xi,\eta)+2iZ(\xi,\eta)\big(b(\xi)+b(\eta)\big)
In the following, we will assume $\kpz\leq\f{1}{200}$, which ensure that: on the support $m(\xi,\eta)=\tilde{\chi}^{L}(\xi)\tilde{\chi}^{L}(\eta)\tilde{\chi}^{L}(\xi+\eta)\f{\lxper}{2ib(\xi+\eta)}$, we have that:
$\ep^2|\xi|^4\leq 16\kpz^2\leq \f{1}{2500}$ and $\f{99}{100}\lxr\leq b(\xi)\leq \lxr$. 
Under this assumption, $A$ has the lower bound:
\ben\label{Alowbdd}
A&=&1+2b(\xi)b(\eta)-2\xi\cdot\eta-\ep^{2}(|\xi|^{4}+|\eta|^{4}-|\xi+\eta|^{4})\nonumber\geq 1-32\kpz^2+2b(\xi)b(\eta)-2\xi\cdot\eta\nonumber\\
&\geq&\f{(1-32\kpz^2+2b(\xi)b(\eta))^{2}-4|\xi\cdot\eta|^2}{1-32\kpz^2+2b(\xi)b(\eta)+2\xi\cdot\eta}%\f{\ler}{\lxr}
\gtrsim\f{(b(\xi)+b(\eta))^{2}}{b(\xi)b(\eta)}\gtrsim \f{(\lxr+\ler)^{2}}{\lxr\ler}\gtrsim 1.%\nonumber\\
\een

Inspired by \cite{MR3274788}\cite{MR3024265}, we will prove that on the support of $m(\xi,\eta)$, for any multi-index $\alpha,\beta\in \mathbb{N}^3$, the following property holds:
\beq\label{ineq for A}
|\pab\f{1}{A}|\lesssim_{\alpha,\beta,\kpz}\f{1}{A}.
\eeq
This  is an easy consequence of Leibniz's rule and the estimate
\beq\label{derivative of A}
|\pab{A}|\lesssim_{\alpha,\beta,\kpz}A,\quad \forall \alpha,\beta\in \mathbb{N}^3.
\eeq
In the following, we will thus  prove \eqref{derivative of A}.
At first, we prove that
\beq\label{first derivative}
|\partial_{\xi,\eta} A|\lesssim_{\kpz}A.
\eeq
We will focus on  $\p_{\xi}A\lesssim_{\kpz}A$. One first notices that
on the support of $\tilde{\chi}^{L}(\xi)\tilde{\chi}^{L}(\eta)\tilde{\chi}^{L}(\xi+\eta)$
\beno
|\partial_{\xi} A|&=&\big|2\partial_{\xi}b(\xi)b(\eta)-2\eta-\ep^{2}(4|\xi|^{2}\xi-4|\xi+\eta|^{2}(\xi+\eta))\big|\\
&=&\big|2\f{b(\eta)}{b(\xi)}(1-2\ep^{2}|\xi|^{2})\xi-2\eta-\ep^{2}(4|\xi|^{2}\xi-4|\xi+\eta|^{2}(\xi+\eta))\big|\\
&\leq&2\big|\f{b(\eta)}{b(\xi)}(1-2\ep^{2}|\xi|^{2})\xi-\eta\big|+64\ep^{\f{1}{2}}\kpz^{\f{3}{2}}
\leq 2\big|\f{b(\eta)}{b(\xi)}(1-2\ep^{2}|\xi|^{2})\xi-\eta\big|+\f{4}{125}  ,
\eeno
if $\ep\leq 1,$ $\kpz\leq \f{1}{200}$.
Thus, by noticing again that 
$A\geq 1-32\kpz^2+2b(\xi)b(\eta)-2\xi\cdot\eta\geq \f{199}{200}++2b(\xi)b(\eta)-2\xi\cdot\eta$,
we only need to show that
$$\big|\f{b(\eta)}{b(\xi)}(1-2\ep^{2}|\xi|^{2})\xi-\eta\big|\lesssim_{\kpz}1+b(\xi)b(\eta)-\xi\cdot\eta.$$
Besides, we also observe that   on the support of $\tilde{\chi}^{L}(\xi)\tilde{\chi}^{L}(\eta)\tilde{\chi}^{L}(\xi+\eta)$, if $|\eta|\lesssim|\xi|$, we have $\f{b(\eta)}{b(\xi)}\ep^{2}|\xi|^{3}\leq
8\ep^{\f{1}{2}}\kpz^{\f{3}{2}}\leq \f{1}{250}$ and  if $|\xi|\leq|\eta|$, we have $\f{b(\eta)|\xi|}{b(\xi)|\eta|}\ep^{2}|\xi|^{2}|\eta|\leq \ep^{2}|\xi|^{2}|\eta| \leq 8\ep^{\f{1}{2}}\kpz^{\f{3}{2}} \leq \f{1}{250}$.
It thus suffices for us  to prove that
\beq\label{reductions}
\big|\f{b(\eta)}{b(\xi)}\xi-\eta\big|\lesssim_{\kpz}1+b(\xi)b(\eta)-\xi\cdot\eta.
\eeq
Let $\theta=\f{\xi\cdot\eta}{|\xi||\eta|}$, to prove \eqref{reductions}, we only need to prove that  there exists a constant $C$, $4<C<\infty$, such that:
\beqs
\f{b^{2}(\eta)}{b^{2}(\xi)}|\xi|^{2}+|\eta|^{2}-2\f{b(\eta)}{b(\xi)}|\xi||\eta|\theta\leq C[1+b^{2}(\xi)b^{2}(\eta)+|\xi|^{2}|\eta|^{2}\theta^{2}-2b(\xi)b(\eta)|\xi||\eta|\theta]
\eeqs
Define $F(\theta)=(|\xi|^{2}|\eta|^{2})\theta^{2}-2 b(\eta)|\xi||\eta|(b(\xi)-\f{1}{Cb(\xi)})\theta$.
The critical point of $F(\theta)$ is $\theta_{0}=\f{b(\eta)(b(\xi)-\f{1}{Cb(\xi)})}{|\xi||\eta|}$ and
\beno
&&\theta_{0}\geq 1
\Longleftrightarrow|\xi|^{2}|\eta|^{2}b^{2}(\xi)\leq b^{2}(\eta)(b^{2}(\xi)-\f{1}{C})^{2}\\
&\Longleftrightarrow&|\xi|^{2}|\eta|^{2}(1+|\xi|^{2}-\ep^{2}|\xi|^{4})\leq\big((1+|\xi|^{2}-\ep^{2}|\xi|^{4})^{2}+\f{1}{C^2}-\f{2}{C}(1+|\xi|^{2}-\ep^{2}|\xi|^{4})\big)
%&&\qquad\qquad\qquad\qquad\qquad\qquad\qquad\qquad\qquad\qquad
(1+|\eta|^{2}-\ep^{2}|\eta|^{4})
\eeno

Nevertheless, by the assumption $\kpz\leq\f{1}{200}$, and $C>4$,
we have that: $32\kpz^2+\f{2}{C}<1,(1-16\kpz^{2})^{2}+\f{1}{C^2}-\f{2}{C}>0$,
which leads to the lower bound of right hand side of the last inequality: $\big((1-16\kpz^2)^{2}+|\xi|^{4}+2(1-16\kpz^{2})|\xi|^2+\f{1}{C^2}-\f{2}{C}(1+|\xi|^{2})\big)(1-16\kpz^2+|\eta|^{2})\geq (1+|\xi|^2)|\xi|^2|\eta|^2$.
We thus have $\theta_0\geq 1$ and only need to prove \eqref{reductions} for $\theta=1$.
However,
\beno
\big|\f{b(\eta)|\xi|}{b(\xi)}-|\eta|\big|&\leq& (b(\eta)-|\eta|)\f{|\xi|}{b(\xi)}+(1-\f{|\xi|}{b(\xi)})|\eta|\\
&\leq& 1+\f{|\eta|(b(\xi)-|\xi|)}{b(\xi)}\leq 1+b(\xi)b(\eta)-|\xi||\eta|,
\eeno
this proves $\eqref{reductions}$ for $\theta=1$
which finish the proof of \eqref{first derivative}.

%Similarly, we have $\partial_{\eta}A\lesssim_{\kpz} A$.

We now prove $\pab A\lesssim_{\kpz}A$ for $|\alpha|+|\beta|\geq 2$.
Indeed, it is direct to show $$\pab A\lesssim_{\kpz}\f{\ler}{\lxr}+\f{\lxr}{\ler}, |\alpha|+|\beta|\geq 2.$$
which, combined with \eqref{Alowbdd},
 yields $\pab A\lesssim_{\kpz}A$. This ends the proof of \eqref{derivative of A} and thus of \eqref{ineq for A}.

Next, we have
\beq
\f{1}{b(\xi)+b(\eta)-b(\xi+\eta)}= \f{b(\xi)+b(\eta)+b(\xi+\eta)}{A}\lesssim_{\kpz}\min\{b(\xi),b(\eta),b(\xi+\eta)\}.
\eeq
In fact, if $b(\xi+\eta)$ is not the biggest, we have $$b(\xi)+b(\eta)-b(\xi+\eta)\geq\min\{b(\xi),b(\eta)\}\geq 1.$$
Otherwise, by the lower bound for $A$ \eqref{Alowbdd},
$$\f{b(\xi)+b(\eta)+b(\xi+\eta)}{A}\lesssim_{\kpz}(b(\xi)+b(\eta)+b(\xi+\eta))\f{b(\xi)b(\eta)}{(b(\xi)+b(\eta))^{2}}\lesssim_{\kpz}\min\{b(\xi),b(\eta)\}.$$
Finally, by inequality \eqref{ineq for A}, we have:
\beno
\left|\pab \f {m(\xi,\eta)}{\phi_{11}}\right|&=&\left|\sum c_{\alpha_1\alpha_2\beta_1\beta_2}\partial_{\xi}^{\alpha_1}\partial_{\eta}^{\beta_1}m(\xi,\eta))
\partial_{\xi}^{\alpha_2}\partial_{\eta}^{\beta_2}\f {b(\xi)+b(\eta)+b(\xi+\eta)%+iZ(\xi,\eta)
}{A} \right|
\\
&\lesssim_{\kpz}&(b(\xi)+b(\eta)+b(\xi+\eta))\f{1}{A}\lesssim_{\kpz} \min \{\lxr,\ler,\lxmer\}.
\eeno
Similarly, one has, by choosing $\kpz$ small if necessary,
\beno
|\pab \f {m(\xi,\eta)}{\phi_{11}^2}|
&\lesssim_{\kpz}&\f{1}{\phi_{11}^2}
\lesssim_{\kpz} \min \{\lxr^2,\ler^2, \lxper^2\}.
\eeno
\end{proof}
%\bibliographystyle{abbrv}
%
%\nocite{*}
%\bibliography{NSP}

\end{document}